\newtheorem{theorem}{Theorem}[section]
\newtheorem{proposition}[theorem]{Proposition}
\newtheorem{lemma}[theorem]{Lemma}
\newtheorem{corollary}[theorem]{Corollary}
\newtheorem{definition}[theorem]{Definition}
\newtheorem{remark}[theorem]{Remark}
\newcommand{\Laut}{{\rm Laut}}
\newcommand{\Paut}{{\rm Paut}}
\newcommand{\Gsec}{{\rm Gsec}}
\newcommand{\diag}{{\rm diag}}
\newcommand{\ord}{{\rm ord}}
\newcommand{\modd}{{\rm mod\ }}
\newcommand{\Hess}{{\rm Hess}}
\newcommand{\Form}{{\rm Form}}
\newcommand{\Ima}{{\rm Im}}
\newcommand{\C}{{\bf C}}
\newcommand{\N}{{\bf N}}
\newcommand{\QQ}{{\bf Q}}
\newcommand{\F}{{\bf F}}
\newcommand{\Z}{{\bf Z}}
\newcommand{\Ker}[1]{\mbox{${\rm Ker\ }{#1}$}}
\newcommand{\AAA}[1]{\mbox{${\bf A}^{#1}$}}
\newcommand{\PP}[1]{\mbox{${\bf P}^{#1}$}}
\newcommand{\A}[1]{\mbox{${\bf A}_{#1}$}}
\newcommand{\SSS}[1]{\mbox{${\bf S}_{#1}$}}
\newcommand{\ZZZ}[1]{\mbox{${\bf Z}_{#1}$}}
\begin{document}

\title{Projective Automorphism Groups of Nonsingular Quartic Surfaces
}

\author{
Stefano Marcugini and Fernanda Pambianco
\thanks{This research was supported in part by Ministry for Education, University and Research of Italy (MIUR) and by the Italian National Group for Algebraic and Geometric Structures and their Applications (GNSAGA - INdAM).
}  \\
Dipartimento di Matematica e Informatica\\
Universit\`a degli Studi di Perugia\\
Perugia (Italy)\\
\{stefano.marcugini,fernanda.pambianco\}@unipg.it\\
Hitoshi Kaneta \\
Kyo-machi 77, Tsuyama, Okayama, Japan\\
nbidai@mx1.tvt.ne.jp\\
}
\date{}

\maketitle


\begin{abstract}
For every $p\geq 5$, we determine all $\mathbf{Z}_p$-invariant nonsingular quartic surfaces in the three dimensional projective space over an algebraically closed field of characteristic zero. In some cases, we also determine
their full projective automorphism groups.
\end{abstract}

\noindent Keywords: Quartic surface, Automorphism groups, Projective space, Field of zero characteristic. \\
 Mathematics Subject Classification (2010): 14J50


\section{Introduction}
Let $k$ be an algebraically closed field of characteristic zero.
In this paper we will discuss the projective automorphism groups  $\Paut(S_4)$ of
nonsingular algebraic quartic surfaces $S_4$ in $\PP{3}(k)$.
The  projective automorphism group $\Paut(S_d)$ for a nonsingular surface of degree $d$ is a finite subgroup of $PGL_n(k)$ \cite{mat} if $d\geq 3$.
For a nonsingular cubic surface $S_3$,  $|\Paut(S_3)|\leq |\Paut(V(x^3+y^3+z^3+t^3))|=648$  \cite[p.493]{dol2}. For a nonsingular quartic surface $S_4$, W. Burnside  conjectured that
$|\Paut(S_4)|\leq |\Paut(V(x^4+y^4+z^4+t^4+ 12 xyzt))|=1920$, \cite[\S 272]{bur}.
In other words $V(x^4+y^4+z^4+t^4+ 12 xyzt)$ was conjectured to be maximally symmetric among nonsingular quartic surfaces.
Decomposing the order $|\Paut(S_4)|$ into prime factors as $\Pi p^{\nu_{S_4}(p)}$ for any nonsingular quartic surface $S_4$ we have
$\nu_{S_4}(2)\leq 10$,  $\nu_{S_4}(3)\leq 8$, $\nu_{S_4}(5)\leq 1$, $\nu_{S_4}(7)\leq 1$, and $\nu_{S_4}(p)=0$ ($p\geq 11$) \cite{gor}.
Since $\nu_{S_4}(5)\leq 1$, $\nu_{S_4}(7)\leq 1$ and $V(x^4+y^4+z^4+t^4+ 12 xyzt)$  is $\Z_5$-invariant, we have faced the problem to determine all $\Z_p$-invariant
nonsingular quartic surfaces in $\PP{3}(k)$ for $p = 5,7$ up to projective equivalence.
Denote by $\Form_{n,d}$ the set of all homogeneous polynomials of degree $d$ in $k[x_1,...,x_n]$.
A non-zero (resp. nonsingular) $f\in \Form_{n,d}$ is $G$-invariant if and only if $f$ belongs to an eigenspace (resp. nonsingular eigenspace) of $G$ in $\Form_{n,d}$.

 Let $G_5$ be a subgroup of $PGL_4(k)$ isomorphic to $\Z_5$, $A_5=\diag[\varepsilon^4,\varepsilon^3,\varepsilon^2,\varepsilon]$ with $\ord(\varepsilon)=5$.
Then the nonsingular eigenspace of $\langle(A_5)\rangle$ in $\Form_{4,4}\subset k[x,y,z,t]$ is
$$\Form_{4,4}(A_5;1)=\langle x^3y,y^3z,z^3t,t^3x,x^2z^2,y^2t^2,xyzt \rangle,$$which
contains a four-dimensional nonsingular subspace $\langle x^3y+y^3z+z^3t+t^3x,x^2z^2,y^2t^2,xyzt \rangle$.
In general $G_5$ has a nonsingular eigenspace in $\Form_{4,4}$ if and only if $G_5$ is conjugate to $\langle (A_5) \rangle$.
 A nonsingular $f\in \Form_{4,4}$ is $\Z_5$-invariant if and only if $f$ is projectively equivalent to
\[
f^{\lambda,\mu,\xi}=x^3y+y^3z+z^3t+t^3x+\lambda x^2z^2+\mu y^2t^2+\xi xyzt,
\]
where $[\lambda,\mu,\xi]$ does not belong to the affine algebraic set $V_a(R)$ (Theorem 3.1,\ Theorem 3.7). Here $R\in k[u,v,w]$ takes the form
\begin{eqnarray*}
&&\{(27u^2+27v^2+6uv-w^2)-36(u+v)uvw+(u+v)w^3+\\
&&(-16u^3v^3+8u^2v^2w^2-uvw^4)\}^2-\{16-18(u+v)w+48u^2v^2+20uvw^2\}^2.
\end{eqnarray*}

 Let $G_7$ be a subgroup of $PGL_4(k)$ isomorphic to $\Z_7$, $A_7=\diag[\varepsilon^4,\varepsilon^2,\varepsilon,1]$ with $\ord(\varepsilon)=7$. Then the
nonsingular eigenspace of $\langle(A_7)\rangle$ in $\Form_{4,4}\subset k[x,y,z,t]$ is $\Form_{4,4}(A_7;1)=\langle x^3y,y^3z,z^3x,t^4,xyzt \rangle$,
which contains a two-dimensional nonsingular subspace $\langle x^3y+y^3z+z^3x+t^4,xyzt \rangle$.
In general $G_7$ has a nonsingular eigenspace if and only if $G_7$ is conjugate to $\langle (A_7) \rangle$.
A nonsingular $f\in \Form_{4,4}$ is $\Z_7$-invariant if and only if $f$ is projectively equivalent to
\[
f^\lambda = x^3y+y^3z+z^3x+t^4+\lambda xyzt,
\]
where $\lambda^4\not\not=4^4$ (Theorem 4.2).
Having determined the equations of all the  nonsingular quartic surfaces in $\PP{3}(k)$ which are $\Z_5$ or $\Z_7$-invariant, we may find Gorinov's results $\nu_{S_4}(5)\leq 1$ and $\nu_{S_4}(7)\leq 1$, but with a much simpler method (Proposition 3.2, Proposition 4.3).
Moreover, with a direct method which utilizes classification of finite subgroups of $PGL_4(k)$, we find $\nu_{S_4}(p) = 0$  for $p \geq 11$ (Theorem 5.5)

$\Z_5$ is a Sylow 5-subgroup of $\A{5}$ and $\SSS{5}$, and each group has  two kinds of faithful representations $\psi_i'$ and $\varphi_i'$ which have
a nonsingular eigenspace in $\Form_{4,4}$ \cite{dol} ($i\in [1,2]$). In \S 3 we will
describe the faithful representations $\varphi_i'$ of $\SSS{5}$ such that $\varphi_i'((12345))=(\diag[\varepsilon^4,\varepsilon^3,\varepsilon^2,\varepsilon])$
with $\ord(\varepsilon)=5$. The representation $\psi_i'$ is just the restriction $\varphi_{i}'|\A{5}$.
The nonsingular eigenspace of $\psi_1'(\A{5})$, which is two-dimensional \cite{mar}, admits bases of the form $f^{\lambda,\mu,\xi}$, while the nonsingular eigenspace of
$\psi_2'(\A{5})$, which is two-dimensional \cite{mar}, does not (Theorem 3.10,\ 3.13).
$\Z_5$ is also a Sylow 5-subgroup of $\Paut(V(x^4+y^4+z^4+t^4+ 12 xyzt))$, and the nonsingular eigenspace of $\Paut(V(x^4+y^4+z^4+t^4+ 12 xyzt))$ is one-dimensional. Hence there exists an
$S\in PGL_4(k)$ such that ${V(x^4+y^4+z^4+t^4+ 12 xyzt)}_{S^{-1}}$ takes the form $f^{\lambda,\mu,\xi}$ (Theorem 7.6).
Nonsingular quartic surfaces admitting a group of projective automorphisms isomorphic to $\A{5}$ or $\SSS{5}$ have been independently studied by
I.V. Dolgachev \cite{dol}.

$\Z_7$ is a Sylow 7-subgroup of $PSL_2(\F_7)$, which has a nonsingular eigenspace in $\Form_{4,4}$ \cite{edg}. In \S 4 we will describe all
faithful representations  $\varphi_{0,1}$ and $\varphi_{\sqrt{2},1}$ of  $PSL_2(\F_7)$ in $PGL_4(k)$, and the bases of the form $f^\lambda$ of nonsingular
eigenspaces of $PSL_2(\F_7)$ in $\Form_{4,4}$ (Propositions 4.5,\ 4.8,\ 4.9).

In connection with Burnside's conjecture $\Paut(f)$ is specified for $f=f^{0}$ and $f=f^{0,0,0}$ (Proposition 4.12,\ Theorem 6.1).

\section{Preliminaries}

We start fixing some notations.
Let $k$ be an algebraically closed field of characteristic zero, and $d,n\geq 2$  integers. Denote by $\Form_{n,d}$ the set of all homogeneous
polynomials of degree $d$ in $k[x_1,...,x_n]$, on which $GL_n(k)$ acts as $Af=f_{A}$, where $A\in GL_n(k)$
with $A^{-1}=[\alpha_{ij}]$, $f\in k[x_1,...,x_n]$, and $f_A(x_1,...,x_n)=f(\sum_{j_1=1}^n\alpha_{1j_1}x_{j_1},...,\sum_{j_n=1}^n\alpha_{nj_n}x_{j_n})$.
Two linear subspaces $U,\ V$ of $k[x_1,...,x_n]$ are conjugate if there exists an $A\in GL_n(k)$ such that $U=V_{A}$.
Let $(A)$ be the projective transformation defined by $A\in GL_n(k)$. For an $f\in \Form_{n,d}$ we call $\Paut(f)=\{(A)\in PGL_n(k):\ f_{A^{-1}}\sim f \}$
the projective automorphism group of the form $f$, and $\Laut(f)=\{A\in GL_n(k):f_{A^{-1}}=f\}$ the linear automorphism group of the form $f$.
A form $f\in \Form_{n,d}$ defines a projective algebraic set $V_p(f)=\{(x)\in \PP{n-1}(k):f(x)=0\}$ in the projective space $\PP{n-1}(k)$ over $k$.
The form $f\not=0$ is called nonsingular if the projective algebraic set $V_p(f)$  is nonsingular. Defining the projective automorphism group
$\Paut(V_p(f))$ of $V_p(f)$ to be $\{(A)\in PGL_n(k):(A)V_p(f)=V_p(f)\}$, we have $\Paut(V_p(f))=\Paut(f)$ if $f$ is irreducible, hence if $f$ is nonsingular.
A linear subspace $U$ of $\Form_{n,d}$ is called nonsingular if $U$ contains a nonsingular element.
We call a nonsingular $g\in \Form_{n,d}$ maximally symmetric if $|\Paut(f)|\leq |\Paut(g)|$ for any nonsingular $f\in \Form_{n,d}$.

We now define the concept of eigenspace in $\Form_{n,d}$. It is a tool for describing  the projective automorphism group of a form in $\Form_{n,d}$.
Let $G$ be a group. An $f\in \Form_{n,d}$ is said to be $G$-invariant if
there is an injective group homomorphism $\varphi:G\rightarrow PGL_n(k)$ such that $\Paut(f)$ contains $\varphi(G)$, provided $G$ is not a
subgroup of either $GL_n(k)$ or $PGL_n(k)$. When $G$ is a subgroup of $GL_n(k)$ (resp. subgroup of  $PGL_n(k)$), an $f\in \Form_{n,d}$ is $G$-invariant if
$\pi_n(G)\subset \Paut(f)$ (resp. $G\subset \Paut(f)$). Here $\pi_n:GL_n(k)\rightarrow PGL_n(k)$ stands for the natural projection.
Let $G$ be a finite subgroup of $PGL_n(k)$. We will denote the integer set $\{1,\dots,m\}$ by $[1,m]$. A generating section $\Gsec(G)$ of $G$ is a finite set $\{A_i\in GL_n(k):\ i\in [1,m]\}$ such that
$G=\langle (A_1),...,(A_m)\rangle$ and $d_i=\ord((A_i))=\ord(A_i)<\infty$. Denoting the cyclic group $\{a\in k^*:\ a^d=1\}$ of order $d$ by $C_d$, let
$E(\Gsec(G))=\Pi_{i=1}^m C_{d_i}$, whose element $[\rho_1,...,\rho_m]$ will be identified with a map $\rho:\Gsec(G)\rightarrow k^*$ such that
$\rho(A_i)=\rho_i$. Let
$\Form_{n,d}(\Gsec(G),\rho)=\{f\in \Form_{n,d}:\ f_{g^{-1}}=\rho(g)f\ (g\in \Gsec(G))\}$. According as $\Form_{n,d}(\Gsec(G),\rho)\not=\{0\}$ or
$\Form_{n,d}(\Gsec(G),\rho)$ contains a nonsingular $f$, it will be called an eigenspace or nonsingular eigenspace of $G$ in   $\Form_{n,d}$ with
respect to $\Gsec(G)$. It is clear that a non-zero (resp. nonsingular) $f\in \Form_{n,d}$ is $G$-invariant if and only if $f$ belongs to an eigenspace
(resp. nonsingular eigenspace) of $G$ in $\Form_{n,d}$.

We now give some lemmas that describe the subgroups of $PGL_4(k)$ isomorphic to $\ZZZ{q}$.
The symmetric group $\SSS{n}$ acts on the additive monoid $\N^n$ as $[i_1,...,i_n]\tau=[i_{\tau(1)},...,i_{\tau(n)}]$. Let $R$ be a commutative $k$-algebra.
Identifying
$[i_1,...,i_n]$ with the monomial $x_1^{i_1}\cdots x_n^{i_n}$, we see that $\SSS{n}$ acts on the $k$-algebra $R[x_1,...,x_n]$ as
\begin{eqnarray*}
&& (\sum c_{i_1...i_n}\Pi_{\ell}x_{\ell}^{i_\ell})\tau=\sum c_{i_1...i_n}\Pi_{\ell}{x_\ell}^{i_{\tau}(\ell)}
=\sum c_{i_1...i_n}\Pi_{j}{x_{\tau^{-1}(j)}}^{i_j}.
\end{eqnarray*}
Consequently  $\SSS{n}$ acts on $R[x_1,...,x_n]$ as
$(\tau f)(x_1,...,x_n)=f(x_{\tau(1)},...,x_{\tau(n)})$ by left transformations, for $(\tau f)=f \tau^{-1}$ ($f\in R[x_1,...,x_n]$).
Let $F(X_1,...,X_m)$ be a polynomial in $k[X_1,...,X_m]$ such that $\sigma F=F$ for a $\sigma\in \SSS{m}$. Then
$F(p_1,...,p_m)=F(p_{\sigma(1)},...,p_{\sigma(m)})$ for any $m$-tuple of polynomials $p_1,...,p_m$ in $k[x_1,...,x_n]$-variables,
for two polynomials coincide if and only if they coincide as $k$-valued functions.
Let $E_n=[e_1,...,e_n]$, the unit matrix in $M_n(k)$. $\SSS{n}$ acts on the set $\{e_1,...,e_n\}$, hence on $k^n$ as
$\tau(\sum_{i=1}^n a_ie_i)=\sum_{i=1}^n a_ie_{\tau(i)}$. Denote by $\hat{\tau}$ the matrix $[e_{\tau(1)},...,e_{\tau(n)}]$, associated with the linear isomorphism $\tau$ of the column vector space $k^n$ with respect to the basis $e_i$ ($i\in [1,n]$).
Thus $\hat{\sigma\tau}=\hat{\sigma}\hat{\tau}$. Since the $i$-th column of the matrix $\hat{\sigma}\diag[a_1,...,a_n]\hat{\tau}$ is equal to
$a_{\tau(i)}e_{\sigma\tau(i)}$, the equality $\hat{\tau}^{-1}\diag[a_1,...,a_n]\hat{\tau}=\diag[a_{\tau(1)},...,a_{\tau(n)}]$ holds.
\begin{lemma}
Let $a_i\in k$ $(i\in [1,n])$ be distinct, and $[b_1,...,b_n]\in k^n$. Then there exists an $A\in GL_n(k)$ such that
$\diag[a_1,...,a_n]\sim A^{-1}\diag[b_1,...,b_n]A$, if and only if there exists a $\tau\in \SSS{n}$ such that
$[a_1,...,a_n]\sim [b_{\tau(1)},...,b_{\tau(n)}]$.
\end{lemma}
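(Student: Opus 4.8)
The plan is to translate the matrix relation into an eigenvalue comparison, leaning on the explicit conjugation identity $\hat{\tau}^{-1}\diag[b_1,\dots,b_n]\hat{\tau}=\diag[b_{\tau(1)},\dots,b_{\tau(n)}]$ recorded just before the statement. Throughout I read $M\sim N$ for matrices as equality up to a nonzero scalar, consistent with the definition of $\Paut$; thus $\diag[a_1,\dots,a_n]\sim A^{-1}\diag[b_1,\dots,b_n]A$ means $\diag[a_1,\dots,a_n]=c\,A^{-1}\diag[b_1,\dots,b_n]A$ for some $c\in k^*$.

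For the sufficiency direction I would start from the hypothesis $[a_1,\dots,a_n]\sim[b_{\tau(1)},\dots,b_{\tau(n)}]$, i.e. $a_i=c\,b_{\tau(i)}$ for all $i$ and some $c\in k^*$, and simply take $A=\hat{\tau}$. The preliminary identity then gives $c\,\hat{\tau}^{-1}\diag[b_1,\dots,b_n]\hat{\tau}=\diag[c\,b_{\tau(1)},\dots,c\,b_{\tau(n)}]=\diag[a_1,\dots,a_n]$, which is exactly $\diag[a_1,\dots,a_n]\sim \hat{\tau}^{-1}\diag[b_1,\dots,b_n]\hat{\tau}$. So this direction is immediate once the permutation matrix $\hat{\tau}$ is chosen as the conjugating $A$.

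For the necessity direction I would rewrite the hypothesis as $\diag[a_1,\dots,a_n]=A^{-1}\diag[c\,b_1,\dots,c\,b_n]A$, absorbing the scalar into the diagonal. This exhibits $\diag[a_1,\dots,a_n]$ as conjugate to $\diag[c\,b_1,\dots,c\,b_n]$, so the two matrices share the same multiset of eigenvalues. The eigenvalues on the left are $a_1,\dots,a_n$, which are distinct by hypothesis, while those on the right are $c\,b_1,\dots,c\,b_n$; matching the two multisets produces a permutation $\tau\in\SSS{n}$ with $a_i=c\,b_{\tau(i)}$ for every $i$, which is precisely $[a_1,\dots,a_n]\sim[b_{\tau(1)},\dots,b_{\tau(n)}]$.

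The arguments are short, and the only point requiring care is the bookkeeping of the scalar $c$ under $\sim$, namely keeping track of it on both sides of the conjugation; I do not expect a genuine obstacle. The distinctness of the $a_i$ plays a purely auxiliary role: it guarantees that $\diag[a_1,\dots,a_n]$ has $n$ distinct eigenvalues, so that the equality of eigenvalue multisets yields a well-defined permutation rather than an ambiguous pairing.
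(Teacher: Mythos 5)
Your proposal is correct and follows essentially the same route as the paper: the forward direction conjugates by the permutation matrix $\hat{\tau}$ using the identity recorded before the lemma, and the converse compares the eigenvalue multisets of $\diag[a_1,\dots,a_n]$ and $\lambda\,\diag[b_1,\dots,b_n]$, with distinctness of the $a_i$ pinning down the permutation. No gaps.
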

\begin{proof}
Assume the existence of $\tau$. Then $\diag[a_1,...,a_n]\sim \hat{\tau}^{-1}\diag[b_1,...,b_n]\hat{\tau}$. \\Conversely,
assume $\diag[a_1,...,a_n]=A^{-1}BA$, where $B=\lambda\diag[b_1,...,b_n]$. The $n$-point set $\{\lambda b_1,...,\lambda b_n\}$ is nothing but the set
of eigenvalues of $A^{-1}BA$, so that it coincides with $\{a_1,...,a_n\}$, hence $a_i=\lambda b_{\tau(i)}$ for some $\tau\in \SSS{n}$.
\end{proof}

For integers $n\geq 3$ and $d\geq 2$ $\Form_{n,d}$ stands for the set of all forms of degree $d$ in the $k$-algebra $k[x_1,...,x_n]$. The usual group
homomorphism of $GL_n(k)$ into the automorphism group of $k[x_1,...,x_n]$ will be denoted by $\sigma_.$, i.e.,
$\sigma_T(f)(x_1,...,x_n)=f(\sum_{i=1}^n\tau_{1i}x_i,...,\sum_{i=1}^n \tau_{ni}x_{i})$ for $T\in GL_n(k)$ with $T^{-1}=[\tau_{ij}]$ and
$f\in k[x_1,...,x_n]$. Writing $\sigma_{A}(f)$ as $f_A$, we have $f_{AB}=(f_B)_A$.  An $f\in \Form_{n,d}\backslash\{0\}$ is nonsingular
if the projective algebraic set $V_p(f)$ in the projective space $\PP{n-1}(k)$ is nonsingular. $\Form_{n,d,nons}$ stands for the set of all nonsingular forms in $\Form_{n,d}$.
Introducing a polynomial $F(\xi,x)=\sum_{i_1+...+i_n=d}\xi_{i_1\dots i_n}x_1^{i_1}\cdots x_n^{i_n}$, any element
$f=\sum_{i_1+\cdots+i_n=d}a_{i_1\dots i_n}x_1^{i_1}\cdots x_n^{i_n}\in \Form_{n,d}$ can be written as $F(a,x)$, where $a=[a_{i_1\dots i_n}]$.
As is known, there exists a polynomial $disc_{n,d}[\xi]$, called the discriminant, such that an $f\in \Form_{n,d}\backslash\{0\}$ is nonsingular
if and only if $disc_{n,d}(a)\not=0$ \cite [Definition 5.20]{muk}. We write also $disc_{n,d}(f)$ for $disc_{n,d}(a)$.
Consequently, any linear subspace $L$ of $\Form_{n,d}$ such that $L\cap \Form_{n,d,nons}\not=\emptyset$ admits a basis consisting of elements in
$\Form_{n,d,nons}$. In fact  let $f_i$ ($i\in [1,\ell]$) form a basis of $L$, and define a polynomial $d_L(\lambda_1,...,\lambda_\ell)$
in indeterminates $\lambda_i$ by $d_L(\lambda_1,...,\lambda_\ell)=disc_{n,d}(\sum_{i=1}^\ell \lambda_if_i)$, which is a non-zero polynomial,
for $\sum_{i=1}^\ell \alpha_if_i\in \Form_{n,d,nons}$ for some $[\alpha_1,...,\alpha_\ell]\in k^\ell$.
So we can find $\ell$ linearly independent elements $\alpha_i=[\alpha_{i1},...,\alpha_{i\ell}]\in k^\ell$
outside of the affine algebraic set $V_a(d_L)$, and obtain a basis $g_i=\sum_{j=1}^\ell \alpha_{ij}f_j$ of $L$.

A character $\chi$ of $H$ is a group homomorphism of $H$ into $k^*$, and $\breve{H}$ stands for the set of characters of $H$.

A subset $S\not=\{1\}$ of a group $H$ is said to be doubly finitely generated if there is a finite subset $G\subset S$ such that
1) $S\subset \langle G\rangle$ and 2) $\ord(g)<\infty$ ($g\in G$). In this case $\langle G\rangle=\langle S\rangle$. Obviously a finite subgroup $S$ of $H$
is doubly finitely generated.

Let $S$ be a subset of $GL_n(k)$. An $f\in \Form_{n,d}$ is $S$-invariant if $f_{A}\sim f$, i.e., $ f_{A}=\lambda_A f$ for some $\lambda_A\in k^*$, for
any $A\in S$. Note that $f_A\sim f$ is equivalent to $f_{A^{-1}}\sim f$. Let $\Form_{n,d}^S=\{f\in \Form_{n,d}: f\ {\rm is\ }S\makebox{\rm -}{\rm invariant}\}$, and
$\Form_{n,d,nons}^S=\Form_{n,d}^S\cap \Form_{n,d,nons}$. We assume that $S$ is doubly finitely generated, i.e., there exists a finite subset $G$ of $S$
such that 1) $S\subset \langle G\rangle$ and 2) $\ord(g)<\infty$ for any $g\in G$. Clearly $\Form_{n,d}^{\langle G\rangle}=\Form_{n,d}^S=\Form_{n,d}^G$, hence
$\Form_{n,d,nons}^{\langle G\rangle}=\Form_{n,d,nons}^S=\Form_{n,d,nons}^G$ as well. We shall define an eigenspace and nonsingular eigenspace of
$\Form_{n,d}^G$, hence of $\Form_{n,d}^S$, as well.
For a positive integer $m$, $C_m$ stands for the cyclic group  $\{z\in k^*:\ z^m=1\}$ of order $m$. Let $E(G)=\Pi_{g\in G}C_{\ord(g)}$,
\begin{eqnarray*}
&& \Form_{n,d}(G,\rho)=\{f\in \Form_{n,d}:\ f_{g^{-1}}=\rho(g)f\ \ (g\in G)\},\\
&& \Form_{n,d,nons}(G,\rho)=\Form_{n,d}(G,\rho)\cap \Form_{n,d,nons},\\
&& E(G)^*=\{\rho\in E(G):\ \dim_k\Form_{n,d}(G,\rho)>0\},\\
&& E(G)^*_{nons}=\{\rho\in E(G)^*:\ \Form_{n,d}(G,\rho)\ {\rm contains\ a\ nonsingular\ element}\}.
\end{eqnarray*}
Similarly, for a subgroup $H$ of $GL_n(k)$ and $\chi\in \check{H}$ let
\begin{eqnarray*}
&& \Form{n,d}(H,\chi)\{f\in \Form_{n,d}:\ f_{h^{-1}}=\chi(h)f\ \ (h\in H)\},\\
&& \Form_{n,d,nons}(H,\chi)=\Form_{n,d}(H,\chi)\cap \Form_{n,d,nons},\\
&& {\check{H}}^*=\{chi\in {\check{H}}:\ \dim_k\Form_{n,d}(H,\chi)>0\},\\
&& {\check{H}}^*_{nons}=\{\chi\in {\check{H}}^*:\ \Form_{n,d}(H,\chi)\ {\rm contains\ a\ nonsingular\ element}\}.
\end{eqnarray*}
\begin{lemma} Let $H$ be as above.\\
$(1)$ $\Form_{n,d}^H=\cup_{\chi\in {\check{H}}^*}\Form_{n,d}(H,\chi)$, and $\Form_{n,d,nons}^H=\cup_{\chi\in {\check{H}}^*_{nons}}\Form_{n,d,nons}(H,\chi)$. \\
$(2)$ $\langle \Form_{n,d}^H\rangle=\bigoplus_{\chi\in {\check{H}}^*}\Form_{n,d}(H,\chi)$.
$\langle \Form_{n,d,nons}^H\rangle=\bigoplus_{\chi\in {\check{H}}^*_{nons}}\Form_{n,d}(H,\chi)$.\\
In particular $\Form_{n,d}(H,\chi)\cap \Form_{n,d}(H,\chi')=\{0\}$ for distinct $\chi,\chi'\in {\check{H}}^*$. \\
\end{lemma}
\begin{proof}
(1) Let $f\in \Form_{n,d}^H\backslash\{0\}$. Then $f_{h^{-1}}=\lambda_h f$ for some $\lambda_h\in k^*$ for
each $h\in HG$ such that $\lambda_h^{\ord(h)}=1$ , for $h^{\ord(h)}=E_n$. Moreover the map $\lambda:H\rightarrow k^*$ is a character of $H$.
Thus $\Form_{n,d}^H=\cup_{\chi\in {\check{H}}^*}\Form_{n,d}(H,\chi)$.
(2) Suppose that
 $\chi_j\in {\check{H}}^*$ ($j\in [1,\ell]$) are distinct. Assuming $f=\sum_{j=1}^\ell\alpha_jf_j=0$ for some $\alpha_j\in k$ and
$f_j\in \Form_{n,d}(H,\chi_j)\backslash\{0\}$,
we will show that $\alpha_j=0$. We proceed by induction on $\ell$. If $\ell=1$, clearly $\alpha_1=0$.  Suppose that our assertion holds for $\ell-1\geq 1$ characters.
There exists $g_\ell\in H$ such that $\chi_1(g_\ell)\not=\chi_\ell(g_\ell)$, hence
the conditions $f=f_{g_\ell^{-1}}=0$ implies that $\sum_{j=1}^{\ell-1}\alpha_j(\rho_j(g_\ell)-\rho_\ell(g_\ell))f_j=0$. Repeating this argument we arrive at
$\alpha_1'f_1=0$, where $\alpha_1'=\alpha_1(\chi_1(g_\ell)-\chi_\ell(g_\ell))\cdots (\chi_1(g_2)-\chi_2(g_2))$ with $\chi_1(g_j)\not=\chi_j(g_j)$
($j\in [2,\ell]$). Thus $\alpha_1=0$. By the induction hypothesis,  $\alpha_2=...=\alpha_\ell=0$, as desired. Consequently ${\check{H}^*}$ is a finite set,
and the equality $\langle \Form_{n,d}^H\rangle=\bigoplus_{\chi\in {\check{H}}^*}\Form_{n,d}(H,\chi)$ holds.
As remarked after the introduction of $disc_{n,d}$, $\Form_{n,d}(H,\chi)$ ($\chi\in {\check{H}}^*_{nons}$) has a basis consisting of nonsingular
elements. Since $\Form_{n,d,nons}^H=\cup_{\chi\in {\check{H}}^*_{nons}}\Form_{n,d,nons}(H,\chi)$, the second equality follows from (1).
\end{proof}

\begin{lemma} Let $S\subset GL_n(k)$ be doubly finitely generated by $G$. \\
$(1)$ $\Form_{n,d}^G=\cup_{\rho\in E(G)^*}\Form_{n,d}(G,\rho)$, and $\Form_{n,d,nons}^G=\cup_{\rho\in E(G)^*_{nons}}\Form_{n,d,nons}(G,\rho)$. \\
$(2)$ $\langle \Form_{n,d}^G\rangle=\bigoplus_{\rho\in E(G)^*}\Form_{n,d}(G,\rho)$.
$\langle \Form_{n,d,nons}^G\rangle=\bigoplus_{\rho\in E(G)^*_{nons}}\Form_{n,d}(G,\rho)$.\\
In particular $\Form_{n,d}(G,\rho)\cap \Form_{n,d}(G,\rho')=\{0\}$ for distinct $\rho,\rho'\in E(G)^*$. \\
\end{lemma}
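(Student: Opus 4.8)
The plan is to reduce everything to the observation that, for each $g\in G$, the assignment $f\mapsto f_{g^{-1}}$ is a $k$-linear operator $T_g$ on $\Form_{n,d}$ (it is the restriction of the algebra automorphism $\sigma_{g^{-1}}$, hence $k$-linear), and that $\Form_{n,d}(G,\rho)$ is precisely the simultaneous $\rho(g)$-eigenspace of all the $T_g$, $g\in G$. In particular each $\Form_{n,d}(G,\rho)$ is a linear subspace.

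For part $(1)$ I would first dispose of the inclusion $\supseteq$: if $f\in\Form_{n,d}(G,\rho)$ with $\rho\in E(G)^*$, then $f_{g^{-1}}=\rho(g)f\sim f$, so $f_g\sim f$ for every $g\in G$ and $f$ is $G$-invariant. For $\subseteq$, take a nonzero $G$-invariant $f$ and write $f_{g^{-1}}=\rho(g)f$ with $\rho(g)\in k^*$ for each $g$. The only point requiring an argument is that $\rho(g)$ is a root of unity of order dividing $\ord(g)$: setting $m=\ord(g)$ and iterating the composition rule $f_{AB}=(f_B)_A$ with linearity gives $f_{g^{-j}}=\rho(g)^jf$ by induction on $j$; taking $j=m$ and using $g^{-m}=E_n$, $f_{E_n}=f$ yields $\rho(g)^mf=f$, whence $\rho(g)^m=1$ because $f\ne0$. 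Thus $\rho\in E(G)$, and since $f\in\Form_{n,d}(G,\rho)$ we get $\rho\in E(G)^*$. The nonsingular statement uses the same $\rho$: if moreover $f$ is nonsingular then $\Form_{n,d}(G,\rho)$ contains a nonsingular element, so $\rho\in E(G)^*_{nons}$ and $f\in\Form_{n,d,nons}(G,\rho)$, the reverse inclusion being immediate.

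For part $(2)$ the heart is the directness of the sum, i.e.\ the linear independence of the joint eigenspaces attached to distinct characters. I would argue this by the standard eigenvector induction, which needs only that each $T_g$ is linear and requires no commutativity of the $T_g$. Suppose a relation $\sum_{i=1}^r f_i=0$ with $f_i\in\Form_{n,d}(G,\rho_i)$, the $\rho_i\in E(G)^*$ distinct, is a counterexample of minimal length $r$ with all $f_i\ne0$ and $r\ge2$. Choosing $g$ with $\rho_1(g)\ne\rho_2(g)$ and applying $T_g-\rho_1(g)\,\mathrm{id}$ annihilates the first term and produces $\sum_{i=2}^r(\rho_i(g)-\rho_1(g))f_i=0$, a nontrivial relation (its $i=2$ term is nonzero) supported on at most $r-1$ indices, contradicting minimality of $r$. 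The case $r=2$ already gives the "in particular" claim $\Form_{n,d}(G,\rho)\cap\Form_{n,d}(G,\rho')=\{0\}$ for $\rho\ne\rho'$: any $f$ in the intersection satisfies $\rho(g)f=f_{g^{-1}}=\rho'(g)f$ for a separating $g$, forcing $f=0$. Combining this with part $(1)$ gives $\langle\Form_{n,d}^G\rangle=\sum_{\rho\in E(G)^*}\Form_{n,d}(G,\rho)=\bigoplus_{\rho\in E(G)^*}\Form_{n,d}(G,\rho)$.

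Finally, for the nonsingular span I would invoke the discriminant remark already established in this section. The inclusion $\langle\Form_{n,d,nons}^G\rangle\subseteq\bigoplus_{\rho\in E(G)^*_{nons}}\Form_{n,d}(G,\rho)$ is clear from part $(1)$, and the sum is direct by the argument above. For the reverse inclusion, fix $\rho\in E(G)^*_{nons}$; then $L=\Form_{n,d}(G,\rho)$ satisfies $L\cap\Form_{n,d,nons}\ne\emptyset$, so by the cited remark $L$ admits a basis of nonsingular forms, each lying in $\Form_{n,d,nons}(G,\rho)\subseteq\Form_{n,d,nons}^G$; hence $L\subseteq\langle\Form_{n,d,nons}^G\rangle$, and summing over $\rho\in E(G)^*_{nons}$ finishes the equality. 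I expect the only delicate point to be the bookkeeping in the minimal-counterexample step, namely checking that the shortened relation is genuinely nontrivial and genuinely shorter; the rest is a direct unwinding of the definitions, with the nonsingular-basis remark doing the real work in the last equality.
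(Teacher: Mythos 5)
Your proof is correct and follows essentially the same route as the paper's: part (1) via $\rho(g)^{\ord(g)}=1$ from $g^{\ord(g)}=E_n$, directness via the standard elimination argument on eigenvectors for distinct characters (the paper iterates the elimination term by term where you package it as a minimal counterexample, but it is the same computation), and the nonsingular span via the discriminant remark guaranteeing a nonsingular basis of each $\Form_{n,d}(G,\rho)$ with $\rho\in E(G)^*_{nons}$. No gaps.
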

\begin{proof}
Let $H=\langle S\rangle$, and $G={g_i:i\in [1,\ell]}$. Since $H=\langle G\rangle$, the restriction map sending $\chi\in{\check{H}}$ to $\chi|_G\in E(G)$
is injective, and the restriction map sending  $\chi\in{\check{H}}^*$ to $\chi|_G\in E(G)^*$ is bijective.
 So Lemma 2.3 follows from Lemma 2.2.
\end{proof}

\begin{lemma} Suppose that both $G$ and $G'$  generate $S$ doubly finitely. Then $E(G)^*=\emptyset$ if and only if $E(G')^*=\emptyset$.
$E(G)^*_{nons}=\emptyset$ if and only if $E(G')^*_{nons}=\emptyset$. There exists a bijection $\varphi:E(G)^*\rightarrow E(G')^*$ such that
$\Form_{n,d}(G,\rho)=\Form_{n,d}(G',\varphi(\rho))$, and that $\varphi(E(G)^*_{nons})=E(G')^*_{nons}$.
\end{lemma}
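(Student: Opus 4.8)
The plan is to route both eigenspace labelings through a single object that depends on $S$ alone, namely the characters of the group $H:=\langle S\rangle$. The first observation is that since $G\subseteq S\subseteq\langle G\rangle$ (and likewise for $G'$), we have $\langle G\rangle=\langle G'\rangle=\langle S\rangle=:H$. Combining this with the identity $\Form_{n,d}^{\langle G\rangle}=\Form_{n,d}^S=\Form_{n,d}^G$ already recorded in the preliminaries (and the same for $G'$), the invariant space is literally one and the same, $\Form_{n,d}^H$; only its two eigenspace decompositions, indexed by $E(G)^*$ and $E(G')^*$, differ, and the task is to match them term by term.

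First I would attach a character of $H$ to each nonzero invariant form. For $f\in\Form_{n,d}^H\setminus\{0\}$ and $h\in H$ we have $f_{h^{-1}}=\lambda_h f$ for a unique $\lambda_h\in k^*$; using $f_{AB}=(f_B)_A$ one computes $f_{(gh)^{-1}}=(f_{g^{-1}})_{h^{-1}}=\lambda_g\lambda_h f$, so $\chi_f\colon h\mapsto\lambda_h$ is a group homomorphism $H\to k^*$. For any such character $\chi$ set $\Form_{n,d}(\chi)=\{f:\ f_{h^{-1}}=\chi(h)f\ (h\in H)\}$.

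The key step is to identify the $G$-eigenspaces with these intrinsic ones. Given $\rho\in E(G)^*$, pick a nonzero $f\in\Form_{n,d}(G,\rho)$; then $\chi_f$ restricts to $\rho$ on $G$, and since a homomorphism on $H=\langle G\rangle$ is determined by its values on $G$, the character $\chi_f$ is independent of the choice of $f$. Writing it $\chi_\rho$, both inclusions give $\Form_{n,d}(G,\rho)=\Form_{n,d}(\chi_\rho)$; conversely any character $\chi$ with $\Form_{n,d}(\chi)\neq\{0\}$ restricts to some $\rho=\chi|_G\in E(G)^*$ with $\chi_\rho=\chi$. Hence $\rho\mapsto\chi_\rho$ is a bijection from $E(G)^*$ onto the set $X:=\{\chi\in\mathrm{Hom}(H,k^*):\ \Form_{n,d}(\chi)\neq\{0\}\}$, which depends on $S$ alone. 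Running the identical argument for $G'$ yields a second bijection $E(G')^*\to X$, and composing produces $\varphi\colon E(G)^*\to E(G')^*$, concretely $\varphi(\rho)=\chi_\rho|_{G'}$, satisfying $\Form_{n,d}(G,\rho)=\Form_{n,d}(\chi_\rho)=\Form_{n,d}(G',\varphi(\rho))$. Since nonsingularity is a property of this common space, $\varphi$ carries $E(G)^*_{nons}$ onto $E(G')^*_{nons}$; and because a bijection sends $\emptyset$ to $\emptyset$, both ``empty iff empty'' assertions drop out at once.

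The hard part, I expect, is solely the well-definedness of $\chi_\rho$: an abstract $\rho\in E(G)$ need not respect the relations of $H$ and so may fail to extend to any character at all. It is precisely the hypothesis $\Form_{n,d}(G,\rho)\neq\{0\}$ that forces an extension to exist (via $\chi_f$) while the generating property of $G$ makes it unique; once this is settled, the remainder is bookkeeping with the two inclusions and the equality $\langle G\rangle=\langle G'\rangle$.
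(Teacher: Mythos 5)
Your proof is correct, and its core construction coincides with the paper's: in both arguments $\varphi(\rho)$ is the character by which $G'$ acts on a nonzero element of $\Form_{n,d}(G,\rho)$ (the paper computes this eigenvalue by writing each element of $G'$ as a word in $G$ and evaluating $\rho$ multiplicatively, which is exactly your $\chi_\rho|_{G'}$). The only substantive difference is the bookkeeping for bijectivity: the paper builds the reverse map $\varphi'$ and invokes the disjointness of distinct eigenspaces from Lemma 2.2, whereas you biject both $E(G)^*$ and $E(G')^*$ with the set of characters of $\langle S\rangle$ having nonzero eigenspace, relying on uniqueness of the extension of $\rho$ to a character of $\langle G\rangle$; your packaging is marginally cleaner in that it makes explicit the well-definedness issue (independence of the word representation, resp.\ of the chosen $f$) that the paper passes over silently.
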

\begin{proof}
Let $H=\langle G\rangle$, which is equal to $\langle G'\rangle$. It is clear that $E(G)^*=\emptyset$ if and only if ${\check{H}}^*=\emptyset$. If
${\check{H}}^*\not=\emptyset$, then we have bijections $r_G:{\check{H}}^*\rightarrow E(G)^*$ and $r_{G'}:{\check{H}}^*\rightarrow E(G')^*$, and
$\varphi={r_{G'}}^{-1}\circ r_G$ is a desired map.
\end{proof}

Suppose $S'$ is conjugate to $S$  for some $T\in GL_n(k)$, i.e. $S'=T^{-1}ST$, and $S$ is doubly finitely generated by $G$. Let $G'=T^{-1}GT$. Then the map
$\theta_T:E(G)\rightarrow E(G')$ defined by $(\theta_T(\rho))(g')=\rho(Tg'T^{-1})$ is a bijection, and $\theta_T^{-1}=\theta_{T^{-1}}$.

\begin{lemma} Let the notations be as above. Then
$\sigma_{T^{-1}}(\Form_{n,d}(G,\rho))=\Form_{n,d}(G',\theta_T(\rho))$.
\end{lemma}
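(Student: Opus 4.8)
The plan is to verify directly that $\sigma_{T^{-1}}$ carries the defining eigen-equations for the pair $(G,\rho)$ into those for $(G',\theta_T(\rho))$, and then to obtain the reverse inclusion for free by symmetry. The only ingredients needed are the homomorphism property $f_{AB}=(f_B)_A$ recorded earlier, the fact that $\sigma_{T^{-1}}$ is a linear bijection of $\Form_{n,d}$ (its inverse being $\sigma_T$, since $\sigma_{T^{-1}}\sigma_T=\sigma_{E_n}=\mathrm{id}$), and the bookkeeping identity $\theta_{T^{-1}}=\theta_T^{-1}$ already noted just above the statement.

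For the forward inclusion I would take $f\in\Form_{n,d}(G,\rho)$ and an arbitrary $g'\in G'$. Writing $g'=T^{-1}gT$ with $g=Tg'T^{-1}\in G$, I would compute $(f_{T^{-1}})_{(g')^{-1}}$. Using $f_{AB}=(f_B)_A$ together with the algebraic simplification $(g')^{-1}T^{-1}=T^{-1}g^{-1}$, this rewrites as $(f_{g^{-1}})_{T^{-1}}=(\rho(g)f)_{T^{-1}}=\rho(g)\,f_{T^{-1}}$. Since $\rho(g)=\rho(Tg'T^{-1})=\theta_T(\rho)(g')$ by the very definition of $\theta_T$, the element $f_{T^{-1}}=\sigma_{T^{-1}}(f)$ satisfies exactly the defining relation of $\Form_{n,d}(G',\theta_T(\rho))$. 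As $g'$ was arbitrary, $\sigma_{T^{-1}}(f)$ lies in that eigenspace, giving $\sigma_{T^{-1}}(\Form_{n,d}(G,\rho))\subseteq\Form_{n,d}(G',\theta_T(\rho))$.

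For the reverse inclusion I would apply the forward statement already proven, but with the conjugating matrix $T^{-1}$ in place of $T$ and source pair $(G',\theta_T(\rho))$. Here the resulting target group is $(T^{-1})^{-1}G'(T^{-1})=TG'T^{-1}=G$, and the resulting character is $\theta_{T^{-1}}(\theta_T(\rho))=\theta_T^{-1}(\theta_T(\rho))=\rho$, so the forward inclusion yields $\sigma_T(\Form_{n,d}(G',\theta_T(\rho)))\subseteq\Form_{n,d}(G,\rho)$. Applying the bijection $\sigma_{T^{-1}}$ to both sides and using $\sigma_{T^{-1}}\sigma_T=\mathrm{id}$ converts this into $\Form_{n,d}(G',\theta_T(\rho))\subseteq\sigma_{T^{-1}}(\Form_{n,d}(G,\rho))$, and the two inclusions combine to the claimed equality.

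I expect no genuine obstacle here; the argument is a short computation. The one place demanding care is the direction of the conventions: since $G'=T^{-1}GT$, one must consistently use $g=Tg'T^{-1}$ (rather than $T^{-1}g'T$) when reading off the eigenvalue through $\theta_T$, and one must respect the order in $f_{AB}=(f_B)_A$ so that the factors $T^{-1}$ and $g^{-1}$ detach on the correct sides. Keeping track of these two conventions, together with the relation $\theta_{T^{-1}}=\theta_T^{-1}$, is essentially the whole of the work.
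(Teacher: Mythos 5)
Your proof is correct and follows essentially the same route as the paper, which states this lemma without proof but displays the identical chain of equalities $(f_{T^{-1}})_{\eta'(g')^{-1}}=f_{T^{-1}\eta(g)^{-1}}=\rho(g)f_{T^{-1}}=(\theta_T(\rho))(g')f_{T^{-1}}$ for the projective analogue (Lemma 2.7). Your handling of the conventions ($g=Tg'T^{-1}$, $f_{AB}=(f_B)_A$, $\theta_{T^{-1}}=\theta_T^{-1}$) and the symmetry argument for the reverse inclusion are all sound.
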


\begin{definition}
We call $\Form_{n,d}(G,\rho)$ an eigenspace or a nonsingular eigenspace of $S\subset GL_n(k)$ in $\Form_{n,d}$ with respect to the generator $G$ and
$\rho\in E(G)$ according as $\rho\in E(G)^*$ or $\rho\in E(G)^*_{nons}$.
If $G=\{A_i:i\in [1,q]\}$ with $\ord(A_i)=r_i$, then we write
$\Form_{n,d}(A_1,...,A_q;\delta_1^{j_1},...,\delta_q^{j_r})$ for $\Form_{n,d}(G,\rho)$ such that $\rho(A_i)=\delta_i^{j_i}$, where $\delta_i\in k^*$ with
$\ord(\delta_i)=r_i$.
\end{definition}

Next assume that $S$ is a subset of $PGL_n(k)$. Let $\pi_n:GL_n(k)\rightarrow PGL_n(k)=GL_n(k)/k^*E_n$ be the canonical projection, and write $(A)$ for
$\pi_n(A)$ ($A\in GL_n(k)$). An element $f\in \Form_{n,d}$ is $S$-invariant if $f_{A^{-1}}\sim f$ for every $(A)\in S$, and the set of all $S$-invariant forms
in $\Form_{n,d}$ will be denoted by $\Form_{n,d}^S$. Consequently, if $\tilde{S}\subset GL_n(k)$, then $\Form_{n,d}^{\tilde{S}}=\Form_{n,d}^{\pi_n(\tilde{S})}$.
 Let $\Form_{n,d,nons}^S=\Form_{n,d}^S\cap \Form_{n,d,nons}$. Let $\Paut(f)=\{(A)\in PGL_n(k):f_{A^{-1}}\sim f\}$ for an $f\in \Form_{n,d}$.
If $f\in \Form_{n,d,nons}$ ($d\geq 3$), $\Paut(f)$ is a finite group \cite{mat}.
 $\Paut(f)$ will be called the projective automorphism group of the form $f$. We further assume
that $S$ is doubly finitely  generated by $G$, i.e., $G$ is a finite subset of $S$ such that $\langle G\rangle\supset S$ and $\ord(g)<\infty$ ($g\in G$).
Obviously $\Form_{n,d}^S=\Form_{n,d}^{\langle G\rangle}=\Form_{n,d}^G$. Let $\eta$ be a finite section on $G$ of $\pi_n$, namely, $\eta$ is a map of $G$ into $GL_n(k)$
such that $\pi_n\circ \eta=id_G$ with $\ord(\eta(g))=\ord(g)$ for every $g\in G$. Denote by $S(G)$ the set of all order-preserving  sections on $G$ of $\pi_n$.
If $\eta\in S(G)$, then $S(G)=\{\rho\eta:\rho\in E(G)\}$, where an element $\rho\in E(G)=\Pi_{g\in G}C_{\ord(g)}$ is a map of $G$ into $k^*$ such that
$\rho(g)\in C_{\ord(g)}$. Let $\Form_{n,d}(G,\eta,\rho)=\{f\in \Form_{n,d}:f_{\eta(g)^{-1}}=\rho(g)f \}$ for $\eta\in S(G)$ and $\rho\in E(G)$.
It is clear that $\Form_{n,d}^S=\Form_{n,d}^G=\Form_{n,d}^{\eta(G)}$, and that $\Form_{n,d}(G,\eta,\rho)=\Form_{n,d}(\eta(G),\rho\circ \pi_n)$. Moreover,
if $\eta'=\lambda \eta$ ($\lambda\in E(G)$), then $\Form_{n,d}(G,\eta,\rho)=\Form_{n,d}(G,\eta',\lambda^{d}\rho)$. Define
\begin{eqnarray*}
E(G,\eta)^*&=&\{\rho\in E(G):\Form_{n,d}(G,\eta,\rho)\not=\{0\}\},\\
E(G,\eta)_{nons}^*&=&\{\rho\in E(G):\Form_{n,d}(G,\eta,\rho)\cap \Form_{n,d,nons}\not=\emptyset\}.
\end{eqnarray*}
We introduce the subgroup $H=\langle \eta(G),\ k^*E_n\rangle$ of $GL_n(k)$, which  depends neither $\eta\in S(G)$ nor the generator $G$.
Obviously $\Form_{n,d}^S=\Form_{n,d}^H$, and $E(G,\eta)^*\not=\emptyset$ if and only if ${\check{H}}^*\not=\emptyset$. Moreover, since
$f_{\alpha^{-1}E_n}=\alpha^d f$ for any $f\in \Form_{n,d}$, the restriction map $r_{\eta(G)}:{\check{H}}^*\rightarrow E(G,\eta)^*$ is a bijection.
\begin{definition}
We call $\Form_{n,d}(G,\eta,\rho)$ for $\rho\in E(G,\eta)^*$ $(resp.\ \rho\in E(G,\eta)_{nons}^*)$ eigenspace (resp. nonsingular  eigenspace )
of $S$ in $\Form_{n,d}$ with respect to the generator $G$, the section $\eta\in S(G)$ and $\rho\in E(G,\eta)$.
\end{definition}
$\langle \Form_{n,d}^S\rangle$ (resp. $\langle \Form_{n,d,nons}^S\rangle$ ) is a direct sum of eigenspaces (resp. nonsingular eigenspaces) of
$S$ in $\Form_{n,d}$ by Lemma 2.2.

Assume that $S'=(T^{-1})S(T)$ and that $S\subset PGL_n(n)$ is doubly finitely generated by a subset $G$. Then $S'$ is doubly finitely generated by
$(T^{-1})G(T)$, and we have bijections $s_T:S(G)\rightarrow S(G')$ and $\theta_T:E(G)\rightarrow E(G')$ such that
\begin{eqnarray*}
(s_T(\eta))(g')&=&T^{-1}\eta((T)g'(T^{-1}))T\ \ (\eta\in S(G),\ g'\in G'),\\
(\theta_T(\rho))(g')&=&\rho((T)g'(T^{-1}))\ \ \ \ \ (\rho\in E(G),\ g'\in G').
\end{eqnarray*}
It is immediate that $s_{T}^{-1}=s_{T^{-1}}$ and $\theta_T^{-1}=\theta_{T^{-1}}$. If $f\in \Form_{n,d}(G,\eta,\rho)$, $\eta'=s_T(\eta)$, and
$g'=(T)^{-1}g(T)$ ($g\in G$), then
\[
 (f_{T^{-1}})_{\eta'(g')^{-1}}=(f_{T^{-1}})_{T^{-1}\eta(g)^{-1}T}=f_{T^{-1}\eta(g)^{-1}}=\rho(g)f_{T^{-1}}=(\theta_T(\rho))(g')f_{T^{-1}}.
\]
So we have
\begin{lemma} $\sigma_{T^{-1}}(\Form_{n,d}(G,\eta,\rho))=\Form_{n,d}(G',{s_T(\eta)},\theta_T(\rho))$.
\end{lemma}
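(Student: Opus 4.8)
The plan is to deduce the stated equality from the two opposite inclusions, exploiting that $\sigma_{T^{-1}}$ is a bijection of $\Form_{n,d}$ onto itself with inverse $\sigma_{T}$ (indeed $f_{AB}=(f_B)_A$ means $\sigma_{AB}=\sigma_A\sigma_B$, so $\sigma_{T^{-1}}\sigma_T=\sigma_{E_n}=\mathrm{id}$), together with the involutivity relations $s_{T^{-1}}=s_T^{-1}$ and $\theta_{T^{-1}}=\theta_T^{-1}$ already recorded above.

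First I would establish the forward inclusion $\sigma_{T^{-1}}(\Form_{n,d}(G,\eta,\rho))\subset \Form_{n,d}(G',s_T(\eta),\theta_T(\rho))$, which is precisely what the displayed computation preceding the statement accomplishes. Fixing $f\in \Form_{n,d}(G,\eta,\rho)$ and an arbitrary generator $g'\in G'$, I would write $g=(T)g'(T^{-1})\in G$, so that the definition of $s_T$ gives $\eta'(g')=T^{-1}\eta(g)T$ for $\eta'=s_T(\eta)$, hence $\eta'(g')^{-1}=T^{-1}\eta(g)^{-1}T$. Telescoping with $f_{AB}=(f_B)_A$ yields
\[
(f_{T^{-1}})_{\eta'(g')^{-1}}=(f_{T^{-1}})_{T^{-1}\eta(g)^{-1}T}=f_{T^{-1}\eta(g)^{-1}}=\rho(g)\,f_{T^{-1}},
\]
and since $(\theta_T(\rho))(g')=\rho((T)g'(T^{-1}))=\rho(g)$, this says exactly that $\sigma_{T^{-1}}(f)=f_{T^{-1}}$ is a $(\theta_T(\rho))(g')$-eigenvector for every $g'\in G'$, i.e. $\sigma_{T^{-1}}(f)\in \Form_{n,d}(G',s_T(\eta),\theta_T(\rho))$.

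For the reverse inclusion I would apply the forward inclusion with the conjugating element $T^{-1}$ in place of $T$ and the data $(G',s_T(\eta),\theta_T(\rho))$ in place of $(G,\eta,\rho)$; note that the new primed group is $(T)G'(T^{-1})=G$. This gives
\[
\sigma_{T}\bigl(\Form_{n,d}(G',s_T(\eta),\theta_T(\rho))\bigr)\subset \Form_{n,d}\bigl(G,s_{T^{-1}}(s_T(\eta)),\theta_{T^{-1}}(\theta_T(\rho))\bigr)=\Form_{n,d}(G,\eta,\rho),
\]
the final equality using $s_{T^{-1}}\circ s_T=\mathrm{id}$ and $\theta_{T^{-1}}\circ\theta_T=\mathrm{id}$. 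Applying the bijection $\sigma_{T^{-1}}$ to both sides and using $\sigma_{T^{-1}}\circ\sigma_T=\mathrm{id}$ turns this into $\Form_{n,d}(G',s_T(\eta),\theta_T(\rho))\subset \sigma_{T^{-1}}(\Form_{n,d}(G,\eta,\rho))$, completing the argument.

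The lemma carries no analytic difficulty; the only point demanding care is the bookkeeping of the index identity $\eta'(g')^{-1}=T^{-1}\eta(g)^{-1}T$ and the correct reading of $\sigma$ as a genuine (not anti-) homomorphism, so that the three subscript reductions in the first display compose in the right order. Keeping these conventions consistent — in particular matching the definition $(s_T(\eta))(g')=T^{-1}\eta((T)g'(T^{-1}))T$ against the substitution $g=(T)g'(T^{-1})$ — is where an order- or inversion-error would most plausibly creep in, so I would cross-check the conjugation direction against the earlier identity $\hat{\tau}^{-1}\diag[a_1,\dots,a_n]\hat{\tau}=\diag[a_{\tau(1)},\dots,a_{\tau(n)}]$ before committing to it.
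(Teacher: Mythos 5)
Your proposal is correct and follows the paper's route: the forward inclusion is exactly the displayed eigenvector computation the paper gives just before the lemma, and the reverse inclusion via $s_{T^{-1}}=s_T^{-1}$, $\theta_{T^{-1}}=\theta_T^{-1}$ and the bijectivity of $\sigma_{T^{-1}}$ is precisely the step the paper leaves implicit. No gaps.
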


For a representation $\psi$ of a group $H$ in $GL_n(k)$ or $PGL_n(k)$, $\Form_{n,d}^\psi$ stands for $\Form_{n,d}^{\psi(H)}$. The next proposition
is concerned with the tangent cone \cite[chapter 2,\ \S 1.5]{sha}.

\begin{theorem} Let $d,\ e$ be positive integers such that $d>e$, and $A=[a_{ij}]\in GL_n(k)$ such that $A^{-1}=[\alpha_{ij}]$ with $\alpha_{\ell n}=1$
 for some $\ell\in [1,n]$. For an $f\in \Form_{n,d}$ of the form $\sum_{r=0}^{d-e}f_{d-r}(x_1,...,x_{n-1})x_n^{r}$ $(f_j\in \Form_{n-1,j})$ with
$f_{e}\not=0$ define $g$ to be $f_{A^{-1}}$. Then $P=(0,...,0,1)\in V_p(f)$ and $Q=(A)^{-1}P=(\alpha_{1n},...,\alpha_{nn})\in V_p(g)$, and the tangent cone
$V_a(f')$
to $V_p(f)$ at $P$ and the tangent cone $V_a(g')$ to $V_p(g)$ at $Q$ are isomorphic in the sense that there exists an $A'\in GL_{n-1}(k)$ such that
$g'=f'_{A'}$.
\end{theorem}
\begin{proof}
Since $f(y_1,...,y_{n-1},1)=f_e(y_1,...,y_{n-1})+\cdots +f_d(y_1,...,y_{n-1})$, we see $V_a(f')=V_a(f_e)$. Let $X_i=\sum_{j=1}^n a_{ij}x_j$.
If $x_\ell=1=\alpha_{\ell n}$ and $x_{j}=y_j+\alpha_{jn}$ ($j\not=\ell$), then for any $i\in [1,n]$
\[
 X_i=\sum_{j\not=\ell}a_{ij}(y_j+\alpha_{jn})+a_{i \ell}\alpha_{\ell n}=\delta_{i,n}+\sum_{j\not=\ell}a_{ij}y_j=\delta_{i,n} +Y_i
\]
so that $g(y_1+\alpha_{1n},...,y_{\ell-1}+\alpha_{\ell-1 n},1,y_{\ell+1}+\alpha_{\ell+1 n},...,y_n+\alpha_{nn})$ can be written as
\begin{eqnarray*}
&& f_e(Y_1,...,Y_{n-1})(1+Y_n)^{d-e}+\cdots + f_d(Y_1,...,Y_{n-1})=f_e(Y_1,...,Y_{n-1})+\sum_{j=e+1}^d g_j(\check{y}),
\end{eqnarray*}
where $g_j\in \Form_{n-1,j}$ and $\check{y}=[y_1,...,y_{\ell-1},y_{\ell+1},...,y_n]$.  Therefore, writing the polynomial $f_e(Y_1,...,Y_{n-1})$ in $\check{y}$
as $f'_e$,  we have $V_a(g')=V_a(f_e')$. Note that linear forms $Y_1,...,Y_{n-1}$ in $\check{y}$ are linearly independent, for the coefficients $a_{ij}$ of
$y_j$ ($j\not=\ell$) in $Y_i$ ($i\in [1,n-1]$) and $a_{n\ell}$ determine the $[\ell,n]$ component of the adjugate matrix $\hat{A}=(\det A)A^{-1} $ of $A$.
\end{proof}

Let $d,\ n-1,\ r\geq 1$ be integers and $\delta\in k^*$ with $\ord(\delta)=r$. If $A\in GL_n(k)$ is diagonal and of order $r$, it takes the form
$\diag[\delta^{i_1},\dots,\delta^{i_n}]$ for some integers $i_1,\dots,i_n\in [0,r-1]$, so that $M_{A^{-1}}=\delta^jM$ for any monomial $M\in k[x]$ with
certain $j\in [0,r-1]$. Therefore the set ${\cal M}_d$ of all monomials of degree $d$ is the disjoint union of
${\cal M}_d(j)=\{M\in {\cal M}_d : M_{A^{-1}}=\delta^j M\}$ ($j\in [0,r-1]$). By Lemma 2.2  we have

\begin{lemma} Let $A$ and ${\cal M}_d(j)$ be as above. Then an $f\in Form_{n,d}\backslash\{0\}$ satisfies $f_{A^{-1}}\sim f$,
if and only if $f$ is a non-zero linear combination of monomials in ${\cal M}_d(j)$ for some $j$ such that ${\cal M}_d(j)\not=\emptyset$.
\end{lemma}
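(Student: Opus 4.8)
The plan is to prove the equivalence by diagonalizing the operator $\sigma_{A^{-1}}$ on $\Form_{n,d}$ in the monomial basis; the classes of the partition ${\cal M}_d=\bigcup_j {\cal M}_d(j)$ are exactly the eigenvectors attached to the distinct eigenvalues $\delta^0,\dots,\delta^{r-1}$. The key input is recorded in the paragraph just before the statement: for the diagonal matrix $A=\diag[\delta^{i_1},\dots,\delta^{i_n}]$ of order $r$, each monomial $M=x_1^{e_1}\cdots x_n^{e_n}\in{\cal M}_d$ satisfies $M_{A^{-1}}=\delta^{j(M)}M$ with a uniquely determined exponent $j(M)\in[0,r-1]$ (concretely, $j(M)\equiv\sum_\ell i_\ell e_\ell \pmod r$), and ${\cal M}_d(j)$ collects the monomials with $j(M)=j$. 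Thus the degree-$d$ monomials form an eigenbasis of $\sigma_{A^{-1}}$, and the span $\langle {\cal M}_d(j)\rangle$ is the eigenspace on which $\sigma_{A^{-1}}$ acts by the scalar $\delta^j$; this is just the single-generator instance of the eigenspace decomposition already obtained for $\Form_{n,d}^G$, but I will give the argument directly since it is short.

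For the implication $(\Leftarrow)$ I would simply observe that if $f\not=0$ is a linear combination of monomials drawn from one nonempty class ${\cal M}_d(j)$, then applying $\sigma_{A^{-1}}$ term by term yields $f_{A^{-1}}=\delta^j f$, so that $f_{A^{-1}}\sim f$.

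For the converse $(\Rightarrow)$ I would start from $f\not=0$ with $f_{A^{-1}}=\lambda f$ for some $\lambda\in k^*$, and expand $f=\sum_{M\in{\cal M}_d}c_M M$ with the $c_M$ not all zero. Applying $\sigma_{A^{-1}}$ and using the eigenvalue relation gives
\[
\sum_{M}c_M\,\delta^{j(M)}M=f_{A^{-1}}=\lambda f=\sum_{M}\lambda\,c_M M .
\]
Because the monomials are linearly independent, comparing coefficients forces $\delta^{j(M)}=\lambda$ for every $M$ with $c_M\not=0$. The one point requiring the hypothesis on $\delta$ is the passage from the scalars $\delta^{j(M)}$ back to the indices: since $\ord(\delta)=r$ and $j(M)\in[0,r-1]$, the map $j\mapsto\delta^j$ is injective on $[0,r-1]$, so the common value of $\delta^{j(M)}$ determines a single $j$ (with $\delta^j=\lambda$) shared by all monomials that actually occur in $f$. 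Hence $f$ is a nonzero combination of monomials in ${\cal M}_d(j)$, and ${\cal M}_d(j)\not=\emptyset$ because $f\not=0$. I do not expect any genuine obstacle: the only thing to watch is exactly this injectivity step, which is what makes the \emph{index} $j$ well defined rather than merely the eigenvalue $\lambda$.
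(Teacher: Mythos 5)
Your proof is correct and matches the paper's route: the paper disposes of this lemma by appealing to the earlier eigenspace decomposition of $\Form_{n,d}^G$ (Lemma 2.2, cited in the text as ``Lemma 2.1,'' apparently a slip), of which your argument is exactly the single-generator case unwound explicitly --- monomials form an eigenbasis of $\sigma_{A^{-1}}$, coefficient comparison forces $\delta^{j(M)}=\lambda$ on the support of $f$, and injectivity of $j\mapsto\delta^j$ on $[0,r-1]$ pins down a single index $j$. No gaps; your explicit attention to that last injectivity step is the only point the paper leaves implicit in its definition of the index.
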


Let $A$ be as in Lemma 2.9 and $M_{A^{-1}}=\delta^j M$, where $M$ is a monomial in $k[x_1,\dots,x_n]$ and $j\in [0,r-1]$. We call $j$, which is considered to
belong to $\Z/r\Z$, the index of $M$ for $A$. Monomials $x_ix_j^{d-1}$ ($i,j\in [1,n]$) are called singularity-checking monomials of  $d$-forms
in $k[x_1,...,x_n]$.
If $d\geq 3$, there exist $n^2$ such monomials.
Define $\ell_{ij}\in \Z/r\Z$ by ${x_ix_j^{d-1}}_{A^{-1}}=\delta^{\ell_{ij}}x_ix_j^{d-1}$ ($i,j\in [1,n]$), and let
$I_{x_j}=I_{x_j}(A)=\{\ell_{ij}:i\in [1,n]\}$, $I_{x_1,...,x_n}=I_{x_1,...,x_n}(A)=\cap_{i=1}^n I_{x_i}$.
Obviously any $(d-1)$-form not containing ${x_j}^{d-1}$ vanishes at $e'_j$, where $e'_j$
stands for the $j$-th row of the unit matrix $E_n\in GL_n(k)$. Let $d\geq 2$, and $i,j\in[1,n]$. A monomial $M\in k[x_1,...,x_n]$ satisfies
both $M_{x_i}\sim x_j^{d-1}$ and $M_{x_i}\not=0$ if and only if $M=x_ix_j^{d-1}$. Therefore the following lemma holds.
\begin{lemma}
Let $d\geq 2$, and assume that  $f\in \Form_{n,d}\backslash\{0\}$ contains none of $n$ forms $x_ix_j^{d-1}$ $(i\in [1,n])$ for some $j\in [1,n]$,
then $f_{x_i}(e'_j)=0$ for any $i$, namely the projective algebraic set $V_p(f)$ is singular at $(e'_j)$.
In particular if $I_{x_1,...,x_n}(A)=\emptyset$, then any $f\in \Form_{n,d}^{\{A\}}\backslash\{0\}$ is singular.
\end{lemma}
\begin{proof}
In order to show the latter part, let $f_{A^{-1}}=\delta^\ell f$ ($\ell\in \Z/r\Z$). Since $\bigcup_{j=1}^n I_{x_i}^c=\Z/r\Z$, $I_{x_j}^c\ni\ell$ for some
$j$, hence $V_p(f)$ is singular at $(e_j')$.
\end{proof}

\begin{corollary}
Let $d\geq 3$ and $f\in \Form_{n,d}\backslash\{0\}$. If $f$ contains at most $n-1$ singularity-checking monomials of degree $d$, then
the projective algebraic set $V_p(f)$ is singular at $(e'_\ell)\in \PP{n-1}(k)$ for some $\ell\in [1,n]$.
\end{corollary}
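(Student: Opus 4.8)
The plan is to deduce the Corollary from Lemma 2.10 by a partition-and-counting argument. First I would organize the $n^2$ singularity-checking monomials $x_i x_j^{d-1}$ into $n$ families indexed by $j \in [1,n]$, the $j$-th family being $\{x_i x_j^{d-1} : i \in [1,n]\}$ --- precisely the set of $n$ forms that Lemma 2.10 attaches to the point $(e'_j)$. The key preliminary observation is that for $d \geq 3$ these families are pairwise disjoint, so they partition the full collection of $n^2$ singularity-checking monomials.

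To justify disjointness, I would note that in a monomial $x_i x_j^{d-1}$ the exponent $d-1 \geq 2$, so the index $j$ is recovered as the unique variable appearing to a power exceeding $1$ (when $i = j$ the monomial is the pure power $x_j^d$, still unambiguously attached to $j$). Thus no monomial lies in two families. This is exactly where the hypothesis $d \geq 3$ enters: for $d = 2$ one has $x_i x_j = x_j x_i$, the families overlap, and the statement would fail.

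With the partition in hand the conclusion is a pigeonhole step. Since $f$ contains at most $n-1$ singularity-checking monomials while there are $n$ disjoint families, at least one family --- say the $\ell$-th --- must contribute nothing to $f$; otherwise $f$ would contain at least $n$ distinct such monomials, contradicting the bound. Hence $f$ contains none of the $n$ forms $x_i x_\ell^{d-1}$ ($i \in [1,n]$), and Lemma 2.10 (with $j = \ell$) yields $f_{x_i}(e'_\ell) = 0$ for every $i$, i.e. $V_p(f)$ is singular at $(e'_\ell)$. I expect the only point requiring any care to be the disjointness of the families; once that is recorded, the remainder is a one-line count followed by a direct invocation of the preceding lemma.
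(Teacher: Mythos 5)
Your proposal is correct and is essentially the paper's own argument: the paper arranges the $n^2$ monomials $x_ix_j^{d-1}$ as the entries of an $n\times n$ matrix with distinct components, observes that the at most $n-1$ monomials occurring in $f$ cannot meet every column, and invokes Lemma 2.10 on an untouched column --- exactly your partition into $n$ disjoint families followed by pigeonhole. Your explicit justification of disjointness via $d-1\geq 2$ is the same fact the paper records as ``the matrix components of $X$ are distinct.''
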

\begin{proof}
Let $X=[x_{ij}]\in M_{n,n}(k[x_1,...,x_n])$ such that $x_{ij}=x_ix_j^{d-1}$ ($i,j\in [1,n]$). Then the matrix components of $X$ are distinct. By the assumption
$f$ contains no elements of a certain column of $X$, the $\ell$-th column say. Thus $V_p(f)$ is singular at $(e'_\ell)$.
\end{proof}

Let $p$ be a prime, $a$ a positive integer, $q=p^a$, $\varepsilon\in k^*$ with $\ord(\varepsilon)=q$. A subgroup $G$ of $PGL_4(k)$ isomorphic to
$\ZZZ{q}$, i.e. the additive group of the ring $\Z/q\Z$. $G$ has a generator $(A)$, where $A\in GL_4(k)$ is of order $q$. We may assume
$A=\diag[\varepsilon^i,\varepsilon^j,\varepsilon^\ell,\varepsilon^m]$ with $\gcd(i,j,\ell,m,q)=1$ ($i,j,\ell,m\in [0,q-1]$). Since
$(A)=(\diag[1,\varepsilon^{(j-i)},\varepsilon^{(\ell-i)},\varepsilon^{(m-i)}])$ in $PGL_4(k)$, we may assume
$A=\diag[1,\varepsilon^{i_1},\varepsilon^{i_2},\varepsilon^{i_3}]$ with $0\leq i_1\leq i_2\leq i_3<q$ and $\gcd(i_1,i_2,i_3,q)=1$, i.e.,
$\gcd(i_m,p)=1$ for some $m\in [1,3]$. There exists uniquely $t\in [1,3]$ such that $i_t>0$ and $i_{t-1}=0$. Note that
we may assume $i_t=1$, for $A$ takes the form $\diag[1,{\varepsilon'}^{j_1},{\varepsilon'}^{j_2},{\varepsilon'}^{j_3}]$ with $j_m=1$
so that $A$ is conjugate to $A'=\diag[1,...,1,\varepsilon',{\varepsilon'}^{i_2},...,{\varepsilon'}^{i_s}]$, where $1\leq i_2\leq ...\leq i_s<q$ and
${\varepsilon'}=\varepsilon^{i_m}$. If $t=3$, then $G$ is conjugate to $\langle(D_0)\rangle$, where
$D_0=\diag[1,1,\varepsilon,1]$. If $t=2$, and $i_2=i_3$, then $G$ is conjugate to $\langle(D_1)\rangle$, where $D_1=\diag[1,1,\varepsilon,\varepsilon]$.
If $t=2$ and $i_2<i_3=\ell$, hence $q\geq 3$ , then $G$ is conjugate to $\langle(D_\ell)\rangle$, where
$D_\ell=\diag[1,1,\varepsilon,\varepsilon^\ell]$ ($\ell\in [2,q-1]$). Suppose $t=1$, and let $i_1=i$, $i_2=j$, $i_3=\ell$ and $I=\{i,j,\ell\}$. If $|I|=1$,
then $i=j=\ell=1$, hence $G$ is conjugate to $\langle (D_{0})\rangle$.
If $|I|=2$, hence $q\geq 3$, then $A=\diag[1,\varepsilon,\varepsilon,\varepsilon^\ell]$  or
$A=\diag[1,\varepsilon,\varepsilon^\ell,\varepsilon^\ell]$ ($\ell\in [2,q-1]$). In the first case $G$ is conjugate to $\langle(D_\ell)\rangle$ for
some $\ell\in [2,q-1]$, because $(A)$ is conjugate to $(\diag[1,1,\varepsilon^{-1},\varepsilon^{-\ell'}])$, where $\ell'=q-\ell+1\in [2,q-1]$.
In the second case, if $\gcd(\ell,p)=1$, $G$ is conjugate to $\langle(D_\ell)\rangle$ for
some $\ell\in [2,q-1]$ (replace $\varepsilon$ by $\varepsilon^{-\ell}$), while if  $\gcd(\ell,p)=p$, hence $a\geq 2$, $G$ is conjugate to
$\langle(B_{j})\rangle$, where $B_{j}=\diag[1,\varepsilon,\varepsilon^{j},\varepsilon^{j}]$ for $j=\ell\in [2,q-1]$ which is divisible by $p$. However,
$\langle(B_{j})\rangle$ is conjugate to $\langle (\diag[1,1,\varepsilon',{\varepsilon'}^{\ell}])\rangle$, where $\varepsilon'=\varepsilon^{1-j}$
and ${\varepsilon'}^{\ell}=\varepsilon^{-j}$ ($\ell\in [2,q-1]$).
Finally suppose $|I|=3$, hence $q\geq 4$. Then $G$ is conjugate to $\langle(D_{j,\ell})\rangle$, where
$D_{j,\ell}=\diag[1,\varepsilon,\varepsilon^j,\varepsilon^\ell]$ with $1<j<\ell<q$. Thus we have shown

\begin{lemma} Let $a$ be a positive integer, $p$ a prime, $q=p^a\geq 4$, and $G$  a subgroup of $PGL_4(k)$ isomorphic to $\ZZZ{q}$, and let
 $D_\ell$ $(0\leq\ell<q)$, $D_{j,\ell}$ $(1<j<\ell<q)$
be as above. Then $G$ is conjugate to one of the cyclic groups $\langle(D_\ell)\rangle$ $(\ell\in [0,q-1])$
and $\langle(D_{j,\ell})\rangle$ $(1<j<\ell<q)$.
\end{lemma}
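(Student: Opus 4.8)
The plan is to reduce an arbitrary generator to a sorted diagonal normal form and then run a finite case analysis on the pattern of its exponents. First I would choose a generator $(A)$ of $G$ and lift it to an $A\in GL_4(k)$ of order $q$; since $A$ has finite order over an algebraically closed field of characteristic zero it is diagonalizable, so after conjugation $A=\diag[\varepsilon^{a_1},\dots,\varepsilon^{a_4}]$ with $\ord(\varepsilon)=q$. Projectivizing (multiplying by the scalar $\varepsilon^{-a_1}$) and then conjugating by a permutation matrix $\hat\tau$ as in the paragraph introducing $\hat\tau$, so that $\hat\tau^{-1}\diag[a_1,\dots,a_4]\hat\tau=\diag[a_{\tau(1)},\dots,a_{\tau(4)}]$, I may assume $A=\diag[1,\varepsilon^{i_1},\varepsilon^{i_2},\varepsilon^{i_3}]$ with $0\le i_1\le i_2\le i_3<q$. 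The requirement $\ord((A))=q$ translates into $\gcd(i_1,i_2,i_3,q)=1$; because $q=p^a$ is a prime power this says precisely that at least one $i_m$ is coprime to $p$, hence a unit of $\Z/q\Z$.

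Next I would exploit the one remaining normalization: replacing the generator $(A)$ by a coprime power $(A)^s$ (equivalently, choosing a different primitive root in place of $\varepsilon$) multiplies every exponent by a fixed unit of $\Z/q\Z$ and preserves $\langle(A)\rangle$. Together with the scalar shift this is the action of the affine group $x\mapsto ux+v$, $u\in(\Z/q\Z)^*$, on the multiset of exponents, and the conjugacy class of $G$ depends only on this orbit; the strategy is to normalize a chosen unit exponent to $1$ and read off the normal form. I would organize the cases by $t$, the least index with $i_t>0$, and, when $t=1$, by the number $|I|$ of distinct values among $i_1,i_2,i_3$: the cases $t=3$ and ($t=1$, $|I|=1$) give the triple-repeated pattern and land on $D_0$; $t=2$ gives $D_1$ when $i_2=i_3$ and $D_\ell$ when $i_2<i_3$; $|I|=3$ gives four distinct exponents and lands on $D_{j,\ell}$; and $|I|=2$ gives one of the two patterns $\diag[1,\varepsilon,\varepsilon,\varepsilon^\ell]$ or $\diag[1,\varepsilon,\varepsilon^\ell,\varepsilon^\ell]$.

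In each case the crucial point is that the prime-power hypothesis guarantees a unit among the relevant exponents, so the affine normalization can place a $1$ where the definition of the target form requires it. Concretely, in the $t=2$, $i_2<i_3$ case $\gcd(i_2,i_3,q)=1$ forces at least one of $i_2,i_3$ to be a unit, which is the exponent I rescale to $1$ (swapping the last two coordinates first if it is $i_3$); the same device handles the first $|I|=2$ pattern. The one pattern that resists reduction to a $D$-form along this route is $\diag[1,\varepsilon,\varepsilon^\ell,\varepsilon^\ell]$ with $\ell$ divisible by $p$: here the repeated exponent is a non-unit, which can only occur when $a\ge 2$, and the normalization instead terminates at the separate family $B_j=\diag[1,\varepsilon,\varepsilon^j,\varepsilon^j]$ with $p\mid j$. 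I expect this bookkeeping — tracking which exponent is coprime to $p$ so that the rescaling lands inside a named family, and isolating the $B_j$ case — to be the only real obstacle; everything else is the routine affine normalization of a size-four multiset in $\Z/q\Z$.
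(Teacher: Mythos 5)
Your proposal is correct and follows essentially the same route as the paper: diagonalize a lifted generator of order $q$, normalize the first exponent to $0$ and sort, use the prime-power hypothesis to rescale a unit exponent to $1$, and split cases on the position $t$ of the first nonzero exponent and the number of distinct exponents, isolating the $B_j$ family exactly when the repeated exponent in the pattern $\diag[1,\varepsilon,\varepsilon^\ell,\varepsilon^\ell]$ is divisible by $p$. Your packaging of the normalizations as the affine action $x\mapsto ux+v$ of $(\Z/q\Z)^*\ltimes\Z/q\Z$ on the exponent multiset is a slightly cleaner way of saying what the paper does step by step, but the argument is the same.
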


Assume $q=5$. Since $D_2^2=\diag[1,1,\varepsilon^2,\varepsilon]$, $\langle(D_2)\rangle$ and $\langle(D_3)\rangle$ are conjugate in $PGL_4(k)$, namely
$\langle(D_2)\rangle\cong\langle(D_3)\rangle$. The equalities
\[
 D_{3,4}^2=\diag[1,\varepsilon^2,\varepsilon,\varepsilon^3],\ \ {\rm and}\ \ D_{2,4}=\varepsilon^4\diag[\varepsilon,\varepsilon^2,\varepsilon^3,1]
\]
imply $\langle(D_{3,4})\rangle\cong \langle(D_{2,3})\rangle$ and $\langle(D_{2,4})\rangle\cong\langle(D_{2,3})\rangle$, respectively. Therefore we have
\begin{lemma}
A subgroup of $PGL_4(k)$ isomorphic to $\ZZZ{5}$ is conjugate to one of the five cyclic subgroups $\langle(D_{0})\rangle$, $\langle(D_1)\rangle$,
$\langle(D_2)\rangle$, $\langle(D_4)\rangle$ and $\langle(D_{2,3})\rangle$.
\end{lemma}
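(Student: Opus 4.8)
The plan is to read off the classification already obtained in Lemma 2.12 for the prime $p=5$ (so $a=1$, $q=5\ge 4$) and then to discard the repetitions. First I would observe that for $a=1$ the parameter range $p[1,p^{a-1}-1]=5\cdot[1,0]$ is empty, so no group of type $\langle(B_j)\rangle$ arises; hence, with $D_\ell$ $(0\le\ell\le 4)$ and $D_{j,\ell}$ $(1<j<\ell<5)$ as in Lemma 2.12, every subgroup of $PGL_4(k)$ isomorphic to $\ZZZ{5}$ is conjugate to one of the eight cyclic groups $\langle(D_\ell)\rangle$ $(\ell\in[0,4])$ together with $\langle(D_{2,3})\rangle$, $\langle(D_{2,4})\rangle$, $\langle(D_{3,4})\rangle$. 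The task then reduces to showing that three of these eight are superfluous.

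The reduction mechanism I would isolate is this: to prove that two cyclic groups $\langle(A)\rangle$ and $\langle(B)\rangle$ generated by diagonal matrices of order $5$ are conjugate in $PGL_4(k)$, it suffices to find an integer $m$ with $\gcd(m,5)=1$, a permutation $\tau\in\SSS{4}$, and a scalar $\lambda\in k^*$ with $A^m=\lambda\,\hat{\tau}^{-1}B\hat{\tau}$. Indeed $\langle(A)\rangle=\langle(A^m)\rangle$ since $(A^m)$ is again a generator of the order-$5$ group, while the preliminary identity $\hat{\tau}^{-1}\diag[a_1,\dots,a_4]\hat{\tau}=\diag[a_{\tau(1)},\dots,a_{\tau(4)}]$ shows that reordering diagonal entries is realized by conjugation, and passing to $PGL_4(k)$ absorbs $\lambda$. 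Thus each reduction amounts to matching, up to permutation and scalar, a suitable power of a generator of one group with a generator of the other.

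Applying this gives exactly the three collapses recorded just before the statement. With $m=3$ one has $D_2^3=\diag[1,1,\varepsilon^3,\varepsilon]$, the transposition of the last two coordinates of $D_3=\diag[1,1,\varepsilon,\varepsilon^3]$, so $\langle(D_3)\rangle$ is conjugate to $\langle(D_2)\rangle$. With $m=2$ one has $D_{3,4}^2=\diag[1,\varepsilon^2,\varepsilon,\varepsilon^3]$, a permutation of $D_{2,3}=\diag[1,\varepsilon,\varepsilon^2,\varepsilon^3]$, so $\langle(D_{3,4})\rangle$ is conjugate to $\langle(D_{2,3})\rangle$. Finally $D_{2,4}=\varepsilon^4\diag[\varepsilon,\varepsilon^2,\varepsilon^3,1]$, whence in $PGL_4(k)$ the element $(D_{2,4})$ coincides with $(\diag[\varepsilon,\varepsilon^2,\varepsilon^3,1])$, a cyclic permutation of $D_{2,3}$, so $\langle(D_{2,4})\rangle$ is conjugate to $\langle(D_{2,3})\rangle$. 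Deleting $D_3$, $D_{3,4}$ and $D_{2,4}$ leaves precisely $\langle(D_0)\rangle$, $\langle(D_1)\rangle$, $\langle(D_2)\rangle$, $\langle(D_4)\rangle$, $\langle(D_{2,3})\rangle$.

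I expect no genuine obstacle here: the only content is recognizing the ``replace a generator by a coprime power, then permute and rescale'' device, after which each of the three identities is an immediate computation modulo $\varepsilon^5=1$. The one point demanding slight care is that these substitutions are the complete list of coincidences among the eight candidates; should one wish to confirm that the surviving five classes are genuinely distinct --- which the statement does not ask --- the natural invariant is the multiset of eigenvalue ratios of a generator taken up to the choice of primitive power, and a short inspection separates $D_0,D_1,D_2,D_4,D_{2,3}$.
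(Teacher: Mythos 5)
Your proposal is correct and follows essentially the same route as the paper: start from the list in Lemma 2.12 (with the $B_j$ family empty since $a=1$), then collapse $\langle(D_3)\rangle$ into $\langle(D_2)\rangle$ and $\langle(D_{3,4})\rangle$, $\langle(D_{2,4})\rangle$ into $\langle(D_{2,3})\rangle$ by taking a power of a generator coprime to $5$, permuting diagonal entries via $\hat\tau$, and absorbing scalars in $PGL_4(k)$. The only cosmetic difference is that you use $D_2^3=\diag[1,1,\varepsilon^3,\varepsilon]$ where the paper writes $D_2^2=\diag[1,1,\varepsilon^2,\varepsilon]$ (apparently a typo for $D_3^2$); your version of the computation is correct and serves the same purpose.
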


Finally assume $q=7$. The equalities $D_2^4=\diag[1,1,\varepsilon^4,\varepsilon]$ and $D_3^5=\diag[1,1,\varepsilon^5,\varepsilon]$ imply
$\langle(D_4)\rangle\cong \langle(D_2)\rangle$ and $\langle(D_5)\rangle\cong \langle(D_3)\rangle$, respectively. Since
$D_{2,4}^5=\varepsilon^5\diag[\varepsilon^2,1,\varepsilon^5,\varepsilon]$ and $D_{2,6}=\varepsilon^6\diag[\varepsilon,\varepsilon^2,\varepsilon^3,1]$, we have
$\langle(D_{2,5})\rangle\cong \langle(D_{2,4})\rangle$ and $\langle(D_{2,6})\rangle\cong\langle(D_{2,3})\rangle$. Moreover, the equalities
\begin{eqnarray*}
&&D_{3,4}^2=\diag[1,\varepsilon^2,\varepsilon^6,\varepsilon],\ D_{3,5}^3=\diag[1,\varepsilon^3,\varepsilon^2,\varepsilon],\
  D_{3,6}^5=\diag[1,\varepsilon^5,\varepsilon,\varepsilon^2],\\
&&D_{4,5}^2=\diag[1,\varepsilon^2,\varepsilon,\varepsilon^3],\ D_{4,6}^2=\diag[1,\varepsilon^2,\varepsilon,\varepsilon^5]
\end{eqnarray*}
imply, respectively,
\begin{eqnarray*}
&&\langle(D_{3,4})\rangle\cong\langle(D_{2,3})\rangle,\ \langle(D_{3,5})\rangle\cong\langle(D_{2,3})\rangle,\
  \langle(D_{3,6})\rangle\cong\langle(D_{2,5})\rangle,\\
&&\langle(D_{4,5})\rangle\cong\langle(D_{2,3})\rangle,\ \langle(D_{4,6})\rangle\cong\langle(D_{2,5})\rangle.
\end{eqnarray*}
Since $D_{5,6}^3=\diag[1,\varepsilon^3,\varepsilon,\varepsilon^4]$, we also have
$\langle(D_{5,6})\cong\langle(D_{3,4})\rangle\cong\langle(D_{2,6})\rangle$. Thus we arrive at
\begin{lemma}
A subgroup of $PGL_4(k)$ isomorphic to $\ZZZ{7}$ is conjugate to one of the seven cyclic groups $\langle(D_{0})\rangle$, $\langle(D_1)\rangle$,
$\langle(D_2)\rangle$, $\langle(D_3)\rangle$, $\langle(D_6)\rangle$, $\langle(D_{2,3})\rangle$ and $\langle(D_{2,4})\rangle$.
\end{lemma}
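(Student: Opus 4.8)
The plan is to take Lemma 2.12 as the starting point. For $q=7$ we have $a=1$, so the family $\langle(B_j)\rangle$ $(j\in p[1,p^{a-1}-1])$ is empty, and Lemma 2.12 leaves only the seventeen candidates $\langle(D_\ell)\rangle$ $(\ell\in[0,6])$ and $\langle(D_{j,\ell})\rangle$ $(1<j<\ell<7)$; the whole task is to merge these into the seven listed classes. The organising observation is that, for a diagonal matrix $D=\diag[\varepsilon^{c_1},\varepsilon^{c_2},\varepsilon^{c_3},\varepsilon^{c_4}]$ of order $7$, the conjugacy class of $\langle(D)\rangle$ in $PGL_4(k)$ is preserved by three moves on the exponent vector $(c_1,c_2,c_3,c_4)\in(\Z/7\Z)^4$: permuting the entries (conjugation by a permutation matrix, via $\hat\tau^{-1}\diag[a_1,\dots,a_4]\hat\tau=\diag[a_{\tau(1)},\dots,a_{\tau(4)}]$), adding a common constant to all entries (a scalar multiple, hence trivial in $PGL_4(k)$), and multiplying all entries by a unit $m$ (replacing $(D)$ by the generator $(D^m)$, legitimate since $\gcd(m,7)=1$). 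Thus the conjugacy class depends only on the multiset $\{c_1,c_2,c_3,c_4\}$ modulo the affine group $\{x\mapsto ax+b:\ a\in(\Z/7\Z)^*,\ b\in\Z/7\Z\}$ of order $42$. Only the reduction direction is asserted—every candidate lies in one of seven orbits—so it suffices to exhibit, for each candidate, an affine move to a chosen representative; since multiplicity is an affine invariant, the one- and two-parameter families never merge and can be treated separately.

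Next I would dispose of the one-parameter family $D_\ell=\diag[1,1,\varepsilon,\varepsilon^\ell]$, whose exponent multiset is $\{0,0,1,\ell\}$. The degenerate patterns $\{0,0,0,1\}$ $(\ell=0)$ and $\{0,0,1,1\}$ $(\ell=1)$ have their own multiplicity type, so $\langle(D_0)\rangle$ and $\langle(D_1)\rangle$ are isolated classes. For $\ell\geq2$ an affine equivalence must carry the repeated value to the repeated value, so after normalising the pair to $0$ and one singleton to $1$ it is just a scaling $x\mapsto ax$; this forces $\langle(D_\ell)\rangle\cong\langle(D_{\ell'})\rangle$ exactly when $\ell'\in\{\ell,\ell^{-1}\}$. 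Since $2^{-1}\equiv4$, $3^{-1}\equiv5$ and $6^{-1}\equiv6\pmod 7$, the classes are $\{2,4\}$, $\{3,5\}$, $\{6\}$, with representatives $D_2,D_3,D_6$; concretely $D_2^4=\diag[1,1,\varepsilon^4,\varepsilon]$ and $D_3^5=\diag[1,1,\varepsilon^5,\varepsilon]$ realise $\langle(D_4)\rangle\cong\langle(D_2)\rangle$ and $\langle(D_5)\rangle\cong\langle(D_3)\rangle$. This leaves the five classes $D_0,D_1,D_2,D_3,D_6$.

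The main work is the two-parameter family $D_{j,\ell}=\diag[1,\varepsilon,\varepsilon^j,\varepsilon^\ell]$, whose exponent multiset $\{0,1,j,\ell\}$ is a genuine four-subset of $\Z/7\Z$; here conjugacy classes correspond to affine orbits of four-subsets. I would reduce the ten normalised subsets $\{0,1,j,\ell\}$ $(2\leq j<\ell\leq6)$ one at a time through explicit power-and-scalar computations such as $D_{2,4}^5=\varepsilon^5\diag[\varepsilon^2,1,\varepsilon^5,\varepsilon]$, $D_{2,6}=\varepsilon^6\diag[\varepsilon,\varepsilon^2,\varepsilon^3,1]$, $D_{3,4}^2=\diag[1,\varepsilon^2,\varepsilon^6,\varepsilon]$, and so on, re-sorting the entries in each case; six of the pairs then collapse onto $D_{2,3}$ and the remaining four onto $D_{2,4}$. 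The obstacle here is purely organisational bookkeeping: guaranteeing that all ten pairs are handled and that the two surviving representatives are genuinely distinct. I would settle both points with Burnside's lemma applied to the affine group of order $42$ acting on four-subsets: the identity fixes $\binom{7}{4}=35$ subsets, the seven maps with $a=6$ (cycle type $1+2+2+2$) fix $3$ each, the fourteen maps with $a\in\{2,4\}$ (cycle type $1+3+3$) fix $2$ each, and all remaining maps fix none, giving $(35+7\cdot3+14\cdot2)/42=2$ orbits, represented by $D_{2,3}\leftrightarrow\{0,1,2,3\}$ and $D_{2,4}\leftrightarrow\{0,1,2,4\}$; this also shows $D_{2,3}$ and $D_{2,4}$ are inequivalent. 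Together with the five one-parameter classes, this yields exactly the seven cyclic groups asserted.
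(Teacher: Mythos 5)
Your argument is correct, and its backbone is the same as the paper's: both start from Lemma 2.12 (for $q=7$ the $B_j$ family is empty, leaving the seven $\langle(D_\ell)\rangle$ and ten $\langle(D_{j,\ell})\rangle$) and collapse the candidates by explicit conjugations built from powers of the generator, permutation matrices, and scalar factors; the identities you quote ($D_2^4$, $D_3^5$, $D_{2,4}^5$, $D_{2,6}=\varepsilon^6\diag[\varepsilon,\varepsilon^2,\varepsilon^3,1]$, $D_{3,4}^2$, and so on) are precisely the ones the paper lists. What you add, and the paper does not have, is the organising observation that the reduction only depends on the exponent multiset modulo the affine group $\{x\mapsto ax+b\}$ of order $42$ on $\Z/7\Z$, plus a Burnside count showing the $\binom{7}{4}=35$ four-subsets split into exactly $(35+7\cdot 3+14\cdot 2)/42=2$ affine orbits. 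This buys a structural guarantee that the case analysis is exhaustive, instead of asking the reader to verify that all ten pairs $(j,\ell)$ occur among the displayed equalities, and it also yields the distinctness of $\langle(D_{2,3})\rangle$ and $\langle(D_{2,4})\rangle$, which the lemma does not assert but which is relevant later. The one point to keep in view is that the orbit count alone does not tell you which orbit a given subset lies in, so you still need either the ten explicit reductions or a direct check that $\{0,1,2,3\}$ and $\{0,1,2,4\}$ are affinely inequivalent (their stabilisers have orders $2$ and $3$, giving orbit sizes $21$ and $14$, consistent with your six-plus-four split); since you supply both, the argument is complete.
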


A Kummer surface is projectively isomorphic to a quartic surface $V_p(c_0g_0+2c_1g_1+2c_2g_2+2c_3g_3+4c_4g_4)$ \cite[Theorem 10.3.18]{dol2}, where $c_j\in k$ satisfying
$c_0(c_0^2-c_1^2-c_2^2-c_3^2+c_4^2)+2c_1c_2c_3=0$, and $g_j\in k[x,y,z,t]$ with the form
\begin{eqnarray*}
&&g_0=x^4+y^4+z^4+t^4,\ g_4=xyzt,\\
&&g_1=x^2y^2+z^2t^2,\ g_2=x^2z^2+y^2t^2,\ g_3=x^2t^2+y^2z^2.
\end{eqnarray*}
In order to characterize the linear subspace $\langle g_0,...,g_4\rangle\subset \Form_{4,4}$ as an eigenspace we define a subgroup $G$ of
$PGL_4(k)$ of order $16$ to be $\langle (A_1),...,(A_4)\rangle$, where
\[
 A_1=\diag[-1,1,-1,1],\ A_2=\diag[1,-1,-1,1],\ A_3=\hat{(12)(34)},\ A_4=\hat{(13)(24)}.
\]
Note that $((12)(34))((13)(24))=((13)(24))((12)(34))=(14)(23)$.
Let $\eta:G\rightarrow GL_4(k)$ be the order-preserving section on $G$ of $\pi_4$ such that $\eta((A_i))=A_i$ ($i\in [1,4]$), and
$\rho:\{A_1,...,A_4\}\rightarrow \langle -1\rangle$ such that $\rho(A_i)=1$ for every $i$.
\begin{proposition} Let the notations be as above. Then $\langle g_0,...,g_4\rangle=\Form_{4,4}(G,\eta,\rho)$.
\end{proposition}
\begin{proof}
We classify all monomials in $\Form_{4,4}$ into five classes ${\cal M}(4)$,${\cal M}(3,1)$,${\cal M}(2,2)$,${\cal M}(2,1,1)$, and ${\cal M}(1,1,1,1)$,
where ${\cal M}(1,1,1,1)=\{xyzt\}$, and
\begin{eqnarray*}
&&{\cal M}(4)=\{x^4,y^4,z^4,t^4\},\ {\cal M}(2,2)=\{x^2y^2,x^2z^2,x^2t^2,y^2z^2,y^2t^2,z^2t^2\},\\
&&{\cal M}(3,1)=\{x^3y,x^3z,x^3t,y^3x,y^3z,y^3t,z^3x,z^3y,z^3t,t^3x,t^3y,t^3z\},\\
&&{\cal M}(2,1,1)=\{x^2yz,x^2yt,x^2zt,y^2xz,y^2xt,y^2zt,z^2xy,z^2xt,z^2yt,t^2xy,t^2xz,t^2yz\}.
\end{eqnarray*}
Let $V_1=\langle {\cal M}(4)\rangle $, $V_2=\langle {\cal M}(3,1)\rangle$, $V_3=\langle {\cal M}(2,2)\rangle$, $V_4=\langle {\cal M}(2,1,1)$, and
$V_5={\cal M}(1,1,1,1)$, so that $\Form_{4,4}=V_1\oplus V_2\oplus V_3\oplus V_4\oplus V_5 $, namely $f\in \Form_{4,4}$ takes the form
$f_1+\cdots +f_5$, where $f_i\in V_i$ ($i\in [1,5]$). Since ${V_i}_{{A_j}^{-1}}=V_i$ ($i\in [1,5]$, $j\in [1,4]$), an $f\in \Form_{4,4}$ belongs to
$\Form_{4,4}(G,\eta,\rho)$ if and only if $f_i\in \Form_{4,4}(G,\eta,\rho)$ ($i\in [1,5]$).
It is clear that $g_j$ are linearly independent and they belong to $\Form_{4,4}(G,\eta,\rho)$. Let $f\in \Form_{4,4}(G,\eta,\rho)$.
Assume $f$ contains $x^4$, so that $f=x^4+...$. Since $f_{{A_j}^{-1}}=f$, $f_1=g_0$.
Since no monomial $M$ in ${\cal M}(3,1)\cup {\cal M}(2,1,1)$ satisfies $M_{{A_1}^{-1}}=M_{{A_2}^{-1}}=M$, $f$ contains no
$M\in {\cal M}(3,1)\cup {\cal M}(2,1,1)$. Assume $f=ax^2y^2+bx^2z^2+cx^2t^2+...$ $(a,b,c\in k)$.  Then $f_3=a(x^2y^2+z^2t^2)+b(x^2z^2+y^2t^2)+c(x^2t^2+y^2z^2)$
so that $f_3\in \langle g_1,g_2,g_3\rangle$. Now it follows that $f\in \langle g_0,...,g_4\rangle$.
\end{proof}
\section{ $\ZZZ{5}$-invariant nonsingular quartic forms}
We shall describe $\ZZZ{5}$-invariant nonsingular quartic forms in $k[x,y,z,t]$.
Let $\varepsilon\in k^*$ be of order $5$, the diagonal matrices $D_j$ and $D_{j,\ell}$ be as in the previous section, and
$A_0=D_{0}$, $A_1=D_1$, $A_2=D_2$, $A_3=D_4$ and $A_4=D_{2,3}$. Let $f^{[i,j]}\in \Form_{4,4}(A_i;\varepsilon^j)$, i.e.,
$f_{A^{-1}_i}^{[i,j]}=\varepsilon^j f^{[i,j]}$ ($i,j\in [0,4]$). Computing indices of the singularity-checking quartic monomials for $A_i$,
we obtain the following table.
{\scriptsize
\begin{eqnarray*}
\begin{array}{lcccccccccccccccc}
         &x^4 &x^3y &x^3z &x^3t &y^3x &y^4 &y^3z &y^3t &z^3x &z^3y &z^4 &z^3t &t^3x &t^3y &t^3z &t^4\\
    D_{0}&0   &0    &1    &0    &0    &0   &1    &0    &3    &3    &4   &3    &0    &0    &1    &0   \\
    D_{1}&0   &0    &1    &1    &0    &0   &1    &1    &3    &3    &4   &4    &3    &3    &4    &4   \\
    D_{2}&0   &0    &1    &2    &0    &0   &1    &2    &3    &3    &4   &0    &1    &1    &2    &3   \\
    D_{4}&0   &0    &1    &4    &0    &0   &1    &4    &3    &3    &4   &3    &2    &2    &3    &1   \\
  D_{2,3}&0   &1    &2    &3    &3    &4   &0    &1    &1    &2    &3   &4    &4    &0    &1    &2
  \end{array}
\end{eqnarray*}
}
Consequently $f^{[i,j]}$ is singular unless $[i,j]=[4,1]$.
Since $\Form_{4,4}(A_4;\varepsilon)$ contains a nonsingular element $x^3y+y^3t+t^3z+z^3x$, $\Form_{4,4}(A_4,\varepsilon)$  is the only one nonsingular
eigenspace for ${A_4}$ in $\Form_{4,4}$.
Let $T=[e_1,e_2,e_4,e_3]$, where $e_i$ is the $i$-th column vector of the unit matrix $E_4\in GL_4(k)$. Now $\sigma_{T^{-1}}(\Form_{4,4}(A_4;\varepsilon))=
\Form_{4,4}(A_4';\varepsilon)=\Form_{4,4}(D;1)$, where $A'_4=\diag[1,\varepsilon,\varepsilon^3,\varepsilon^2]$, and
$D=\varepsilon A_4'$, which is equal to\\
$$\langle x^3y,y^3z,z^3t,t^3x,x^2z^2,y^2t^2,xyzt\rangle,$$
 and contains
the four-dimensional subspace\\ $\langle g_0,g_1,g_2,g_3\rangle$, where $g_0=x^3y+y^3z+z^3t+t^3x$, $g_1=x^2z^2$, $g_2=y^2t^2$ and $g_3=xyzt$.
We consider the quartic form $f^{\lambda,\mu,\xi}=g_0+\lambda g_1+ \mu g_2+\xi g_3$, for any $f\in \langle g_1,g_2,g_3\rangle$ is singular.
Note that for any nonsingular quartic form $f\in \Form_{4,4}(D;1)$
there exists a diagonal matrix $T\in GL_4(k)$ such that $f_{T^-1}=f^{\lambda,\mu,\xi}$ for some $[\lambda,\mu,\xi]\in k^3$, that is, $f$ is projectively
equivalent to $f^{\lambda,\mu,\xi}$.

\begin{theorem}  For a nonsingular quartic form $f(x,y,z,t)$  $|\Paut(f)|$ is divisible by $5$ if and only if  $f$ is projectively equivalent to
$f^{\lambda,\mu,\xi}$ for some $[\lambda,\mu,\xi]\in k^3$.
\end{theorem}
\begin{proof}
Let $D=\diag[\varepsilon,\varepsilon^2,\varepsilon^4,\varepsilon^3]$. First assume $5|\ |\Paut(f)|$. Then $\Paut(f)$ contains a cyclic group $C_5$ of order
5 that is conjugate with $\langle (D)\rangle$, hence there exists  a $T\in GL_4(k)$ such that $(TDT^{-1})\in C_5$. Since $f_{(TDT^{-1})^{-1}}\sim f$,
$g=f_{T^{-1}}$ is nonsingular and satisfies $g_{D^{-1}}=f_{T^{-1}(TD^{-1}T^{-1})}\sim f_{T^{-1}}=g$ so that $g\in \Form_{4,4}(D;1)$. Conversely,
assume $f_{T^{-1}}=f^{\lambda\mu,\xi}$ for some $[\lambda,\mu,\xi]\in k^3$ and $T\in GL_4(k)$. Then $(f_{T^{-1}})_{D^{-1}}=f_{T^{-1}}$ so that
$f_{(TDT^{-1})^{-1}}=f_{TD^{-1}T^{-1}}=(f_{D^{-1}T^{-1}})_T=f$. Thus $(TDT^{-1})\in \Paut(f)$.
\end{proof}

\begin{proposition}
Let $G$ be the projective automorphism group of a nonsingular quartic form $f(x,y,z,t)$. In the decomposition $\Pi p^{\nu(p)}$ of $|G|$ into prime factors
it holds that $\nu(5)\leq 1$.
\end{proposition}
\begin{proof}
As is known, the projective automorphism group of a nonsingular $d$-form is a finite group \cite{mat} if $d\geq 3$. Let
$G=\Paut(f)$ ($f\in \Form_{4,4,nons}$), $p=5$ and $c=\nu(p)$. Suppose $c>1$. Then G contains a subgroup $H$ of order $p^2$, which is isomorphic to
$\Z_p\times \Z_p$ by Theorem 5.1, i.e., $H=\langle (A),(B)\rangle$ with $\ord(A)=\ord(B)=5$ and $(A)(B)=(B)(A)$. We may assume
$A=\diag[1,\varepsilon,\varepsilon^3,\varepsilon^2]$ and $f=f^{\lambda,\mu,\xi}$.
We can directly see that any $X\in GL_4(k)$ such that $AX\sim XA$, namely
$(A)(X)=(X)(A)$ in $PGL_4(k)$, is diagonal. Consequently, the conditions $B=\diag[\alpha,\beta,\gamma,1]$ and $f_{B^{-1}}\sim f$ yield
$\alpha^2\beta=\alpha^{-1}\beta^3\gamma=\alpha^{-1}\gamma^3=1$, hence $\alpha=\gamma^3$, $\beta=\gamma^{-6}$ and $\gamma^{20}=1$, i.e.,
$B=\diag[\gamma^3,\gamma^{-6},\gamma,1]$ with $\gamma^{20}=1$. Thus $|H|\leq 20<p^2$, a contradiction.
\end{proof}

We shall find  a condition for the quartic form $f=f^{\lambda,\mu,\xi}\in \Form_{4,4}$ to be singular.  Note that if every partial derivative
$f_{x_i}(x_1,...,x_4)$ vanishes at any $[x,y,z,t]\in k^4\backslash\{0\}$ such that $f(x,y,z,t)$, then $xyzt\not=0$. To verify this, suppose $t=0$. Then
if $xz=0$, then  $x=y=z=0$. If $xz\not=0$, then the equalities $f_{x_1}=f_{x_3}=0$ imply $y^3z=3x^3y$, hence 3$yf_{x_2}=10y^3z$, i.e., $y=0$, so that
$f_{x_2}=x^3=0$, a contradiction. Since
$f_{A^{-1}}^{\lambda,\mu,\xi}=f^{\mu,\lambda,\xi}$ for $A=[e_4,e_1,e_2,e_3]$, it has been shown that if every partial derivative $f_{x_i}$ vanishes at
$[x,y,z,t]\in k^4$ with $xyzt=0$, then $x=y=z=t=0$. Now suppose that  $(x,y,z,t)\in \PP{3}$ is a singular point of $V_p(f)$, hence $xyzt\not=0$.
We may assume $t=1$. We write $f_x$, $f_y$, $f_z$, and $f_t$ for $f_{x_i}(x,y,z,1)$ ($i\in [1,4]$), respectively. Clearly $f_x=f_y=f_z=f_t=0$ if and only if
$g_j=0$ ($j=[1,4]$), where
\[
 g_1=xf_x-f_t,\ g_2=yf_y-f_t,\ g_3=zf_z-f_t,\ g_4=f_t.
\]
 Since $(3g_1+g_2-3g_3)=10(x^3y-z^3)$ vanishes, $g_2=3(y^3z-x)=0$, hence $y^{10}=1$. Now that $y^{10}=1$ and $x=y^3z$, we have
$g_1=g_3=2y^{6}h_1$, $g_4=h_2$ and $g_2=0$, where
\[
 h_1=y^4 z^3-y^7z+\lambda z^4+\mu y^6,\ \ h_2=z^3+3y^3z+2\mu y^2+\xi y^4z^2.
\]
$V_p(f)$ is singular at $(x,y,z,1)\in \PP{3}$ if and only if $y^{10}=1$, $x=y^3z\not=0$, $h_1=h_2=0$.

We discuss first the case $\lambda\mu\xi=0$.
\begin{lemma} \ \\
$(1)$ The quartic form $f^{\lambda,0,0}$ is singular if and only if $\lambda^4=3^{-6}4^4$.\\
$(2)$ The quartic form $f^{0,\mu,0}$ is singular if and only if $\mu^4=3^{-6}4^4$.
\end{lemma}
\begin{proof}
It suffices to prove (1). We write $f$ for $f^{\lambda,0,0}$. Since $h_1=z(\lambda z^3+y^4z^2-y^7)$ and $h_2=z(z^2+3y^3)$, $(x,y,z,1)\in \PP{3}$ is a
singular point of $V_p(f)$ if and only if there exists $z\in k^*$ such that $\lambda z^3+y^4(z^2-y^3)=0$ and $z^2+3y^3=0$, equivalently
$3\lambda z+4y^4=0$ and $z^2+3y^3=0$, for some $y\in k$ satisfying $y^{10}=1$. There exists $z\in k^*$ satisfying this condition if and only if
$\lambda\not=0$ and $(-4\lambda^{-1}y^4/3)^2+3y^3=0$, i.e., $y^5=-27\lambda^2/16=0$, for some $y$ such that $y^{10}=1$. Thus
$V(f)$ is singular if and only if $1=3^64^{-4}\lambda^4$.
\end{proof}

\begin{lemma}
The quartic form $f^{0,0,\xi}$ is singular if and only if $\xi^4=4^4$.
\end{lemma}
\begin{proof}
We write $f$ for $f^{0,0,\xi}$.
Since $h_1=y^4z(z^2-y^3)$ and $h_2=z(z^2+3y^3+\xi y^4z)$, $(x,y,z,1)\in \PP{3}$ is a singular point of $V_p(f)$ if and only if there exists $z\in k^*$
such that $z^2-y^3=0$ and $z^2+\xi y^4z+3y^3=0$,  equivalently $z^2-y^3=0$ and $\xi y^4z+4y^3=0$
for some $y$ satisfying $y^{10}=1$. There exists $z\in k^*$ satisfying this condition if and only if $\xi\not=0$ and
$(-4y^{-1}\xi^{-1})^2-y^3=0$, i.e., $y^5=4^2\xi^{-2}$, for some $y$ satisfying $y^{10}=1$. Thus $V_p(f)$ is singular if and only if $1=4^4\xi^{-4}$.
\end{proof}

\begin{lemma} \ \\
$(1)$ The quartic form $f^{\lambda,0,\xi}$ is singular if and only if $(16-18\lambda\xi)^2-(27\lambda^2-\xi^2+\lambda\xi^3)^2=0$. \\
$(2)$ The quartic form $f^{0,\mu,\xi}$ is singular if and only if $(16-18\mu\xi)^2-(27\mu^2-\xi^2+\mu\xi^3)^2=0$.
\end{lemma}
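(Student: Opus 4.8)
The plan is to specialize the general singularity criterion established just before the lemma: $V_p(f^{\mu,\nu,\lambda})$ has a singular point $(x,y,z,1)$ precisely when $y^{10}=1$, $x=y^3z\neq 0$ (so $z\neq0$), and $h_1=h_2=0$. For part (1) I set $\nu=0$, so that $h_1=z(\mu z^3+y^4z^2-y^7)$ and $h_2=z(z^2+\lambda y^4z+3y^3)$. Since $z\neq 0$, the question reduces to deciding, for some $y$ with $y^{10}=1$, whether the cubic $A=\mu z^3+y^4z^2-y^7$ and the quadratic $B=z^2+\lambda y^4z+3y^3$ (in $z$, with $y$ a parameter) share a root. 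Crucially $A(0)=-y^7\neq0$, so any common root is automatically nonzero, which frees me from tracking $z\neq 0$ separately.

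Next I would eliminate $z$. Dividing the cubic $A$ by the monic quadratic $B$ leaves a linear remainder $Pz-Q$, where $P=\lambda(\mu\lambda-1)y^8-3\mu y^3$ and $Q=(4-3\mu\lambda)y^7$ (concretely, reduce $z^3$ via $B$ and substitute into $A$). Thus $A,B$ have a common root iff $B$ and $Pz-Q$ do, and the elimination condition is $\mathrm{Res}_z(B,Pz-Q)=Q^2+\lambda y^4 PQ+3y^3P^2=0$. The decisive step is to expand this and reduce every exponent modulo $y^{10}=1$. I expect a substantial collapse: after collecting terms, the contributions quadratic in $\mu\lambda$ cancel, leaving exactly $(16-18\mu\lambda)y^4+(27\mu^2-\lambda^2+\mu\lambda^3)y^9$. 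Dividing by $y^4$ yields $(16-18\mu\lambda)+(27\mu^2-\lambda^2+\mu\lambda^3)y^5=0$. Since $y^{10}=1$ forces $y^5=\pm1$, and both signs occur among tenth roots of unity, the existence of a suitable $y$ is equivalent to $(16-18\mu\lambda)=\pm(27\mu^2-\lambda^2+\mu\lambda^3)$, i.e. to $(16-18\mu\lambda)^2-(27\mu^2-\lambda^2+\mu\lambda^3)^2=0$, proving (1).

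For (2) I would invoke the symmetry noted earlier: with $A=[e_4,e_1,e_2,e_3]$ one has $f^{\mu,\nu,\lambda}_{A^{-1}}=f^{\nu,\mu,\lambda}$, so $f^{0,\nu,\lambda}$ is projectively equivalent to $f^{\nu,0,\lambda}$ and hence singular under exactly the condition of (1) with $\mu$ replaced by $\nu$; this is the stated criterion. The main obstacle is the middle computation: confirming that the resultant, once reduced modulo $y^{10}-1$, genuinely collapses to the clean two-term form, since this hinges on several cancellations among the $\mu\lambda$ and $(\mu\lambda)^2$ coefficients. I would also verify the boundary case $P=0$ (equivalently $Q=0$), checking that the resultant still records existence of a nonzero common root; this is routine once one notes $B(0)=3y^3\neq0$, so that the quadratic $B$ always supplies a nonzero root when the remainder vanishes identically.
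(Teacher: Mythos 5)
Your proposal is correct and follows essentially the same route as the paper: both eliminate $z$ from the pair (cubic $h_1/z$, quadratic $h_2/z$) by reducing to a quadratic-plus-linear system, take the resultant, reduce modulo $y^{10}=1$, and use $y^5=\pm 1$ to convert the two-term expression into the stated difference of squares; I checked your $P$, $Q$ and the collapse of $Q^2+\lambda y^4PQ+3y^3P^2$ to $(16-18\mu\lambda)y^4+(27\mu^2-\lambda^2+\mu\lambda^3)y^9$, and they are right. The only cosmetic difference is that the paper first disposes of $\mu\lambda=0$ via Lemmas 3.3--3.4 and builds its linear eliminant $h_4'$ by ad hoc combinations, whereas your single Euclidean division (together with the $P=0$ check and the observation $A(0)=-y^7\neq 0$) handles all cases uniformly.
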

\begin{proof}
It suffices to prove (1), for $f_{A^{-1}}^{\lambda,0,\xi}=f^{0,\lambda,\xi}$, where $A=[e_4,e_1,e_2,e_3]$. If $\lambda\xi=0$,
we are done by Lemma 3.3 and Lemma 3.4. Assume $\lambda\xi\not=0$. We write $f$ for $f^{\lambda,0,\xi}$.
 $V_p(f)$ is singular at $(x,y,z,1)$, if and only if there exists $z\in k^*$ such that $h_1=zh_1'=0$ and $h_2=zh_2'=0$ for some $y$ satisfying $y^{10}=1$.
 Let $h_3'=3\lambda z^2+4y^4z+\xi y^8$ and $h_4'=(4-3\lambda\xi)z+\xi y^4-9\lambda y^9$ so that $3h_1'+y^4h_2'=zh_3'$ and $h_3'-3\lambda h_2'=y^4h_4'$, and
write $h_2'$ and $h_4'$ as $z^2+az+b$ and $\alpha z+\beta$, respectively. Now $V_p(f)$ has a singular point, if and only if there exists $z\in k^*$
such that $h_2'=0$ and $h_4'=0$ for some $y$ satisfying $y^{10}=1$, i.e., $r=(-\beta)^2+a\alpha(-\beta)+b\alpha^2=0$ for some $y$ satisfying $y^{10}=1$.
Since $r=3y^8\{27\lambda^2-\xi^2+\lambda\xi^3+y^5(16-18\lambda\xi)\}$, $f$ is singular if and only if
$(27\lambda^2-\xi^2+\lambda\xi^3)^2-(16-18\lambda\xi)^2=0$.
\end{proof}

\begin{lemma}
The quartic form $f^{\lambda,\mu,0}$ is singular if and only if $4^4(1+3\lambda^2\mu^2)^2-(27\lambda^2+6\lambda\mu+27\mu^2-16\lambda^3\mu^3)^2=0$.
\end{lemma}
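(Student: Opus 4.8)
The plan is to follow the method used for Lemma 3.5: reduce the singularity condition to one quadratic and one linear equation in $z$, impose the common–root (resultant) condition, and simplify modulo $y^{10}=1$. Setting $\lambda=0$ in the discussion preceding Lemma 3.3, a point $(x,y,z,1)$ is a singular point of $V_p(f^{\mu,\nu,0})$ exactly when $y^{10}=1$, $x=y^3z\neq 0$ (hence $y,z\neq 0$), and $h_1=h_2=0$ with $h_1=\mu z^4+y^4z^3-y^7z-\nu y^6$ and $h_2=z^3+3y^3z+2\nu y^2$. Put $h_3=2\mu z^3+3y^4z^2+y^7$; by the relation $2y^6h_1+h_2=y^6zh_3$ (valid under $y^{10}=1$) together with $y,z\neq 0$, I would first replace the system $h_1=h_2=0$ by the equivalent system $h_2=h_3=0$.

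Next I would eliminate $z$. Set $g_2=2\mu h_2-h_3=-3y^4z^2+6\mu y^3z+(4\mu\nu y^2-y^7)$, a quadratic in $z$ with leading coefficient $-3y^4\neq 0$; since $g_2$ is a $k[y,z]$-combination of $h_2,h_3$, the common roots of $h_2,h_3$ are exactly those of the monic cubic $h_2$ and $g_2$. Reducing $h_2$ modulo $g_2$ (legitimate because $y\neq 0$) yields a linear form $L=\alpha+2y\beta z$ with $\alpha=(3\nu-\mu)y^5+4\mu^2\nu$ and $\beta=3\mu^2+\mu\nu+2y^5$; since $L$ is a $k[y,z]$-combination of $h_2,g_2$ while a nonzero power of $y$ times $h_2$ is in turn a combination of $L,g_2$, the common-root sets of $\{g_2,h_2\}$ and of $\{g_2,L\}$ coincide. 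As $g_2$ has nonvanishing leading coefficient, its resultant in $z$ with $L$ vanishes precisely when a genuine common root exists, and this resultant equals, up to a nonzero factor, the quantity $3\alpha^2+12\mu\alpha\beta+4(y^5-4\mu\nu)\beta^2$.

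Now I would reduce this resultant modulo $y^{10}=1$. Only $y^5$ occurs, and $y^5=\pm 1$, so the expression collapses to $C_0+C_1y^5$, where the two (cancellation-heavy) expansions give $C_0=27\mu^2+6\mu\nu+27\nu^2-16\mu^3\nu^3$ and $C_1=16(1+3\mu^2\nu^2)$; in $C_0$ the $\mu^4\nu^2$ and $\mu^5\nu$ terms cancel, and in $C_1$ the $\mu^3\nu$ and $\mu^4$ terms cancel. Every value $\pm 1$ of $y^5$ is attained by some tenth root of unity, and $z=0$ is never a common root since $h_3(0)=y^7\neq 0$; hence $V_p(f^{\mu,\nu,0})$ is singular if and only if $C_0+C_1y^5=0$ for one of $y^5=\pm 1$, that is, if and only if $(C_0+C_1)(C_0-C_1)=C_0^2-C_1^2=0$. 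This is exactly $256(1+3\mu^2\nu^2)^2-(27\mu^2+6\mu\nu+27\nu^2-16\mu^3\nu^3)^2=0$.

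The main obstacle is purely computational: performing the two-step elimination of $z$ cleanly and verifying the two polynomial identities for $C_0$ and $C_1$, where the nontrivial cancellations occur. A secondary point needing care is the degenerate locus $\beta=0$, where $L$ loses its $z$-term: there the resultant reduces to a nonzero multiple of $\alpha^2$, which agrees with $3\alpha^2+12\mu\alpha\beta+4(y^5-4\mu\nu)\beta^2$ evaluated at $\beta=0$, so the criterion $C_0+C_1y^5=0$ persists; and the nonvanishing of any genuine common root is ensured throughout by $h_3(0)\neq 0$.
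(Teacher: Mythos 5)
Your proof is correct and follows essentially the same route as the paper's: both eliminate $z$ by passing to a quadratic-plus-linear system in $z$, take the resultant, and use $y^5=\pm1$ to turn $C_0+C_1y^5=0$ into $C_0^2=C_1^2$ (you start from the pair $h_2,h_3$ as in the proof of Theorem 3.7 rather than from $h_1,h_2$ directly, and your uniform guarantee $h_3(0)=y^7\neq0$ that the common root is nonzero lets you skip the paper's separate treatment of $\mu\nu=0$ via Lemma 3.3). One point worth recording: your $C_0=27\mu^2+6\mu\nu+27\nu^2-16\mu^3\nu^3$ agrees with the lemma's statement and with the specialization $w=0$ of the polynomial $R_1$ in Theorem 3.7, whereas the paper's printed $r_1=27\mu^2+27\nu^2+6\mu\nu$ drops the $-16\mu^3\nu^3$ term, evidently a typo in the paper's proof rather than in the statement.
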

\begin{proof}
If $\lambda\mu=0$, we are done by Lemma 3.3. We write $f$ for $f^{\lambda,\mu,0}$ and assume $\lambda\mu\not=0$.
 $V_p(f)$ is singular at $(x,y,z,1)$ if and only if there exists $z\in k^*$ such that $h_1=h_2=0$ for some $y$ satisfying $y^{10}=1$. Let $\delta=y^5$, and
\[
 h_3=z^2-2\lambda y^9z+\frac{1}{3}y^3(1-4\lambda\mu \delta),\ \ h_4=y\{4+(6\lambda^2+2\lambda\mu)\delta\}z-\lambda+3\mu+4\lambda^2\mu\delta
\]
so that $2y^6h_1+h_2=z(2\lambda y^6h_2+3h_3)$ and $h_2=(z+2\lambda y^9)h_3+\frac{2}{3}y^2h_4$. Thus, $V_p(f)$ has a singular point if and only if
there exists $z\in k^*$ such that $h_3=h_4=0$ for some $y$ satisfying $y^{10}=1$. Write $h_3=z^2+az+b$ and $h_4=\alpha z+\beta$. Then
there exists $z\in k^*$ such that $h_3=h_4=0$ for some $y$ satisfying $y^{10}=1$ if and only if $r=\beta^2-\alpha\beta a+\alpha^2 b$ vanishes for
some $y$ satisfying $y^{10}=1$, for $a=-2\lambda y^9\not=0$. Since $3r=r_0+ r_1\delta$, where  $r_0=27\lambda^2+27\mu^2+6\lambda\mu-16\lambda^3\mu^3$ and
$r_1=16(1+3\lambda^2\mu^2)$, $f$ is singular if and
only if $r_0^2-r_1^2=0$.
\end{proof}

Let $R=R_0^2-R_1^2\in k[u,v,w]$, where
\begin{eqnarray*}
&&R_0=27(u^2+v^2)+6uv -w^2-36(u+v)uvw +(u+v)w^3
  -16u^3v^3+8u^2v^2w^2-uvw^4,\\
&&R_1=16-18(u+v)w+48u^2v^2+20uvw^2,\\
\end{eqnarray*}
\begin{theorem}
Let the polynomial $R$ be as above. The quartic form $f^{\lambda,\mu,\xi}$ is singular if and only if $R(\lambda,\mu,\xi)=0$.
\end{theorem}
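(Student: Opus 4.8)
The plan is to apply the singularity criterion recorded just before Lemma 3.3 and then eliminate $z$ by a resultant, using a fifth-root-of-unity symmetry to pin down the shape of the result. Recall that $V_p(f^{\mu,\nu,\lambda})$ is singular if and only if there exist a tenth root of unity $y$ and a scalar $z\in k^*$ with $h_1=h_2=0$, where $h_1=\mu z^4+y^4z^3-y^7z-\nu y^6$ and $h_2=z^3+\lambda y^4z^2+3y^3z+2\nu y^2$; recall also the auxiliary cubic $h_3=2\mu z^3+3y^4z^2+\lambda y^8z+y^7$, for which $2y^6h_1+h_2=y^6zh_3$. First I would replace the pair $(h_1,h_2)$ by $(h_2,h_3)$. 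Since $h_3(0)=y^7\neq0$, every common root of $h_2$ and $h_3$ is automatically nonzero, and evaluating the identity $2y^6h_1+h_2=y^6zh_3$ at such a root forces $h_1=0$; conversely a common root of $h_1,h_2$ with $z\neq0$ forces $h_3=0$. Hence $V_p(f^{\mu,\nu,\lambda})$ is singular if and only if, for some tenth root of unity $y$, the cubics $h_2$ and $h_3$ share a root. As $h_2$ is monic in $z$, this is precisely the vanishing of $\mathrm{Res}_z(h_2,h_3)=\prod_i h_3(z_i)$, the product being taken over the roots $z_i$ of $h_2$.

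Next I would determine how $\mathrm{Res}_z(h_2,h_3)$ depends on $y$. For a fifth root of unity $\varepsilon$ a direct check gives $h_2(\varepsilon y,\varepsilon^{-1}z)=\varepsilon^2 h_2(y,z)$ and $h_3(\varepsilon y,\varepsilon^{-1}z)=\varepsilon^2 h_3(y,z)$. Thus the roots of $h_2(\varepsilon y,\cdot)$ are the $\varepsilon^{-1}z_i$, and therefore
\[
\mathrm{Res}_z(h_2,h_3)(\varepsilon y)=\prod_i\varepsilon^2 h_3(y,z_i)=\varepsilon^6\,\mathrm{Res}_z(h_2,h_3)(y)=\varepsilon\,\mathrm{Res}_z(h_2,h_3)(y).
\]
Reducing modulo $y^{10}-1$, this scaling law forces every surviving monomial $y^m$ to satisfy $m\equiv1\pmod5$, so only the powers $y$ and $y^6$ can occur; that is, $\mathrm{Res}_z(h_2,h_3)\equiv c\,y\,(R_0+y^5R_1)$ modulo $y^{10}-1$ for a nonzero constant $c$ and polynomials $R_0,R_1\in k[\mu,\nu,\lambda]$.

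It remains to identify $R_0,R_1$ and to conclude. Since a tenth root of unity satisfies $y^5=\pm1$, with both values attained, and $c\,y\neq0$, the equation $\mathrm{Res}_z(h_2,h_3)=0$ is solvable in such $y$ if and only if $R_0+R_1=0$ or $R_0-R_1=0$, i.e. $R_0^2-R_1^2=0$. Hence $f^{\mu,\nu,\lambda}$ is singular exactly when $R_0^2-R_1^2$ vanishes, and the theorem follows once $R_0$ and $R_1$ are shown to equal, up to the constant $c$, the polynomials $R_0,R_1$ of the statement, so that $R_0^2-R_1^2=c^2R$.

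The main obstacle is this identification: extracting the coefficients of $y$ and of $y^6$ in $\mathrm{Res}_z(h_2,h_3)$ modulo $y^{10}-1$. Concretely one evaluates the resultant of the two cubics once under $y^5=1$ and once under $y^5=-1$ (both substitutions collapse $y^4,y^7,y^8$ to small powers of $y$), reads off $c(R_0+R_1)$ and $c(R_0-R_1)$, and hence recovers $R_0$ and $R_1$; the scaling law guarantees that no other powers of $y$ intervene. This determinant computation, though lengthy, is the only genuine work. As partial checks, the specializations $\nu=0$ and $\lambda=0$ recover Lemmas 3.5(1) and 3.6, and the coordinate-axis cases recover Lemmas 3.3 and 3.4; these confirm all terms of $R_0,R_1$ except those involving $\mu,\nu,\lambda$ simultaneously, which are pinned down only by the full computation.
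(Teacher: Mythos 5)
Your proposal follows the same route as the paper's proof: reduce singularity to the existence of a common root of the cubics $h_2$ and $h_3$ for some tenth root of unity $y$ (your observation that $h_3(0)=y^7\neq 0$ makes any common root automatically nonzero is a slightly cleaner way to dispose of the $z\neq 0$ condition than the paper's), and then express this as the vanishing of $\mathrm{Res}_z(h_2,h_3)$, i.e.\ of the $6\times 6$ Sylvester determinant $\det S$ that the paper writes down. Where you genuinely add something is the equivariance argument: the identities $h_2(\varepsilon y,\varepsilon^{-1}z)=\varepsilon^2h_2(y,z)$ and $h_3(\varepsilon y,\varepsilon^{-1}z)=\varepsilon^2h_3(y,z)$ for $\varepsilon^5=1$ (both of which check out, using that $h_2$ is monic in $z$ for every $y$) do force $\mathrm{Res}_z(h_2,h_3)\equiv cy(R_0+y^5R_1)\pmod{y^{10}-1}$, which explains \emph{a priori} the shape $\frac{1}{4y}\det S=R_0+y^5R_1$ that the paper only observes a posteriori after expanding the determinant; it also immediately yields the final equivalence with $R_0^2-R_1^2=0$. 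However, the theorem asserts the identity for the \emph{specific} polynomials $R_0,R_1$ displayed before the statement, and your argument only shows that \emph{some} $R_0,R_1$ exist with this property; the identification — which is the actual content of the theorem and is exactly the determinant expansion the paper carries out via $\det S=\det S_1-2\mu\det S_2$ — is acknowledged in your last paragraph but not performed. Your consistency checks against Lemmas 3.3--3.6 do pin down all terms of $R_0,R_1$ except those involving $\mu\nu\lambda$ jointly, as you say, so the proposal is a correct and well-organized plan, but it is not yet a complete proof without that one evaluation (e.g.\ of the resultant at $y^5=1$ and $y^5=-1$). As a minor point in your favor, your argument applies uniformly in $\mu,\nu,\lambda$, whereas the paper somewhat unnecessarily restricts its Theorem 3.7 computation to $\mu\nu\lambda\neq 0$ and leans on the earlier lemmas for the degenerate cases.
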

\begin{proof}
Due to Lemma 3.5 and Lemma 3.6 it suffices to prove the theorem under the condition $\lambda\mu\xi\not=0$. We write $f$ for $f^{\lambda,\mu,\xi}$.
$V_p(f)$ is singular at $(x,y,z,1)$  if and only if there exists $y, z \in k^*$ such that $y^{10}=1$, $h_1=h_2=0$ and $x=y^3z$.
Since $2y^6 h_1+h_2=y^6zh_3$, where $h_3=2\lambda z^3+3y^4z^2+\xi y^8z+y^7$, there exists $z\in k^*$ such that $h_2=h_3=0$ for some $y$ satisfying
$y^{10}=1$ if and only if there exists $z\in k$ such that $h_2=h_3=0$ for some $y$ satisfying $y^{10}=1$. As is well known \cite[p.203]{lan} (recall that
$\lambda\not=0$), there
exists $z\in k$ such that $h_2=h_3=0$ for given $y\in k$ if and only if $\det S=0$, where
\begin{eqnarray*}
S&=&\left[\begin{array}{rrrrrr}
                 1&\xi y^4&       3y^3 &    2\mu y^2&          0&        0\\
                 0&          1& \xi y^4&        3y^3&   2\mu y^2&        0\\
                 0&          0&           1& \xi y^4&       3y^3& 2\mu y^2\\
              2\lambda&       3y^4& \xi y^8&         y^7&          0&        0\\
                 0&       2\lambda&        3y^4& \xi y^8&        y^7&        0\\
                 0&          0&        2\lambda&        3y^4& \xi y^8&     y^7
         \end{array}
    \right].
\end{eqnarray*}
As will be shown later $\frac{1}{4}y^{-6}\det S=\Delta_0+\delta\Delta_1$, where
\begin{eqnarray*}
\Delta_0&=&27(\lambda^2+\mu^2)-\xi^2+6\lambda\mu-36(\lambda^2\mu+\lambda\mu^2)\xi+(\lambda+\mu)\xi^3
   -16\lambda^3\mu^3+8\lambda^2\mu^2\xi^2-\lambda\mu\xi^4,\\
\Delta_1&=&16-18(\lambda+\mu)\xi+48\lambda^2\mu^2+20\lambda\mu\xi^2,\\
\end{eqnarray*}
Plainly there exists some $y$ such that  $y^{10}=1$ and $\det S=0$ if and only if $\Delta_0^2-\Delta_1^2=0$. Thus if $\lambda\mu\xi\not=0$,
$f$ is singular if and only if $R(\lambda,\mu,\xi)=0$.

In order to compute $\det S$ we
replace three rows $s_6$, $s_5$ and $s_4$ of $S$ by $s_6-2\lambda s_3$, $s_5-2\lambda s_2-y^4(3-\lambda\xi) s_3$, and
$s_4-2\lambda s_1-y^4(3-\lambda\xi) s_2-y^3\{\delta(-2\xi+2\lambda\xi^2)-\lambda\} s_3$, respectively. One can easily see
that $\frac{1}{4}y^{-6}\det S$ is equal to the following determinant $\Delta$;
\[
 \left|\begin{array}{lll}
           \delta(-4+6\lambda\xi)-2\lambda\mu+\xi^2-\lambda\xi^3 &\delta(9\lambda-3\mu+2\lambda\mu\xi)+3\xi-3\lambda\xi^2 &\delta(6\lambda\mu)+2\mu\xi-2\lambda\mu\xi^2\\
           \delta(-\xi+\lambda\xi^2)-3\lambda  &\delta(-4+3\lambda\xi)-2\lambda\mu &\delta(-3\mu+2\lambda\mu\xi)\\
           3-\lambda\xi  &\delta(\xi)-6\lambda  &\delta-4\lambda\mu \end{array} \right|
\]
Expanding the determinant according to the first row, we get  $\Delta=\Delta_0+\delta\Delta_1$.
\end{proof}

We shall describe $\A{5}$-invariant and $\SSS{5}$-invariant nonsingular quartic forms. Let
\[
 s_1=(123),\ s_2=(12)(34),\ s_3=(12)(45),\ s_5=s_1s_2s_3=(12345),\  t_1=(12).
\]
Note that  $\A{5}=\langle s_1,s_2,s_3\rangle=\langle s_1,s_5\rangle$ and $\SSS{5}=\langle t_1,s_5\rangle$. It is known that
there are two  faithful representations $\varphi$  of $\SSS{5}$ in $PGL_4(k)$, such that $\varphi(\SSS{5})$ has a nonsingular eigenspace, and that
their restrictions to $\A{5}$ are the only faithful representation of $\A{5}$ such that $\varphi(\A{5})$ has a nonsingular eigenspace \cite{dol}\cite{mar}.
Let $R_{11}=\diag[1,1,\omega,\omega^2]$, with $\ord(\omega)=3$, and $R_{1j}$ ($j\in [2,4]$) be as follows;
\begin{eqnarray*}
&&R_{12}=\left[\begin{array}{cccc}
               1&0&0&0\\
               0&-\frac{1}{3}&\frac{2}{3}&\frac{2}{3}\\
               0&\frac{2}{3}&-\frac{1}{3}&\frac{2}{3}\\
               0&\frac{2}{3}&\frac{2}{3}&-\frac{1}{3}\end{array}\right],\
  R_{13}=\left[\begin{array}{cccc}
               -\frac{1}{4}&\frac{\sqrt{15}}{4}&0&0\\
               \frac{\sqrt{15}}{4}& \frac{1}{4}&0&0\\
                0 & 0&                          0&1\\
                0&  0                           1&0\end{array}\right],\
  R_{14}=\left[\begin{array}{cccc}
                1&0&0&0\\
                0&1&0&0\\
                0&0&0&1\\
                0&0&1&0\end{array}\right].
\end{eqnarray*}
By Moore's theorem \cite{moo} there exists uniquely an injective group homomorphism $\phi$ of $\SSS{5}$ into $GL_4(k)$, which can be seen to be equivalent to
the standard representation $\rho_V$ of $\SSS{5}$ to be introduced shortly by showing that their characters are equal. H. Maschke \cite{mas} obtained a faithful
representation $\psi_1=\pi_4\circ \phi$ (resp. $\varphi_1=\pi_4\circ \phi$) of $\A{5}$ (resp. $\SSS{5}$)  in $PGL_4(k)$. The groups $\psi_1(\A{5})$ and
$\varphi_1(\SSS{5})$ have  only one nonsingular eigenspace $\langle h_0,h_1\rangle=\Form_{4,4}(R_{11},R_{12},R_{13},R_{14};1,1,1,1)$, where
\begin{eqnarray*}
&&h_0=-x^4+2\sqrt{15}(y^3+z^3+t^3)x +10(z^3+t^3)y+13x^2y^2+5z^2t^2\\
&&+(26x^2-6\sqrt{15}xy+20y^2)zt,\ \
h_1=x^4+y^4+2x^2y^2+4z^2t^2+4(x^2+y^2)zt.
\end{eqnarray*}
Note that $\phi(\SSS{5})$ and $\psi_1(\SSS{5})$ have the same eigenspaces in $\Form_{4,4}$.
The standard representation of the symmetric group $\SSS{5}$ is the restriction $\rho_V$ of the following representation $\rho$ of $\SSS{5}$ in
$k^5$ such that $\rho(\tau)e_i=e_{\tau^{-1}(i)}$, where $e_i$ ($i\in [1,5]$) stands for the $i$-th column vector of the unit matrix $E_5$,
to the invariant subspace
$V=\{x\in k^5 : x_1+\cdots +x_5=0\}$ \cite{ful}. With respect to the basis $f_j=e_j-e_5$ ($j\in [1,4]$) we have for $t_1$, $s_1$ and $s_5$ in $\SSS{5}$
\[
 \rho_V(t_1))=\left[\begin{array}{cccc}
                    0&1&0&0\\
                    1&0&0&0\\
                    0&0&1&0\\
                    0&0&0&1\end{array}\right],\
\rho_V(s_1)=\left[\begin{array}{cccc}
              0&0&1&0\\
              1&0&0&0\\
              0&1&0&0\\
              0&0&0&1\end{array}\right],\
\rho_V(s_5)=\left[\begin{array}{rrrr}
                      -1&-1&-1&-1\\
                       1& 0& 0& 0\\
                       0& 1& 0& 0\\
                       0& 0& 1& 0\end{array}\right].
\]
Note that unless $\tau=id$, $\rho_V(\tau)\not\in k^*E_4$. In particular $(\rho_V(\tau))=(E_4)$ if and only if $\tau=id$.
It is immediate that $\det(\rho_V(s_5)-\lambda E_4)=\lambda^4+\lambda^3+\lambda^2+\lambda+1$ and that
$(\rho_V(s_5)-\lambda E_4)(\lambda^{-1},\lambda^{-2},\lambda^{-3},\lambda^{-4})^t=0$ provided $\lambda^5=1$ and $\lambda\not=1$. Let $\varepsilon
\in k^*$ with $\ord(\varepsilon)=5$. Then $T=[t_{ij}]\in M_4(k)$ with $t_{ij}=\varepsilon^{-ij}=(\varepsilon^{-1})^{ij}$ is nonsingular, for we have
\begin{lemma}
Assume that $p=\ord(\delta)$ is a prime $(\delta\in k^*)$. \\
$(1)$ The square matrix $A=[a_{ij}]\in M_{p-1}(k)$ with $a_{ij}=\delta^{ij}$ $(i,j\in [1,p-1])$
is nonsingular, and $A^{-1}=p^{-1}(B-C)$, where $B=[b_{ij}],C=[c_{ij}]\in M_{p-1}(k)$ with $b_{ij}=\delta^{-ij}$ and $c_{ij}=1$.\\
$(2)$  The square matrix $A=[a_{ij}]\in M_p(k)$ with $a_{ij}=\delta^{(i-1)(j-1)}$ $(i,j\in [1,p])$ is nonsingular, and $A^{-1}=[\alpha_{ij}]$
where $\alpha_{ij}=p^{-1}\delta^{-(i-1)(j-1)}$.
\end{lemma}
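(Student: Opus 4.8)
The plan is to verify both stated formulas by a direct matrix multiplication, the only analytic input being the elementary root-of-unity identity: for any $\zeta\in k^*$ with $\zeta^p=1$ one has $\sum_{m=0}^{p-1}\zeta^m=p$ when $\zeta=1$, and $\sum_{m=0}^{p-1}\zeta^m=0$ otherwise, the latter because $\zeta^p-1=(\zeta-1)\sum_{m=0}^{p-1}\zeta^m=0$ with $\zeta\neq 1$. The repeatedly-used observation is that if two indices $i,j$ lie in an interval of length less than $p$, then $i\equiv j\pmod p$ forces $i=j$; hence $\delta^{i-j}\neq 1$ whenever $i\neq j$, which is exactly what converts the summation identity into Kronecker deltas.

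For part $(2)$, writing $\tilde A=[\delta^{-(i-1)(j-1)}]$, I would compute the $(i,k)$ entry of $A\tilde A$ as $\sum_{j=1}^{p}\delta^{(i-1)(j-1)}\delta^{-(j-1)(k-1)}=\sum_{m=0}^{p-1}(\delta^{i-k})^m$, after the substitution $m=j-1$. Since $i,k\in[1,p]$ we have $|i-k|<p$, so this sum equals $p$ if $i=k$ and $0$ if $i\neq k$. Thus $A\tilde A=pE_p$, which simultaneously shows that $A$ is nonsingular and gives $A^{-1}=p^{-1}\tilde A$, i.e.\ $\alpha_{ij}=p^{-1}\delta^{-(i-1)(j-1)}$.

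For part $(1)$, the matrix $A=[\delta^{ij}]$ is the same kind of matrix with its zeroth row and column deleted, so the clean computation of $(2)$ no longer lands on a scalar multiple of the identity, and the all-ones matrix $C$ is precisely the device that repairs the defect. Concretely, I would expand the $(i,k)$ entry of $A(B-C)$ as $\sum_{j=1}^{p-1}\delta^{ij}\delta^{-jk}-\sum_{j=1}^{p-1}\delta^{ij}=\sum_{j=1}^{p-1}(\delta^{i-k})^{j}-\sum_{j=1}^{p-1}(\delta^{i})^{j}$. Because each sum now runs over $[1,p-1]$ rather than $[0,p-1]$, I rewrite $\sum_{j=1}^{p-1}\xi^{j}=\big(\sum_{j=0}^{p-1}\xi^{j}\big)-1$ and apply the identity: the first sum equals $p-1$ when $i=k$ and $-1$ when $i\neq k$, while the second sum equals $-1$ for every $i\in[1,p-1]$, since $\delta^{i}\neq 1$ there. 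Subtracting yields $p$ on the diagonal and $0$ off it, so $A(B-C)=pE_{p-1}$ and hence $A^{-1}=p^{-1}(B-C)$.

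The only place demanding care — and the sole real obstacle — is the bookkeeping in part $(1)$: one must keep the summation range $[1,p-1]$ straight, note that $\delta^{i}\neq 1$ for $i\in[1,p-1]$ (so that the second sum contributes a uniform $-1$ and the necessity of subtracting $C$ becomes transparent), and check that the $+1$ coming from the diagonal correction is exactly what restores the diagonal value to $p$. Once these ranges are handled correctly, both assertions are purely formal consequences of the geometric-series identity.
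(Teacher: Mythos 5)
Your proof is correct and follows essentially the same route as the paper: both arguments reduce to the geometric-series identity $\sum_{m=0}^{p-1}\zeta^m=0$ for $\zeta\neq 1$ with $\zeta^p=1$ (equivalently $\sum_{j=1}^{p-1}\delta^{hj}=-1$ when $p\nmid h$), the paper merely packaging the computation in part (1) as $AC=-C$ and $AB=pE-C$ rather than as a single entrywise subtraction. No gaps.
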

\begin{proof}
(1) Clearly $\sum_{j=1}^{p-1}\delta^j=-1$. Note that for any integer $h$, $\ord(\delta^h)=p$ unless  $p$ divides $h$. Thus $AC=-C$ and $AB=pE-C$, so that
$A(B-C)=pE$, where $E$ stands for the unit matrix in $M_{p-1}(k)$. (2) It is immediate that $\sum_{i=1}^p a_{i\ell}\alpha_{\ell j}=\delta_{ij}$.
\end{proof}

$U=[u_{ij}]\in M_4(k)$ with $u_{ij}=\varepsilon^{ij}$ satisfies $(U-I)T=5E_4$, where each component of $I\in M_4(k)$ is equal to one.

 Let $S=T\diag[\varepsilon^4,\varepsilon^3,\varepsilon^2,\varepsilon][e_1,e_2,e_4,e_3]$, where $E_4=[e_1,e_2,e_3,e_4]$ is the unit matrix. We easily
verify that $S^{-1}\rho_V(s_5)S=\diag[\varepsilon,\varepsilon^2,\varepsilon^4,\varepsilon^3]$ which will be denoted by $\tau_{12345}$ and
\[
 S^{-1} \rho_V(t_1)S=\left[\begin{array}{rrrr}
                            \eta &2\eta-1&-\eta+1&-2\eta+1\\
                            2\eta-1&-\eta+1&-2\eta+1&\eta \\
                           -\eta+1 &-2\eta+1& \eta&2\eta-1\\
                           -2\eta+1&\eta &2\eta-1& -\eta+1\end{array}\right], \ \ (\eta=(3+\varepsilon+\varepsilon^4)/5)
\]
which will be denoted by $\tau_{12}$.
\begin{lemma} Let $F_0(x,y,z,t)=x^3y+y^3z+z^3t+t^3x+3xyzt$ and $F_1=x^2z^2+y^2t^2+2xyzt$. Then
 $ F_{i,\tau^{-1}}=F_i$ for $i\in [0,1]$ and $\tau\in \{\tau_{12},\tau_{12345}\}$.
\end{lemma}
\begin{proof}
 Any monomial $M\in \{x^3y,y^3z,z^3t,t^3x,x^2z^2,y^2t^2,xyzt\}$ satisfies $M_{\tau_{12345}}=M$. So we may assume $\tau=\tau_{12}$.
Since $\varepsilon^2+\varepsilon^3=(5\eta-3)^2-2$ and $\sum_{i=1}^4\varepsilon^i=-1$, it follows that  $\eta^2-\eta=-1/5$, hence
$(xz+yt)_{\tau^{-1}}=xz+yt$ so that $F_{1,\tau^{-1}}=F_1$.
Denoting the first row of the matrix $\tau_{12}$ by $[\alpha,\beta,\gamma,\delta]$, we have
$F_{\tau^{-1}}(x,y,z,t)=G(x,y,z,t,\alpha,\beta,\gamma,\delta)$.
Let $a_j,x_j$ ($j\in [1,4]$) be algebraically independent indeterminates, $F(x_1,x_2,x_3,x_4)=F_{0}(x_1,x_2,x_3,x_4)$,
$y_j=\sum_{i=1}^4a_{\sigma^{j-1}(i)}x_i$, where $\sigma=(1234)\in \SSS{4}$,  and
\[
 F_{0}(y_1,y_2,y_3,y_4)=G(x_1,...,x_4,a_1,...,a_4)=\sum_{i_1+\cdots +i_4=4} g_{i_1i_2i_3i_4}(a_1,...,a_4)x_1^{i_1}x_2^{i_2}x_3^{i_3}x_4^{i_4}.
\]
Regarding $G$ as a polynomial in $x$ or $a$, we have
\begin{eqnarray*}
&& (\sigma G)(x_1,...,x_4,a)=G(x_{\sigma(1)},...,x_{\sigma(4)},a)=F(y_4,y_1,y_2,y_3)=F(y_1,y_2,y_3,y_4),\\
&& (\sigma G)(x,a_1,...,a_4)=G(x,a_{\sigma(1)},...,a_{\sigma(4)})=F(y_2,y_3,y_4,y_1)=F(y_1,y_2,y_3,y_4).
\end{eqnarray*}
Consequently,  $g_{i_1\cdots i_4}=g_{j_1\cdots j_4}$ if $[j_1,...,j_4]=[i_1,...,i_4]\sigma^\ell$ for some $\ell\in [0,3]$,  and
$\sigma g_{i_1...i_4}= g_{[i_1...i_4]\sigma}$.
Let $\Form_{4,4,1}=\{x_1^{i_1}x_2^{i_2}x_3^{i_3}x_4^{i_4}\in \Form_{4,4}\ :\ i_1\geq i_2,i_3,i_3\}$. Namely,
\begin{eqnarray*}
\Form_{4,4,1}&=&\{x_1^4,x_1^3x_2,x_1^3x_3,x_1^3x_4,x_1^2x_2^2,x_1^2x_3^2,x_1^2x^2_4,x_1^2x_2x_3,x_1^2x_2x_4,x_1^2x_3x_4,x_1x_2x_3x_4\}.
\end{eqnarray*}
Evidently $\Form_{4,4}=\cup_{i=0}^3 \sigma^i \Form_{4,4,1}$. In order to describe polynomials $g_{i_1\cdots i_4}\in \Form_{4,4}$,
we introduce polynomials in indeterminates $a,b,c,d$ as follows.
\begin{eqnarray*}
f_1&=&a^4+b^4+c^4+d^4,\ f_2=a^3b+b^3c+c^3d+d^3a,\ f_3=da^3+ab^3+bc^3+cd^3,\\
f_4&=&a^3c+b^3d+c^3a+d^3b,\ f_5=a^2b^2+b^2c^2+c^2d^2+d^2a^2,\ f_6=a^2c^2+b^2d^2,\\
f_7&=&a^2bc+b^2cd+c^2da+d^2ab,\ f_8=da^2b+ab^2c+bc^2d+cd^2a,\\
f_9&=&cda^2+dab^2+abc^2+bcd^2,\ f_{10}=abcd.
\end{eqnarray*}
Then $g_{i_1\cdots i_4}=g_{i_1\cdots i_4}(a,b,c,d)$  turn out to satisfy $g_{4000}=f_2+3f_{10}$, and
\begin{eqnarray*}
g_{3100}&=&f_4+3f_5+3f_7,\ g_{3010}=f_3+3f_7+3f_8,\ g_{3001}=f_1+3f_8+3f_9,\\
g_{2200}&=&3(f_3+f_6+f_7+f_8),\ g_{2020}=3(f_5+2f_9+2f_{10}),\ g_{2002}=3(f_3+f_6+f_7+f_8),\\
g_{2110}&=&3(f_2+2f_6+3f_8+2f_9),\ g_{2101}=3(f_2+f_4+f_5+3f_9+4f_{10}),\\
g_{2011}&=&3(f_3+f_4+f_5+2f_7+8f_{10}),\ g_{1111}=3(f_1+2f_6+8f_7+4f_8).
\end{eqnarray*}
By use of equalities $\eta(-\eta+1)=(2\eta-1)^2=1/5$, $\eta^2+(-\eta+1)^2=3/5$ and $\eta^3-(-\eta+1)^3=(2\eta-1)\{\eta^2+\eta(-\eta+1)+(-\eta+1)^2\}$, we can
evaluate $\zeta_i=f_i(\alpha,\beta,\gamma,\delta)$ as follows;
\begin{eqnarray*}
\zeta_1&=&9/25,\ \zeta_2=3/25,\ \zeta_3=-3/25,\ \zeta_4=1/25,\ \ \ \ \zeta_5=6/25,\\
\zeta_6&=&2/25,\ \zeta_7=2/25,\ \zeta_8=-1/25,\ \zeta_9=-2/25,\ \zeta_{10}=-1/25.
\end{eqnarray*}
Now  $h_{i_1\cdots i_4}=g_{i_1\cdots i_4}(\alpha,\beta,\gamma,\delta)$ turns out to be
\begin{eqnarray*}
h_{4000}&=&0,\ h_{3100}=1,\ h_{3010}=0,\ h_{3001}=0,\ h_{2200}=0,\ h_{2020}=0,\ h_{2002}=0,\\
h_{2110}&=&0,\ h_{2101}=0,\ h_{2011}=0,\ h_{1111}=3.
\end{eqnarray*}
Consequently $F_{0,\tau_{12}^{-1}}(x,y,z,t)=G(x,y,z,t,\alpha,\beta,\gamma,\delta)=F_{0}(x,y,z,t)$.
\end{proof}

Since the representations $\rho_V$ and $\phi$ of $\SSS{5}$ are equivalent, there exists a $T\in GL_4(k)$ such that $\rho_V=T^{-1}\phi T$, hence
$(TS)^{-1}\phi TS=S^{-1}\rho_V S$. Define a faithful representation $\psi_1'$ (resp. $\varphi_1'$) of $\A{5}$ (resp. $\SSS{5}$)  in $PGL_4(k)$
by  $(TS)^{-1}\psi_1(TS)$ (resp. $(TS)\varphi_1'(TS)$), i.e., $\psi_1'(\tau)=(TS)^{-1}\psi_1(\tau)(TS)$ for every $\tau\in \A{5}$. Now Lemma 3.9 implies
\begin{theorem} The nonsingular eigenspace of $\varphi_1'(\SSS{5})$ and $\psi_1'(\A{5})$ in $\Form_{4,4}$ is $\langle F_0,F_1\rangle$.
\end{theorem}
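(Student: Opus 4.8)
The plan is to prove the statement by combining two facts and then intersecting dimensions: first, that $\langle F_0,F_1\rangle$ is \emph{a} nonsingular eigenspace of $\varphi_1'(\SSS{5})$ (and of $\psi_1'(\A{5})$) for the trivial character $\rho\equiv 1$; second, that $\varphi_1'(\SSS{5})$ and $\psi_1'(\A{5})$ each possess a \emph{unique} nonsingular eigenspace, which is two-dimensional. Once both are in hand, the inclusion of the two-dimensional space $\langle F_0,F_1\rangle$ into the unique two-dimensional nonsingular eigenspace forces equality.

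For the first fact, recall $\SSS{5}=\langle t_1,s_1s_2s_3\rangle$, so $\varphi_1'(\SSS{5})=\langle (\tau_{12}),(\tau_{12345})\rangle$, where $\tau_{12}=S^{-1}\rho_V(t_1)S$ and $\tau_{12345}=S^{-1}\rho_V(s_1s_2s_3)S$. Lemma 3.9 says exactly that $F_{i,\tau^{-1}}=F_i$ for $i\in[0,1]$ and $\tau\in\{\tau_{12},\tau_{12345}\}$; since $f_{AB}=(f_B)_A$, invariance under the two generators propagates to all of $\varphi_1'(\SSS{5})$, so $F_0,F_1\in\Form_{4,4}(G,\eta,1)$ with $G=\{(\tau_{12}),(\tau_{12345})\}$, $\eta$ the section $(\tau_{12})\mapsto\tau_{12}$, $(\tau_{12345})\mapsto\tau_{12345}$, and $\rho\equiv 1$. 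The forms $F_0,F_1$ are linearly independent (only $F_0$ involves $x^3y$), so $\dim\langle F_0,F_1\rangle=2$. Nonsingularity is immediate: $F_0=f^{0,0,3}$, and by Lemma 3.4 the form $f^{0,0,\lambda}$ is singular iff $\lambda^4=4^4$; since $3^4=81\neq 256$, $F_0$ is nonsingular. Thus $\Form_{4,4}(G,\eta,1)$ is a nonsingular eigenspace containing $\langle F_0,F_1\rangle$.

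For the second fact, use the conjugacy. Setting $W=TS$, the definition gives $\varphi_1'(\tau)=(W)^{-1}\varphi_1(\tau)(W)$ for all $\tau\in\SSS{5}$, hence $\varphi_1'(\SSS{5})=(W)^{-1}\varphi_1(\SSS{5})(W)$. By Lemma 3.8 the map $\sigma_{W^{-1}}$ carries each eigenspace of $\varphi_1(\SSS{5})$ isomorphically onto an eigenspace of $\varphi_1'(\SSS{5})$; since $\sigma_{W^{-1}}$ is induced by a projective transformation it preserves both dimension and nonsingularity, so it yields a dimension- and nonsingularity-preserving bijection of eigenspaces. As $\varphi_1(\SSS{5})$ has exactly one nonsingular eigenspace, the two-dimensional $\langle h_0,h_1\rangle$, the group $\varphi_1'(\SSS{5})$ also has exactly one, of dimension $2$. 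The eigenspace $\Form_{4,4}(G,\eta,1)$ found above contains the nonsingular form $F_0$, so it \emph{is} this unique nonsingular eigenspace; being two-dimensional and containing the two-dimensional $\langle F_0,F_1\rangle$, it equals $\langle F_0,F_1\rangle$. The argument for $\psi_1'(\A{5})$ is identical: $F_0,F_1$ are already fixed by every element of $\varphi_1'(\SSS{5})\supseteq\psi_1'(\A{5})$, and $\psi_1(\A{5})$ likewise has $\langle h_0,h_1\rangle$ as its unique, two-dimensional nonsingular eigenspace, so the same transfer yields the claim.

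The genuine computational obstacle, namely verifying that $F_0$ and $F_1$ are actually invariant under the explicit matrix $\tau_{12}$, is precisely what Lemma 3.9 already resolves, so the residual work is organizational. The one point demanding care is the correct bookkeeping with Lemma 3.8: one must confirm that $\sigma_{W^{-1}}$ genuinely preserves nonsingularity (it is the action of a fixed element of $GL_4(k)$, so $V_p(f_{W^{-1}})$ is the projective image of $V_p(f)$) and hence transports both the \emph{uniqueness} and the \emph{dimension} of the nonsingular eigenspace from $\varphi_1$ to $\varphi_1'$. Combined with the explicit nonsingular member $F_0=f^{0,0,3}$, this pins the eigenspace down to $\langle F_0,F_1\rangle$ without any further direct computation involving $h_0,h_1$ or the matrix $W$.
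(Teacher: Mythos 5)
Your argument is correct and is essentially the paper's own: the paper deduces Theorem 3.10 from Lemma 3.9 together with the previously stated fact that $\varphi_1(\SSS{5})$ and $\psi_1(\A{5})$ have a unique, two-dimensional nonsingular eigenspace $\langle h_0,h_1\rangle$, transported to $\varphi_1'$ and $\psi_1'$ by conjugation, and you supply exactly these ingredients, usefully making explicit the check that $F_0=f^{0,0,3}$ is nonsingular via Lemma 3.4. The only slip is a citation label: the eigenspace-transport statement you invoke is Lemma 2.7 (or Lemma 2.4), not Lemma 3.8, which is the Vandermonde-matrix lemma.
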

\ \\
Let  $R_{21}=\diag[1,1,\omega,\omega^2]$ ($\omega^2+\omega+1=0$), and $R_{2j}\in GL_4(k)$ ($j\in [2,4]$) be as follows;
\begin{eqnarray*}
&&R_{22}=\left[\begin{array}{cccc}
          \frac{1}{\sqrt{3}}&0&0&\frac{\sqrt{2}}{\sqrt{3}}\\
          0&-\frac{1}{\sqrt{3}}&\frac{\sqrt{2}}{\sqrt{3}}&0\\
          0&\frac{\sqrt{2}}{\sqrt{3}}&\frac{1}{\sqrt{3}}&0\\
          \frac{\sqrt{2}}{\sqrt{3}}&0&0&-\frac{1}{\sqrt{3}}\end{array}\right],
R_{23}=\left[\begin{array}{cccc}
          \frac{\sqrt{3}}{2}&\frac{1}{2}&0&0\\
          \frac{1}{2}&-\frac{\sqrt{3}}{2}&0&0\\
          0&               0&             0&1\\
          0&               0&             1&0\end{array}\right],
R_{24}=\left[\begin{array}{cccc}
          0&1&0&0\\
          -1&0&0&0\\
          0&0&0&1\\
          0&0&-1&0\end{array}\right].
\end{eqnarray*}
The map $\psi_2$ of $\{s_1,s_2,s_3\}$ into $PGL_4(k)$ such that $\psi_2(s_j)=(R_{2j})$ ($j\in [1,3]$) can be extended to
an injective group homomorphism of $\A{5}$ into $PGL_4(k)$, and  the map $\varphi_2$ of $\{s_1,s_2,s_3,t_1\}$ into $PGL_4(k)$ such that
$\varphi_2(s_j)=(R_{2j})$ ($j\in [1,3]$) and $\varphi_2(t_1)=(R_{24})$ can be extended to an injective group homomorphism of $\SSS{5}$ into $PGL_4(k)$
\cite{mas}. The nonsingular eigenspace of $\psi_2(\A{5})$ in $\Form_{4,4}$ is $\Form_{4,4}(R_{21},R_{22},R_{23};1,1,1)=\langle f_0,f_1\rangle$, while the
nonsingular eigenspaces of $\varphi_2(\SSS{5})$ are $\Form_{4,4}(R_{21},R_{22},R_{23},R_{24};1,1,1,1)=\langle f_0\rangle$ and
$\Form_{4,4}(R_{21},R_{22},R_{23},R_{24};1,1,1,-1)=\langle f_1\rangle$, where $f_0(x,y,z,t)$ and $f_1(x,y,z,t)$ take the following forms \cite{mar};
\begin{eqnarray*}
&&x^4+y^4+2\sqrt{3}(-x^3y+y^3x)+2\sqrt{2}\sqrt{3}(z^3x+t^3y)+2\sqrt{2}(-z^3y+t^3x)+6(x^2y^2+z^2t^2)\\
&&+6\sqrt{3}(-x^2+y^2)zt-12xyzt,\\
&&x^4-y^4+\frac{2}{\sqrt{3}}(x^3y+y^3x)+\frac{2\sqrt{2}}{\sqrt{3}}(z^3x-t^3y)+2\sqrt{2}(z^3y+t^3x)+2\sqrt{3}(x^2+y^2)zt.
\end{eqnarray*}

Let $U=R_{21}R_{22}R_{23}$. Then $U^5=-E_4$. Indeed, there exists an $S\in GL_4(k)$ such that
$S^{-1}US=\diag[-\varepsilon, -\varepsilon^2, -\varepsilon^3, -\varepsilon^4]$, where $\ord(\varepsilon)=5$. In fact, writing $\sqrt{6}$ for
$\sqrt{2}\sqrt{3}$, we have
\[
  U=\left[\begin{array}{cccc}
           \frac{1}{2}& \frac{\sqrt{3}}{6}& \frac{\sqrt{6}}{3} & 0\\
           -\frac{\sqrt{3}}{6}& \frac{1}{2} & 0 &\frac{\sqrt{6}}{3}\\
          \frac{\sqrt{6}}{6}\omega& -\frac{\sqrt{2}}{2}\omega& 0& \frac{\sqrt{3}}{3}\omega\\
          \frac{\sqrt{2}}{2}\omega^2& -\frac{\sqrt{6}}{6}\omega^2&-\frac{\sqrt{3}}{3}\omega^2& 0
           \end{array}
      \right],\\
\]
so that $\det(U-\lambda E_4)=\lambda^4-\lambda^3+\lambda^2-\lambda +1$.  Denote the transpose of the following row vector
$e'_\lambda$in $k[\lambda]^4$ by $e_\lambda$:
\[
 [\sqrt{6}(1+\omega\lambda-2\omega\lambda^2),\sqrt{2}(-3+(2+\omega)\lambda),2((1-\omega)\lambda+3\omega\lambda^2-3\omega\lambda^3),
2\sqrt{3}(1-2\lambda+\lambda^2)].
\]
One can verify that the transpose of $(U-\lambda E_4)e_\lambda$ is equal to $[0,0,6\omega(\lambda^4-\lambda^3+\lambda^2-\lambda+1),0]$.
Note that $\omega^i\varepsilon^j$ ($i\in [0,1],j\in [0,3]$) is a basis of the field $\QQ(\omega,\varepsilon)$ over $\QQ$.
From now on we assume $\lambda^4-\lambda^3+\lambda^2-\lambda+1=0$, hence $e_\lambda$
is a nonzero eigenvector of $U$ for the eigenvalue $\lambda$. Let $S=[e_{-\varepsilon},e_{-\varepsilon^2},e_{-\varepsilon^3},e_{-\varepsilon^4}]$. Then
$S^{-1}US=\diag[-\varepsilon,-\varepsilon^2,-\varepsilon^3,-\varepsilon^4]$. In order to calculate $S^{-1}$ we introduce matrices $\Omega$ and $\Sigma$:
\begin{eqnarray*}
\Omega&=&\left[\begin{array}{cccc}
-1-\omega& -1-2\omega& -1& -1\\
1-\omega& 3& 3& 3\\
-1+\omega& 3\omega& 3\omega& 0\\
1& 0& -1& -1\end{array}\right],\ \
\Sigma=\left[\begin{array}{cccc}
           \varepsilon& \varepsilon^2& \varepsilon^3&\varepsilon^4\\
           \varepsilon^2& \varepsilon^4 & \varepsilon &\varepsilon^3 \\
           \varepsilon^3& \varepsilon & \varepsilon^4 &\varepsilon^2 \\
           \varepsilon^4& \varepsilon^3 & \varepsilon^2 &\varepsilon \end{array}
          \right].
\end{eqnarray*}
Since $1=\lambda-\lambda^2+\lambda^3-\lambda^4$,
\[
 e_\lambda=\diag[\sqrt{6},\sqrt{2},2,2\sqrt{3}]\
\Omega\ \diag[1,-1,1,-1]\left[\begin{array}{l}\lambda\\ \lambda^2\\ \lambda^3\\ \lambda^4\end{array}\right].
\]
Consequently $S=\diag[\sqrt{6},\sqrt{2},2,2\sqrt{3}]\ \Omega\  \Sigma$. By Lemma 3.7 $\Sigma^{-1}$ is known. Thus

\begin{eqnarray*}
 \Omega^{-1}&=&\left[\begin{array}{cccc}
                 \frac{\omega^2}{2}& \frac{1-\omega}{6}& 0& 1\\
                 \frac{5+4\omega}{6}& \frac{1+2\omega}{6}& 0& \frac{-1+\omega}{3}\\
                 \frac{-1+\omega^2}{3}& \frac{\omega^2}{3}& \frac{\omega^2}{3}& \frac{-\omega+\omega^2}{3}\\
                 \frac{1-\omega}{6}& \frac{2-\omega^2}{6}& -\frac{\omega^2}{3}& \frac{\omega-\omega^2}{3}\end{array}\right],\
5\Sigma^{-1}=\left[\begin{array}{cccc}
                                     \varepsilon^4&\varepsilon^3&\varepsilon^2&\varepsilon\\
                                     \varepsilon^3&\varepsilon&\varepsilon^4&\varepsilon^2\\
                                     \varepsilon^2&\varepsilon^4&\varepsilon&\varepsilon^3\\
                                     \varepsilon&\varepsilon^2&\varepsilon^3&\varepsilon^4\end{array} \right]
                            -\left[\begin{array}{cccc}
                                   1&1&1&1\\
                                   1&1&1&1\\
                                   1&1&1&1\\
                                   1&1&1&1\end{array}\right]
                        .
\end{eqnarray*}

Putting  $V=[v_{ij}]=S^{-1}R_{21}S$ and $W=[w_{ij}]=S^{-1}R_{24}S$, we obtain $V$ and $W$ as follows.
\begin{lemma}
\begin{eqnarray*}
5v_{11}&=&-2\varepsilon-\varepsilon^3-2\varepsilon^4\\
5v_{12}&=& -9\varepsilon-5\varepsilon^3-6\varepsilon^4+\omega(-3-3\varepsilon-4\varepsilon^3) \\
5v_{13}&=&-3-5\varepsilon^2-2\varepsilon^4+\omega(4+4\varepsilon+7\varepsilon^3)\\
5v_{14}&=&  1-\varepsilon^3+\omega(1+\varepsilon-2\varepsilon^3)\\
5v_{21}&=&-3-2\varepsilon^3-5\varepsilon^4+\omega(4+7\varepsilon+4\varepsilon^2)\\
5v_{22}&=& -\varepsilon-2\varepsilon^2-2\varepsilon^3\\
5v_{23}&=& 1-\varepsilon+\omega(1-2\varepsilon+\varepsilon^2)\\
5v_{24}&=&  -5\varepsilon-9\varepsilon^2-6\varepsilon^3+\omega(-3-4\varepsilon-3\varepsilon^2)\\
5v_{31}&=& -6\varepsilon^2-9\varepsilon^3-5\varepsilon^4+\omega(-3-3\varepsilon^3-4\varepsilon^4)\\
5v_{32}&=& 1-\varepsilon^4+\omega(1+\varepsilon^3-2\varepsilon^4)\\
5v_{33}&=& -2\varepsilon^2-2\varepsilon^3-\varepsilon^4\\
5v_{34}&=&  -3-5\varepsilon-2\varepsilon^2+\omega(4+4\varepsilon^3+7\varepsilon^4)\\
5v_{41}&=& 1-\varepsilon^2+\omega(1-2\varepsilon^2+\varepsilon^4)\\
5v_{42}&=& -3-2\varepsilon-5\varepsilon^3+\omega(4+7\varepsilon^2+4\varepsilon^4)\\
5v_{43}&=& -6\varepsilon-5\varepsilon^2-9\varepsilon^4+\omega(-3-4\varepsilon^2-3\varepsilon^4)\\
5v_{44}&=& -2\varepsilon-\varepsilon^2-2\varepsilon^4.
\end{eqnarray*}
\begin{eqnarray*}
5\sqrt{3}w_{11}&=& 3\varepsilon^2-3\varepsilon^3 \\
5\sqrt{3}w_{12}&=& 1-\varepsilon+2\varepsilon^2-2\varepsilon^3+\omega(2\varepsilon+2\varepsilon^2+\varepsilon^4)\\
5\sqrt{3}w_{13}&=& 2-7\varepsilon+6\varepsilon^2-6\varepsilon^3+\omega(11\varepsilon^2-7\varepsilon^3+11\varepsilon^4)\\
5\sqrt{3}w_{14}&=& 3-3\varepsilon+\omega(6+3\varepsilon^2+6\varepsilon^4)\\
5\sqrt{3}w_{21}&=& 2-6\varepsilon-7\varepsilon^2+6\varepsilon^4+\omega(-7\varepsilon+11\varepsilon^3+11\varepsilon^4)\\
5\sqrt{3}w_{22}&=& -3\varepsilon+3\varepsilon^4\\
5\sqrt{3}w_{23}&=& 3-3\varepsilon^2+\omega(6+6\varepsilon^3+3\varepsilon^4)\\
5\sqrt{3}w_{24}&=& 1-2\varepsilon-\varepsilon^2+2\varepsilon^4+\omega(2\varepsilon^2+\varepsilon^3+2\varepsilon^4)\\
5\sqrt{3}w_{31}&=& 1+2\varepsilon-\varepsilon^3-2\varepsilon^4+\omega(2\varepsilon+\varepsilon^2+2\varepsilon^3)\\
5\sqrt{3}w_{32}&=& 3-3\varepsilon^3+\omega(6+3\varepsilon+6\varepsilon^2)\\
5\sqrt{3}w_{33}&=& 3\varepsilon-3\varepsilon^4\\
5\sqrt{3}w_{34}&=& 2+6\varepsilon-7\varepsilon^3-6\varepsilon^4+\omega(11\varepsilon+11\varepsilon^2-7\varepsilon^4)\\
5\sqrt{3}w_{41}&=& 3-3\varepsilon^4+\omega(6+6\varepsilon+3\varepsilon^3)\\
5\sqrt{3}w_{42}&=& 2-6\varepsilon^2+6\varepsilon^3-7\varepsilon^4+\omega(11\varepsilon-7\varepsilon^2+11\varepsilon^3)\\
5\sqrt{3}w_{43}&=& 1-2\varepsilon^2+2\varepsilon^3-\varepsilon^4+\omega(\varepsilon+2\varepsilon^3+2\varepsilon^4)\\
5\sqrt{3}w_{44}&=& -3\varepsilon^2+3\varepsilon^3.
\end{eqnarray*}
\end{lemma}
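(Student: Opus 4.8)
The plan is to reduce everything to a direct matrix computation over $\QQ(\omega,\varepsilon)$. Writing $D=\diag[\sqrt6,\sqrt2,2,2\sqrt3]$, we have $S=D\Omega\Sigma$ and hence $S^{-1}=\Sigma^{-1}\Omega^{-1}D^{-1}$, so that both $V=S^{-1}R_{21}S$ and $W=S^{-1}R_{24}S$ factor as three successive conjugations, by $D$, by $\Omega$, and by $\Sigma$. I would carry these out in that order, simplifying after each stage by means of the relations $\omega^2+\omega+1=0$, $\omega^3=1$ and $1+\varepsilon+\varepsilon^2+\varepsilon^3+\varepsilon^4=0$, $\varepsilon^5=1$.

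First I would treat $V$. Since $R_{21}=\diag[1,1,\omega,\omega^2]$ commutes with the diagonal matrix $D$, the first stage is trivial: $D^{-1}R_{21}D=R_{21}$, so $V=\Sigma^{-1}(\Omega^{-1}R_{21}\Omega)\Sigma$. Next I would compute the inner matrix $P=\Omega^{-1}R_{21}\Omega$ from the displayed forms of $\Omega^{-1}$ and $\Omega$, its entries lying in $\Z[\omega]$ once reduced modulo $\omega^2+\omega+1$. Finally I would form $V=\Sigma^{-1}P\Sigma$ using the explicit $5\Sigma^{-1}$ and $\Sigma$; here each entry $5v_{ij}$ is a double sum of monomials $\varepsilon^{-ia}\,p_{ab}\,\varepsilon^{bj}$, and the all-ones summand $C$ in $5\Sigma^{-1}=B-C$ is what produces the constant terms appearing in the asserted values.

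For $W$ the only new feature is that $R_{24}$ is not diagonal, so the $D$-conjugation no longer disappears. I would first compute $D^{-1}R_{24}D$, which merely rescales the four nonzero entries of $R_{24}$ to $\pm\sqrt3$ and $\pm1/\sqrt3$; this rescaling is precisely the source of the overall $\sqrt3$ in the normalization $5\sqrt3\,w_{ij}$ of the stated answer. After that I would proceed exactly as for $V$, computing $Q=\Omega^{-1}(D^{-1}R_{24}D)\Omega$ and then $W=\Sigma^{-1}Q\Sigma$.

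I expect the main obstacle to be purely the bookkeeping in the last, $\Sigma$-conjugation stage: each of the sixteen entries of $V$ (and of $\sqrt3\,W$) is a double sum of up to sixteen powers of $\varepsilon$ with coefficients in $\Z[\omega]$, which must first be collapsed modulo $\varepsilon^5=1$ and then rewritten, via $1+\varepsilon+\varepsilon^2+\varepsilon^3+\varepsilon^4=0$, in the basis $\{\omega^i\varepsilon^j:i\in[0,1],\ j\in[0,3]\}$ of $\QQ(\omega,\varepsilon)$ over $\QQ$ noted just before the lemma. Because that set is a basis, the reduced expression for each entry is unique, so it suffices to match the reduced forms against the sixteen values listed for $V$ and the sixteen for $W$; this comparison completes the verification.
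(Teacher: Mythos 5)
Your proposal is correct and coincides with what the paper does: the paper supplies exactly the ingredients you use ($S=\diag[\sqrt6,\sqrt2,2,2\sqrt3]\,\Omega\,\Sigma$, the explicit $\Omega^{-1}$ and $5\Sigma^{-1}$, and the basis $\{\omega^i\varepsilon^j\}$ for the final reduction) and then states the lemma as the outcome of precisely this staged conjugation, with no further argument. Your observation that $D$ commutes with $R_{21}$ while its conjugation of $R_{24}$ rescales the nonzero entries to $\pm\sqrt3,\ \pm1/\sqrt3$ (accounting for the $5\sqrt3\,w_{ij}$ normalization) is the only structural point worth noting, and it is the same one implicit in the paper.
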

Put
\[
 5\sqrt{3}W=\left[\begin{array}{cccc}
                3(\varepsilon^2-\varepsilon^3)&u&v&w\\
                x&3(-\varepsilon+\varepsilon^4)&\ell&m\\
                y&p&3(\varepsilon-\varepsilon^4)&    n\\
                z&q&r &3(-\varepsilon^2+\varepsilon^3)\end{array}\right].
\]
Then we can easily verify
\begin{lemma}
$(1)$ $xu=-9+12(\varepsilon^2+\varepsilon^3)=rn$,\ $yv=-9+12(\varepsilon+\varepsilon^4)=qm$,\\
\ \ \  $zw=9(\varepsilon-\varepsilon^4)$,\ $p\ell=9(\varepsilon^2-\varepsilon^3)$.\\
$(2)$ $-9+12(\varepsilon^2+\varepsilon^3)=\{\sqrt{3}(1+2\varepsilon+2\varepsilon^2)\}^2$ and
$-9+12(\varepsilon+\varepsilon^4)=\{\sqrt{3}(1+2\varepsilon^2+2\varepsilon^4)\}^2$.
\end{lemma}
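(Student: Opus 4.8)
The plan is to verify every identity by direct reduction in the field $K=\QQ(\omega,\varepsilon)$, using only $\omega^2=-1-\omega$ together with $\varepsilon^5=1$ and $1+\varepsilon+\varepsilon^2+\varepsilon^3+\varepsilon^4=0$. Since $\{\omega^i\varepsilon^j:\ i\in[0,1],\ j\in[0,3]\}$ is a $\QQ$-basis of $K$ (as already noted), an element of $K$ is determined by its coordinates in this basis, so each asserted equality reduces to comparing finitely many rationals once both sides have been rewritten in the basis.

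Part $(2)$ lives entirely in $\QQ(\varepsilon)$ and is the quickest. I would expand $\{\sqrt3(1+2\varepsilon+2\varepsilon^2)\}^2=3(1+2\varepsilon+2\varepsilon^2)^2$, reduce the exponents modulo $5$, and then replace $\varepsilon+\varepsilon^4$ by $-1-\varepsilon^2-\varepsilon^3$; the expansion collapses to $-9+12(\varepsilon^2+\varepsilon^3)$. The second square is identical after replacing $\varepsilon^2+\varepsilon^3$ by $-1-\varepsilon-\varepsilon^4$. In the real subfield these read $-15-6\sqrt5$ and $-15+6\sqrt5$, which is presumably why one wants them displayed as squares.

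For part $(1)$ I would handle the six symmetric products $M_{ij}M_{ji}$ of $M=5\sqrt3\,W$ one at a time. Reading the entries off the preceding lemma, write each as $a+b\omega$ with $a,b\in\QQ(\varepsilon)$; then $(a+b\omega)(c+d\omega)=(ac-bd)+(ad+bc-bd)\omega$ after substituting $\omega^2=-1-\omega$. The first point to settle in each case is that the $\omega$-coefficient $ad+bc-bd$ vanishes, i.e. $ad+bc=bd$, so that the product actually lies in $\QQ(\varepsilon)$; this is where the particular shape of the $w_{ij}$ enters, and it is exactly the step that is not automatic. Granting it, the value is $ac-bd$, which I reduce by $\sum_{i=0}^4\varepsilon^i=0$ and compare with the stated element of $\QQ(\varepsilon)$; e.g. the product $xu$ comes out as $-9+12(\varepsilon^2+\varepsilon^3)$. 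The equalities $xu=rn$ and $yv=qm$ then appear because the two products in each pair reduce to the same element. As an independent check on these pairings, note $R_{24}^2=-E_4$ forces $W^2=-E_4$, hence $M^2=-75E_4$; equating the $(1,1)$ and $(4,4)$ entries of $M^2$ gives $ux+vy=qm+rn$, and equating the $(2,2)$ and $(3,3)$ entries gives $xu+mq=yv+nr$, both consistent with the asserted values.

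There is no conceptual obstacle here; the content is bookkeeping, and the only real hazard is an arithmetic slip propagating through the six products. The organizing device that keeps this under control is the $a+b\omega$ split: verifying the single scalar relation $ad+bc=bd$ before simplifying the $\QQ(\varepsilon)$-part $ac-bd$ isolates the one nontrivial cancellation in each product and keeps every reduction short.
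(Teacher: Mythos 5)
Your proposal is correct and matches the paper, which offers no argument beyond ``we can easily verify'' --- i.e., exactly the direct reduction in the basis $\omega^i\varepsilon^j$ of $\QQ(\omega,\varepsilon)$ that you carry out. Your added consistency check via $W^2=S^{-1}R_{24}^2S=-E_4$ is a sensible safeguard but not something the paper uses.
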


Define $\alpha,\beta,\gamma\in k^*$ by
$\alpha z=\alpha^{-1} w=3(\varepsilon-\varepsilon^4)$, $\beta q=\beta^{-1} m=\sqrt{3}(1+2\varepsilon^2+2\varepsilon^4)$, and
$\gamma r=\gamma^{-1} n=\sqrt{3}(1+2\varepsilon+2\varepsilon^2)$, respectively.
Keeping in mind that $(a+\omega b)(a'+\omega b')=aa'-bb'+\omega(ab'+a'b-bb')$ ($a,a',b,b'\in \QQ(\varepsilon)$), we obtain
\begin{eqnarray*}
&&\alpha=\varepsilon^2+\varepsilon^4+\omega(-1+\varepsilon^4),\ \ \alpha^{-1}=-(\varepsilon+\varepsilon^3)+\omega(1-\varepsilon),\\
&&\beta=\frac{1}{\sqrt{3}}\{3+2\varepsilon-\varepsilon^2-2\varepsilon^3+\omega(2+\varepsilon^3+2\varepsilon^4)\},\\
&&\beta^{-1}=\frac{1}{\sqrt{3}}\{4\varepsilon+2\varepsilon^3+3\varepsilon^4+\omega(-3+2\varepsilon-2\varepsilon^3+3\varepsilon^4)\},\\
&&\gamma=\frac{1}{\sqrt{3}}\{-2\varepsilon-4\varepsilon^2-3\varepsilon^3+\omega(3+2\varepsilon-2\varepsilon^2-3\varepsilon^3)\},\\
&&\gamma^{-1}=\frac{1}{\sqrt{3}}\{-4+\varepsilon-3\varepsilon^2-\varepsilon^3+\omega(-2-\varepsilon-2\varepsilon^3)\},
\end{eqnarray*}
so that
\begin{eqnarray*}
&&\alpha\beta^{-1}=\frac{1}{\sqrt{3}}\{-1+3\varepsilon+2\varepsilon^2+4\varepsilon^4+\omega(2\varepsilon+2\varepsilon^3+\varepsilon^4)\},\\
&&\alpha^{-1}\beta=\frac{1}{\sqrt{3}}\{-3\varepsilon^2-4\varepsilon^3-2\varepsilon^4+\omega(1-2\varepsilon-5\varepsilon^2-4\varepsilon^3)\},\\
&&\alpha\gamma^{-1}=\frac{1}{\sqrt{3}}\{-3\varepsilon-2\varepsilon^2-4\varepsilon^4+\omega(5-\varepsilon+4\varepsilon^2+2\varepsilon^3)\},\\
&&\alpha^{-1}\gamma=\frac{1}{\sqrt{3}}\{-1+2\varepsilon+4\varepsilon^2+3\varepsilon^3+\omega(-2-2\varepsilon-\varepsilon^2)\},\\
&&\beta\gamma^{-1}=\varepsilon^3+\varepsilon^4+\omega(-1+\varepsilon^3),\ \beta^{-1}\gamma=-(\varepsilon+\varepsilon^2)+\omega(1-\varepsilon^2).
\end{eqnarray*}
Let $T=\diag[\alpha,\beta,\gamma,1]$. Then $5(ST)^{-1}R_{24}ST$ is equal to
\[
 \left[\begin{array}{cccc}
  \sqrt{3}(\varepsilon^2-\varepsilon^3)& 1+2\varepsilon^3+2\varepsilon^4& 1+2\varepsilon^2+2\varepsilon^4& \sqrt{3}(\varepsilon-\varepsilon^4)\\
  1+2\varepsilon^3+2\varepsilon^4& \sqrt{3}(-\varepsilon+\varepsilon^4)& \sqrt{3}(\varepsilon^2-\varepsilon^3)& 1+2\varepsilon^2+2\varepsilon^4\\
 1+2\varepsilon^2+2\varepsilon^4&\sqrt{3}(\varepsilon^2-\varepsilon^3)& \sqrt{3}(\varepsilon-\varepsilon^4)& 1+2\varepsilon+2\varepsilon^2\\
 \sqrt{3}(\varepsilon-\varepsilon^4)&1+2\varepsilon^2+2\varepsilon^4& 1+2\varepsilon+2\varepsilon^2& \sqrt{3}(-\varepsilon^2+\varepsilon^3)
       \end{array}  \right].
\]
By use of computer we obtain
\begin{eqnarray*}
&&f_{0\ (ST)^{-1}}=c_0(x^3y-y^3t+t^3z+z^3x-\frac{\sqrt{3}}{2}x^2t^2-\frac{\sqrt{3}}{2}y^2z^2),\\
&&f_{1\ (ST)^{-1}}=c_1(x^3y+y^3t+t^3z-z^3x+\sqrt{3}x^2t^2-\sqrt{3}y^2z^2+3\sqrt{3}xyzt),
\end{eqnarray*}
where
\begin{eqnarray*}
&&c_0=-27\cdot 320\sqrt{3}\{10+21\varepsilon+18\varepsilon^2+6\varepsilon^3+\omega(6+18\varepsilon+21\varepsilon^2+10\varepsilon^3)\},\\
&&c_1=9\cdot 320\sqrt{3}\{-7\varepsilon-10\varepsilon^2-2\varepsilon^3+\omega(2+10\varepsilon+7\varepsilon^2)\}.
\end{eqnarray*}
We summarize these computations. Let $S'=STR_{14}$, where $R_{14}=[e_1,e_2,e_4,e_3]$. Then
$S'^{-1}R_{21}R_{22}R_{23}S'=-\diag[\varepsilon,\varepsilon^2,\varepsilon^4,\varepsilon^3]$, and
\begin{eqnarray*}
&&R_{24}'=S'^{-1}R_{24}S'=\frac{1}{5}\left[\begin{array}{cccc}
  \sqrt{3}(\varepsilon^2-\varepsilon^3)& 1+2\varepsilon^3+2\varepsilon^4& \sqrt{3}(\varepsilon-\varepsilon^4)& 1+2\varepsilon^2+2\varepsilon^4\\
  1+2\varepsilon^3+2\varepsilon^4& \sqrt{3}(-\varepsilon+\varepsilon^4)& 1+2\varepsilon^2+2\varepsilon^4& \sqrt{3}(\varepsilon^2-\varepsilon^3)\\
\sqrt{3}(\varepsilon-\varepsilon^4) &1+2\varepsilon^2+2\varepsilon^4&\sqrt{3}(-\varepsilon^2+\varepsilon^3)& 1+2\varepsilon+2\varepsilon^2  \\
1+2\varepsilon^2+2\varepsilon^4&\sqrt{3}(\varepsilon^2-\varepsilon^3)& 1+2\varepsilon+2\varepsilon^2 & \sqrt{3}(\varepsilon-\varepsilon^4)
       \end{array}  \right],\\
&&f'_0(x,y,z,t)=x^3y-y^3z+z^3t+t^3x-\frac{\sqrt{3}}{2}x^2z^2-\frac{\sqrt{3}}{2}y^2t^2,\\
&&f'_1(x,y,z,t)=x^3y+y^3z+z^3t-t^3x+\sqrt{3}x^2z^2-\sqrt{3}y^2t^2+3\sqrt{3}xyzt.
\end{eqnarray*}
Then $\varphi'_2=(S')^{-1}\varphi_2(S')$ (resp. $\psi'_2=(S')^{-1}\psi_2(S')$) is an injective group homomorphism of $\SSS{5}$ (resp. $\A{5}$) into
$PGL_4(k)$ such that $\varphi'_2(s_5)=(\diag[\varepsilon,\varepsilon^2,\varepsilon^4,\varepsilon^3])$ and $\varphi'_2(t_1)=(R_{24}')$.
\begin{theorem}
The nonsingular eigenspaces of $\varphi'_2(\SSS{5})$ in $\Form_{4,4}$ are $\langle f'_0\rangle$ and $\langle f'_1\rangle$. The nonsingular
eigenspace of $\psi'_2(\A{5})$ in $\Form_{4,4}$ is $\langle f'_0,f'_1\rangle$.
\end{theorem}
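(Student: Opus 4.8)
\emph{The plan} is to obtain the statement by transporting the eigenspace data of the Maschke representations $\varphi_2$ and $\psi_2$ through the conjugation $S'$, rather than recomputing anything. Recall from the text (following \cite{mas},\cite{mar}) that the only nonsingular eigenspaces of $\varphi_2(\SSS{5})$ are the one-dimensional $\langle f_0\rangle=\Form_{4,4}(R_{21},R_{22},R_{23},R_{24};1,1,1,1)$ and $\langle f_1\rangle=\Form_{4,4}(R_{21},R_{22},R_{23},R_{24};1,1,1,-1)$, while the only nonsingular eigenspace of $\psi_2(\A{5})$ is the two-dimensional $\langle f_0,f_1\rangle=\Form_{4,4}(R_{21},R_{22},R_{23};1,1,1)$. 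Since $\varphi'_2=(S')^{-1}\varphi_2(S')$ and $\psi'_2=(S')^{-1}\psi_2(S')$, the first step is to apply Lemma~2.7 with $T=S'$: the linear isomorphism $\sigma_{(S')^{-1}}$ carries each eigenspace $\Form_{4,4}(G,\eta,\rho)$ of $\varphi_2(\SSS{5})$ onto the eigenspace $\Form_{4,4}(G',s_{S'}(\eta),\theta_{S'}(\rho))$ of $\varphi'_2(\SSS{5})$, and $\theta_{S'}$ is a bijection of characters with inverse $\theta_{(S')^{-1}}$. Because $\sigma_{(S')^{-1}}$ is induced by a projective change of coordinates and nonsingularity is a projective invariant, this bijection sends nonsingular eigenspaces to nonsingular eigenspaces in both directions. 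Hence $\varphi'_2(\SSS{5})$ has exactly the two nonsingular eigenspaces $\sigma_{(S')^{-1}}(\langle f_0\rangle)$ and $\sigma_{(S')^{-1}}(\langle f_1\rangle)$, and $\psi'_2(\A{5})$ the single one $\sigma_{(S')^{-1}}(\langle f_0,f_1\rangle)$.

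The second step is to identify these images explicitly, and here I would exploit the factorization $S'=STR_{14}$, so that $(S')^{-1}=R_{14}(ST)^{-1}$ and $\sigma_{(S')^{-1}}(f)=(f_{(ST)^{-1}})_{R_{14}}$; that is, first conjugate by $ST$ and then apply the transposition $R_{14}=[e_1,e_2,e_4,e_3]$, which effects the substitution $z\leftrightarrow t$. The two computer computations displayed just before the theorem give $f_{0,(ST)^{-1}}=c_0(\dots)$ and $f_{1,(ST)^{-1}}=c_1(\dots)$ with $c_0,c_1\in k^*$; performing the swap $z\leftrightarrow t$ turns the parenthesized forms into $f'_0$ and $f'_1$ respectively. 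Thus $\sigma_{(S')^{-1}}(\langle f_0\rangle)=\langle f'_0\rangle$ and $\sigma_{(S')^{-1}}(\langle f_1\rangle)=\langle f'_1\rangle$, which proves the first assertion; moreover $f'_0$ and $f'_1$ are automatically nonsingular, being projective images of the nonsingular forms $f_0$ and $f_1$.

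For $\A{5}$ I would transport the two-dimensional space directly: $\sigma_{(S')^{-1}}(\langle f_0,f_1\rangle)=\langle \sigma_{(S')^{-1}}(f_0),\sigma_{(S')^{-1}}(f_1)\rangle=\langle f'_0,f'_1\rangle$, which is then the nonsingular eigenspace of $\psi'_2(\A{5})$. This is consistent with the $\SSS{5}$ picture: the characters attached to $\langle f_0\rangle$ and $\langle f_1\rangle$ differ only in their value on $\varphi_2(t_1)=(R_{24})$, being $+1$ and $-1$, so upon restriction to $\A{5}=\langle s_1,s_2,s_3\rangle$, where $t_1$ is dropped from the generating set, these two characters coincide and the two lines merge into a single two-dimensional eigenspace.

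The argument is short precisely because the real labor—the two conjugations $f_{i,(ST)^{-1}}$—is already carried out by computer above; accordingly \emph{the main point to guard} is twofold: that $\langle f_0\rangle,\langle f_1\rangle$ (resp.\ $\langle f_0,f_1\rangle$) genuinely exhaust the nonsingular eigenspaces of $\varphi_2(\SSS{5})$ (resp.\ $\psi_2(\A{5})$), which is exactly Maschke's classification invoked in the text, and that the displayed formulas for $f_{0,(ST)^{-1}}$ and $f_{1,(ST)^{-1}}$ are correct, since the whole identification of the new eigenspaces rests on them.
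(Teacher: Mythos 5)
Your proposal is correct and follows essentially the same route as the paper: the paper's displayed computations of $f_{0\,(ST)^{-1}}$ and $f_{1\,(ST)^{-1}}$ immediately before the theorem, together with the substitution by $R_{14}$, carry out exactly the transport you describe, and the paper's one-line proof then just cites \cite{mar} for the fact that the nonsingular eigenspace of $\psi_2(\A{5})$ (hence of $\psi'_2(\A{5})$) is two-dimensional. You have merely made explicit the appeal to Lemma 2.7 and the invariance of nonsingularity under projective equivalence, which the paper leaves implicit.
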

\begin{proof}
It is known that the dimension of the nonsingular eigenspace of $\psi'_2(\A{5})$ is equal to two \cite{mar}.
\end{proof}

Let $T_0=\diag[\alpha,\alpha^{-3},\alpha^{9},-\alpha^{-27}]$ and $T_1=T_0\diag[1,1,1,-1]$ with $\alpha^{80}=-1$. Then
\begin{eqnarray*}
&&f'_{0\ T_0^{-1}}=x^3y+y^3z+z^3t+t^3x+\frac{\sqrt{3}}{2}\theta(x^2z^2-y^2t^2)\ \ \ (\theta=\alpha^{20},\ hence \ \theta^4=-1),\\
&&f'_{1\ T_1^{-1}}=x^3y+y^3z+z^3t+t^3x+\sqrt{3}\theta(x^2z^2+y^2t^2) -3\sqrt{3}\theta^3 xyzt.
\end{eqnarray*}

\section{$\ZZZ{7}$-invariant nonsingular quartic forms}
We shall describe $\ZZZ{7}$-invariant nonsingular quartic forms in $k[x,y,z,t]$. Let $p=7$ , $\varepsilon\in k^*$ with $\ord(\varepsilon)=7$, and
let diagonal matrices $D_j$ and $D_{j,\ell}$ be as in the section 2. Put
\begin{eqnarray*}
A_0=D_{0},\ A_1=D_1,\ A_2=D_2,\ A_3=D_3,\ A_4=D_6,\ A_5=D_{2,3},\ A_6=D_{2,4}.
\end{eqnarray*}
A subgroup of $PGL_4(k)$ isomorphic to $\ZZZ{p}$ is conjugate to one of these seven cyclic groups $\langle(A_i)\rangle$ by Lemma 2.12.
Assume $f^{[i,j]}\in \Form_{4,4}(A_i;\varepsilon^j)$ ($i,j\in [0,6]$).  Calculating indices of
singularity-checking quartic monomials for $A_i$, we obtain the following table.
{\scriptsize
\begin{eqnarray*}
\begin{array}{lcccccccccccccccc}
         &x^4 &x^3y &x^3z &x^3t &y^3x &y^4 &y^3z &y^3t &z^3x &z^3y &z^4 &z^3t &t^3x &t^3y &t^3z &t^4\\
    D_{0}&0   &0    &1    &0    &0    &0   &1    &0    &3    &3    &4   &3    &0    &0    &1    &0   \\
    D_{1}&0   &0    &1    &1    &0    &0   &1    &1    &3    &3    &4   &4    &3    &3    &4    &4   \\
    D_{2}&0   &0    &1    &2    &0    &0   &1    &2    &3    &3    &4   &5    &6    &6    &0    &1   \\
    D_{3}&0   &0    &1    &3    &0    &0   &1    &3    &3    &3    &4   &6    &2    &2    &3    &5   \\
    D_{6}&0   &0    &1    &6    &0    &0   &1    &6    &3    &3    &4   &2    &4    &4    &5    &3   \\
  D_{2,3}&0   &1    &2    &3    &3    &4   &5    &6    &6    &0    &1   &2    &2    &3    &4    &5   \\
  D_{2,4}&0   &1    &2    &4    &3    &4   &5    &0    &6    &0    &1   &3    &5    &6    &0    &2
  \end{array}
\end{eqnarray*}
}
We see easily that $V_p(f^{[i,j]})$ is singular at some $(e'_\ell)$ ($\ell\in [1,4]$) unless $[i,j]=[3,3]$ or $[i,j]=[6,0]$. Every
$f\in \Form_{4,4}(A_3;\varepsilon^3)\backslash\{0\}$ is singular. Indeed, $f$ takes the form $a_1x^3t+a_2y^3t+a_3z^3x+a_4z^3y+a_5t^3z+b_1x^2yt+b_2y^2xt$,
so we may assume $a_1a_2\not=0$, otherwise, $V_p(f)$ is singular at $(e'_1)$ or $(e'_2)$. Thus we may assume $f=x^3t+y^3t+az^3x+bz^3y+ct^3z+dx^2yt+ey^2xt$,
and $x^3+y^3+dx^2y+exy^2=(x-\alpha_1y)(x-\alpha_2y)(x-\alpha_3y)$ ($\alpha_i\in k^*$). So $V_p(f)$ is singular at $(\alpha_1,1,0,0)$. On the contrary
$\Form_{4,4}(A_6;1)$ contains a nonsingular element $x^4+y^3t+t^3z+z^3y$.
Any nonsingular form $f$ in
$\Form_{4,4}(A_6;1)=\langle x^4,y^3t,t^3z,z^3y,xyzt\rangle$ takes the form $ax^4+by^3t+ct^3z+dz^3y+exyzt$ with $abcd\not=0$.
We can easily find a nonsingular diagonal matrix $D=\diag[\alpha,\beta,\gamma,\delta]$ such that $f=f^{[6,0]}_{D^{-1}}=x^4+y^3t+t^3z+z^3y+\lambda xyzt$.
It is immediate that $f^\lambda=f_{T^{-1}}=x^3y+y^3z+z^3x+t^4+\lambda xyzt$ for $T=[e_4,e_3,e_2,e_1]$ and that $f^\lambda_{A^{-1}}=f^\lambda$ for
$A=T^{-1}A_6T=\diag[\varepsilon^4,\varepsilon^2,\varepsilon,1]$.

\begin{lemma}
The quartic form $f^\lambda$ is singular if and only if $(\lambda/4)^4=1$.
\end{lemma}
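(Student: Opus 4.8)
The plan is to compute the four partial derivatives of $g=g^\lambda$,
\[
 g_x=3x^2y+z^3+\lambda yzt,\quad g_y=x^3+3y^2z+\lambda xzt,\quad g_z=y^3+3z^2x+\lambda xyt,\quad g_t=4t^3+\lambda xyz,
\]
and to find all projective points where they vanish simultaneously. First I would dispose of the possibility that a singular point has a zero coordinate, assuming $\lambda\neq0$. Using that $g^\lambda$ is invariant under the cyclic permutation $(x\,y\,z)$ of the first three variables, it suffices to treat $x=0$: then $g_z=y^3=0$ forces $y=0$, whence $g_x=z^3=0$ gives $z=0$ and finally $g_t=4t^3=0$ gives $t=0$, the zero vector. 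Moreover if $t=0$ then $g_t=\lambda xyz=0$ forces one of $x,y,z$ to vanish, reducing to the previous case. Hence every genuine singular point has $xyzt\neq0$, and I may normalise $t=1$.

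Next comes the key reduction. Multiplying $g_x,g_y,g_z$ by $x,y,z$ respectively and writing $a=x^3y$, $b=y^3z$, $c=z^3x$, $S=-\lambda txyz$, the three equations become the linear system $3a+c=S$, $a+3b=S$, $b+3c=S$. Its coefficient matrix is nonsingular (determinant $28$) and has constant row sum $4$, so the unique solution is $a=b=c=S/4$. On the other hand $g_t=0$ gives $-\lambda xyz=4t^3$, hence $S=t\cdot4t^3=4t^4$ and therefore
\[
 x^3y=y^3z=z^3x=t^4,\qquad \lambda xyz=-4t^3.
\]
Putting $t=1$ and solving the chain $x^3y=y^3z=z^3x=1$ yields $y=x^{-3}$, $z=x^9$ and $x^{28}=1$, whence $xyz=x^7$; the relation $\lambda xyz=-4$ then reads $x^7=-4/\lambda$. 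Raising to the fourth power and using $x^{28}=1$ gives $(4/\lambda)^4=1$, i.e.\ $(\lambda/4)^4=1$.

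For the converse I would run this backwards: if $(\lambda/4)^4=1$ then $-4/\lambda$ is a fourth root of unity, so any $x$ with $x^7=-4/\lambda$ (which exists since $k$ is algebraically closed) satisfies $x^{28}=1$; setting $y=x^{-3}$, $z=x^9$, $t=1$ one checks the displayed relations hold and hence all four partials vanish, producing a singular point. The case $\lambda=0$ is handled separately: there $(\lambda/4)^4=0\neq1$ and $g_t=4t^3=0$ forces $t=0$, after which the zero-coordinate analysis above shows the only common zero is trivial, so $g^0$ is nonsingular, consistent with the claim. I expect the main subtlety to be not the algebra but the organisation of the converse---verifying that the necessary conditions extracted from $xg_x,yg_y,zg_z,g_t$ are actually sufficient to make all four original partials vanish---together with the clean observation that the three multiplied equations collapse, via a circulant-type system of determinant $28$, to $a=b=c$.
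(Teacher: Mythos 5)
Your proof is correct and follows essentially the same route as the paper's: multiply $g_x,g_y,g_z$ by $x,y,z$ to reduce to $x^3y=y^3z=z^3x=-\tfrac{\lambda}{4}xyzt=t^4$, from which $(\lambda/4)^4=1$ follows (the paper takes fourth powers of the product $x^3y\cdot y^3z\cdot z^3x\cdot t^4=(xyzt)^4$ where you parametrize via $y=x^{-3}$, $z=x^9$, $x^{28}=1$; and for the converse the paper exhibits the single point $(1,1,1,1)$ on $V_p(g^{-4})$ and uses $g^{\lambda}_{D^{-1}}=g^{\sqrt{-1}\lambda}$, where you build a point for each admissible $\lambda$ --- all equivalent). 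The only nit is the case $\lambda=0$, $t=0$: the ``zero-coordinate analysis'' you invoke starts from $x=0$ (or, for $t=0$, uses $\lambda\neq0$), so you should first apply your circulant reduction with $S=0$ to get $x^3y=y^3z=z^3x=0$, which forces one of $x,y,z$ to vanish and only then feeds into that chain.
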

\begin{proof}
To begin with, for $D=\diag[1,1,1,\sqrt{-1}]$ we have $f^\lambda_{D^{-1}}=f^{\sqrt{-1}\lambda}$, and
\begin{eqnarray*}
&&f^\lambda_x=3x^2y+z^3+\lambda yzt,\ f^\lambda_y=x^3+3y^2z+\lambda xzt,\\
&&f^\lambda_z=y^3+3z^2x+\lambda xyt,\ f^\lambda_t=4t^3+\lambda xyz.
\end{eqnarray*}
Assume that the above four derivatives vanish at $[x,y,z,t]\in k^4\backslash\{0\}$. The equalities $xf^\lambda_x=yf^\lambda_y=zf^\lambda_z=0$ imply
$x^3y=y^3z=z^3x=-(\lambda xyzt)/4$. Now we see that $f^0$ is nonsingular. Assume $\lambda\not=0$. If $xyz=0$, then $t=0$, hence
$3x^2y+z^3=x^3+3y^2z=y^3+3z^2x=0$ so that $x=y=z=0$, namely $[x,y,z,t]=0$, a contradiction. Thus $xyzt\not=0$, for $t=0=f_t^\lambda$ imply $xyz=0$.
Now, since $tf_t^\lambda=0$, namely $t^4=-(\lambda xyzt)/4$, we have  $(xyzyt)^4=(xyzt)^4(-\lambda/4)^4$ , i.e., $(\lambda/4)^4=1$.
Conversely, $V_p(f^{-4})$ is singular at $(1,1,1,1)$.
\end{proof}

\begin{theorem}
For a nonsingular quartic form $f(x,y,z,t)$  $|\Paut(f)|$ is divisible by $7$ if and only if  $f$ is projectively equivalent to
$f^{\lambda}$ for some $\lambda\in k$ such that $\lambda^4\not=4^4$.
\end{theorem}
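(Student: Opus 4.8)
The plan is to follow the pattern of the proof of Theorem 3.1 almost verbatim, replacing $\ZZZ{5}$ by $\ZZZ{7}$ and the matrix $\diag[\varepsilon,\varepsilon^2,\varepsilon^4,\varepsilon^3]$ by $A=\diag[\varepsilon^4,\varepsilon^2,\varepsilon,1]$, which has order $7$ and satisfies $g^\lambda_{A^{-1}}=g^\lambda$, as recorded just before the statement. Everything rests on the classification already carried out in this section: among the seven representatives $\langle(A_i)\rangle$ of the conjugacy classes of order-$7$ cyclic subgroups of $PGL_4(k)$ (Lemma 2.14), the table of indices together with the factorization argument ruling out $\Form_{4,4}(A_4;\varepsilon^3)$ shows that only $\langle(A_6)\rangle$ possesses a nonsingular eigenspace, namely $\Form_{4,4}(A_6;1)$, and that every nonsingular element of it is projectively equivalent to $g^\lambda$.

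For the forward implication, recall first that $\Paut(f)$ is a finite group, since $f$ is a nonsingular quartic and $d=4\geq 3$ \cite{mat}. If $7\mid|\Paut(f)|$, then by Cauchy's theorem $\Paut(f)$ contains an element $(B)$ of order $7$, so $\langle(B)\rangle\cong\ZZZ{7}$. By Lemma 2.14 there is a $T\in GL_4(k)$ with $(T)\langle(B)\rangle(T)^{-1}=\langle(A_i)\rangle$ for exactly one $i$. Then $g=f_{T^{-1}}$ is nonsingular and $\langle(A_i)\rangle$-invariant, so $\langle(A_i)\rangle$ has a nonsingular eigenspace; by the preceding classification this forces $i=6$ and, since $g$ is nonsingular, places $g$ in $\Form_{4,4}(A_6;1)$. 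As explained before the statement, a nonsingular element of $\Form_{4,4}(A_6;1)=\langle x^4,y^3t,t^3z,z^3y,xyzt\rangle$ has all four non-$xyzt$ coefficients nonzero, so a suitable diagonal matrix followed by the permutation $[e_4,e_3,e_2,e_1]$ carries $g$ to $g^\lambda$. Hence $f$ is projectively equivalent to $g^\lambda$, and since $f$ is nonsingular Lemma 4.1 gives $\lambda^4\neq 4^4$.

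For the converse, suppose $f_{T^{-1}}=g^\lambda$ for some $T\in GL_4(k)$ with $\lambda^4\neq 4^4$. By Lemma 4.1 the form $g^\lambda$, hence $f$, is nonsingular. Using $g^\lambda_{A^{-1}}=g^\lambda$ and the identity $f_{AB}=(f_B)_A$, I would compute
\[
f_{(TAT^{-1})^{-1}}=f_{TA^{-1}T^{-1}}=\bigl((f_{T^{-1}})_{A^{-1}}\bigr)_T=\bigl(g^\lambda_{A^{-1}}\bigr)_T=(g^\lambda)_T=f,
\]
so that $(TAT^{-1})\in\Paut(f)$. Since conjugation preserves order, $(TAT^{-1})$ has order $7$, and therefore $7\mid|\Paut(f)|$.

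Each step is routine once the results of this section are invoked, so I do not expect a serious obstacle. The only place that needs care is the forward reduction, where one must confirm that a nonsingular $\langle(B)\rangle$-invariant $f$ can sit over no standard cyclic group other than $\langle(A_6)\rangle$ with eigenvalue $1$; but this is precisely the content established before the statement (the singularity of every form in $\Form_{4,4}(A_4;\varepsilon^3)$ being the decisive exclusion), so the argument reduces to assembling those facts and transcribing the conjugation bookkeeping correctly.
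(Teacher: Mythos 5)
Your proof is correct and follows exactly the route the paper intends: the paper's own proof of this theorem is the single line ``argue as in the proof of Theorem 3.1,'' and you have carried out that argument, using Lemma 2.14 and the index table to isolate $\Form_{4,4}(A_6;1)$ as the unique nonsingular eigenspace, Lemma 4.1 for the condition $\lambda^4\neq 4^4$, and the standard conjugation computation with $f_{AB}=(f_B)_A$ for both directions.
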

\begin{proof}
We can argue as in the proof of Theorem 3.1.
\end{proof}

\begin{proposition}
Let $G$ be the projective automorphism group of a nonsingular quartic form. In the decomposition $\Pi p^{\nu(p)}$ of $|G|$ into prime factors it holds
that $\nu(7)\leq 1$.
\end{proposition}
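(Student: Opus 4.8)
The plan is to follow the proof of Proposition 3.2 verbatim, with the prime $5$ replaced by $7$ and the normal form $f^{\mu,\nu,\lambda}$ replaced by $g^\lambda$. Set $G=\Paut(f)$ and suppose, toward a contradiction, that $\nu(7)=d\geq 2$. Since $f$ is nonsingular, $G$ is finite \cite{mat}, and $7^2\mid |G|$; a Sylow $7$-subgroup of $G$ then contains a subgroup of order $7^2$, and every group of order $p^2$ is abelian, so $G$ has an abelian subgroup $H$ with $|H|=49$. As $7\mid|G|$, Theorem 4.2 allows us, after a projective change of coordinates, to take $f=g^\lambda$ with $\lambda^4\neq 4^4$. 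By the classification of the $\Z_7$-subgroups of $PGL_4(k)$ (Lemma 2.12) and the fact that $\langle(A)\rangle$ is the only one carrying a nonsingular eigenspace in $\Form_{4,4}$, we may further conjugate so that the generator $(A)$ of this group, $A=\diag[\varepsilon^4,\varepsilon^2,\varepsilon,1]$, lies in $H$; a diagonal rescaling fixing $(A)$ keeps $f$ of the form $g^\lambda$, so this normalization costs nothing.

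The first substantive step is to force every element of $H$ to be diagonal. Each $(B)\in H$ commutes with $(A)$, so $AB=\mu BA$ for some $\mu\in k^*$; comparing the $(i,j)$ entries gives $a_iB_{ij}=\mu a_jB_{ij}$, where $a_1,\dots,a_4$ are the distinct diagonal entries of $A$. Hence every nonzero $B_{ij}$ forces $a_i=\mu a_j$, i.e. the exponents satisfy $e_i\equiv c+e_j\pmod 7$ for the single value $c$ with $\mu=\varepsilon^c$. Because the exponent set $\{4,2,1,0\}=\{0,1,2,4\}$ is carried onto itself by no nonzero translation modulo $7$, the nonzero pattern of $B$ can support an invertible matrix only when $c=0$, i.e. $\mu=1$, and then $B$ is diagonal. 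I expect this arithmetic check --- that $\{0,1,2,4\}$ has trivial translation stabilizer in $\Z/7\Z$ --- to be the one place the specific prime enters, exactly as the set $\{0,1,2,3\}$ did for $p=5$ in Proposition 3.2.

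It then remains to solve the resulting monomial equations and bound $|H|$. Normalizing a diagonal $(B)\in H$ to $B=\diag[1,\beta,\gamma,\delta]$, the relation $g^\lambda_{B^{-1}}\sim g^\lambda$ reads $g^\lambda(x,\beta y,\gamma z,\delta t)=\rho\,g^\lambda$, and comparing the coefficients of $x^3y,\,y^3z,\,z^3x,\,t^4$ gives $\rho=\beta=\beta^3\gamma=\gamma^3=\delta^4$. These yield $\gamma=\beta^{-2}$, $\beta^7=1$ and $\delta^4=\beta$, so there are at most $7\cdot 4=28$ classes $(B)$; when $\lambda\neq 0$ the extra coefficient of $xyzt$ forces $\delta=\beta^2$ and trims this to $7$. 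In either case the diagonal part of $\Paut(g^\lambda)$ has order at most $28<49$, and since $H$ is contained in it we get $|H|\leq 28<49$, contradicting $|H|=49$. Therefore $\nu(7)\leq 1$.
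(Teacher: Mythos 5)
Your proof is correct and follows essentially the same route as the paper: reduce to $f=g^\lambda$ with $(A)\in H$ for an abelian subgroup $H$ of order $49$ obtained via Sylow's theorem, show that commuting with $(A)$ forces elements of $H$ to be diagonal, and then bound the diagonal stabilizer of $g^\lambda$ by $28<49$. The only difference is cosmetic: you spell out the translation-stabilizer argument on the exponent set $\{0,1,2,4\}\subset\Z/7\Z$ that the paper compresses into ``we can directly show,'' and you normalize the first diagonal entry to $1$ instead of the last, arriving at the same count of $28$.
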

\begin{proof}
Let $c=\nu(7)$ and $p=7$. Assume $G=\Paut(f)$ and $c\geq 2$, where $f$ is a nonsingular quartic form. By Sylow theorem \cite{hal} there exists a
subgroup $H$ of $G$ such that $|H|=p^2$. Since any $\Z_{p^2}$-invariant $g\in \Form_{4,4}$ is singular by Theorem 5.1, $H$ is isomorphic to $\Z_p\times \Z_p$.
By Theorem 4.2 we may assume $f=f^\lambda$ for some $\lambda\in k$ with $\lambda^4\not=4^4$ and $(A)\in H$, where
$A=\diag[\varepsilon^4,\varepsilon^2,\varepsilon,1]$ with $\ord(\varepsilon)=p$.  We can directly show that any $X\in GL_4(k)$ satisfying
$(A)(X)=(X)(A)$ is diagonal. Thus, if $(B)\in H$, we may assume $B=\diag[\alpha,\beta,\gamma,1]$, and it follows that
$\alpha^3\beta=\beta^3\gamma=\gamma^3\alpha=1$, hence $B=\diag[\gamma^{-3},\gamma^9,\gamma,1]$ with $\gamma^{28}=1$. Consequently
$|H|\leq 28<p^2$, a contradiction.
\end{proof}


We shall describe all faithful representations i.e., injective group homomorphisms of $PSL_2(\F_7)$ into $PGL_4(k)$, up to equivalence.
We denote the group $PSL_2(\F_7)$ by $G$ in this section. A system of defining relations with respect to three generators $x$, $y$ and $z$ of
$G$ \cite[\S6 in chapter 2]{suz} is
\[
 x^7=y^3=z^2=1,\ y^{-1}xy=x^2,\ z^{-1}yz=y^{-1},\ zxz=x^{-1}zx^{-1}.
\]
For $x$, $y$ and $z$ we may take $(a)$, $(b)$ and $(c)$, where
\begin{eqnarray*}
a&=&\left[\begin{array}{cc} 1&1\\ 0&1
                             \end{array}
\right],\ \
b=\left[\begin{array}{cc} 5&0\\ 0&3
                             \end{array}
\right],\ \
c=\left[\begin{array}{cc} 0&-1\\ 1&0
                             \end{array}\right].
\end{eqnarray*}
If there exists a faithful representation $\varphi:\ G\rightarrow PGL_4(k)$, then $(A)=\varphi((a))$, $(B)=\varphi((b))$ and $(C)=\varphi((c))$ in
$PGL_4(k)$ are distinct and they satisfy
\begin{eqnarray*}
&& \ord((A))=7,\ \ord((B))=3,\ \ord((C))=2,\ (B)^{-1}(A)(B)=(A)^2,\\
&& (C)^{-1}(B)(C)=(B)^{-1},\  (C)(A)(C)=(A)^{-1}(C)(A)^{-1}.
\end{eqnarray*}
Conversely, if there exist distinct $(A)$, $(B)$ and $(C)$ in $PGL_4(k)$ satisfying above conditions, there exists
a faithful representation $\varphi:\ G\rightarrow PGL_4(k)$ such that $(A)=\varphi((a))$, $(B)=\varphi((b))$ and $(C)=\varphi((c))$.
Let $\varepsilon\in k^*$ with $\ord(\varepsilon)=7$. By Lemma 2.15 a subgroup of $PGL_4(k)$ isomorphic to $\Z_7$ is conjugate to one of the following groups
$\langle (A_j)\rangle$ ($j\in [0,6]$), where
\begin{eqnarray*}
A_0&=&\diag[1,1,\varepsilon,1],\ A_1=\diag[1,1,\varepsilon,\varepsilon],\ A_2=\diag[1,1,\varepsilon,\varepsilon^2],\ A_3=\diag[1,1,\varepsilon,\varepsilon^3],\\
A_4&=&\diag[1,1,\varepsilon,\varepsilon^6],\ A_5=\diag[1,\varepsilon,\varepsilon^2,\varepsilon^3],\ A_6=\diag[1,\varepsilon,\varepsilon^2,\varepsilon^4].
\end{eqnarray*}
Note that $\diag[\varepsilon^i,\varepsilon^j,\varepsilon^\ell,\varepsilon^m]^n=\diag[\delta^i,\delta^j,\delta^\ell,\delta^m]$, where $\delta=\varepsilon^n$.
We must find $X$, $Y$ and $Z$ in $GL_4(k)$ with $\ord(X)=7$, $\ord(Y)=3$ and $\ord(Z)=2$ for $A$, $B$ and $C$.
Since $\langle (X)\rangle$ is conjugate to $\langle (A_j)\rangle$ for some $j$, we may assume $X=A_j^i$ for some $i\in [1,6]$, hence $X=A_j$, replacing $
\varepsilon^i$ by $\varepsilon$. One can easily see that there exists nonsingular $Y$ such that $XY\sim YX^2$ only if $j=6$.
Therefore we may assume $X=\diag[\varepsilon^4,\varepsilon^2,\varepsilon,1]$ and $Y=\diag[\alpha,\beta,\gamma,1][e_3,e_1,e_2,e_4]$ with $\alpha\beta\gamma=1$,
for $\ord(Y)=3$. Since $D^{-1}YD=[e_3,e_1,e_2,e_4]$ for $D=\diag[1,\beta\gamma,\gamma,1]$, we may assume $Y=[e_3,e_1,e_2,e_4]$.

It remains to fix $Z$ such that $\ord((Z))=2$, $YZ\sim ZY^{-1}$ and $ZXZ \sim X^{-1}ZX^{-1}$.
The condition $YZ=\lambda ZY^{-1}$ for some $\lambda\in k^*$ implies
\begin{eqnarray*}
 &&Z=\left[\begin{array}{cccc}\alpha& \beta& \gamma & \mu\\
                            \beta& \gamma & \alpha& \mu\lambda\\
                            \gamma&\alpha & \beta & \mu\lambda^2\\
                            \nu & \nu     & \nu   & \delta\end{array}
    \right]\diag[1,\lambda^2,\lambda,1],
\end{eqnarray*}
where $\lambda^3=1$ and $\delta=\delta\lambda$. Since the $(4,4)$ components of $ZXZ$ and $X^{-1}ZX^{-1}$ are
$(\varepsilon+\varepsilon^2+\varepsilon^4)\mu\nu+\delta^2$ and $\delta$ respectively, $\delta\not=0$, for $\delta=0$ implies that $Z$ is singular.
Thus $\lambda=1$, and we may assume $\delta=1$. If $\mu=0$, then $\nu=0$. Indeed, $\mu=0$ implies $ZXZ=X^{-1}ZX^{-1}$, hence if $\nu\not=0$,  the components
$(4,j)$ ($j\in [1,3]$) give
\begin{eqnarray*}
&&\varepsilon^4\alpha+\varepsilon^2\beta+\varepsilon\gamma+1=\varepsilon^3,\
\varepsilon^4\beta+\varepsilon^2\gamma+\varepsilon\alpha+1=\varepsilon^5,\
\varepsilon^4\gamma+\varepsilon^2\alpha+\varepsilon\beta+1=\varepsilon^6,
\end{eqnarray*}
therefore $(\varepsilon^4+\varepsilon^2+\varepsilon)(\alpha+\beta+\gamma)=\varepsilon^3+\varepsilon^5+\varepsilon^6-3$.
On the other hand, $(4,1)$ component of $Z^2\sim E_4$ yields $\alpha+\beta+\gamma+1=0$, a contradiction. Similarly, if $\nu=0$, then $\mu=0$. If
$\mu\nu\not=0$, then there exists $\eta\not=0$ such that  $\diag[\eta,\eta,\eta,1]^{-1}\ Z\ \diag[\eta,\eta,\eta,1]$ is a symmetric matrix whose
$(4,j)$ components $(j\in [1,3])$ are equal to $\tau$. Thus we must find $Z$ with the form
\begin{eqnarray*}
 Z=\left[\begin{array}{cccc}\alpha& \beta& \gamma & \tau\\
                            \beta& \gamma & \alpha& \tau\\
                            \gamma&\alpha & \beta & \tau\\
                            \tau & \tau   & \tau   & 1\end{array}
    \right]
\end{eqnarray*}
such that $Z^2\sim E_4$ and $ZXZ\sim X^{-1}ZX^{-1}$. We have
\begin{eqnarray*}
&&Z^2=\left[\begin{array}{cccc}\zeta_1&\zeta_2&\zeta_2&\zeta_3\\
                             \zeta_2&\zeta_1&\zeta_2&\zeta_3\\
                             \zeta_2&\zeta_2&\zeta_1&\zeta_3\\
                             \zeta_3&\zeta_3&\zeta_3&\zeta_4\end{array}
            \right],\ {\rm where\ }\left[\begin{array}{c}\zeta_1\\\zeta_2\\\zeta_3\\\zeta_4\end{array}\right]
                    =\left[\begin{array}{c}\alpha^2+\beta^2+\gamma^2+\tau^2\\
                                           \alpha\beta+\beta\gamma+\gamma\alpha+\tau^2\\
                                           (\alpha+\beta+\gamma+1)\tau\\
                                           3\tau^2+1\end{array}
                                                               \right],\\
&&X^{-1}ZX^{-1}=\left[\begin{array}{cccc}\varepsilon^6\alpha&\varepsilon\beta   &\varepsilon^2\gamma&\varepsilon^3\tau\\
                                       \varepsilon\beta   &\varepsilon^3\gamma&\varepsilon^4\alpha&\varepsilon^5\tau\\
                                       \varepsilon^2\gamma&\varepsilon^4\alpha&\varepsilon^5\beta &\varepsilon^6\tau\\
                                       \varepsilon^3\tau  &\varepsilon^5\tau  &\varepsilon^6\tau  &1\end{array}
                 \right],\ \ \
ZXZ=\left[\begin{array}{cccc}\varepsilon_1&\varepsilon_4&\varepsilon_5&\eta_1\\
                             \varepsilon_4&\varepsilon_2&\varepsilon_6&\eta_2\\
                             \varepsilon_5&\varepsilon_6&\varepsilon_3&\eta_3\\
                             \eta_1       &\eta_2       &\eta_3       &\eta_4\end{array}\right]\ {\rm with\ }\\
&&\left[\begin{array}{c}\varepsilon_1\\
                                    \varepsilon_2\\
                                    \varepsilon_3\\
                                    \varepsilon_4\\
                                    \varepsilon_5\\
                                    \varepsilon_6\end{array}\right]
                                  =\left[\begin{array}{c}\varepsilon^4\alpha^2+\varepsilon^2\beta^2+\varepsilon\gamma^2+\tau^2\\
                                                         \varepsilon^4\beta^2+\varepsilon^2\gamma^2+\varepsilon\alpha^2+\tau^2\\
                                                         \varepsilon^4\gamma^2+\varepsilon^2\alpha^2+\varepsilon\beta^2+\tau^2\\
                                                         \varepsilon^4\alpha\beta+\varepsilon^2\beta\gamma+\varepsilon\gamma\alpha+\tau^2\\
                                                         \varepsilon^4\gamma\alpha+\varepsilon^2\alpha\beta+\varepsilon\beta\gamma+\tau^2\\
                                                         \varepsilon^4\beta\gamma+\varepsilon^2\gamma\alpha+\varepsilon\alpha\beta+\tau^2\end{array}\right],\
\left[\begin{array}{c}\eta_1\\\eta_2\\\eta_3\\\eta_4\end{array}\right]=\left[\begin{array}{c}
(\varepsilon^4\alpha+\varepsilon^2\beta+\varepsilon\gamma+1)\tau\\
(\varepsilon^4\beta+\varepsilon^2\gamma+\varepsilon\alpha+1)\tau\\
(\varepsilon^4\gamma+\varepsilon^2\alpha+\varepsilon\beta+1)\tau\\
(\varepsilon^4+\varepsilon^2+\varepsilon)\tau^2+1\end{array}
               \right].
\end{eqnarray*}
\begin{lemma} $Z=Z(\tau,\alpha,\beta,\gamma)$ satisfies $Z^2\sim E_4$ and $ZXZ\sim X^{-1}ZX^{-1}$ if and only if
\begin{eqnarray*}
\left[
\begin{array}{c}\tau\\
                \alpha\\
                \beta\\
                \gamma\end{array}\right]&=&\left[\begin{array}{l}\pm\sqrt{2}\\
                                                        \varepsilon+\varepsilon^6\\
                                                        \varepsilon^2+\varepsilon^5\\
                                                        \varepsilon^3+\varepsilon^4\end{array}\right] {\rm \ or\ \ }
\left[\begin{array}{c}\tau\\
                  \alpha\\
                  \beta\\
                  \gamma\end{array}\right]=\left[\begin{array}{l}0\\
                                               \frac{1}{7}(-2-\varepsilon+2\varepsilon^2+2\varepsilon^5-\varepsilon^6)\\
                                               \frac{1}{7}(-2-\varepsilon^2+2\varepsilon^3+2\varepsilon^4-\varepsilon^5)\\
                                               \frac{1}{7}(-2+2\varepsilon-\varepsilon^3-\varepsilon^4+2\varepsilon^6)\end{array}\right].
\end{eqnarray*}
\end{lemma}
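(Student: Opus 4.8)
The plan is to translate both matrix conditions into explicit systems of polynomial equations in $\tau,\alpha,\beta,\gamma$, reading off the entries of $Z^2$, $ZXZ$ and $X^{-1}ZX^{-1}$ already computed above. Because $Z^2$ has the displayed shape, $Z^2\sim E_4$ holds precisely when $\zeta_2=0$, $\zeta_3=0$ and $\zeta_1=\zeta_4$, i.e. when $\alpha\beta+\beta\gamma+\gamma\alpha+\tau^2=0$, $(\alpha+\beta+\gamma+1)\tau=0$ and $\alpha^2+\beta^2+\gamma^2=2\tau^2+1$. Likewise $ZXZ\sim X^{-1}ZX^{-1}$ holds precisely when there is a scalar $\rho\in k^*$ making the six block entries $\varepsilon_1,\dots,\varepsilon_6$ and the four border entries $\eta_1,\dots,\eta_4$ equal to $\rho$ times the corresponding entries of $X^{-1}ZX^{-1}$.

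First I would use $\zeta_3=(\alpha+\beta+\gamma+1)\tau=0$ to split into the two cases $\tau\neq 0$ (so $\alpha+\beta+\gamma=-1$) and $\tau=0$, which will produce the first and second families of solutions respectively.

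In the case $\tau\neq 0$, the $(4,4)$-comparison gives $\rho=(\varepsilon+\varepsilon^2+\varepsilon^4)\tau^2+1$, and dividing the three border equations by $\tau$ yields
\begin{eqnarray*}
\varepsilon^4\alpha+\varepsilon^2\beta+\varepsilon\gamma+1&=&\rho\varepsilon^3,\\
\varepsilon\alpha+\varepsilon^4\beta+\varepsilon^2\gamma+1&=&\rho\varepsilon^5,\\
\varepsilon^2\alpha+\varepsilon\beta+\varepsilon^4\gamma+1&=&\rho\varepsilon^6.
\end{eqnarray*}
The coefficient matrix is the circulant $\mathrm{circ}(\varepsilon^4,\varepsilon^2,\varepsilon)$ with determinant $\varepsilon^3+\varepsilon^5+\varepsilon^6-3\neq 0$, so the system determines $\alpha,\beta,\gamma$ uniquely once $\rho$ is fixed. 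Writing $g_1=\varepsilon+\varepsilon^2+\varepsilon^4$ and $g_2=\varepsilon^3+\varepsilon^5+\varepsilon^6$ for the two Gaussian periods, which satisfy $g_1+g_2=-1$ and $g_1g_2=2$, summing the three equations and inserting $\alpha+\beta+\gamma=-1$ pins down $\rho$ and forces $\alpha=\varepsilon+\varepsilon^6$, $\beta=\varepsilon^2+\varepsilon^5$, $\gamma=\varepsilon^3+\varepsilon^4$; then $\alpha^2+\beta^2+\gamma^2=5$, so $\alpha^2+\beta^2+\gamma^2=2\tau^2+1$ gives $\tau^2=2$, i.e. $\tau=\pm\sqrt 2$. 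In the case $\tau=0$ the $(4,4)$-comparison forces $\rho=1$ and all border entries vanish, so the conditions collapse to the six quadratic equations of the $3\times 3$ block together with $\zeta_2=0$ and $\alpha^2+\beta^2+\gamma^2=1$; solving this system over $\QQ(\varepsilon)$ yields the second listed triple.

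I expect the $\tau=0$ branch to be the main obstacle: in contrast to the linear circulant system of the first case, here one must solve a genuinely quadratic, over-determined system and argue that its only solution is the stated one, which calls for careful reduction of exponents modulo $7$ and repeated use of $1+\varepsilon+\cdots+\varepsilon^6=0$. A tidy way to organize the whole argument is to establish the ``if'' direction by substituting each of the two triples directly into the formulas for $\zeta_i$, $\varepsilon_i$, $\eta_i$ (thereby checking, rather than rederiving, that the remaining block equations of the first case are consistent), and to obtain the ``only if'' direction from the case analysis above.
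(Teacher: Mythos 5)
Your overall architecture coincides with the paper's: split on $(\alpha+\beta+\gamma+1)\tau=0$; in the branch $\tau\neq 0$ extract a nonsingular linear system for $\alpha,\beta,\gamma$ from the border entries together with $\alpha+\beta+\gamma=-1$, obtain $\tau^2=2$ from $\zeta_1=\zeta_4$, and confirm the remaining block equations by substitution. Your variant of that branch (keeping the proportionality factor $\rho$, fixing it by summing the three border equations, and inverting the circulant $\mathrm{circ}(\varepsilon^4,\varepsilon^2,\varepsilon)$) is equivalent to the paper's ratio argument and is correct; I checked that the value $\rho=(3-g_1)/g_2=2g_1+1$ you get from the sum is consistent with $\rho=g_1\tau^2+1$ at $\tau^2=2$, and that the stated $\alpha,\beta,\gamma$ solve your system.

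The genuine gap is the $\tau=0$ branch, which you reduce to ``solving this system over $\QQ(\varepsilon)$'' but do not solve, and for which ``reduction of exponents modulo $7$'' is not the operative tool. The system of six quadratic equations $T[\alpha^2,\beta^2,\gamma^2]^t=[\varepsilon^6\alpha,\varepsilon^3\gamma,\varepsilon^5\beta]^t$, $T[\alpha\beta,\beta\gamma,\gamma\alpha]^t=[\varepsilon\beta,\varepsilon^4\alpha,\varepsilon^2\gamma]^t$ is not attacked head-on in the paper: it is linearized by multiplying through by the adjugate $\tilde T$ and then substituting the constraints $\alpha^2+\beta^2+\gamma^2=1$, $\alpha\beta+\beta\gamma+\gamma\alpha=0$ and $\gamma=\theta-\alpha-\beta$ with $\theta=\alpha+\beta+\gamma\in\{-1,+1\}$; this collapses everything to a $2\times 2$ linear system in $\alpha,\beta$ whose determinant $7(-3+\varepsilon+\varepsilon^2+\varepsilon^4)$ is nonzero, so each sign of $\theta$ yields at most one candidate. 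Your proposal misses both the linearization idea and the resulting case split: you must still rule out $\theta=+1$ (the paper does this by showing the $(1,1)$ entries of $ZXZ$ and $X^{-1}ZX^{-1}$ disagree for the $\theta=+1$ candidate) and then verify that the $\theta=-1$ candidate actually satisfies all of the original equations, since the linearized system is only a consequence of, not equivalent to, the quadratic one. Without these steps the uniqueness claim in the $\tau=0$ case — and hence the ``only if'' direction of the lemma — is not established.
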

\begin{proof}
First assume $\tau\not=0$.
Since the (1,4) component of $Z^2$ vanishes, $\alpha+\beta+\gamma=-1$. In addition the condition $ZXZ\sim X^{-1}ZX^{-1}$ implies
 $\varepsilon^2(ZXZ)_{14}=(ZXZ)_{24}$ and
$\varepsilon^3(ZXZ)_{14}=(ZXZ)_{34}$, namely
\[
\varepsilon^2(\varepsilon^4\alpha+\varepsilon^2\beta+\varepsilon\gamma+1)=\varepsilon^4\beta+\varepsilon^2\gamma+\varepsilon\alpha+1,\
\varepsilon^3(\varepsilon^4\alpha+\varepsilon^2\beta+\varepsilon\gamma+1)=\varepsilon^4\gamma+\varepsilon^2\alpha+\varepsilon\beta+1.
\]
Therefore
\begin{eqnarray*}
\left[\begin{array}{ccc}1     &1      &1   \\
                        \varepsilon^6-\varepsilon&0                        &\varepsilon^3-\varepsilon^2\\
                        1-\varepsilon^2          &\varepsilon^5-\varepsilon&0\end{array}\right]
\left[\begin{array}{c}\alpha\\\beta\\\gamma\end{array}\right]
&=&\left[\begin{array}{c}-1\\1-\varepsilon^2\\1-\varepsilon^3\end{array}\right],
\end{eqnarray*}
hence,
\begin{eqnarray*}
\left[\begin{array}{c}\alpha\\\beta\\\gamma\end{array}\right]
&=&\frac{1}{d}\left[\begin{array}{l}-2-\varepsilon^2+3\varepsilon^3-\varepsilon^4+2\varepsilon^5-\varepsilon^6\\
                                    -1\varepsilon+3\varepsilon^2-2\varepsilon^3-\varepsilon^4+2\varepsilon^6\\
                                    3+2\varepsilon-2\varepsilon^2-\varepsilon^3-\varepsilon^4-\varepsilon^5\end{array}\right]
=\left[\begin{array}{c}\varepsilon+\varepsilon^6\\
                       \varepsilon^2+\varepsilon^5\\
                       \varepsilon^3+\varepsilon^4\end{array}\right],
\end{eqnarray*}
where $d=-\varepsilon+3\varepsilon^4-\varepsilon^5-\varepsilon^6$ so that $d^{-1}=(1+4\varepsilon^3+\varepsilon^4+\varepsilon^5)/14$.
Since $Z^2\sim E_4$ implies $\alpha^2+\beta^2+\gamma^2+\tau^2=3\tau^2+1$, we have $\tau^2=2$. Now $Z^2=7E_4$, for $\alpha^2+\beta^2+\beta^2=5$.
Using equalities $\alpha\beta\gamma=1$, $\alpha\beta=\gamma+\alpha$, $\beta\gamma=\alpha+\beta$ and $\gamma\alpha=\beta+\gamma$, one can verify
$(ZXZ)_{ij}/(X^{-1}ZX^{-1})_{ij}=1+2(\varepsilon+\varepsilon^2+\varepsilon^4)$ for $[i,j]$ such that $1\geq i\geq j\geq 4$, namely
$ZXZ=(1+2\varepsilon+2\varepsilon^2+2\varepsilon^4)X^{-1}ZX^{-1}$.

Second assume $\tau=0$. Clearly $Z^2\sim E_4$ if and only if $\alpha^2+\beta^2+\gamma^2=1$ and $\alpha\beta+\beta\gamma+\gamma\alpha=0$, namely
$(\alpha+\beta+\gamma)^2=1$ and $\alpha\beta+\beta\gamma+\gamma\alpha=0$, hence $Z^2=E_4$ . Let $\theta=\alpha+\beta+\gamma\in \{-1,1\}$. Meanwhile
$ZXZ\sim X^{-1}ZX^{-1}$ if and only if $ZXZ=X^{-1}ZX^{-1}$, namely
\begin{eqnarray*}
T\left[\begin{array}{c}\alpha^2\\\beta^2\\\gamma^2\end{array}\right]
&=&\left[\begin{array}{c}\varepsilon^6\alpha\\\varepsilon^3\gamma\\\varepsilon^5\beta\end{array}\right],\
T\left[\begin{array}{c}\alpha\beta\\\beta\gamma\\\gamma\alpha\end{array}\right]
=\left[\begin{array}{c}\varepsilon\beta\\\varepsilon^4\alpha\\\varepsilon^2\gamma\end{array}\right],{\rm \ where\ }
T=\left[\begin{array}{lll}
               \varepsilon^4&\varepsilon^2&\varepsilon\\
               \varepsilon  &\varepsilon^4&\varepsilon^2\\
               \varepsilon^2&\varepsilon  &\varepsilon^4\end{array}\right].
\end{eqnarray*}
Denoting the adjugate matrix of $T$ by $\tilde{T}$,  we have
\begin{eqnarray*}
&&\det T\left[\begin{array}{c}\alpha^2\\\beta^2\\\gamma^2\end{array}\right]
              =\tilde{T}\left[\begin{array}{c}\varepsilon^6\alpha\\\varepsilon^3\gamma\\\varepsilon^5\beta\end{array}\right],\
\det T \left[\begin{array}{c}\alpha\beta\\\beta\gamma\\\gamma\alpha\end{array}\right]
              =\tilde{T}\left[\begin{array}{c}\varepsilon\beta\\\varepsilon^4\alpha\\\varepsilon^2\gamma\end{array}\right],{\rm \ where\ }\\
&&\tilde{T}=\left[\begin{array}{lll}
               \varepsilon-\varepsilon^3& \varepsilon^2-\varepsilon^6&\varepsilon^4-\varepsilon^5\\
               \varepsilon^4-\varepsilon^5&\varepsilon-\varepsilon^3 &\varepsilon^2-\varepsilon^6\\
               \varepsilon^2-\varepsilon^6&\varepsilon^4-\varepsilon^5&\varepsilon-\varepsilon^3\end{array}\right].
\end{eqnarray*}
Since $\alpha^2+\beta^2+\gamma^2=1$, $\alpha\beta+\beta\gamma+\gamma\alpha=0$,  $\gamma=\theta-\alpha-\beta$,
and $\det T=-3+\varepsilon^3+\varepsilon^5+\varepsilon^6$, we have
\begin{eqnarray*}
&&\left[\begin{array}{ll}
        2\varepsilon+\varepsilon^3-2\varepsilon^4-2\varepsilon^5+\varepsilon^6& 2\varepsilon^2-\varepsilon^3-2\varepsilon^4-\varepsilon^5+2\varepsilon^6\\
        2\varepsilon-\varepsilon^2-2\varepsilon^3-\varepsilon^4+2\varepsilon^5& \varepsilon+\varepsilon^2-2\varepsilon^4+2\varepsilon^5-2\varepsilon^6
         \end{array}\right]\left[\begin{array}{c}\alpha\\\beta\end{array}\right]\\
  &&=\left[\begin{array}{l}\theta(-1+\varepsilon+\varepsilon^2-\varepsilon^4-\varepsilon^5+\varepsilon^6)-3+\varepsilon^3+\varepsilon^5+\varepsilon^6\\
                          \theta(1+\varepsilon-\varepsilon^3-\varepsilon^4+\varepsilon^5-\varepsilon^6)\end{array}\right].
\end{eqnarray*}
Hence, denoting the determinant of the coefficient matrix of $\alpha,\beta$ by $\delta$ which equals to $7(-3+\varepsilon+\varepsilon^2+\varepsilon^4)$,
we have
\begin{eqnarray*}
&&\delta\left[\begin{array}{c}\alpha\\\beta\end{array}\right]
=\left[\begin{array}{ll}\varepsilon+\varepsilon^2-2\varepsilon^4+2\varepsilon^5-2\varepsilon^6&
                                 -2\varepsilon^2+\varepsilon^3+2\varepsilon^4+\varepsilon^5-2\varepsilon^6\\
                             -2\varepsilon+\varepsilon^2+2\varepsilon^3+\varepsilon^4-2\varepsilon^5&
                                  2\varepsilon+\varepsilon^3-2\varepsilon^4-2\varepsilon^5+\varepsilon^6\end{array}\right]\\
&&\times \left[\begin{array}{l}\theta(-1+\varepsilon+\varepsilon^2-\varepsilon^4-\varepsilon^5+\varepsilon^6)-3+\varepsilon^3+\varepsilon^5+\varepsilon^6\\
                          \theta(1+\varepsilon-\varepsilon^3-\varepsilon^4+\varepsilon^5-\varepsilon^6)\end{array}\right].
\end{eqnarray*}
Consequently
\[
\left[\begin{array}{c}
           \delta\alpha\\
           \delta\beta\\
           \delta\gamma\end{array}\right]
=7\left[ \begin{array}{l}
           \theta(-1+\varepsilon^4)-\varepsilon^2+\varepsilon^4-\varepsilon^5+\varepsilon^6\\
           \theta(-1+\varepsilon)+\varepsilon-\varepsilon^3-\varepsilon^4+\varepsilon^5\\
           \theta(-1+\varepsilon^2)-\varepsilon+\varepsilon^2+\varepsilon^3-\varepsilon^6\end{array}\right].
 \]
If $\theta=1$, then one can show that $\delta^2(X^{-1}ZX^{-1})_{11}=\delta^2\varepsilon^6\alpha$ and
$\delta^2(ZXZ)_{11}=\delta^2(\varepsilon^4\alpha^2+\varepsilon^2\beta^2+\varepsilon\gamma^2)$ are distinct, hence $X^{-1}ZX^{-1}=ZXZ$
does not hold. Therefore $\theta\not=1$. Assume $\theta=-1$. Since $\delta^{-1}=-(4+\varepsilon+\varepsilon^2+\varepsilon^4)/98$, it follows that
\begin{eqnarray*}
&&\left[\begin{array}{c}\alpha\\\beta\\\gamma\end{array}\right]=\left[\begin{array}{c}\alpha_{0}\\\beta_{0}\\\gamma_{0}\end{array}\right]
=\frac{1}{7}\left[\begin{array}{l}
      -2-\varepsilon+2\varepsilon^2+2\varepsilon^5-\varepsilon^6\\
      -2-\varepsilon^2+2\varepsilon^3+2\varepsilon^4-\varepsilon^5\\
      -2+2\varepsilon-\varepsilon^3-\varepsilon^4+2\varepsilon^6\end{array}\right], \\
&&\left[\begin{array}{c}
        \alpha^2\\
        \beta^2\\
        \gamma^2\end{array}\right]
=\frac{1}{7}\left[\begin{array}{l}
                  2-\varepsilon^2-\varepsilon^5\\
                  2-\varepsilon^3-\varepsilon^4\\
                  2-\varepsilon-\varepsilon^6\end{array} \right],\
  \left[\begin{array}{c}
        \alpha\beta\\
        \beta\gamma\\
        \gamma\alpha\end{array}\right]
=\frac{1}{7}\left[\begin{array}{l}
        \varepsilon-\varepsilon^3-\varepsilon^4+\varepsilon^6\\
        -\varepsilon+\varepsilon^2+\varepsilon^5-\varepsilon^6\\
        -\varepsilon^2+\varepsilon^3+\varepsilon^4-\varepsilon^5\end{array}\right].
\end{eqnarray*}
Now one can see easily that $Z^2=E_4$ and $X^{-1}ZX^{-1}=ZXZ$.
\end{proof}

We are now in a position to describe all faithful representations of $PSL_2(\F_7)$ up to equivalence.
Let $A=\diag[\varepsilon^4,\varepsilon^2,\varepsilon,1]$, $B=[e_3,e_1,e_2,e_4]$ where $e_i$ stands for the $i$-th column vector of the unit matrix $E_4$.
Denote by $C_{\pm\sqrt{2}}$ and $C_{0}$ the matrices $Z(\tau,\alpha,\beta,\gamma)$ for
$[\tau,\alpha,\beta,\gamma]=[\pm\sqrt{2},\varepsilon+\varepsilon^6,\varepsilon^2+\varepsilon^5,\varepsilon^3+\varepsilon^4]$ and
$[\tau,\alpha,\beta,\gamma]=[0,\alpha_{0},\beta_{0},\gamma_{0}]$ respectively. There exist faithful representations $\varphi_{\pm\sqrt{2}}$ of $PSL_2(\F_7)$ in
$PGL_4(k)$ such that:\\ $\varphi_{\pm\sqrt{2}}((a))=(A)$, $\varphi_{\pm\sqrt{2}}((b))=(B)$ and $\varphi_{\pm}((c))=(C_{\pm\sqrt{2}})$. There exists a
faithful representation $\varphi_{0}$ of $PSL_4(\F_7)$ in  $PGL_4(k)$ such that $\varphi_{0}((a))=(A)$, $\varphi_{0}((b))=(B)$ and $\varphi_{0}((c))=(C_{0})$.
Moreover, any faithful representation $\varphi$ of $PSL_2(\F_7)$ in $PGL_4(k)$ is equivalent to one of $\varphi_{\pm\sqrt{2}}$ and $\varphi_{0}$, i.e.,
there exists an inner automorphism $\sigma$ of $PGL_4(k)$ such that $\sigma\circ\varphi$ is equal to $\varphi_{\pm\sqrt{2}}$ or $\varphi_{0}$.
Note that $\varphi_{\sqrt{2}}$ and $\varphi_{-\sqrt{2}}$ are equivalent, for $\varphi_{-\sqrt{2}}=(\diag[-1,-1,-1,1])\varphi_{\sqrt{2}}(\diag[-1,-1,-1,1])$.
However $\varphi_{\sqrt{2}}$ and $\varphi_{0}$ are not equivalent,
for $G_{\sqrt{2}}=\Ima\ \varphi_{\sqrt{2}}$ does not fix a point in $P^3(k)$, while
$G_{0}=\Ima\ \varphi_{0}$ fixes the point $(0,0,0,1)$ in $P^3(k)$. Replacing $\varepsilon$ by $\varepsilon^i$ ($i\in [1,6]$), we obtain faithful
representations $\varphi_{\sqrt{2},i}$ and $\varphi_{0,i}$ of $PSL_2(\F_7)$. Obviously $\varphi_{\sqrt{2}}=\varphi_{\sqrt{2},1}$ and
$\varphi_{0}=\varphi_{0,1}$.
\begin{proposition} Let the representations $\varphi_{\sqrt{2}}$,  $\varphi_{\sqrt{2},i}$, $\varphi_{0}$ and $\varphi_{0,i}$ of $PSL_2(\F_7)$ in $PGL_4(k)$ be
as above, and $I_1=\{1,2,4\}$, $I_2=\{3,5,6\}$. Denote the automorphism of $PGL_4(k)$ sending $(A)$ to $({A'}^{-1})$ by $\kappa$, where $A'$ stands for
the transposed matrix of $A\in GL_4(k)$.\\
$(1)$ The representations $\varphi_{\sqrt{2},i}$ $(i\in I_1)$ are equivalent. The representations $\varphi_{\sqrt{2},i}$ $(i\in I_2)$ are equivalent.
The representations $\varphi_{\sqrt{2},1}$ and $\varphi_{\sqrt{2},6}$ are not equivalent, but $\varphi_{\sqrt{2},6}=\kappa\circ\varphi_{\sqrt{2},1}$ and
$\Ima\ \varphi_{\sqrt{2},1}=\Ima\ \varphi_{\sqrt{2},6}$. \\
$(2)$ The representations $\varphi_{0,i}$ $(i\in I_1)$ are equivalent. The representations $\varphi_{0,i}$ $(i\in I_2)$ are equivalent.
The representations $\varphi_{0,1}$ and $\varphi_{0,6}$ are not equivalent, but $\varphi_{0,6}=\kappa\circ\varphi_{0,1}$ and
$\Ima\ \varphi_{0,1}=\Ima\ \varphi_{0,6}$. \\
$(3)$ Any faithful representation of $PSL_2(\F_7)$ ind $PGL_4(k)$ is equivalent to one of $\varphi_{\sqrt{2},1}$, $\varphi_{\sqrt{2},6}$,
$\varphi_{0,1}$ and $\varphi_{0,6}$. \\
$(4)$ A subgroup of $PGL_4(k)$ isomorphic to $PSL_2(\F_7)$ is conjugate to $\Ima\ \varphi_{\sqrt{2}}$ or $\Ima\ \varphi_{0}$.
\end{proposition}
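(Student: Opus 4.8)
The plan is to treat the multiset of eigenvalue exponents of $\varphi_{\cdot,i}((a))$ as the basic invariant and to realize the index substitution $\varepsilon\mapsto\varepsilon^i$ by honest conjugations in $PGL_4(k)$. Writing exponents in $\Z/7\Z$, the element $\varphi_{\sqrt 2,i}((a))=(A_i)$ has exponent multiset $\{4i,2i,i,0\}$, which equals $\{0,1,2,4\}$ for $i\in I_1$ and $\{0,3,5,6\}$ for $i\in I_2$; here $I_1=\langle 2\rangle=\langle 4\rangle$ is the group of squares mod $7$, $I_2$ is its nontrivial coset, and $\{0,3,5,6\}=-\{0,1,2,4\}$. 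By Lemma 2.1 this already shows $(A_1)\not\sim(A_6)$ once a common translation is ruled out, which forces inequivalence of the $i=1$ and $i=6$ representations; the positive equivalences inside $I_1$ and inside $I_2$ will come from conjugation by $(B)$.

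For the equivalences I would use that $\varphi_{\sqrt 2,i}((b))=(B)$ for every $i$, so conjugation by $(B)$ equals precomposition with the inner automorphism $g\mapsto (b)g(b)^{-1}$ of $PSL_2(\F_7)$; by the defining relation $y^{-1}xy=x^2$ this sends $(a)$ to $(a)^4$. Hence conjugation by $(B)$ carries $\varphi_{\sqrt 2,i}$ to a representation agreeing with $\varphi_{\sqrt 2,4i}$ on $(a)$ and $(b)$. To pin down its value on $(c)$ I would invoke Lemma 4.4: since the conjugated representation sends $(a)$ to $(A_{4i})=(\diag[\varepsilon'^4,\varepsilon'^2,\varepsilon',1])$ with $\varepsilon'=\varepsilon^{4i}$ and $(b)$ to $(B)$, its value on $(c)$ is one of the three matrices $(C_{\pm\sqrt 2})$, $(C_0)$ built from the primitive root $\varepsilon^{4i}$. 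Because conjugation preserves the property of fixing a point of $\PP{3}(k)$ and $\Ima\varphi_{\sqrt 2}$ fixes none, the $(C_0)$ case is excluded; and since conjugating by $\diag[-1,-1,-1,1]$ (which fixes $A_{4i}$ and $B$ and flips $\tau$) shows $\varphi_{-\sqrt 2,4i}\cong\varphi_{\sqrt 2,4i}$, we get $\varphi_{\sqrt 2,i}\cong\varphi_{\sqrt 2,4i}$. As multiplication by $4$ is a $3$-cycle on each of $I_1$ and $I_2$, iterating yields the first two sentences of (1); the identical argument with the point-fixing alternative of Lemma 4.4 proves them for (2).

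For the inequivalence and the $\tau$-relation I would argue directly. If $\varphi_{\sqrt 2,1}\cong\varphi_{\sqrt 2,6}$, then by Lemma 2.1 there is $c\in\Z/7\Z$ with $\{c,c{+}1,c{+}2,c{+}4\}=\{0,3,5,6\}$; summing both sides gives $4c\equiv 0$, so $c\equiv 0$ and $\{0,1,2,4\}=\{0,3,5,6\}$, a contradiction. For $\varphi_{\sqrt 2,6}=\tau\circ\varphi_{\sqrt 2,1}$ I would check the generators: $\tau((A_1))=(A_1^{-1})=(A_6)=\varphi_{\sqrt 2,6}((a))$; $\tau((B))=(B)$ since $B$ is a permutation matrix; and $\tau((C_{\sqrt 2}))=(C_{\sqrt 2}^{-1})=(C_{\sqrt 2})$ since $C_{\sqrt 2}$ is symmetric with $C_{\sqrt 2}^2=7E_4$, while $C_{\sqrt 2}$ is also the $i=6$ matrix because $\sqrt 2\in\QQ$ and the entries $\varepsilon+\varepsilon^6,\varepsilon^2+\varepsilon^5,\varepsilon^3+\varepsilon^4$ are fixed by $\varepsilon\mapsto\varepsilon^6=\varepsilon^{-1}$. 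Equality of images then follows because $\tau$ fixes $(B)$ and $(C_{\sqrt 2})$ and sends $(A_1)$ to $(A_1)^{6}\in\langle(A_1)\rangle$, so $\tau$ maps the generators of $\Ima\varphi_{\sqrt 2,1}$ into that group and, being an involution, preserves it. The case of $\varphi_0$ is the same, using $C_0^2=E_4$ and the reality of $\alpha_0,\beta_0,\gamma_0$. I expect this $\tau$-bookkeeping — keeping straight that the contragredient simultaneously negates the exponents of $(A)$ yet fixes the symmetric involution $(C)$ — to be the one step needing genuine care.

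Finally, parts (3) and (4) are assembled from the normalization carried out just before the Proposition together with (1) and (2). That normalization, reading off which primitive seventh root actually occurs, shows every faithful representation of $PSL_2(\F_7)$ in $PGL_4(k)$ is conjugate to some $\varphi_{\pm\sqrt 2,i}$ or $\varphi_{0,i}$ with $i\in[1,6]$; by (1), (2) and $\varphi_{-\sqrt 2,i}\cong\varphi_{\sqrt 2,i}$ each of these is equivalent to one of $\varphi_{\sqrt 2,1},\varphi_{\sqrt 2,6},\varphi_{0,1},\varphi_{0,6}$, which is (3). For (4), given a subgroup $H\cong PSL_2(\F_7)$ I would regard an isomorphism onto $H$ as a faithful representation; by (3) it is equivalent, hence $H$ is conjugate, to one of the four images, and since $\Ima\varphi_{\sqrt 2,1}=\Ima\varphi_{\sqrt 2,6}$ and $\Ima\varphi_{0,1}=\Ima\varphi_{0,6}$, these four images comprise only the two groups $\Ima\varphi_{\sqrt 2}$ and $\Ima\varphi_0$, so $H$ is conjugate to one of them.
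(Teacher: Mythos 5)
Your argument is correct and follows the paper's proof in all essentials: conjugation by powers of $(B)$ for the equivalences within $I_1$ and $I_2$, comparison of eigenvalue-exponent multisets (Lemma 2.1) for the inequivalence of the $i=1$ and $i=6$ representations, a generator-by-generator check for $\varphi_{\cdot,6}=\tau\circ\varphi_{\cdot,1}$ and the $\tau$-invariance of the image, and the normalization preceding the Proposition for (3) and (4). The only divergence is local: where you identify the $(B)$-conjugate's value on $(c)$ indirectly via Lemma 4.4 and the fixed-point dichotomy (which tacitly also needs the harmless further conjugation by $\diag[\eta,\eta,\eta,1]$ used to reach the normal form, a matrix commuting with $A_{4i}$ and $B$), the paper simply computes $B^{-1}C_{\sqrt{2},i}B$ directly, using that $\varepsilon\mapsto\varepsilon^{i}$ permutes $\alpha,\beta,\gamma$ cyclically.
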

\begin{proof}
(1) Let $\sigma=(123)\in \SSS{4}$, so that $B^{-1}=\hat{\sigma}$.
Substituting $\varepsilon^i$ for $\varepsilon$ which determines $A$ and $C_{\sqrt{2}}$, we obtain matrices $A_i$ and $C_{\sqrt{2},i}$ as follows.
\begin{eqnarray*}
&&A_1=\diag[\varepsilon^4,\varepsilon^2,\varepsilon,1],\ A_2=\diag[\varepsilon,\varepsilon^4,\varepsilon^2,1],\ A_3=\diag[\varepsilon^5,\varepsilon^6,\varepsilon^3,1],\\
&&A_4=\diag[\varepsilon^2,\varepsilon,\varepsilon^4,1],\ A_5=\diag[\varepsilon^6,\varepsilon^3,\varepsilon^5,1],\ A_6=\diag[\varepsilon^3,\varepsilon^5,\varepsilon^6,1],\\
&&C_{\sqrt{2},1}=C(\sqrt{2},\alpha,\beta,\gamma),\ C_{\sqrt{2},2}=C(\sqrt{2},\beta,\gamma,\alpha,),\ C_{\sqrt{2},3}=C(\sqrt{2},\gamma,\alpha,\beta),\\
&&C_{\sqrt{2},4}=C(\sqrt{2},\gamma,\alpha,\beta),\ C_{\sqrt{2},5}=C(\sqrt{2},\beta,\gamma,\alpha,),\ C_{\sqrt{2},6}=C(\sqrt{2},\alpha,\beta,\gamma).
\end{eqnarray*}
Therefore, $B^{-i}BB^i=B$ for any integer $i$, and
\begin{eqnarray*}
&& B^{-1}A_1B=A_2,\ B^{-1}C_{\sqrt{2},1}B=C_{\sqrt{2},2},\ \ B^{-2}A_1B^2=A_4,\ B^{-2}C_{\sqrt{2},1}B^2=C_{\sqrt{2},4},\\
&& B^{-1}A_6B=A_5,\ B^{-1}C_{\sqrt{2},6}B=C_{\sqrt{2},5},\ \ B^{-2}A_6B^2=A_3,\ B^{-2}C_{\sqrt{2},6}B^2=C_{\sqrt{2},3}.
\end{eqnarray*}
In addition $\kappa((A_1))=(A_6)=(A_1)^6$, $\kappa((B))=(B)$, and $\kappa((C_{\sqrt{2},1}))=(C_{\sqrt{2},1})=(C_{\sqrt{2},6})$, so that
$\varphi_{\sqrt{2},6}=\kappa\circ\varphi_{\sqrt{2},1}$. In particular, $\kappa((A_1))$, $\kappa((B))$ and $\kappa((C_{\sqrt{2},1}))$ belong to
$\Ima\ \varphi_{\sqrt{2},1}$, hence the finite group $\Ima\ \varphi_{\sqrt{2},1}$ is $\kappa$-invariant. Let
$[a_1,...,a_4]=[\varepsilon^4,\varepsilon^2,\varepsilon,1]$ and $[b_1,...,b_4]=[\varepsilon^3,\varepsilon^5,\varepsilon^6,1]$. Then there exists
a $\sigma\in \SSS{4}$ such that $[a_1,...,a_4]\sim [b_{\sigma(1)},...,b_{\sigma(4)}]$ if and only if $\{a_1,...,a_4\}={b_{i}/b_{j}:\ i\in [1,4]}$
for some $j\in [1,4]$. The last equality is impossible, for
\[
[b_1,...,b_4]=[\varepsilon^3,\varepsilon^5,\varepsilon^6,1]\sim [\varepsilon^4,\varepsilon^6,1,\varepsilon]
\sim[\varepsilon^5,1,\varepsilon,\varepsilon^2]\sim[1,\varepsilon^2,\varepsilon^3,\varepsilon^4].
\]
Consequently no $T\in GL_4(k)$ satisfies $(A_1)=(T^{-1})(A_6)(T)$ by Lemma 2.1. Hence $\varphi_{\sqrt{2},1}$ and $\varphi_{\sqrt{2},6}$ are not
equivalent.
One can show (2) similarly. Now (3) and (4) follow, for $\varphi_{\sqrt{2}}=\varphi_{\sqrt{2},1}$ and $\varphi_{0}=\varphi_{0,1}$.
\end{proof}

\begin{remark}
Edge describes a representation of $PSL_2(\F_7)$ in $PGL_4(k)$ equivalent to $\varphi_{\sqrt{2}}$ {\rm \cite[p.166]{edg}}. The representation $\varphi_{0}$ is
essentially the representation of $PSL_2(\F_7)$ in $PGL_3(k)$ {\rm \cite[p.54]{lev}}.
\end{remark}

Let $\delta=-\varepsilon-\varepsilon^2-\varepsilon^4+\varepsilon^3+\varepsilon^5+\varepsilon^6$, hence $\delta^2=-7$. Then
\[
 \delta C_{0}= \left[\begin{array}{cccc}
                    \varepsilon-\varepsilon^6&\varepsilon^2-\varepsilon^5&\varepsilon^4-\varepsilon^3&0\\
                    \varepsilon^2-\varepsilon^5&\varepsilon^4-\varepsilon^3&\varepsilon-\varepsilon^6&0\\
                    \varepsilon^4-\varepsilon^3&\varepsilon-\varepsilon^6&\varepsilon^2-\varepsilon^5  &0\\
                    0                          &0                       &0                           &\delta
                  \end{array}\right].
\]
For a square matrix $X=[x_{ij}]$ ($i,j\in [1,4]$) $\lceil X\rceil$ stands for the square matrix $[x_{i,j}]$ ($i,j\in [1,3]$). It is easy to see that
$(\lceil A_i\rceil)$, $(\lceil B\rceil)$, $(\lceil \delta C_{0}\rceil)\in PGL_3(k)$ satisfy the defining relations of $PSL_2(\F_7)$ for $i\in \{1,6\}$.
There exists a faithful representation $\psi_i$ of $PSL_2(\F_7)$ such that $\psi_i((a))=(\lceil A_i\rceil)$, $\psi((b))=(\lceil B\rceil)$ and
$\psi((c))=(\lceil \delta C_{0}\rceil)$. Note that $\psi_i(PSL_2(\F_7))$ fixes no points of $P^2(k)$. One can easily see that 1) the representations
$\psi_1$ and $\psi_6$ are not equivalent, 2) $\psi_6=\kappa'\circ\psi_1$,  3) $\Ima\ \psi_6=\Ima\ \psi_1$, and $\Ima\ \psi_1=\Ima\ \psi_6$. Here $\kappa'$ is
the automorphism of $PGL_3(k)$ such that $\kappa'((A))=(A'^{-1})$ for $A\in GL_3(k)$, where $A'$ is the transposed matrix of $A$.
\begin{corollary}
A faithful representation of $PSL_2(\F_7)$ in $PGL_3(k)$ is equivalent to the representation $\psi_1$ or $\psi_6$.
In particular, a subgroup of $PGL_3(k)$ isomorphic to $PSL_2(\F_7)$ is conjugate to $\Ima\ \psi_1$. There exist no faithful representations
of $PSL_2(\F_7)$ in $PGL_2(k)$.
\end{corollary}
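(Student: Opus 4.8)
The plan is to deduce everything from Proposition 4.9 by relating $PGL_3(k)$ (and $PGL_2(k)$) to $PGL_4(k)$ through a block embedding. For $M\in GL_3(k)$ let $\hat M=\left[\begin{smallmatrix}M&0\\0&1\end{smallmatrix}\right]\in GL_4(k)$; then $(M)\mapsto(\hat M)$ is an injective homomorphism $PGL_3(k)\hookrightarrow PGL_4(k)$ under which every image collineation fixes $P_0=(0,0,0,1)$. Note that $\lceil\hat M\rceil=M$, and that by the construction preceding the corollary $\varphi_{0,i}=\widehat{\psi_i}$ for $i\in\{1,6\}$. Given a faithful representation $\psi:PSL_2(\F_7)\rightarrow PGL_3(k)$, first I would form $\widehat{\psi}$, a faithful representation in $PGL_4(k)$ whose image fixes $P_0$. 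By Proposition 4.9(3) it is equivalent to one of $\varphi_{\sqrt2,1},\varphi_{\sqrt2,6},\varphi_{0,1},\varphi_{0,6}$; since equivalent representations have conjugate images and conjugation preserves the existence of a fixed point in $P^3(k)$, while $\Ima\,\varphi_{\sqrt2,1}=\Ima\,\varphi_{\sqrt2,6}$ fixes no point of $P^3(k)$, the representation $\widehat\psi$ must be equivalent to $\varphi_{0,1}=\widehat{\psi_1}$ or $\varphi_{0,6}=\widehat{\psi_6}$.

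The hard part will be to descend a $PGL_4(k)$-equivalence $\widehat\psi\sim\widehat{\psi_i}$ to a $PGL_3(k)$-equivalence $\psi\sim\psi_i$. I would first check that $P_0$ is the unique fixed point of $\widehat{\psi_i}$ in $P^3(k)$: as each $\widehat{\psi_i}(g)$ is block-diagonal, a fixed point $(x,y,z,t)$ with $(x,y,z)\neq0$ would give a point of $P^2(k)$ fixed by all of $\psi_i(PSL_2(\F_7))$, contradicting the fact (established before the corollary) that $\psi_i(PSL_2(\F_7))$ fixes no point of $P^2(k)$. Hence if $(T)$ conjugates $\widehat{\psi_i}$ to $\widehat\psi$, then, since $(T)$ carries fixed points of $\widehat\psi$ to fixed points of $\widehat{\psi_i}$ and $P_0$ is fixed by $\widehat\psi$, we must have $(T)P_0=P_0$; scaling $T$ so that its last column is $(0,0,0,1)^t$, we get $T=\left[\begin{smallmatrix}T_3&0\\w&1\end{smallmatrix}\right]$ with $T_3\in GL_3(k)$. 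Writing the block-diagonal representative of $\widehat{\psi_i}(g)$ as $\left[\begin{smallmatrix}M_g&0\\0&1\end{smallmatrix}\right]$, a direct block computation of $T^{-1}\left[\begin{smallmatrix}M_g&0\\0&1\end{smallmatrix}\right]T$ shows that the $(4,4)$ entry forces the scalar of proportionality to be $1$ and the upper-left block to be $T_3^{-1}M_gT_3$; thus $\psi(g)=(T_3)^{-1}\psi_i(g)(T_3)$ for every $g$, i.e. $\psi\sim\psi_i$. This proves the first assertion.

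The remaining two assertions are then short. For the second, if $H\subset PGL_3(k)$ is isomorphic to $PSL_2(\F_7)$, any isomorphism is a faithful representation, hence equivalent to $\psi_1$ or $\psi_6$, so $H$ is conjugate to $\Ima\,\psi_1$ or to $\Ima\,\psi_6=\Ima\,\psi_1$; either way $H$ is conjugate to $\Ima\,\psi_1$. For the third, I would apply the same embedding one dimension lower: a faithful representation $\chi:PSL_2(\F_7)\rightarrow PGL_2(k)$ would yield, via $PGL_2(k)\hookrightarrow PGL_3(k)$, a faithful representation $\widehat\chi$ in $PGL_3(k)$ whose image fixes $(0,0,1)\in P^2(k)$; but by the first assertion $\widehat\chi$ is equivalent to $\psi_1$ or $\psi_6$, whose images fix no point of $P^2(k)$, a contradiction. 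Hence no faithful representation of $PSL_2(\F_7)$ in $PGL_2(k)$ exists.
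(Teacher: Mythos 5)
Your proposal is correct and follows essentially the same route as the paper: embed the given representation block-diagonally into $PGL_4(k)$, invoke Proposition 4.9 together with the fixed-point distinction to land on $\varphi_{0,1}$ or $\varphi_{0,6}$, descend the conjugating matrix to $GL_3(k)$, and rule out $PGL_2(k)$ by the same embedding trick. The only (harmless) variation is in the descent step, where you use uniqueness of the fixed point $P_0$ of $\widehat{\psi_i}$ plus a block computation, while the paper normalizes $S$ via the common column \emph{and} row eigenvectors of the three generators to get $S=T\oplus 1$ outright; both rest on the same fact that $\psi_i(PSL_2(\F_7))$ fixes no point of $P^2(k)$.
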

\begin{proof}
Let $\psi'$ be a faithful representation of $PSL_2(\F_7)$ in  $PGL_3(k)$ such that $\psi'((a))=(X)$, $\psi'((b))=(Y)$ and $\psi'((c))=(Z)$
with $\ord(X)=7$, $\ord(Y)=3$ and $\ord(Z)=2$, where $X,Y,Z\in GL_4(k)$.
Then we obtain a faithful representation $\varphi'$ of $PSL_2(\F_7)$  in $PGL_4(k)$ such that $\varphi'((a))=(X\oplus 1)$,
$\varphi'((b))=(Y\oplus 1)$, and $\varphi'((c))=(Z\oplus 1)$. Since $\varphi'(PSL_2(\F_7))$ fixes the point $(0,0,0,1)$, the representations
$\varphi'$ is equivalent to $\varphi_{0,i}$ for some $i\in\{1,6\}$: there exists an $S\in GL_4(k)$ such that $S^{-1}A_iS\sim X\oplus 1$,
$S^{-1}BS\sim Y\oplus 1$ and $S^{-1}C_{0,i}S\sim Z\oplus 1$. Any nonzero column vector $u\in k^4$ such that
$A_i u\sim u$, $Bu\sim u$ and $C_{0,i}u\sim u$ is proportional to $e_4$, for $A_i$ has four 1-dimensional eigenspaces $\langle e_j\rangle$ ($j\in [1,4]$).
Consequently the common eigenspace of $\{S^{-1}A_iS,S^{-1}BS,S^{-1}C_{0,i}S\}$ is $\langle S^{-1}e_4\rangle$, while it coincides with the common eigenspace
of $\{X\oplus 1,Y\oplus 1,Z\oplus 1\}$ which contain $e_4$. Thus $S^{-1}e_4\sim e_4$.
Similarly any nonzero row vector $v\in k^4$ such that $vA_i\sim v$, $vB\sim v$ and $vC_{0,i}\sim v$ is proportional to ${e_4}'$, the fourth row vector
 of $E_4$. Consequently $e_4'S\sim e_4'$. Thus we may assume $S=T\oplus 1$ for some $T\in GL_3(k)$. Now
$T^{-1}\lceil A_i\rceil T\sim X$, $T^{-1}\lceil B\rceil T\sim Y $ and $T^{-1}\lceil \delta C_{0,i}\rceil T\sim Z$. Hence $\psi_i$ and $\psi'$ are equivalent.
The equality $\Ima\ \psi_1=\Ima\ \psi_6$ is already shown.
Suppose that there exists a faithful representation $\eta$ of $PSL_2(\F_7)$ in  $PGL_2(k)$. Then we obtain a faithful representation $\psi'$ of
$PSL_2(\F_7)$ in  $PGL_3(k)$ in a trivial manner. However $\psi'(PSL_2(\F_7))$ fixes the point $(0,0,1)$, a contradiction.
\end{proof}

Let $A_3$, $B_3$ and $C_3\in SL_3(k)$ be as follows.
\begin{eqnarray*}
A_3=\diag[\varepsilon^4,\varepsilon^2,\varepsilon],\ B_3=\left[\begin{array}{ccc}0&1&0\\ 0&0&1\\ 1&0&0\end{array}\right],\
C_3=\frac{1}{\sqrt{-7}}\left[\begin{array}{ccc}
                             \varepsilon-\varepsilon^6& \varepsilon^2-\varepsilon^5& \varepsilon^4-\varepsilon^3\\
                             \varepsilon^2-\varepsilon^5& \varepsilon^4-\varepsilon^3& \varepsilon-\varepsilon^6\\
                             \varepsilon^4-\varepsilon^3& \varepsilon-\varepsilon^6& \varepsilon^2-\varepsilon^5\end{array}\right],
\end{eqnarray*}
where $\sqrt{-7}=-\varepsilon-\varepsilon^2+\varepsilon^3-\varepsilon^4+\varepsilon^5+\varepsilon^6$. We can show that
\begin{eqnarray*}
&&\ord(A_3)=7,\ \ord(B_3)=3,\ \ord(C_3)=2,\ {B_3}^{-1}A_3B_3={A_3}^2,\\
&&{C_3}^{-1}BC_3={B_3}^{-1},\ C_3A_3C_3={A_3}^{-1}C_3{A_3}^{-1}.
\end{eqnarray*}
So there exists a faithful representations $\Psi$ of $G$ in $GL_3(k)$ such that $\Psi((a))=A_3$, $\Psi((b))=B_3$, and $\Psi((c))=C_3$.
This representation  gives rise to the faithful representation $\Psi_1$ of $G$ in $GL_4(k)$ such that $\Psi_1(g)=\Psi(g)\oplus 1$ for any $g\in G$.
Let $\psi=\pi_3\circ \Psi$ and $\psi_1=\pi_4\circ \Psi_1$. Clearly they are faithful representations of $G$, and $\psi_1=\varphi_{0,1}$.

\begin{proposition}
The nonsingular eigenspace of $\varphi_{0,1}(PSL_2(\F_7))$ in $\Form_{4,4}$ is $\langle x^3y+y^3z+z^3x,t^4\rangle$. Any quartic form
$a(x^3y+y^3z+z^3x)+bt^4$ with $ab\not=0$ is projectively equivalent to $x^3y+y^3z+z^3x+t^4$.
\end{proposition}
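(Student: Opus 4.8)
The plan is to strip off the three generators of $G_0 := \varphi_{0,1}(PSL_2(\F_7))$ one at a time, namely $(A)$ with $A = \diag[\varepsilon^4,\varepsilon^2,\varepsilon,1]$, $(B)$ with $B = [e_3,e_1,e_2,e_4]$, and $(C_0)$ the matrix of Lemma 4.4 with $\tau = 0$. First I would use $(A)$: the index of a monomial $x^{i_1}y^{i_2}z^{i_3}t^{i_4}$ for $A$ is $4i_1+2i_2+i_3 \bmod 7$, and among the singularity-checking monomials $x_ix_j^3$ exactly $x^3y$, $y^3z$, $z^3x$, $t^4$ have index $0$, one in each column of the matrix $[x_ix_j^3]$, whereas for every index $j\neq 0$ some column is empty. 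By Corollary 2.11 the only eigenspace of $\langle(A)\rangle$ that can contain a nonsingular form is therefore $\Form_{4,4}(A;1) = \langle x^3y,\ y^3z,\ z^3x,\ t^4,\ xyzt\rangle =: W$, so every nonsingular $G_0$-invariant $f$ lies in $W$.

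Next I would impose $(B)$-invariance inside $W$. Since $B$ fixes the $t$-coordinate, $\sigma_{B^{-1}}$ cyclically permutes $x^3y$, $y^3z$, $z^3x$ and fixes $t^4$ and $xyzt$. Any form in $W$ whose $t^4$-coefficient vanishes is singular at $(0,0,0,1)$ (all four partials vanish there), so nonsingularity forces a nonzero $t^4$-coefficient; as $t^4$ has $\sigma_{B^{-1}}$-eigenvalue $1$ this pins the character to $\rho(B)=1$, and the corresponding eigenspace is $\langle K,\ t^4,\ xyzt\rangle$ with $K := x^3y+y^3z+z^3x$.

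The decisive step is $(C_0)$-invariance. From the displayed matrix $\delta C_0$ (with $\delta = \sqrt{-7}$) one reads off $C_0 = C_3 \oplus 1$, acting on $(x,y,z)$ through $C_3$ and fixing $t$. For a candidate $f = aK + bt^4 + c\,xyzt$ I would compute $f_{C_0^{-1}} = a\,\sigma_{C_3^{-1}}(K) + bt^4 + c\,t\,\sigma_{C_3^{-1}}(xyz)$; the coefficient of $x^3$ in $\sigma_{C_3^{-1}}(xyz)$ is the product of the first-column entries of $C_3$, namely the nonzero number $(\varepsilon-\varepsilon^6)(\varepsilon^2-\varepsilon^5)(\varepsilon^4-\varepsilon^3)/(\sqrt{-7})^3$. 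Hence $f_{C_0^{-1}}$ contains the monomial $x^3t$ with coefficient a nonzero multiple of $c$; but $x^3t$ has $A$-index $5$ and so is not in $W$, whereas $f_{C_0^{-1}} = \rho(C_0)f \in W$. This forces $c = 0$, i.e. $f \in \langle K, t^4\rangle$. Conversely, $t^4$ is $G_0$-fixed and $K$ is the Klein quartic, invariant under $\psi_1(PSL_2(\F_7))$ (invariance under $A_3$ and $B_3$ is the index computation above, and invariance under $C_3$ is the classical $PSL_2(\F_7)$-invariance of $x^3y+y^3z+z^3x$), so $\langle K, t^4\rangle$ is $G_0$-invariant and contains the nonsingular form $g^0 = K + t^4$ by Lemma 4.1; this gives the first assertion.

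For the second assertion, given $ab \neq 0$ I would take a fourth root $s = (a/b)^{1/4}$, which exists since $k$ is algebraically closed, and set $T = \diag[1,1,1,s]$. Then $\sigma_{T^{-1}}(aK + bt^4) = aK + bs^4 t^4 = a(K + t^4) \sim K + t^4$, so $aK+bt^4$ is projectively equivalent to $x^3y+y^3z+z^3x+t^4$. The only genuinely delicate points are reading off $C_0 = C_3\oplus 1$ together with the nonzero $x^3$-coefficient in $\sigma_{C_3^{-1}}(xyz)$, and the invariance of $K$ under $C_3$, which I would either cite as Klein's invariant or confirm by the explicit $3\times 3$ multiplication; the remaining reductions are the index/eigenvalue routine already used in the $\Z_7$ analysis.
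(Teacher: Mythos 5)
Your proposal is correct and follows essentially the same route as the paper: reduce to $\Form_{4,4}(A;1)=\langle x^3y,y^3z,z^3x,t^4,xyzt\rangle$ by the index computation, use $(B)$ together with the nonvanishing of the $t^4$-coefficient to force the shape $a(x^3y+y^3z+z^3x)+bt^4+c\,xyzt$, kill $c$ via $(C_0)$, and rescale $t$ for the normal form. Your $x^3t$-coefficient argument for $c=0$ is a clean explicit version of what the paper merely asserts, and the verification you defer --- that $(x^3y+y^3z+z^3x)_{C_0^{-1}}$ equals the Klein quartic exactly and not just up to a scalar, which is what makes $\langle x^3y+y^3z+z^3x,\ t^4\rangle$ a single eigenspace --- is precisely the computation the paper carries out in detail with the evaluations $g'_{400}=0$, $g'_{310}=49$, etc., so nothing essential is missing.
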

\begin{proof}
Let $A=\diag[\varepsilon^4,\varepsilon^2,\varepsilon,1]$, $B=[e_3,e_1,e_2,e_4]$ and $C=\delta C_{0}$ whose first row takes the form $[\alpha,\beta,\gamma,0]$
with $\alpha=\varepsilon-\varepsilon^6$, $\beta=\varepsilon^2-\varepsilon^5$, $\gamma=\varepsilon^4-\varepsilon^3$ and $\delta=-(\alpha+\beta+\gamma)$.
Assume that a nonsingular quartic form $f(x,y,z,t)$ satisfies 1) $f_{A^{-1}}\sim f$, 2) $f_{B^{-1}}\sim f$ and 3) $f_{C^{-1}}\sim f$.
Any $h\in \Form_{4,4}(A;\varepsilon^j)$ ($j\in [0,6]$) is singular unless $j=0$. So $f\in \langle x^3y,y^3z,z^3x,t^4,xyzt\rangle$.
By the condition 2) the nonsingular $f$ takes the form $a(x^3y+y^3z+z^3x)+bt^4+cxyzt$ ($ab\not=0$), where $c=0$ by the condition 3).
Let $x_1,x_2,x_3,a_1,a_2,a_3$ be indeterminates, $F(x,y,z)=x^3y+y^3z+z^3x$, $y_i=\sum_{j=1}^3a_{\sigma^{i-1}(j)}x_j$ where $\sigma=(123)\in \SSS{3}$, and
$F(y_1,y_2,y_3)=G(x_1,x_2,x_3,a_1,a_2,a_3)=\sum_{i_1+i_2+i_3=4}g_{i_1i_2i_3}(a_1,a_2,a_3)x_1^{i_1}x_2^{i_2}x_3^{i_3}$. Then
$g_{[i_1,i_2,i_3]\sigma}=g_{[i_1,i_2,i_3]}$, for $(\sigma G)(x_1,x_2,x_3)=\sum_{i_1+i_2+i_3=4}g_{[i_1,i_2,i_3]\sigma}x_1^{i_1}x_2^{i_2}x_3^{i_3}$ and
\begin{eqnarray*}
(\sigma G)(x_1,x_2,x_3)=(\sigma^2 F)(y_1,y_2,y_3)=F(y_1,y_2,y_3)=G(x_1,x_2,x_3).
\end{eqnarray*}
Clearly $F_{C^{-1}}(x,y,z)=G(x,y,z,\alpha,\beta,\gamma)$.
Setting $a=a_1$, $b=a_2$ and $c=a_3$, we have
\begin{eqnarray*}
&&g_{400}=a^3b+b^3c+c^a,\ g_{310}=ab^3+bc^3+ca^3+3(a^2b^2+b^2c^2+c^2a^2),\\
&&g_{301}=a^4+b^4+c^4+3(a^2bc+ab^2c+abc^2),\\
&&g_{220}=3(ab^3+bc^3+ca^3+a^2bc+ab^2c+abc^2),\\
&&g_{211}=3(a^3b+b^3c+c^a+a^2b^2+b^2c^2+c^2a^2)+6(a^2bc+ab^2c+abc^2).
\end{eqnarray*}
Put $p=\varepsilon+\varepsilon^6$, $q=\varepsilon^2+\varepsilon^5$, $r=\varepsilon^3+\varepsilon^5$. Then
\begin{eqnarray*}
&&p^2=2+q,\ q^2=2+r,\ r^2=2+p,\ pq=-1-q,\ qr=-1-r,\ rp=-1-p,\\
&&\alpha^2=-2+q,\ \beta^2=-2+r,\ \gamma^2=-2+p,\ \alpha\beta=r-p,\ \beta\gamma=p-q,\ \gamma\alpha=q-r.
\end{eqnarray*}
Using these equalities, we obtain
\begin{eqnarray*}
&&\alpha^4+\beta^4+\gamma^4=21,\ \alpha^3\beta+\beta^3\gamma+\gamma^3\alpha=0,\ \alpha\beta^3+\beta\gamma^3+\gamma\alpha^3=7,\\
&&\alpha^2\beta^2+\beta^2\gamma^2+\gamma^2\alpha^2=14,\ \alpha^2\beta\gamma+\alpha\beta^2\gamma+\alpha\beta\gamma^2=-7.
\end{eqnarray*}
Hence, we can evaluate $g'_{i_1i_2i_3}=g_{i_1i_2i_3}(\alpha,\beta,\gamma)$ as follows.
\begin{eqnarray*}
&&g'_{400}=0,\ g'_{310}=49,\ g'_{301}=0,\ g'_{220}=0,\ g'_{211}=0.
\end{eqnarray*}
Consequently $f_{C^{-1}}=49f$, for $\delta^4=49$.\\
Obviously $f_{T^{-1}}=x^3y+y^3z+z^3x+t^4$ for $T=\diag[a^{-1/4},a^{-1/4},a^{-1/4},b^{-1/4}]$.
\end{proof}

\begin{proposition}{\rm \cite[p.200]{edg}}
The nonsingular eigenspace of $\varphi_{\sqrt{2},1}(PSL_2(\F_7))$ in $\Form_{4,4}$ is $\langle 2(x^3y+y^3z+z^3x)+t^4+(6\sqrt{2}) xyzt\rangle$.
\end{proposition}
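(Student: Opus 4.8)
The plan is to follow the same three–step reduction as in the proof of Proposition 4.8, the only—but decisive—difference being that now $C_{\sqrt2}=Z(\sqrt2,\alpha,\beta,\gamma)$ has $\tau=\sqrt2\neq0$, so that $C_{\sqrt2}$ genuinely mixes the variable $t$ with $x,y,z$. First I would invoke the $A$- and $B$-invariance exactly as there: since any nonzero element of $\Form_{4,4}(A;\varepsilon^j)$ is singular unless $j=0$, a nonsingular invariant $f$ lies in $\langle x^3y,y^3z,z^3x,t^4,xyzt\rangle$ with $\rho_A=1$, and the cyclic symmetry $(B)$ (with $\rho_B=1$, forced because $t^4$ must survive for nonsingularity) reduces it to
\[
 f=a(x^3y+y^3z+z^3x)+bt^4+c\,xyzt,\qquad ab\neq0.
\]
In Proposition 4.8 the matrix $C_0$ fixed the $t$–axis and forced $c=0$, leaving the two–dimensional space $\langle x^3y+y^3z+z^3x,\,t^4\rangle$; here the $t$–mixing will instead tie $a,b,c$ together and cut the eigenspace down to a single line.

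Next I would pin the scalar $\rho_C$ in $f_{C_{\sqrt2}^{-1}}=\rho_C f$ directly, avoiding a full expansion. From Lemma 4.4 one has $C_{\sqrt2}^2=7E_4$ and the relation $C_{\sqrt2}AC_{\sqrt2}=\mu\,A^{-1}C_{\sqrt2}A^{-1}$ with $\mu=1+2\varepsilon+2\varepsilon^2+2\varepsilon^4$. Since $\{1,2,4\}$ are the quadratic residues mod $7$, $\varepsilon+\varepsilon^2+\varepsilon^4=\tfrac12(-1+\sqrt{-7})$, so $\mu=\sqrt{-7}$. Applying $\sigma_{(\cdot)^{-1}}$ to the relation, using that the $\sigma$'s act by scalars on the invariant line and that $\sigma_{\lambda M}=\lambda^{-4}\sigma_M$ on quartics, together with $\rho_A=1$, gives $\rho_C^2=\mu^4\rho_C$, that is $\rho_C=\mu^4=49$ (consistent with $\rho_C^2=7^4$ from $C_{\sqrt2}^2=7E_4$).

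With $\rho_C=49$ fixed, two coefficient comparisons determine the line. Writing $C_{\sqrt2}[x,y,z,t]^t=[X,Y,Z,T]^t$ with $X=\alpha x+\beta y+\gamma z+\sqrt2\,t$, $Y=\beta x+\gamma y+\alpha z+\sqrt2\,t$, $Z=\gamma x+\alpha y+\beta z+\sqrt2\,t$ and $T=\sqrt2(x+y+z)+t$, I compare the coefficients of $t^4$ and of $x^4$ in $f_{C_{\sqrt2}^{-1}}=f(C_{\sqrt2}x)=49f$. Here $\alpha=\varepsilon+\varepsilon^6$, $\beta=\varepsilon^2+\varepsilon^5$, $\gamma=\varepsilon^3+\varepsilon^4$, and from the relations $p^2=2+q$, $pq=-1-q$ (cyclically) one gets $\alpha\beta\gamma=1$ and $\alpha^3\beta+\beta^3\gamma+\gamma^3\alpha=8(\alpha+\beta+\gamma)=-8$. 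The $t^4$–coefficient yields $12a+b+2\sqrt2\,c=49b$ and the $x^4$–coefficient yields $-8a+4b+\sqrt2\,c=0$; subtracting gives $a=2b$ and then $c=6\sqrt2\,b$. Hence any nonsingular invariant $f$ lies in $\langle 2(x^3y+y^3z+z^3x)+t^4+6\sqrt2\,xyzt\rangle$.

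Finally I would establish the converse inclusion and nonsingularity. The main obstacle is verifying that $f^{*}:=2(x^3y+y^3z+z^3x)+t^4+6\sqrt2\,xyzt$ is actually $C_{\sqrt2}$–invariant, i.e. $f^{*}(C_{\sqrt2}x)=49f^{*}$; this is the complete quartic substitution, carried out with the $p,q,r$–relations, and is the step I expect to delegate to machine computation as elsewhere in the paper. Granting it, nonsingularity is immediate: the diagonal scaling $(x,y,z,t)\mapsto(ux,uy,uz,vt)$ with $v^4=2u^4$ turns $f^{*}$ into a multiple of $g^\lambda=x^3y+y^3z+z^3x+t^4+\lambda xyzt$ with $\lambda=3\sqrt2\,(v/u)=3\cdot2^{3/4}$, so $\lambda^4=3^4\cdot2^3=648\neq4^4$, whence $g^\lambda$ and therefore $f^{*}$ is nonsingular by Lemma 4.1. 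Combining the two inclusions shows that the nonsingular eigenspace of $\varphi_{\sqrt2,1}(PSL_2(\F_7))$ in $\Form_{4,4}$ is exactly $\langle f^{*}\rangle$.
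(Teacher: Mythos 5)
Your proposal is correct and arrives at the same line, but the decisive step is executed by a genuinely different route. The skeleton is common to both arguments: $A$-invariance together with the singularity table forces $f\in\langle x^3y,y^3z,z^3x,t^4,xyzt\rangle$; $B$-invariance (with $\rho_B=1$ because $t^4$ must occur) gives $f=a(x^3y+y^3z+z^3x)+bt^4+c\,xyzt$ with $ab\neq 0$; and $C$-invariance cuts this three-parameter family down to a line. The paper performs the last step by expanding $f_{C^{-1}}$ completely in the symmetric functions $s_{i_1i_2i_3}$ and imposing the vanishing of the coefficients of $t^2x^2$ and $t^2xy$ (monomials absent from $f$, so no knowledge of the eigenvalue is needed). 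You instead pin the eigenvalue first: the computation $\rho_C^2=\mu^4\rho_C$, hence $\rho_C=\mu^4=49$, from $CAC=\mu A^{-1}CA^{-1}$ with $\mu=1+2(\varepsilon+\varepsilon^2+\varepsilon^4)=\pm\sqrt{-7}$ and $\rho_A=1$, is valid, and then only the $t^4$ and $x^4$ coefficients are needed, which require nothing beyond $\alpha\beta\gamma=1$, $\alpha+\beta+\gamma=-1$ and $\alpha^3\beta+\beta^3\gamma+\gamma^3\alpha=-8$. Your equations $12a+b+2\sqrt{2}c=49b$ and $-8a+4b+\sqrt{2}c=0$ do yield $a=2b$, $c=6\sqrt{2}b$; incidentally the $4b$ is right --- the factor $\tau^2 b$ multiplying $s_{400}+\cdots$ in the paper's displayed expansion should read $\tau^4 b$. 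What your route buys is a much lighter necessity argument; what it costs is that the sufficiency check $f^*_{C^{-1}}=49f^*$ is deferred to machine computation, whereas the paper's full expansion already contains that verification. Your nonsingularity check --- rescaling $f^*$ to $g^\lambda$ with $\lambda^4=648\neq 4^4$ and invoking Lemma 4.1 --- is a clean addition to a point the paper leaves implicit.
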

\begin{proof}
Let $G=\varphi_{\sqrt{2},1}(PSL_2(\F_7))$, $A=\diag[\varepsilon^4,\varepsilon^2,\varepsilon,1]$, $B=[e_3,e_1,e_2,e_4]$, and
$C=Z(\sqrt{2},\alpha,\beta,\gamma)$, where
$\alpha=\varepsilon+\varepsilon^6$, $\beta=\varepsilon^2+\varepsilon^5$ and $\gamma=\varepsilon^3+\varepsilon^4$. Then $G=\langle (A),(B),(C)\rangle$.
Let $f\in \Form_{4,4}$ be nonsingular and $G$-invariant. Since $f\in \Form_{4,4}(A;1)$ and $f_{B^{-1}}\sim f$, $f$ takes the form
$a(x^3y+y^3z+z^3x) +bt^4+cxyzt$ ($a,b,\in k^*,\ c\in k$). Let $\sigma=(123)\in \SSS{4}$, $x_1,...,x_4$, $a_1,a_2,a_3$ be indeterminates,
$y_i=\sum_{j=1}^3a_{\sigma^{i-1}(j)}x_j+\tau x_4$ ($i\in [1,3],\ \tau=\sqrt{2}$), $y_4=\tau(\sum_{j=1}^3x_j)+x_4$, and define a polynomial
$$G(x_1,...,x_4,a_1,a_2,a_3)=\sum_{i_1+...+i_4=4}g_{i_1...i_4}(a_1,a_2,a_3)x_1^{i_1}\cdots x_4^{i_4}$$ to be $f(y_1,...,y_4)$. Then,
$\sigma^{-1} G=G$ as a polynomial in $x_1,x_2,x_3,x_4$, hence $g_{[i_1,i_2,i_3,i_4]\sigma}=g_{[i_1,i_2,i_3,i_4]}$,
for $\sigma^{-1} G=(\sigma^2f)(y_1,y_2,y_3,y_4)=f(y_1,y_2,y_3,y_4)$. Note that $f_{C^{-1}}(x,y,z,t)=G(x,y,z,t,\alpha,\beta,\gamma)$.
In addition to equalities $\alpha+\beta+\gamma=-1$ and $\alpha\beta\gamma=1$, we have the following equalities.
\begin{eqnarray*}
&&\left[\begin{array}{c}\alpha^2\\ \beta^2\\ \gamma^2\end{array}\right]
=\left[\begin{array}{c}2+\beta\\ 2+\gamma\\ 2+\alpha\end{array}\right],\
\left[\begin{array}{c} \alpha\beta\\ \beta\gamma\\ \gamma\alpha\end{array}\right]
=\left[\begin{array}{c}-1-\beta\\ -1-\gamma\\ -1-\alpha\end{array}\right],\\
&&\left[\begin{array}{c}\alpha^3\\ \beta^3\\ \gamma^3\end{array}\right]
=\left[\begin{array}{c}-1+2\alpha-\beta\\ -1+2\beta-\gamma\\ -1+2\gamma-\alpha\end{array}\right],\
\left[\begin{array}{c}\alpha^2\beta\\ \beta^2\gamma\\ \gamma^2\alpha\end{array}\right]
=\left[\begin{array}{c}2+2\beta+\gamma\\ 2+2\gamma+\alpha\\ 2+2\alpha+\beta\end{array}\right],\
\left[\begin{array}{c}\alpha\beta^2\\ \beta\gamma^2\\ \gamma\alpha^2\end{array}\right]
=\left[\begin{array}{c}-1+\alpha\\ -1+\beta\\ -1+\gamma\end{array}\right],\\
&&\left[\begin{array}{c}\alpha^4\\ \beta^4\\ \gamma^4\end{array}\right]
=\left[\begin{array}{c}6+4\beta+\gamma\\ 6+4\gamma+\alpha\\ 6+4\alpha+\beta\end{array}\right],\
\left[\begin{array}{c}\alpha^2\beta^2\\ \beta^2\gamma^2\\ \gamma^2\alpha^2\end{array}\right]
=\left[\begin{array}{c} 3+2\beta+\gamma\\ 3+2\gamma+\alpha\\ 3+2\alpha+\beta\end{array}\right],\\
&&\left[\begin{array}{c}\alpha^3\beta\\ \beta^3\gamma\\ \gamma^3\alpha\end{array}\right]
=\left[\begin{array}{c}-4-3\beta-\gamma\\ -4-3\gamma-\alpha\\ -4-3\alpha-\beta\end{array}\right],\
\left[\begin{array}{c}\alpha\beta^3\\ \beta\gamma^3\\ \gamma\alpha^3\end{array}\right]
=\left[\begin{array}{c}-1-2\beta\\ -1-2\gamma\\ -1-2\alpha\end{array}\right].
\end{eqnarray*}
Thus $f_{C^{-1}}(x,y,z,t)$ takes the form
\begin{eqnarray*}
&&t^4(12a+q+2\tau c)+t^3(x+y+z)(-8\tau a+4\tau b+2c)\\
&&+t^2\left[(x^2+y^2+z^2)(18a+12b-4\tau c)+(xy+yz+zx)(-6a+24b-\tau c)\right]\\
&&+t\left[5(x^3+y^3+z^3)-27(x^2y+y^2z+z^2x)-6(xy^2+yz^2+zx^2)+72xyz\right]\tau a\\
&&+t\left[x^3+y^3+z^3+3(x^2y+y^2z+z^2x)+3(xy^2+yz^2+zx^2)+6xyz\right]\tau b\\
&&+t\left[-3(x^3+y^3+z^3)+5(x^2y+y^2z+z^2x)-2(xy^2+yz^2+zx^2)+17xyz\right]c\\
&&+\left[-8s_{400}+18s_{310}+10s_{301}-6s_{220}-12s_{211}\right]a\\
&&+\left[s_{400}+4s_{310}+4s_{301}+6s_{220}+12s_{211}\right]\tau^2 b\\
&&+\left[s_{400}+4s_{310}-3s_{301}-s_{220}-2s_{211}\right]\tau c,
\end{eqnarray*}
where $s_{400}=x^4+y^4+z^4$, $s_{220}=x^2y^2+y^2z^2+z^2x^2$, and
\[
 s_{310}=x^3y+y^3z+z^3x,\ s_{301}=xy^3+yz^3+zx^3,\ s_{211}=x^2yz+xy^2z+xyz^2.
\]
Since $f_{C^{-1}}\sim f$, the coefficients of monomials $t^2x^2$ and $t^2xy$ in $f_{C^{-1}}$ vanish, namely $18a-4\tau c=-12b$ and $-6a-\tau c=-24c$, i.e.,
$a=2b,\ \tau c=12b$. One sees easily that $f_{C^{-1}}=49b f$ if $a=2b,\ \tau c=12b$.
\end{proof}

\begin{lemma}{\rm \cite[\S 272, Ex.5]{bur}}
$\Paut(x^5z+y^5x+z^5y)$ is a group of order $63$ generated by $(\diag[\delta,\delta^5,1])$ and $([e_3,e_1,e_2])$, where $\delta\in k^*$ with
$\ord(\delta)=21$, and $[e_1,e_2,e_3]$ is the unit matrix in $GL_3(k)$.
\end{lemma}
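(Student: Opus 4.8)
The plan is to prove the two inclusions separately, the easy one being $\langle(g_1),(g_2)\rangle\subseteq\Paut(F)$ together with $|\langle(g_1),(g_2)\rangle|=63$, where $F=x^5z+y^5x+z^5y$, $g_1=\diag[\delta,\delta^5,1]$ and $g_2=[e_3,e_1,e_2]$. A direct substitution gives $F(g_1x)=\delta^5F(x)$ and $F(g_2x)=F(x)$, so $(g_1),(g_2)\in\Paut(F)$. The element $(g_1)$ has order $21$ in $PGL_3(k)$; since conjugation by the permutation matrix $g_2$ permutes the diagonal entries, $g_2^{-1}g_1g_2=\diag[1,\delta,\delta^5]\sim g_1^{16}$, so $\langle(g_1)\rangle$ is normal in $\langle(g_1),(g_2)\rangle$. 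As $(g_2)$ is not projectively diagonal, $\langle(g_1)\rangle\cap\langle(g_2)\rangle=1$ and $\langle(g_1),(g_2)\rangle=\langle(g_1)\rangle\rtimes\langle(g_2)\rangle$ has order $21\cdot3=63$.

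For the reverse inclusion I would first record that $C=V_p(F)$ is smooth: the partials $5x^4z+y^5$, $5y^4x+z^5$, $x^5+5z^4y$ cannot vanish simultaneously off the coordinate axes, since multiplying the three resulting relations yields $(xyz)^5=-125(xyz)^5$, while on each axis they force the point to be $0$. Next I would characterise the coordinate triangle intrinsically. The vertices $P_1=(1,0,0)$, $P_2=(0,1,0)$, $P_3=(0,0,1)$ lie on $C$, and the restriction of $F$ to each coordinate line is a monomial of shape (variable)$\cdot$(variable)$^5$; hence each coordinate line is the tangent to $C$ at one $P_i$ with contact order exactly $5$. The crucial claim is that $\{P_1,P_2,P_3\}$ is precisely the set of points of $C$ at which the tangent line has contact order $\geq5$. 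Being defined by tangency and intersection multiplicity, this set is invariant under $\Paut(F)$, so every $(A)\in\Paut(F)$ permutes $\{P_1,P_2,P_3\}$; consequently $Ae_i$ is a scalar multiple of some $e_j$ for each $i$, i.e.\ $A$ is a monomial matrix.

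It then suffices to determine the monomial automorphisms. Writing $F=\sum_i x_i^5x_{\tau(i)}$ with $\tau=(1\,3\,2)$ and letting a monomial map act by $x_i\mapsto c_ix_{\rho(i)}$ ($\rho\in\SSS{3}$), its effect on $F$ is $\sum_i c_i^5c_{\tau(i)}\,x_{\rho(i)}^5x_{\rho\tau(i)}$. Matching monomials with those of $F$ forces $\rho\tau\rho^{-1}=\tau$, so $\rho$ lies in the centraliser $\langle\tau\rangle$ of the $3$-cycle $\tau$ in $\SSS{3}$, namely $\rho\in\A{3}$. Thus the homomorphism $\Paut(F)\to\SSS{3}$ recording the induced permutation of $\{P_1,P_2,P_3\}$ has image inside $\A{3}$; since $(g_2)$ induces a $3$-cycle, the image is exactly $\A{3}$. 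Its kernel consists of the diagonal automorphisms, for which $\diag[a,b,c]$ must satisfy $a^5c=ab^5=bc^5$; normalising $c=1$ gives $b=a^5$ and $a^{21}=1$, so the kernel is $\langle(g_1)\rangle$ of order $21$. Hence $|\Paut(F)|=21\cdot3=63$ and $\Paut(F)=\langle(g_1),(g_2)\rangle$.

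The main obstacle is the intrinsic claim that the only points of contact order $\geq5$ are $P_1,P_2,P_3$; making this rigorous requires analysing the flexes of $C$, that is, the intersection $C\cap\Hess(F)$ together with the higher osculation conditions, which is a routine but lengthy computation. Everything downstream of that claim — the reduction to monomial matrices and the kernel/image count — is elementary, so once the hyperflex set is pinned down the order $63$ and the explicit generators follow immediately.
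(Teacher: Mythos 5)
Your overall strategy parallels the paper's: both arguments hinge on exhibiting a projectively invariant three-point set which forces every element of $\Paut(F)$ to be represented by a monomial matrix, after which the permutation and diagonal parts are pinned down. Everything downstream of your invariance claim checks out: the verification that $(g_1),(g_2)\in\Paut(F)$, that $g_2^{-1}g_1g_2\sim g_1^{16}$ so that $\langle(g_1),(g_2)\rangle$ has order $63$, the centraliser condition $\rho\tau=\tau\rho$ forcing $\rho\in\A{3}$, and the kernel computation $\diag[a,a^5,1]$ with $a^{21}=1$ are all correct. The problem is the central step. You take the invariant set to be the points of $V_p(F)$ whose tangent has contact order $\geq 5$, and you explicitly defer the claim that this set equals $\{P_1,P_2,P_3\}$, calling it ``routine but lengthy.'' That claim is not a technicality: it is the entire content of the lemma, since without it there is no bound on $\Paut(F)$ at all. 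Verifying it means controlling all $72$ intersection points (counted with multiplicity) of the sextic with its degree-$12$ Hessian and showing that none of the remaining flexes has excess contact; as submitted, the proof therefore has a genuine gap at its decisive step.

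The paper closes exactly this gap by choosing a different invariant set, and the choice matters for the amount of work. Since $\Hess(F_{A^{-1}})=(\det A)^2\Hess(F)_{A^{-1}}$, one has $\Paut(F)\subset\Paut(h)$ for $h=\Hess(F)/250=33(xyz)^4-2(x^9y^3+y^9z^3+z^9x^3)$, and the invariant set is taken to be the singular locus of $V_p(h)$: a short computation (from $xh_x=yh_y=zh_z=0$ with $xyz\neq 0$ one gets $x^9y^3=y^9z^3=z^9x^3=\tfrac{11}{2}(xyz)^4$, hence $(xyz)^{12}=(11/2)^3(xyz)^{12}$, a contradiction) shows this locus is exactly the three coordinate vertices. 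The paper then computes the stabiliser of $(0,0,1)$ inside $\Paut(h)$ directly and concludes by orbit--stabiliser that $|\Paut(h)|=63$, which sandwiches $\Paut(F)$ between $\langle(g_1),(g_2)\rangle$ and $\Paut(h)$. If you replace your hyperflex claim by this Hessian-singularity computation, the rest of your argument goes through verbatim; as it stands, the hyperflex route would require substantially more computation than you have supplied.
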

\begin{proof}
Let $f=x^5z+y^5x+z^5y$, $\Hess(f)=250h$, where $h=33(xyz)^4-2(x^9y^3+y^9z^3+z^9x^3)$, and $(x,y,z)\in \PP{2}$ a singular point of $V_p(h)$.
If $xyz=0$, then $(x,y,z)$ is $(1,0,0)$, $(0,1,0)$ or $(0,0,1)$. We claim that $xyz\not=0$ is impossible. Indeed, $xyz\not=0$ and $x^3h_x=y^3h_y=z^3h_z=0$
imply $x^9y^3=y^9z^3=z^9x^3=11(xyz)^4/2$, hence $(xyz)^{12}=(11/2)^3(xyz)^{12}$, a contradiction. Let $G=\Paut(f)$ and $H=\Paut(h)$. As is well known,
$G$ is a subgroup of $H$. Clearly $(A), (B)\in G$, where $A=\diag[\delta,\delta^5,1]$ and $B=[e_3,e_1,e_2]$. Since $G$ acts transitively on the set of all
singular points of $V_p(h)$, so does $H$. Let $H_3$ be the isotropy subgroup of $H$ at $(0,0,1)$, and assume $(C)\in H_3$ with $C=[c_{ij}]\in GL_3(k)$.
We may assume that the third column of $C$ is equal to $e_3$. Since $h_{C^{-1}}\sim h$, it follows that $c_{31}=c_{32}=0$, hence $C=\diag[\beta,\gamma,1]$
such that $\gamma^{21}=1$ and $\beta=\gamma^{-4}$. Consequently $H_3=\langle(A)\rangle$ and $|H|=63$ so that $H=\langle(A),(B)\rangle= G$.
\end{proof}

\begin{proposition} Let $f=x^3y+y^3z+z^3x$ and $h=x^5z+y^5x+z^5y-5x^2y^2z^2$.  \\
$(1)$ $\Paut(f)=\Paut(h)=\psi(PSL_2(\F_7))$, and the nonsingular eigenspace of $\psi(PSL_2(\F_7))$ in $\Form_{3,6}$ is
$\langle h\rangle$.\\
$(2)$ Let $\delta_n\in k^*$ with $\ord(\delta_n)=n$ for positive integer $n$. Then $\Laut(f)=\langle \delta_4\rangle \Psi(PSL_2(\F_7))$ and
$\Laut(h)=\langle \delta_6\rangle \Psi(PSL_2(\F_7))$
\end{proposition}
\begin{proof}
Let $h_0=f$, $h_1=-\Hess(h_0)/57$, $h_2=\Hess(h_1)/250$, where $h_1=h$  and
\begin{eqnarray*}
h_2&=&x^{10}z^2+y^{10}x^2+z^{10}y-2(x^9y^3+y^9z^3+z^9x^3)-4(x^6y^5z+y^6z^5x+z^6x^5y)\\
&&-16(x^7y^2z^3+y^7z^2x^3+z^7x^2z^3)+13x^4y^4z^4.
\end{eqnarray*}
Denote the groups $\psi(PSL_2(\F_7))$, $\Paut(h_0)$, $\Paut(h_1)$ and $\Paut(h_2)$ by $G_{-1}$, $G_0$, $G_1$ and $G_2$, respectively. As is well known,
$G_0\subset G_1\subset G_2 $. Recall that $A_3=\Psi((a))$, $B_3=\Psi((b))$ and $C_3=\Psi((c))$.
It is evident that ${h_0}_{A_3^{-1}}={h_0}_{B_3^{-1}}=h_0$.
Let $[\alpha,\beta,\gamma]$ be the first row of $\sqrt{-7}C_3$. Then $49f_{C_3^{-1}}$ takes the form
\begin{eqnarray*}
&&(x^3y+y^3z+z^3x)\{\alpha\beta^3+\beta\gamma^3+\gamma\alpha^3+3(\alpha^2\beta^2+\beta^2\gamma^2+\gamma^2\alpha^2)\}\\
&&+(x^4+y^4+z^4)(\alpha^3\beta+\beta^3\gamma+\gamma^3\alpha)\\
&&+(x^3z+y^3x+z^3y)\{\alpha^4+\beta^4+\gamma^4+3(\alpha^2\beta\gamma+\beta^2\gamma\alpha+\gamma^2\alpha\beta)\}\\
&&+3(x^2y^2+y^2z^2+z^2x^2)\{(\alpha\beta^3+\beta\gamma^3+\gamma\alpha^3)+(\alpha^2\beta\gamma+\beta^2\gamma\alpha+\gamma^2\alpha\beta)\}\\
&&+3(x^2yz+y^2zx+z^2xy)c_{211},
\end{eqnarray*}
where $c_{211}=(\alpha^3\beta+\beta^3\gamma+\gamma^3\alpha)+(\alpha^2\beta^2+\beta^2\gamma^2+\gamma^2\alpha^2)
+2(\alpha^2\beta\gamma+\beta^2\gamma\alpha+\gamma^2\alpha\beta)$.
Let $p=\varepsilon+\varepsilon^6$, $q=\varepsilon^2+\varepsilon^5$, and $r=\varepsilon^3+\varepsilon^4$. Then, using the following equalities
\begin{eqnarray*}
&&\alpha^2=-2+q,\ \beta^2=-2+r,\ \gamma^2=-2+p,\ \alpha\beta=r-p,\ \beta\gamma=p-q,\ \gamma\alpha=q-r,
\end{eqnarray*}
it is not difficult to see ${h_0}_{C_3^{-1}}=h_0$. Thus $\Psi(PSL_2(\F_7))\subset \Laut(h_0)$, and $G_{-1}\subset G_0$. Since $\Psi(PSL_2(\F_7))\subset SL_3(k)$, it follows
that $\Psi(PSL_2(\F_7))\subset \Laut(h_1)$.
Let $F\in \Form_{3,d}\backslash \{0\}$ $(d>2)$.
As is known, $H=\Hess(F)=0$ if and only if $F_{T^{-1}}\in \Form_{2,d}$ for some $T\in GL_3(k)$. Assume $H\not=0$. We write $I(Q,F\cap H)$ for
the intersection number of $V_p(F)$ and $V_p(H)$ at $Q\in \PP{2}$ \cite[p.74,p.104]{ful0}. Then  $Q\in V_p(F)\cap V_p(H)$ if and only if $Q$ is
a singular point or a flex of $V_p(F)$, and  $I(Q,F\cap H)=1$ if and only if $Q$ is an ordinary flex \cite[p.116]{ful0}. Both $h_0$ and $h_1$ are
nonsingular. For instance, assume $h_{1,x}$, $h_{1,y}$, and $h_{1,z}$ vanish at $[x,y,z]\in k^3$. Then the condition $xh_{1,x}=yh_{1,y}=zh_{1,z}=0$ implies
$x^5z=y^5x=z^5y=5x^2y^2z^2$, so that $xyz=0$, hence it follows that $x=y=z=0$. Let $Q_1=(1,0,0)$, $Q_2=(0,1,0)$, and $Q_3=(0,0,1)$. Clearly
$Q_3\in V_p(h_0)\cap V_p(h_1)\cap V_p(h_2)$. Let $P=[0,0]\in k^2$. Denote the order functions at $P$ of the affine curves $h_0(x,y,1)=0$ and $h_1(x,y,1)$ by
$\ord_P^{h_0}$ and $\ord_P^{h_1}$, respectively. Then  $\ord_P^{h_0}(y)=1$ so that $\ord_P^{h_0}(x)=3$ and $\ord_P^{h_0}(h_1(x,y,1))=1=I(Q_3,f\cap h_1)$.
Hence $Q_3$ is an ordinary flex of $V_p(h_0)$. $G_{-1}Q_3\subset V_p(h_0)\cap V_p(h_1)$, and $\sum_{Q\in V_p(h_0)\cap V_p(h_1)}I(Q,h_0\cap h_1)=24$ by Bezout's
theorem. Since $|G_{-1}Q_3|\geq 3+7\cdot 3$, which will be shown shortly, it follows that $|G_{-1}Q_3|=24$ and $G_{-1}Q_3=V_p(h_0)\cap V_p(h_1)$.
In order to show that $|G_1Q_3|\geq 24$ let $b_{ij}=\sqrt{-7}A_3^iC_3B_3^{3-j}[0,0,1]$ ($[i,j]\in [0,6]\times [1,3]$), i.e., $b_{i1}=A_3^i[\alpha,\beta,\gamma]$,
$b_{i2}=A^i[\beta,\gamma,\alpha]$ and $b_{i3}=A^i[\gamma,\alpha,\beta]$. It is clear that $b_{ij}\not\sim b_{i'j}$ if $i\not=i'$. Since
$\gamma^{-1}=-\varepsilon^4(6+5\varepsilon+4\varepsilon^2+3\varepsilon^3+2\varepsilon^4+\varepsilon^5)$, it follows that
$\alpha\beta\gamma^{-2}=\varepsilon(-\varepsilon-\varepsilon^2+\varepsilon^4+\varepsilon^5)\not\in \langle \varepsilon\rangle$, hence
$b_{ij}\not\sim b_{i'j'}$ if $j\not=j'$. Thus the set $\{(A^iC_3B_3^{3-j})Q_3:[i,j]\in [0,6]\times [1,3]\}\subset G_1Q_3$ consists of $21$ points.
Furthermore $\ord_P^{h_1}(x)=1$ so that $\ord_P^{h_1}(y)=5>3$
and $\ord_P^{h_1}(h_2(x,y,1))=3=I(Q_3,h_1\cap h_2)$. Thus $Q_3$ is a higher flex of $V_p(h_1)$.  $G_1Q_3\subset V_p(h_1)\cap V_p(h_2)$, and
$\sum_{Q\in V_p(h_1)\cap V_p(h_2)}I(Q,h_1\cap h_2)=72$ by Bezout's theorem. So $G_{-1}Q_3=V_p(h_1)\cap V_p(h_2)$. Consequently $G_1$ acts transitively on
$V_p(h_1)\cap V_p(h_2)$. Let $(A)\in G_{1,Q_3}$, the isotropy subgroup of $G_1$ at $Q_3$, where $A=[a_{ij}]\in GL_3(k)$. We may assume the third column
of $A$ is $e_3$. Since $h_{1,A^{-1}}\sim h_1$, it follows that $a_{31}=a_{32}=0$, hence $A=\diag[\alpha,\beta,1]$ with $\alpha^7=1$ and $\beta=\alpha^5$.
Thus  $|G_{1,Q_3}|=7$, $|G_{1}|=7\cdot 24$, i.e., $G_1=G_{-1}$.

 Suppose $\Form_{3,6,nons}^{G_{-1}}\not=\emptyset$, and $g\in \Form_{3,6,nons}^{G_{-1}}$. $\Form_{3,6}(A_3;\varepsilon^i)$ ($i\in [0,6]$) is singular
at $Q_j\in \PP{2}(k)$ for some $j\in [1,3]$ unless $i=0$, hence  $g\in \Form_{3,6}(A_3;1)=\langle x^5z,y^5x,z^5y,x^2y^2z^2\rangle$ so that
$g=ax^5z+by^5x+cz^5y+dx^2y^2z^2$ with $abcd\not=0$ by Lemma 4.10. Indeed, if $d=0$, then $g_{T^{-1}}=x^5z+y^5x+z^5y$ so that $|\Paut(g)|=63$
by Lemma 4.10, a contradiction. Since $g_{B_3^{-1}}\sim g$, we have $a=b=c$. Since $g_{C_3^{-1}}\sim g$, the coefficient of $x^6$ in $g_{C_3^{-1}}$
must vanish, i.e., $d=-5a$. Thus the nonsingular eigenspace of $G_{-1}$ in $\Form_{3,6}$ is $\langle h_1\rangle$.

The group homomorphism $\pi_3:\Laut(h_0)\rightarrow \Paut(h_0)=G_{-1}$ is surjective. Denote this homomorphism by $\rho$ and let
$N=\Ker{\rho}$. Then $N=\Ker{\pi_3}\cap \Laut(h_0)=\langle \delta_4\rangle$, hence $\Laut(h_0)=\cup_{A\in \Psi(PSL_2(\F_7))}\langle \delta_4\rangle A$,
\end{proof}
Now we can determine the projective automorphism group of $f^0=x^3y+y^3z+z^3x+t^4$.
\begin{proposition} Let $\Psi_1:PSL_2(\F_7)\rightarrow GL_4(k)$ be the faithful group representation such that $\Psi_1(u)=\Psi(u)\oplus 1$ for
$u\in PSL_2(\F_7)$,  $D=\diag[\sqrt{-1},\sqrt{-1},\sqrt{-1},1]$,  $f=x^3y+y^3z+z^3x$ and $g=x^3y+y^3z+z^3x+t^4$.
Then $\Paut(g)=\{(AD^j):A\in \Phi(PSL_2(\F_7)),j\in [0,3]\}$, which coincides with $\langle \varphi_{0,1}(PSL_2(\F_7)), (D)\rangle$.
\end{proposition}
\begin{proof}
Let $f=x^3y+y^3z+z^3x$. Then $\Hess(g)=-2^3 3^4h(x,y,z)t^2$, where $h=xy^5+yz^5+zx^5-5x^2y^2z^2$. Since the projective algebraic set $V_p(h)$
in $\PP{2}(k)$ is nonsingular, $h$ is irreducible. Assume $(S)\in \Paut(g)$ with $S=[s_{ij}]\in GL_4(k)$. Then $g_{S^{-1}}\sim g$ so that
$\Hess(g)_{S^{-1}}\sim \Hess(g)$. Since the polynomial ring $k[x,y,z,t]$ is UFD, $t_{S^{-1}}\sim t$ and $h_{S^{-1}}\sim h$. The first condition
yields $s_{4j}=0$ $(j\in [1,3])$, while the second condition implies
\[
 t(s_{14}Y^5+5s_{24}XY^4+s_{24}Z^5+5s_{34}YZ^4+s_{34}X^5+5s_{14}ZX^4)=0,
\]
where $X=s_{11}x+s_{12}y+s_{13}z$, $Y=s_{21}x+s_{22}y+s_{23}z$, and $Z=s_{31}x+s_{32}y+s_{33}z$. Since $X,Y,Z$ are algebraically
independent over $k$, it follows that $s_{i4}=0$ $(i\in [1,3])$,
 hence we may assume
$S=T\oplus s_{44}$ with $T\in GL_3(k)$. Let $T'=T/s_{44}$ and $S'=T'\oplus 1$ so that $(S')=(S)$. Clearly $(S')\in \Paut(g)$ if and only if $f_{{T'}^{-1}}=f$.
The group $H=\{T'\in GL_3(k):\ f_{{T'}^{-1}}=f\}$ is nothing but $\Laut(f)$ in Proposition 4.11. Moreover
the map $\varphi:H\rightarrow \Paut(g)$ assigning $T'$ to $(T'\oplus 1)\in PGL_4(k)$ is a group isomorphism.
\end{proof}

\section{$\ZZZ{p^a}$-invariant forms in $\Form_{4,d}$}

In this section we shall prove
\begin{theorem}
Let $a$ be a positive integer, $p$ a prime,  $q=p^a$, $\varepsilon\in k^*$ with $\ord(\varepsilon)=q$, $d\geq 3$ an integer, $f\in \Form_{4,d}$,
$D_0=\diag[1,1,\varepsilon,1]$, $D_{j}=\diag[1,1,\varepsilon,\varepsilon^j]$ ($0<j<q$) and
$D_{j\ell}=\diag[1,\varepsilon,\varepsilon^j,\varepsilon^\ell]$ $(j,\ \ell\in [2,\ q-1]$ with $j\not=\ell)$. \\
$(1)$ Suppose $\Paut(f)\supset \langle (D_0)\rangle$.  If $q>d$, then $f$ is singular.\\
$(2)$ Suppose $\Paut(f)\supset \langle (D_j)\rangle$.  If either $(2.1)$ $q>d(d-1)$ or $(2.2)$ $[q,d]=[11,4]$, then $f$ is singular.\\
$(3)$ Suppose $\Paut(f)\supset \langle (D_{j\ell})\rangle$. If either $(3.1)$  $q>d(d-1)^2$ or $[q,d]=[5^2,4]$, then $f$ is singular.\\
$(4)$ Any $\ZZZ{q}$-invariant $d$-forms with $q>d(d-1)^2$, any $\ZZZ{p}$-invariant quartic forms with $p>7$ and any $\ZZZ{p^2}$-invariant quartic
forms with $p>3$ are singular.
\end{theorem}
\begin{proof}
For a diagonal matrix $D\in GL_4(k)$ and monomials $M_{ij}=x_ix_j^{d-1}\in \Form_{4,d}$ we have
${M_{ij}}_{D^{-1}}=\varepsilon^{\ell_{ij}}$ ($\ell_{ij}\in \Z/q\Z$). Let $I_j=I_j(D)=\{\ell_{ij}:i\in [1,4]\}$, and $I=I(D)=\cap_{j=1}^4 I_j(D)$.
Then, if $I=\emptyset$, any $f\in \Form_{4,d}^{\{D\}}\backslash\{0\}$ is singular by Lemma 2.11.\\
(1) Let $I_i=I_i(D_0)$ and $I=\cap_{i=1}^4 I_i$. Then $I_1=I_2=I_4=\{0,1\}$ and $I_3=\{d-1,d\}$ so that $I=\emptyset$. \\
(2) Let $I_i=I_i(D_j)$ ($j\in [1,q-1]$) and $I=\cap_{i=1}^4 I_i$. Then $I_1=I_2=\{0,1,j\}$, $I_3=\{d-1,d,d-1+j\}$ and $I_4=\{(d-1)j,(d-1)j+1,dj\}$.
If $j=1$, then $I_3=I_4$, hence $I=\emptyset$. We shall give a proof in the case (2.1), for the case (2.2) can be dealt with exactly in the same way.
Assume $j>1$, $I\not=\emptyset$ and $i\in I$. Since $I\subset I_1$, $i\in \{0,1,j\}$.
Suppose $i=0$. The condition $i\in I_3$ implies $0\equiv d-1+j$, hence $I_4=\{-d(d-2),\ -(d-1)^2,\ -d(d-1)\}\not\ni 0$, a contradiction.
Suppose $i=1$. The condition $i\in I_3$ implies $1\equiv d-1+j$, hence $I_4=\{-(d-1)(d-2)+1,\ -(d-1)(d-2),\ -d(d-1)\}\not\ni 1$, a contradiction.
Finally, suppose $i=j$. The condition $i\in I_3$ implies $j\equiv d-1$ or $j\equiv d$. In the former case $I_4=\{(d-1)^2,\ (d-1)^2+1,\ d(d-1)\}\not\ni d-1$,
 and in the second case $I_4=\{d(d-1),\ d(d-1)+1,\ d^2\}\not\ni d$, a contradiction. \\
(3) Let $I_i=I_i(D_{j\ell})$ ($j,\ell\in [2,q-1]$ with $j\not=\ell$) and $I=\cap_{i=1}^4 I_i$. So $I_1=\{0,1,j,\ell\}$, $I_2=\{d-1,d,d-1+j,d-1+\ell\}$,
$I_3=I_3(j,\ell)=\{(d-1)j,(d-1)j+1,(d-1)j+\ell,dj\}$, and $I_4=I_4(j,\ell)=\{(d-1)\ell,(d-1)\ell+1,(d-1)\ell+j,d\ell\}$.
Note that $I_3(\ell,j)=I_4(j,\ell)$. We shall give a proof in the case (3.1), for the case (3.2) can
be dealt with exactly in the same way. The assumption $I\not=\emptyset$ leads us to a contradiction, as follows. Let $i\in I$. Then $i\in I_1$,
for $I\subset I_1$. Assume first that $i\equiv 0\in I_1$. Since $0\not\equiv d,d-1$, either $i\equiv d-1+j$ or $i\equiv d-1+\ell$. If $i\equiv d-1+j$, then
$I_3=\{-(d-1)^2,-(d-1)^2+1,-d(d-1),-(d-1)^2+\ell\}\ni 0$, hence $-(d-1)^2+\ell\equiv 0$ so that $I_4=\{(d-1)^3,(d-1)^3+1,d(d-1)(d-2)\}\not\ni 0$,
a contradiction. If $i\equiv d-1+\ell$, the condition $0\in I_4$ leads to $0\not\in I_3$, a contradiction. Assume secondly that $i\equiv 1\in I_1$.
Since $1\not\equiv d,d-1$, either $i\equiv d-1+j$ or $i\equiv d-1+\ell$. It suffices to consider the case $1\equiv d-1+j$. Since
$1\in I_3=\{-(d-1)(d-2),-(d-1)(d-2)+1,-d(d-2),-(d-1)(d-2)+\ell\}$, $\ell\equiv d^2-3d+3=u$ so that $I_4=\{(d-1)u,(d-1)u+1,(d-1)u-(d-2),du\}\not\ni 1$,
a contradiction. Assume thirdly that $i=j\in I_1$. Since $j\in I_2$ and $j\not\equiv d-1+j$, $j\in \{d-1,d,d-1+\ell\}$. If $j\equiv d-1$, then
the condition $j\in I_3$ implies $j\equiv (d-1)^2+\ell$, hence $I_4=\{-(d-1)^2(d-2), -(d-1)^2(d-2)+1,-(d-1)^2(d-2)+d-1,-d(d-1)(d-2)\}\not\ni d-1=j=i$,
a contradiction. If $j\equiv d$, the condition $j\in I_3$ yields $d(d-1)+\ell\equiv j$, hence
$I_4=\{-d(d-1)(d-2),-d(d-1)(d-2)+1,-d(d-1)(d-2)+d,-d^2(d-2)\}\not\ni d=j=i$, a contradiction. Suppose $j\equiv d-1+\ell$, i.e., $\ell\equiv j-d+1$.
Then
\begin{eqnarray*}
I_3&=&\{(d-1)j,(d-1)j+1,dj,dj-d+1\},\\
I_4&=&\{(d-1)j-(d-1)^2,(d-1)j-d(d-2),dj-(d-1)^2,dj-d(d-1)\}.
\end{eqnarray*}
However, we can show $j\not\in I_4$ as follows. Suppose first that  $j\equiv (d-1)j\in I_3$, i.e., $(d-2)j\equiv 0$.
 Then $(d-1)j\not\equiv (d-1)j-(d-1)^2,(d-1)j-d(d-2)\in I_4$.
 If $dj-(d-1)^2-j\equiv 0$, then multiplication by $d-2$ gives $-(d-1)^2(d-2)\equiv 0$, a contradiction. If $dj-d(d-1)-j\equiv 0$, then $d(d-1)(d-2)\equiv 0$,
a contradiction. Suppose secondly $j\equiv (d-1)j+1\in I_3$, i.e., $(d-2)j\equiv -1$. Then $(d-1)j+1\not\equiv (d-1)j-(d-1)^2,(d-1)j-d(d-2)\in I_4$.
If $ dj-(d-1)^2-j\equiv 0$, multiplication by $d-2$ gives $(d-1)u\equiv 0$, a contradiction. If $dj-d(d-1)-j\equiv 0$, then $(d-1)^3\equiv 0$, a contradiction.
Suppose thirdly $j\equiv dj$, i.e., $(d-1)j\equiv 0$. Then $j\equiv dj\not\equiv dj-(d-1)^2,dj-d(d-1)\in I_4$. If $(d-1)j-(d-1)^2-j\equiv 0$, then
multiplication by $d-1$ gives $(d-1)^3\equiv 0$, a contradiction. Similarly, if $(d-1)j-d(d-2)-j\equiv 0$, then $d(d-1)(d-2)\equiv 0$, a contradiction.
Suppose fourthly $j\equiv dj-d+1\in I_3$, i.e., $(d-1)j\equiv d-1$. Clearly $j\equiv dj-d+1\not\equiv dj-(d-1)^2,dj-d(d-1)\in I_4$. If
$(d-1)j-(d-1)^2-j\equiv 0$, then multiplication by $d-1$ yields $(d-1)u\equiv 0$, a contradiction. Similarly, if $(d-1)j-d(d-2)-j\equiv 0$, then
$(d-1)^2(d-2)\equiv 0$, a contradiction. Finally  assume $i=\ell\in I_1$. In this case we can proceed exactly as in the case $i=j\in I_1$. \\
The last statement (4) follows from (1),(2),(3) and Lemma 2.13.
\end{proof}
\begin{corollary}
Let $p\geq 11$ be a prime. Then any $\ZZZ{p}$-invariant quartic form in $k[x,y,z,t]$ is singular.
\end{corollary}
\begin{corollary}
Let $G$ be the projective automorphism group of a nonsingular quartic form in $k[x,y,z,t]$, and $\Pi p^{\nu(p)}$  the decomposition of the order
$|G|$ into prime factors. Then $\nu(p)=0$ if $p\geq 11$.
\end{corollary}

\section{$\Paut(x^3y+y^3z+z^3t+t^3x)$}
 The goal of this section is the determination of the projective automorphism group of $f^{0,0,0}=x^3y+y^3z+z^3t+t^3x$. For the sake of simplicity we denote $f^{0,0,0}$ by $f$.
 Let $B=\diag[1,\beta,\beta^{-2},\beta^7]$, $B'=\diag[\beta,\beta^{-2} ,\beta^7,1]$ with $\ord(\beta)=20$, and $C=[e_4,e_1,e_2,e_3]$.
Then
$f_{B^{-1}}=\beta f$, $f_{C^{-1}}=f$, and $CBC^{-1}=\beta B^{17}$. In particular $G_{80}=\{(B)^i(C)^j\ :\  i\in [0,19],\ j\in [0,3]\}$ is a subgroup of
$PGL_4(k)$ of order $80$. An abelian subgroup $\langle (B^5)\rangle \times \langle(C)\rangle$ is a Sylow 2-subgroup of $G_{80}$.
$G_{80}$ is isomorphic to $C_4 \times (C_5 \rtimes C_4)$. This group and the groups $G_i$ of the next section have been identified using Magma, a computational algebra system, considering the order of the elements of the group; see \cite{magma}.
If there are several groups of that order whose elements have the same orders, the number of couples of conjugate elements of the group has been considered.
Then the names of the groups have been determined using GAP; see \cite{GAP}.
Let $G_{20}=\langle(B)\rangle=\langle(B')\rangle$. We shall show

\begin{theorem}
$\Paut(x^3y+y^3z+z^3t+t^3x)=G_{80}$.
\end{theorem}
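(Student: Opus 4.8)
The plan is to prove the two inclusions $G_{80}\subseteq \Paut(f)$ and $\Paut(f)\subseteq G_{80}$ separately, where $f=x^3y+y^3z+z^3t+t^3x$. The inclusion $G_{80}\subseteq\Paut(f)$ is immediate: the data already recorded give $f_{B^{-1}}=\beta f\sim f$ and $f_{C^{-1}}=f$, so $(B),(C)\in\Paut(f)$, and since $G_{80}=\langle (B),(C)\rangle$ the inclusion follows because $\Paut(f)$ is a group. All the real work is in the reverse inclusion.

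For $\Paut(f)\subseteq G_{80}$ I would argue by Sylow theory on $G:=\Paut(f)$, which is finite by \cite{mat}. Put $A:=B^4=\diag[1,\varepsilon,\varepsilon^3,\varepsilon^2]$ with $\varepsilon=\beta^4$ of order $5$; then $(A)\in G$ has order $5$, so $5\mid |G|$, and by Proposition 3.2 the exponent of $5$ in $|G|$ is exactly one. Hence $P:=\langle(A)\rangle\cong\ZZZ{5}$ is a Sylow $5$-subgroup. Next I would compute $N_G(P)$. By the centralizer computation in Proposition 3.2 every element of $PGL_4(k)$ commuting with $(A)$ is diagonal, so $C_G(P)$ equals the group of diagonal members of $\Paut(f)$; solving $f_{S^{-1}}\sim f$ for diagonal $S$ forces $S=\diag[1,\lambda,\lambda^{-2},\lambda^7]$ with $\lambda^{20}=1$, i.e. $C_G(P)=\langle(B)\rangle=G_{20}$, cyclic of order $20$. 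The conjugation action of $N_G(P)$ on $P$ gives an embedding $N_G(P)/C_G(P)\hookrightarrow \mathrm{Aut}(P)\cong(\Z/5\Z)^\ast\cong \ZZZ{4}$; since $CAC^{-1}=\varepsilon A^2$ shows that $(C)$ induces $(A)\mapsto(A^2)$, a generator of order $4$, this embedding is onto. Therefore $|N_G(P)|=20\cdot 4=80$, and as $G_{80}=\langle G_{20},(C)\rangle\subseteq N_G(P)$ already has order $80$ we conclude $N_G(P)=G_{80}$.

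It then remains to show $G=N_G(P)$, equivalently that the number $n_5=[G:N_G(P)]$ of Sylow $5$-subgroups equals $1$; this is the heart of the matter. I would deduce it from the claim that every $(S)\in G$ is a \emph{monomial} transformation, i.e. permutes the four coordinate points $(e'_1),\dots,(e'_4)$ of $\PP{3}(k)$. Granting this, each $(S)$ normalizes the full diagonal torus and hence its intersection with $G$, namely $C_G(P)=G_{20}$; since $P$ is the unique subgroup of order $5$ of the cyclic group $G_{20}$ it is characteristic there, so $(S)$ normalizes $P$. Thus $P\trianglelefteq G$, $n_5=1$, and $G=N_G(P)=G_{80}$. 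To establish the monomiality claim I would argue intrinsically on the smooth quartic $V_p(f)$ (nonsingular since $R(0,0,0)=256\neq 0$ by Theorem 3.7): it contains the two skew lines $L_{13}=\{y=t=0\}$ and $L_{24}=\{x=z=0\}$, and along $L_{13}$ the tangent plane of $V_p(f)$ at $[x,0,z,0]$ is $x^3Y+z^3T=0$, so the Gauss map $[x:z]\mapsto[x^3:z^3]$ along the line ramifies exactly at $(e'_1)$ and $(e'_3)$; symmetrically for $L_{24}$. Since $\Paut(f)$ permutes the finitely many lines of $V_p(f)$ together with their ramification points, the four coordinate points form a $\Paut(f)$-invariant set, which is the desired monomiality.

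The step I expect to be the genuine obstacle is precisely the monomiality claim: making ``canonically determined by $f$'' rigorous requires controlling \emph{all} lines on $V_p(f)$ --- ideally showing that $L_{13}$ and $L_{24}$ are the only ones, or at least that the coordinate points are distinguished inside the complete incidence and ramification configuration of lines --- which amounts to analyzing the Fano scheme of lines of the surface. A direct linear-algebra alternative, solving $f_{S^{-1}}=\lambda f$ for an unknown $S=[s_{ij}]$, is possible but combinatorially heavy; the line-configuration route is cleaner, and is where I would concentrate the effort.
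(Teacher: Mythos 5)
Your overall architecture is sound and genuinely different from the paper's: you reduce everything to a Sylow/normalizer computation ($P=\langle(A)\rangle$ is a Sylow $5$-subgroup by Proposition 3.2, $C_G(P)=G_{20}$, $N_G(P)=G_{80}$, so it suffices to show $P\lhd G$), and your computations of $C_G(P)$ and of the action of $(C)$ on $P$ are correct. The paper instead works with $h=3^{-4}\Hess(f)$, computes the full $68$-point singular locus of $V_p(h)$ as ${\cal S}_0\cup{\cal S}_1\cup{\cal S}_2\cup{\cal S}_3$ with ${\cal S}_0$ the four coordinate points, and gets $|\Paut(h)|=80$ by orbit--stabilizer once it knows $\Paut(h)$ preserves ${\cal S}_0$. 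Both routes funnel through the same bottleneck: showing that the four coordinate points form a $\Paut(f)$-invariant set (your ``monomiality claim'').

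That claim is exactly where your proof has a genuine gap, and the argument you sketch does not close it. What your line-geometric reasoning actually produces as a canonical invariant set is the union, over \emph{all} lines $L\subset V_p(f)$, of the ramification points of the Gauss map restricted to $L$. To conclude that this set is $\{(e'_1),\dots,(e'_4)\}$ you would need to know either that $L_{13}$ and $L_{24}$ are the only lines on the surface, or that they are intrinsically distinguished among all lines (a smooth quartic can carry up to $64$ lines, and nothing you have written rules out further lines whose Gauss maps contribute extra ramification points to the invariant set). You flag this yourself, but it is the entire content of the theorem beyond the routine normalizer computation, so as it stands the proof is incomplete. The paper's device for closing precisely this step is worth importing: the coordinate points are singular points of $V_p(\Hess(f))$ whose tangent cone is the reducible cubic cone $V_a(yz^2)$, while at each of the other $60$ singular points the tangent cone is an irreducible (nondegenerate) quadric; by Proposition 2.7 tangent cones at corresponding points are carried to isomorphic tangent cones, so ${\cal S}_0$ is preserved. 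Substituting that Hessian/tangent-cone argument for your line-configuration argument would make your Sylow-theoretic proof complete, and arguably shorter than the paper's, since you would then not need the explicit coordinates of ${\cal S}_1,{\cal S}_2,{\cal S}_3$ beyond verifying that their tangent cones are nondegenerate quadrics.
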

\begin{proof}
By definition the projective automorphism group $\Paut(g)$ of a non-zero form $g(x_1,\dots,x_n)$ consists of $(A)\in PGL_n(k)$ such that
$g_{A^{-1}}\sim g$. Recall that $\Hess(g)=\det [g_{x_i,x_j}]$. As $\Hess(g_{A^{-1}})=(\det A)^2\Hess(g)_{A^{-1}}$, $g_{A^{-1}}\sim g$ implies
$\Hess(g)_{A^{-1}}\sim \Hess(g)$, hence $\Paut(g)$ is a subgroup of $\Paut(\Hess(g))$. Let $h=3^{-4}\Hess(f)$, where  $f=x^3y+y^3z+z^3t+t^3x$.
Therefore
\[
 h=x^4z^4+y^4t^4-4(x^5zt^2+x^2y^5t+xy^2z^5+yz^2t^5)+14x^2y^2z^2t^2.
\]
Hence
\begin{eqnarray*}
h_x&=& 4x^3z^4-4(5x^4zt^2+2xy^5t+y^2z^5)+28xy^2z^2t^2,\\
h_y&=& 4y^3t^4-4(5x^2y^4t+2xyz^5+z^2t^5)+28x^2yz^2t^2,\\
h_z&=& 4x^4z^3-4(x^5t^2+5xy^2z^4+2yzt^5)+28x^2y^2zt^2,\\
h_t&=& 4y^4t^3-4(2x^5zt+x^2y^5+5yz^2t^4)+28x^2y^2z^2t.
\end{eqnarray*}
Denote by ${\cal S}$ the set of all singular points of the projective algebraic set $V_p(h)$ in
$\PP{3}(k)$.\\ Clearly ${\cal S}_0=\{(1,0,0,0),(0,1,0,0),(0,0,1,0),(0,0,0,1)\}$
is a subset of ${\cal S}$. It is immediate that $(x,y,z,t)\in {\cal S}$ with $xyzt=0$ belongs to ${\cal S}_0$. We shall find all
$(x,y,z,1)\in {\cal S}\setminus {\cal S}_0$. Suppose $(x,y,z,1)$ with $xyz\not=0$ belongs to ${\cal S}$, namely $h_x=h_y=h_z=h_t=0$.
This condition is equivalent to $g_j=0$ ($j\in [1,4]$), where
\[
 g_1=(xh_x-h_t)/4,\ g_2=(yh_y-h_t)/4,\ g_3=(zh_z-h_t)/4,\ g_4=h_t/4.
\]
Since $10y(-x^2y^4+z^2)=(g_1-g_3+2g_2)$, we have $z^2=x^2y^4$, hence $z=\sigma xy^2$, where $\sigma^2=1$. Now $g_2=0$ yields $y^{10}=1$.
Now substituting $\sigma xy^2$ ($y^{10}=1$) for $z$ in $g_j$, we see that the condition $g_j=0$ ($j\in [1,4]$) is equivalent to
$y^{10}=1$, $z=\sigma xy^2$, $\ell_3=\ell_4=0$, where $\sigma^2=1$ and
\begin{eqnarray*}
&&\ell_3=x^8y^8-4x^6y^2\sigma+4x^2y^5-y^4,\ \ \ell_4=-2x^6y^2\sigma+7x^4y^6-6x^2y^5+y^4.
\end{eqnarray*}
Consequently $(x,y,z,1)\in \PP{3}(k)$ with $xyz\not=0$ belongs to ${\cal S}$ if and only if $[x,y,z]=[x,y,\sigma xy^2]$ for some $[x,\sigma,y]$ satisfying
$\sigma^2=y^{10}=1$, $\ell_3=\ell_4=0$. Define $\ell_1$,$\ell_2$ and $m_2$ as follows.
\begin{eqnarray*}
\ell_1&=&(\ell_3+\ell_4)x^{-2}y^{-8}=x^6-6\eta y^{-1}x^4+7y^{-2}x^2-2y^{-3},\\
\ell_2&=&(\ell_4y^{-2}\sigma+2\ell_1)y^{-4}\sigma=-5x^4+x^2(-6+14\eta)y^{-1}+(1-4\eta)y^{-2},\\
m_2&=&\frac{25}{4}\{\ell_1+\frac{1}{5}\ell_2(x^2-\frac{6+16\eta}{5}y^{-1})\}y^2=(1+\eta)(-x^2+y^{-1}),
\end{eqnarray*}
where $\eta=y^5\sigma$, which satisfies $\eta^2=1$. Evidently $\ell_3=\ell_4=0$ if and only if $\ell_2=m_2=0$.
We have found all $(x,y,z,1)\in {\cal S}$ with $xyz\not=0$.
Namely, in the case 1) $\sigma y^5=1$ with $y^{10}=1$ $(x,y,\sigma xy^2,1)$ belongs to ${\cal S}$ if and only if $x^2=y^{-1}$, for in this case
$\ell_2=(-5x^2+3y^{-1})(x^2-y^{-1})$. Since $y\in \langle \beta^2\rangle$, we get  the following $20$ singular points:
\begin{eqnarray*}
 {\cal S}_1&=&\{(\pm \beta^{-i},\beta^{2i},\pm (-1)^i\beta^{3i},1):i\in [0,9]\}=\{(\beta^{-j},\beta^{2j},\beta^{-7j},1):j\in [0,19]\}.
\end{eqnarray*}
In the another case 2) $\sigma y^5=-1$ with $y^{10}=1$ $(x,y,\sigma xy^2,1)$ belongs to ${\cal S}$ if and only if $\ell_2=0$, i.e., $x^4+4y^{-1}x^2-y^{-2}=0$.
Let $u=\sqrt{-2+\sqrt{5}}$ and $v=\sqrt{-2-\sqrt{5}}$. Then for every $y=\beta^{2i}$ ($i\in [0,9]$) $x^2=(-2\pm \sqrt{5})y^{-2i}$. Hence we have the following
$40$ singular points:
\begin{eqnarray*}
{\cal S}_2&=&\{(u\beta^{-j},\beta^{2j},-u\beta^{-7j},1):j\in [0,19]\},\ \ {\cal S}_3=\{(v\beta^{-j},\beta^{2j},-v\beta^{-7j},1):j\in [0,19]\}.
\end{eqnarray*}
We have shown that ${\cal S}={\cal S}_0+{\cal S}_1+{\cal S}_2+{\cal S}_3$. Moreover, ${\cal S}_1=G_{20}(1,1,1,1)$, ${\cal S}_2=G_{20}(u,1,-u,1)$ and
${\cal S}_3=G_{20}(v,1,-v,1)$, for $G_{10}=\langle (\diag[\beta,\beta^{-2},\beta^{7},1])\rangle$.

Let $Q_0=(0,0,0,1)$, $Q_1=(1,1,1,1)$, $Q_2=(u,1-u,1)$ and $Q_3=(v,1,-v,1)$. Evidently $H=\Paut(h)$ acts on ${\cal S}$. We claim that there exists no
$(A)\in H$ such that $(A)Q_0\not\in {\cal S}_0$ so that $H$ acts on ${\cal S}_0$. To prove the claim it suffices to show that
the tangent cone $T_{Q_0}$ of $V_p(h)$ at $Q_0$ is not isomorphic to the one $T_{Q_\ell}$ at $Q_\ell$ ($\ell=1,2,3$) by Theorem 2.9.  Since
$h(x,y,z,t)=g_0(x,y,z)t^5+{\rm lower\ terms\ of\ }t$, where $g_0(x,y,z)=-4yz^2$, $T_{Q_0}$ is the reducible affine algebraic set $V_a(yz^2)$ in $\AAA{3}(k)$.
Let $S_1\in GL_4(k)$ such that $(S_1)Q_1=Q_0$ and $h_{S_1}(x,y,z,t)=h(x+t,y+t,z+t,t)$, hence $h_{S_1}(x,y,z,t)=g_1(x,y,z)t^6+{\rm lower\ terms\ of\ }t$,
where $g_1(x,y,z)=8(-3x^2-3y^2-3z^2+xy+yz+4xz)$. Then the tangent cone $T_{Q_1}$ of $V_p(h)$ at $Q_1$ is isomorphic to $V_a(g_1)$ which is nonsingular.
We write $w$ for $u$ or $v$, so that $w^4+4w^2-1=0$. Let $S(w)\in GL_4(k)$ such that $(S(w))(w,1,-w,1)=Q_0$ and $h_{S(w)}(x,y,z,t)=h(x+wt,y+t,z-wt,t)$,
hence $h_{S(w)}(x,y,z,t)=g(x,y,z,w)t^6+{\rm lower\ terms\ of\ }t$, where
\[
 g(x,y,z,w)=c_{11}x^2+c_{22}y^2+c_{33}z^2+2c_{12}xy+2c_{13}zx+2c_{23}yz
\]
with
\begin{eqnarray*}
c_{11}&=&c_{33}=-44w^2+12,\ \ c_{22}=-28w^2+4,\\
c_{12}&=& 12w^3-16w,\ \ c_{13}=12(-7w^2+1),\ \ c_{23}=52w^3-16w.
\end{eqnarray*}
The tangent cone $T_{(w,1,-w,1)}$ of $V_p(h)$ at $(w,1,-w,1)$ is isomorphic to the affine algebraic set $V_a(g(w))$ which is nonsingular, for
 the determinant of the symmetric matrix $[c_{ij}]$ is equal to
$10w^2(224w^4+46w^2-24)\not=0$. Thus $T_{Q_0}$ is not isomorphic to none of $T_{Q_j}$ ($j\in [1,3]$), as desired.

 $H$ acts transitively on ${\cal S}_0$, for so does the subgroup $G_{80}$.  Let $H_0=\{(A)\in H:(A)Q_0=Q_0\}$, which contains $G_{20}$.
In order to see $H_0\subset G_{20}$, hence $H_0=G_{20}$, we assume $(A)\in H_0$.
Let $a=[a_1,a_2,a_3,0]$, $b=[b_1,b_2,b_3,0]$, $c=[c_1,c_2,c_3,0]$ and $d=[d_1,d_2,d_3,1]$ be the row vectors of $A$. By the condition $h_{A^{-1}}\sim h$,
we have $yz^2t^5_{A^{-1}}\sim yz^2t^5$, thus $b=[0,b_2,0,0]$ and $c=[0,0,c_3,0]$.
Writing $h_{A^{-1}}=(b_2c_3^2yz^2)t^5+\{b_2^4y^4+5b_2c_3^2(d_1x+d_2y+d_3z)\}t^4+\cdots$, we see that $\{b_2^4y^4+5b_2c_3^2(d_1x+d_2y+d_3z)\}\sim y^4$,
i.e., $d=[0,0,0,1]$. Now it follows immediately that $A$ is diagonal, hence $(A)\in G_{20}$.
Consequently $|H|=|{\cal S}_0|\ |H_0|=80$, and $H=G_{80}$. We have shown $G_{80}\subset \Paut(f)\subset H\subset G_{80}$.
\end{proof}

\section {The quartic forms $x^4+y^4+z^4+t^4+\lambda\ xyzt$}
Let $M^\lambda=x^4+y^4+z^4+t^4+\lambda\ xyzt$. It holds that $|\Paut(V(M^{12}))|=1920$ and W. Burnside  conjectured that $V(M^{12})$ is  maximally
symmetric nonsingular quartic surface \cite[\S 272]{bur}.
In this section we study the projective automorphism group of $V(M^\lambda)$ and determine a form of type $f^{\lambda,\mu,\xi}$ that is
equivalent to $M^{12}$. Let $G_{16}=\{(\diag[a,b,c,1]):\ abc=a^4=b^4=c^4=1\}$, which is isomorphic to $C_4 \times C_4$,
$(\hat{\SSS{4}})=\{(\hat{\tau})=([e_{\tau(1)},e_{\tau(2)},e_{\tau(3)},e_{\tau(4)}]):\ \tau\in \SSS{4}\}$, and
$(\hat{\SSS{3}})=\{(\hat{\tau})=([e_{\tau(1)},e_{\tau(2)},e_{\tau(3)},e_4]):\ \tau\in \SSS{3}\}$. Then
$G_{384}=\langle G_{16},(\hat{\SSS{4}})\rangle$ and $G_{96}=\langle G_{16},(\hat{\SSS{3}})\rangle$ are groups of order $384$ and $96$, respectively, for
$\hat{\tau}^{-1}\diag[a_1,a_2,a_3,a_4]\hat{\tau}=\diag[a_{\tau(1)},a_{\tau(2)},a_{\tau(3)},a_{\tau(4)}]$ for any $\tau\in \SSS{4}$.

$G_{96}$ is isomorphic to $((C_4 \times C_4) \rtimes C_3) \rtimes C_2$.
The following $C\in GL_4(k)$ is of order five, and satisfies $M^{12}_{C^{-1}}=M^{12}$.
\vspace{-3ex}
\begin{eqnarray}
\lefteqn{}\nonumber\\
C=\frac{1}{2}\left[\begin{array}{rrrr}
                   -1        &-\sqrt{-1}&-\sqrt{-1}& 1       \\
                   -\sqrt{-1}&-1        &1         &-\sqrt{-1}\\
                    1        &\sqrt{-1} &-\sqrt{-1}&1        \\
                   -\sqrt{-1}&-1        &-1        &\sqrt{-1}\end{array}\right],\
C^2=\frac{1}{2}\left[\begin{array}{rrrr}
                    -\sqrt{-1}& \sqrt{-1}& -1&-1\\
                     \sqrt{-1}& \sqrt{-1}&-1 & 1\\
                    -\sqrt{-1}&-\sqrt{-1}&-1 & 1\\
                     \sqrt{-1}&-\sqrt{-1}&-1 &-1\end{array}\right],&&\nonumber\\
C^3=\frac{1}{2}\left[\begin{array}{rrrr}
                     \sqrt{-1}&-\sqrt{-1}& \sqrt{-1}&-\sqrt{-1}\\
                    -\sqrt{-1}&-\sqrt{-1}& \sqrt{-1}& \sqrt{-1}\\
                    -1        &       -1 &        -1&       -1 \\
                    -1        &        1 &         1&       -1 \end{array}\right],\
C^4=\frac{1}{2}\left[\begin{array}{rrrr}
                    -1&         \sqrt{-1}&         1& \sqrt{-1}\\
                     \sqrt{-1}&-1        &-\sqrt{-1}&       -1 \\
                     \sqrt{-1}&         1& \sqrt{-1}&       -1 \\
                             1& \sqrt{-1}&         1&-\sqrt{-1}\end{array}\right].&&\nonumber
\end{eqnarray}
\begin{lemma} \label{lem 5.5}
$V_p(M^{-4})$ is projectively equivalent to a Kummer surface {\rm \cite[Theorem 10.3.18]{dol2}}. \\
$(1)$ The quartic form $M^\lambda(x,y,z,t)$ is singular if and only if $\lambda^4=4^4$.\\
$(2)$  $G_{384}=\Paut(M^\lambda)$ if $\lambda\not\in\{0,12\sqrt{-1}^{\ a},-4\sqrt{-1}^{\ b}:a\in [0,3],\ b\in [1,3]\}$.
$G_{384}$ is isomorphic to $((((C_4 \times C_4) \rtimes C_2) \rtimes C_2) \rtimes C_3) \rtimes C_2$.
\\
$(3)$  $G_{1920}=G_{384}+(C)G_{384}+(C)^2G_{384}+(C)^3G_{384}+(C)^4G_{384}$ is a group of order $1920$, and $G_{1920}=\Paut(M^{12})$.
$G_{1920}$ is isomorphic to $((C_2 \times C_2 \times C_2 \times C_2) \rtimes \A{5}) \rtimes C_2$.\\
$(4)$ Let $G_{64}=\{(\diag[a,b,c,1]) : a^4=b^4=c^4=1\}$. Then
$(\hat{\tau})^{-1}G_{64}(\hat{\tau})=G_{64}$  for $\tau\in \SSS{4}$, hence $G_{1536}=\langle G_{64},(\hat{\SSS{4}})\rangle$ is a group of order $1536$,
and $G_{1536}=\Paut(M^0)$.
\end{lemma}
\begin{proof}
Note first that if $\lambda^4=\nu^4$, then $\nu=\alpha\lambda$, where $\alpha^4=1$, so that $M^{\nu}={M^\lambda}_{D^{-1}}$ for $D=\diag[1,1,1,\alpha]$.
It is trivial that $M^0$ is nonsingular. Suppose $\lambda\not=0$ and that $V_p(M^\lambda)$ is singular at $(x,y,z,t)$.
Then $4^4xyzt=\lambda^4xyzt\not=0$, hence $\lambda^4=4^4$. Conversely, if $\lambda^4=4^4$, then $V_p(M^\lambda)$ is singular at
$(1,1,-4/\lambda,1)$.

Let $\mu=\lambda/12$, $h=12^{-4}\Hess(M^\lambda)$.   Then
\[
 h=(1-3\mu^4)x^2y^2z^2t^2+2\mu^3xyzt(x^4+y^4+z^4+t^4)-\mu^2\{x^4(y^4+z^4+t^4)+y^4(z^4+t^4)+z^4t^4\},
\]
so that
\begin{eqnarray*}
&&h_x/2=(1-3\mu^4)xy^2z^2t^2+\mu^3(5x^4yzt+y^5zt+yz^5t+yzt^5)-2\mu^2x^3(y^4+z^4+t^4),\\
&&h_y/2=(1-3\mu^4)x^2yz^2t^2+\mu^3(x^5zt+5xy^4zt+xz^5t+xzt^5)-2\mu^2y^3(x^4+z^4+t^4),\\
&&h_z/2=(1-3\mu^4)x^2y^2zt^2+\mu^3(x^5yt+xy^5t+5xyz^4t+xyt^5)-2\mu^2z^3(x^4+y^4+t^4),\\
&&h_t/2=(1-3\mu^4)x^2 y^2z^2t+\mu^3(x^5yz+xy^5z+xyz^5+5xyzt^4)-2\mu^2t^3(x^4+y^4+z^4).
\end{eqnarray*}
As is well known, $\Paut(M^\lambda)$ is a subgroup of $\Paut(h)$. Note that $G_{96}\cup (\hat{\SSS{4}})\subset \Paut(M^\lambda)$.  Assume $\mu\not=0$.
Clearly $V_p(g)$  is singular at $P_1=(1,0,0,0)$, $P_2=(0,1,0,0)$, $P_3=(0,0,1,0)$ and $P_4=(0,0,0,1)$.
If $V_p(h)$ is singular at $P=(x,y,z,t)$ with $xyzt=0$, then it can be shown easily that $P=P_i$ for some $i\in [1,4]$. Suppose
$V_p(h)$ is singular at  $P=(x,y,z,t)$ with $xyzt\not=0$. We may assume $t=1$.  Now $h_x=h_y=h_z=h_t=0$ if and only if
$xh_x=yh_y=zh_z=h_t=0$, i.e., $xh_x-h_t=yh_y-h_t=zh_z-h_t=h_t=0$, namely, putting $s=2\mu xyz-x^4-y^4-z^4$
\begin{eqnarray*}
&&(x^4+s)(x^4-1)=(y^4+s)(y^4-1)=(z^4+s)(z^4-1)=0,\\
&& (1-3\mu^4)x^2 y^2z^2+\mu^3xyz(x^4+y^4+z^4+5)-2\mu^2(x^4+y^4+z^4)=0.
\end{eqnarray*}
Note that if $[x,y.z]=[\alpha_1,\alpha_2,\alpha_3]\in k^3$ is a solution, then
$[x,y,z]=[\alpha_{\tau(1)},\alpha_{\tau(2)},\alpha_{\tau(3)}]$ ($\tau\in \SSS{3}$) is also a solution.
Note that if $x^4=1$, then $y^4=1$ or $z^4=1$. To see this, assume $y^4,z^4\not=1$. Then the above equalities imply $y^4+s=z^4+s=0$, hence
$y^4=z^4$ and $2\mu xyz=(\beta+1)$, where $\beta=y^4$. Consequently $16\mu^4 \beta^2=(\beta+1)^4\not=0$.
Multiplying $4\mu^2$ after substituting $(\beta+1)/2\mu$ for $xyz$ in $h_t$, we get
\begin{eqnarray*}
0&=&(\beta+1)^2+\mu^4(\beta^2-6\beta+1)=\frac{1}{16}\beta^{-2}(\beta+1)^2(\beta-1)^4,
\end{eqnarray*}
hence $y^4=\beta=1$, a contradiction.  Assume $x^4=y^4=1$ and $z^4\not=1$. Then $2\mu xyz=2$, hence $\mu^4\gamma=1$, where
$\gamma=z^4$.  Now the condition $\mu^2h_t=0$ is satisfied.  So $P_{3,a,b}=(\sqrt{-1}^{\ a},\sqrt{-1}^{\ b},\mu^{-1}\sqrt{-1}^{\ -a-b},1)$ ($a,b\in [0,3]$) are
singular points of $V_p(h)$. Note that $\mu^4\not=1$, for $\mu^4=z^{-4}\not=1$. By the symmetry, $V_p(h)$ is also singular at
\[
P_{1,a,b}=(\mu^{-1}\sqrt{-1}^{\ -a-b},\sqrt{-1}^{\ a},\sqrt{-1}^{\ b},1),\ P_{2,a,b}=(\sqrt{-1}^{\ a},\mu^{-1}\sqrt{-1}^{\ -a-b},\sqrt{-1}^{\ b},1),
\]
where $a,b\in [0,3]$.
Assume $x^4=y^4=z^4=1$. Then, according as  $xyz=\sqrt{-1}^{\ a}$ ($a\in [0,3]$),  either $\mu=\sqrt{-1}^{\ -a}$ ($a\in [0,3]$) or
$\mu=-\frac{1}{3}\sqrt{-1}^{\ -a}$.
Indeed, putting $\nu=\sqrt{-1}^{\ a}\mu$ we have
\begin{eqnarray*}
h_t(x,y,z,1)=-\sqrt{-1}^{-2a}(3\nu^4-8v^3+6\nu^2_1)=(\nu-1)^3(3\nu+1).
\end{eqnarray*}
Next assume that none of $x^4,\ y^4,\ z^4$ is equal to $1$. Then
\[
 2\mu xyz=y^4+z^4,\ \ 2\mu xyz=x^4+z^4,\ \ 2\mu xyz=x^4+y^4,
\]
equivalently, $x^4=y^4=z^4=\mu xyz$. Now the condition $h_t=0$ reduces to $xyz=\mu^3$, hence $x^4=y^4=z^4=\mu^4=\mu xyz\not=1$. Therefore,
$V_p(h)$ is singular at $P_{4,a,b}=(\mu \sqrt{-1}^{\ a},\mu \sqrt{-1}^{\ b},\mu \sqrt{-1}^{\ -a-b},1)$ ($a,b\in [0,3]$). Thus, if $\mu^4\not=0,1,3^{-4}$, then
$V_p(h)$ has exactly $68$ singular points.  Assume $\mu^4=1$. In this case one of $x^4,y^4,z^4$ must be equal to 1. Therefore,
$x^4=y^4=z^4=1$ with $\mu xyz=1$. So, if $\mu=1$, the set of singular points of $V_p(h)$ consists of $P_i$ ($i\in [1,4]$) and
$Q_{ab}=(\sqrt{-1}^{\ a},\sqrt{-1}^{\ b},\sqrt{-1}^{\ -a-b},1)$ ($a,b\in [0,3]$).
If $\mu=-\frac{1}{3}$, the set of singular points of $V_p(h)$ consists of $P_i$, $P_{i,a,b}$ and $Q_{a,b}$ ($i\in [1,4],\ a,b\in [0,3]$).

Let $G_{P_4}=\{(A)\in \Paut(g):\ (A)P_4=P_4\}$. We will show $G_{P_4}=G_{96}$, to be precise  $G_{P_4}=\pi_4(G_{96})$, for $\mu$ satisfying
$\mu^4\not=0$. Clearly $G_{P_4}\supset G_{96}$. Assume $(A)\in G_{P_4}$, where $A=[a_{ij}]\in GL_4(k)$ with the 4-th column $e_4$. Comparing the
coefficients of $t^5$ in $h_{A^{-1}}$ and $h$, we see that $(\sum_{j=1}^{3}a_{1j}x_j)(\sum_{j=1}^{3}a_{2j}x_j)(\sum_{j=1}^{3}a_{3j}x_j)\sim x_1x_2x_3$.
Replacing $A$ by $A[e_{\tau(1)},e_{\tau(2)},e_{\tau(3)},e_4]$ for some $\tau\in \SSS{3}$, we may assume $a_{ij}=0$ ($i,j\in [1,3],\ i\not=j$).
Comparing the coefficients of $t^4$ in $h_{A^{-1}}$ and $h$, we see that $a_{4j}=0$ ($j\in [1,3]$). Now the nonsingular  matrix
$A=\diag[a,b,c,1]$ satisfies $h_{A^{-1}}\sim h$ if and only if $a^4=b^4=c^4=abc=1$. Thus $G_{P_4}\subset G_{96}$.

Let ${\cal P}_0=\{P_1,P_2,P_3,P_4\}$, ${\cal P}_i=\{P_{i,a,b}:\ a,b\in [0,3]\}$ ($i\in [1,4]$), and ${\cal Q}=\{Q_{ab}:\ a,b\in [0,3]\}$. To complete
the proof we first assume $\mu^4\not=0,3^{-4},1$, hence $\cup_{i=0}^4{\cal P}_i$ is the set of all singular points of $V_p(h)$.
$G_{384}$ acts transitively on $\cup_{i=1}^4{\cal P}_i$ and ${\cal P}_0$, respectively.
Any $(A)\in \Paut(g)$ maps ${\cal P}_0$ into itself. To verify this it suffices to show that $(A)P_4=P_{4,0,0}$ is impossible.
Note that there exists $T\in GL_4(k)$ such that $T[\mu,\mu,\mu,1]=[0,0,0,1]$ and $h_{T}(x,y,z,t)=h(x+\mu t,y+\mu t,z+\mu t,t)$. Since
\begin{eqnarray*}
&&h(x,y,z,t)=t^5(2\mu^3xyz)+{\rm lower\ terms\ of\ }t, \\
&&h_{T}(x,y,z,t)=t^6\{\mu^4(\mu^4-1)\{5(x^2+y^2+z^2)-6(xy+yz+zx)\}+{\rm lower\ terms\ of\ }t,
\end{eqnarray*}
the tangent cone to $V_p(h)$ at $P_4$ (resp. at $P_{4,0,0}$) is the affine algebraic set $V_a(xyz)$
(resp. $V_a(5(x^2+y^2+z^2)-6(xy+yz+zx))$). Since these two affine algebraic sets are not isomorphic,
no $(A)\in \Paut(h)$  satisfies $(A)P_4=P_{4,0,0}$  by Theorem 2.9. Since
$\Paut(h)$ acts transitively on the four-point set ${\cal P}_0$ and $|G_{P_4}|=96$, it follows that $|\Paut(h)|=384$, hence $\Paut(h)=G_{384}$.
Now $G_{384}\subset \Paut(M^\lambda)\subset \Paut(g)$, i.e., $\Paut(M^\lambda)=G_{384}$.

Secondly assume $\mu=-\frac{1}{3}$. Then $G_{384}$ acts transitively on $\cup_{i=1}^4{\cal P}_i$, ${\cal P}_0$ and ${\cal Q}$, respectively. There exists
$S\in GL_4(k)$ such that $S[1,1,1,1]=[0,0,0,1]$ and $h_{S}(x,y,z,t)=h(x+t,y+t,z+t,t)$.
Since
$h_{S}(x,y,z,t)=\frac{16}{27}\{-3(x^2+y^2+z^2)+2(xy+yz+zx)\}t^6+{\rm lower\ terms\ of\ }t$, the tangent cone to $V_p(h)$ at $Q_{00}$ is
$V_a(-3(x^2+y^2+z^2)+2(xy+yz+zx))$. The tangent cone to $V_p(h)$ at $P_{4,0,0}$ is $V_a(5(x^2+y^2+z^2)-6(xy+yz+zx))$. Thus no $(A)\in \Paut(h)$ transforms
$P_4$ into $\{P_{4,0,0}, Q_{00}\}$. Hence, as in the case $\mu^4\not\in \{0,3^{-4},1\}$, $\Paut(h)=G_{384}$.

Next assume $\mu=1$.
$\{(C)^i(B^\ell)P_4:\ \ell\in [0,3]\}$ ($i\in [0,4]$)
consists of four points in $\PP{3}(k)$ with homogeneous coordinates $c'_{ij}$, where $c'_{ij}$ denote the transposed of the $j$-th column of $C^i$.
One can verify immediately that $\{(C)^i(B^\ell)P_4:\ \ell\in [0,3],\ i\in [0,4]\}={\cal P}_0\cup {\cal Q}$. The right-hand side is the set of
all singular points of $V_p(h)$. So $\Paut(h)$ acts transitively on ${\cal P}_0\cup {\cal Q}$, and $|G_{P_4}|=96$. Consequently $|\Paut(h)|=20\cdot 96$.
Since $G_{1920}\subset \Paut(M^{12})\subset \Paut(h)$, it follows that $G_{1920}=\Paut(M^{12})=\Paut(h)$.

Finally  assume $\mu=0$.
It is clear that $G_{1536}\subset \Paut(M^0)$. Let $(A)\in \Paut(M^0)$ and $h=12^{-4}\Hess(M^0)=x^2y^2z^2t^2$. Since $(A)\in \Paut(h)$,
$(A)$ takes the form $(\diag[a,b,c,1])(\hat{\tau})$, where $a^4=b^4=c^4=1$ and $\tau\in \SSS{4}$. Hence $G_{1536}\supset \Paut(M^0)$.
\end{proof}

  Let $H_1=K_1=\diag[1,1,1,1]=[e_1,e_2,e_3,e_4]$ and
\begin{eqnarray*}
&&H_2=[-e_1,-e_2,e_3,e_4],\ H_3=[-e_1,e_2,-e_3,e_4],\  H_4=[e_1,-e_2,-e_3,e_4]=H_2H_3,\\
&&K_2=\hat{(12)(34)}=[e_2,e_1,e_4,e_3],\  K_3=\hat{(13)(24)}=[e_3,e_4,e_1,e_2],\ K_4=\hat{(14)(23)}=K_2K_3.
\end{eqnarray*}
Clearly ${\cal H}=\{H_i:\ i\in [1,4]\}$ is a subgroup of $GL_4(k)$ isomorphic to $\Z_2\times \Z_2$. Permutations $\sigma_1=id$, $\sigma_2=(12)(34)$,
$\sigma_3=(13)(24)$, and $\sigma_4=(14)(23)$ form a subgroup of $\SSS{4}$ isomorphic to $\Z_2\times \Z_2$. Let ${\cal K}=\{\hat{\sigma_i}:\ i\in [1,4]\}$.
In addition
\begin{eqnarray*}
&& \hat{\sigma_2}^{-1}H_2\hat{\sigma_2}=H_2,\ \hat{\sigma_2}^{-1}H_3\hat{\sigma_2}=-H_3,\\
&& \hat{\sigma_3}^{-1}H_2\hat{\sigma_3}=-H_2,\ \hat{\sigma_3}^{-1}H_3\hat{\sigma_3}=H_3.
\end{eqnarray*}
Thus, the map $\xi:{\cal H}\times {\cal K}\rightarrow PGL_4(k)$ defined by $\xi(H_i,K_j)=(H_iK_j)$ is a group homomorphism such that $\Ima \xi$ is abelian.
Since $\xi$ is injective, ${\cal A}_{16}=\{(H_i)(K_j):\ i,j\in [1,4]\}$ is an abelian group of order 16.  ${\cal A}_{16}$ is isomorphic to
$C_2 \times C_2 \times C_2 \times C_2$.

\begin{lemma}  ${\cal A}_{16}\lhd G_{1920}$.
\end{lemma}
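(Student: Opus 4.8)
The plan is to produce a generating set of $G_{1920}$ and verify that each generator normalises ${\cal A}_{16}$. By construction $G_{96}=\hat{\SSS{3}}G_{16}$, $G_{384}=\bigcup_{i=0}^{3}(B)^iG_{96}$ and $G_{1920}=\bigcup_{i=0}^{4}(C)^iG_{384}$, so $G_{1920}$ is generated by the diagonal matrices of $G_{16}$, the matrices $\hat\tau$ with $\tau\in\SSS{3}$, together with $B$ and $C$. The normaliser $N=N_{G_{1920}}({\cal A}_{16})$ is a subgroup, and since conjugation is injective while ${\cal A}_{16}$ is finite, a generator $(g)$ lies in $N$ as soon as $(g){\cal A}_{16}(g)^{-1}\subseteq{\cal A}_{16}$. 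Because $\xi$ is a homomorphism with ${\cal H}=\langle H_2,H_3\rangle$ and ${\cal K}=\langle K_2,K_3\rangle$, we have ${\cal A}_{16}=\langle(H_2),(H_3),(K_2),(K_3)\rangle$; hence for each generator $(g)$ it is enough to show that the four conjugates $(g)(H_2)(g)^{-1}$, $(g)(H_3)(g)^{-1}$, $(g)(K_2)(g)^{-1}$, $(g)(K_3)(g)^{-1}$ belong to ${\cal A}_{16}$.

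First I would dispose of the generators coming from $G_{384}$ by the calculus of monomial matrices alone. Conjugating a diagonal matrix by a permutation matrix permutes its diagonal entries, and $\hat\sigma\hat\pi\hat\sigma^{-1}=\widehat{\sigma\pi\sigma^{-1}}$. Since the Klein four-group $\{\sigma_1,\dots,\sigma_4\}$ is normal in $\SSS{4}$ and ${\cal K}$ is its image under $\tau\mapsto\hat\tau$, conjugation by any $\hat\tau$ ($\tau\in\SSS{3}$) or by $B$ maps ${\cal K}$ into ${\cal K}$. These same permutation matrices map ${\cal H}$ into ${\cal H}$, because ${\cal H}$ is exactly the group of $\pm1$ diagonal matrices with last entry $1$ and an even number of $-1$'s among the first three entries, and this set is preserved by permuting the first three coordinates (for $\hat\tau$, $\tau\in\SSS{3}$) and, after rescaling by the last diagonal entry, by the $4$-cycle conjugation of $B$. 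For a diagonal $D\in G_{16}$ one has $D{\cal H}D^{-1}={\cal H}$, whereas $DK_jD^{-1}$ is monomial with permutation part in the Klein four-group and nonzero entries equal to quotients of fourth roots of unity; the relations $abc=1$ and $a^4=b^4=c^4=1$ force all these quotients to be $\pm1$ after division by a common scalar, so $(DK_jD^{-1})\in{\cal A}_{16}$ in $PGL_4(k)$. This already yields ${\cal A}_{16}\lhd G_{384}$.

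The remaining and genuinely computational step is conjugation by $C$, and this is where the real work lies. Here I would use the explicit matrices $C$ and $C^{-1}=C^4$ displayed above and evaluate the four products $CH_2C^4$, $CH_3C^4$, $CK_2C^4$, $CK_3C^4$ directly. The point to be checked is that, although $C$ is not monomial — its entries lie in $\tfrac12\Z[\sqrt{-1}]$ — each of these conjugates nevertheless turns out to be a signed permutation matrix whose permutation part lies in the Klein four-group and whose signs match those of some $H_iK_j$ up to a single overall scalar; recognising the answer inside $\{H_iK_j\}$ then places $(C)(H_2)(C)^{-1},\dots,(C)(K_3)(C)^{-1}$ in ${\cal A}_{16}$. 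The main obstacle is precisely this $4\times4$ multiplication over $\QQ(\sqrt{-1})$: it is routine but unavoidable, being the only part not governed by the general behaviour of monomial matrices. Once the four conjugates are identified in ${\cal A}_{16}$, the generator $(C)$ lies in $N$, every generator of $G_{1920}$ lies in $N$, and therefore $N=G_{1920}$, i.e. ${\cal A}_{16}\lhd G_{1920}$.
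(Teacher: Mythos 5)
Your proposal follows the paper's proof essentially verbatim: both reduce normality to checking the generators $(D)\in (G_{16})$, $(\hat\tau)$ with $\tau\in\SSS{3}$, $(B)=(\hat\sigma)$, and $(C)$, handle the monomial generators by permuting diagonal entries and using the normality of the Klein four-group in $\SSS{4}$, and isolate conjugation by $C$ as the one explicit matrix computation. The only difference is that the paper records the outcome of that computation ($C^{-1}H_2C=\sqrt{-1}H_2K_4$, $C^{-1}H_3C=\sqrt{-1}H_3K_2$, etc.) while you describe it without executing it; the form you predict for the answer is exactly what those identities confirm.
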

\begin{proof}
By Lemma 7.1 the group $G_{1920}$ is generated by $(G_{16})$, $(\hat{\SSS{3}})$, $(\hat{\sigma})$ and $(C)$, where $\sigma=(1234)\in \SSS{4}$.
(1) Let $D=\diag[a,b,c,1]$ with $a^4=b^4=c^4=abc=1$. Then $D$ and $H_i$ commute. Besides, $D^{-1}K_2D=c\diag[b^2,a^2,a^2b^2,1]K_2$, and
$D^{-1}K_3D=b\diag[c^2,a^2c^2,a^2,1]K_3$. Thus $(D)^{-1}{\cal A}_{16}(D)={\cal A}_{16}$. (2) The permutations $\tau_1=(12)$ and $\tau_2=(23)$
generate $\SSS{3}$. Recall that $\hat{\tau}^{-1}\diag[a_1,a_2,a_3,a_4]\hat{\tau}=\diag[a_{\tau(1)},a_{\tau(2)},a_{\tau(3)},a_{\tau(4)}]$. Therefore,
$(\hat{\tau}_i)^{-1}({\cal H})(\hat{\tau}_i)=({\cal H})$ for $i\in [1,2]$. In addition
\[
 \tau_1^{-1}\sigma_2\tau_1=\sigma_2,\ \tau_1^{-1}\sigma_3\tau_1=\sigma_4,\ \tau_2^{-1}\sigma_2\tau_2=\sigma_3,\ \tau_2^{-1}\sigma_3\tau_2=\sigma_2.
\]
So $(\hat{\tau})^{-1}{\cal A}_{16}(\hat{\tau})={\cal A}_{16}$ for any $\tau\in \SSS{3}$.
(3) Note that $\sigma^{-1}\sigma_2\sigma=\sigma_4$ and $\sigma^{-1}\sigma_3\sigma=\sigma_3$. Consequently
$(\hat{\sigma})^{-1}{\cal A}_{16}(\hat{\sigma})={\cal A}_{16}$. (4) Finally $(C)^{-1}{\cal A}_{16}(C)={\cal A}_{16}$, for
\begin{eqnarray*}
&&C^{-1}H_2C=\sqrt{-1}H_2K_4,\ C^{-1}H_3C=\sqrt{-1}H_3K_2,\\
&&C^{-1}K_2C=-\sqrt{-1}H_2K_4,\ C^{-1}K_3C=-\sqrt{-1}H_4K_2. \;\;\;\;\;\;
\end{eqnarray*}
\end{proof}

Let $\sigma=(13)(25)$, $\tau=(1234)\in \SSS{5}$, and $\rho=\sigma\tau$. Clearly $\ord(\rho)=6$.
\begin{lemma}
In $\SSS{5}$
\begin{eqnarray*}
&&(34)(152)=\sigma\tau,\ \ (34)=\rho^3,\ \ (152)^{-1}=\rho^{2},\ \ (23)=\tau^{-1}(34)\tau,\ \ (12)=\tau^{-1}(23)\tau,\\
&&(25)=(152)^{-1}(12)(152),\ \ (35)=(23)(25)(23),\ \ (45)=(34)(35)(34).
\end{eqnarray*}
In particular $\sigma$ and $\tau$ generate $\SSS{5}$.
\end{lemma}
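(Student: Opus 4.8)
The plan is to verify each of the eight displayed identities by a direct computation of the induced permutation of $\{1,2,3,4,5\}$, and then to read off a generating set of $\SSS{5}$ from them. Throughout I would fix the convention that products of permutations are composed right-to-left (apply the rightmost factor first), which is the convention already used in the paper (it gives $s_1s_2s_3=(12345)$) and is the one forced by the first identity.

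First I would compute $\sigma\tau$ with $\sigma=(13)(25)$ and $\tau=(1234)$ explicitly: tracing each point gives $1\mapsto5$, $5\mapsto2$, $2\mapsto1$ and $3\mapsto4$, $4\mapsto3$, so $\sigma\tau=(152)(34)=(34)(152)$, the two cycles being disjoint and hence commuting. This establishes the first identity and shows $\rho=\sigma\tau$ is a product of a disjoint $2$-cycle and $3$-cycle, so $\ord(\rho)=6$ as asserted. Because $(34)$ and $(152)$ commute, I would then get $\rho^3=(34)^3(152)^3=(34)$ and $\rho^2=(34)^2(152)^2=(152)^2=(152)^{-1}$, which are the second and third identities; in particular both $(34)$ and $(152)=\rho^{-2}$ lie in $\langle\sigma,\tau\rangle$.

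The remaining five identities are all conjugations, so I would dispatch them uniformly using the standard rule $g(a_1\cdots a_k)g^{-1}=(g(a_1)\cdots g(a_k))$. Applying this with $g=\tau^{-1}=(1432)$ gives $\tau^{-1}(34)\tau=(\tau^{-1}(3)\,\tau^{-1}(4))=(23)$ and then $\tau^{-1}(23)\tau=(12)$; applying it with $g=(152)^{-1}=(125)$ gives $(152)^{-1}(12)(152)=(25)$; and the last two, $(23)(25)(23)=(35)$ and $(34)(35)(34)=(45)$, are conjugations by the involutions $(23)$ and $(34)$. Each reduces to a one-line trace of two points.

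Finally I would assemble the conclusion: the identities successively place $(34),(152),(23),(12),(25),(35),(45)$ inside $\langle\sigma,\tau\rangle$. In particular $\langle\sigma,\tau\rangle$ contains the adjacent transpositions $(12),(23),(34),(45)$, which are the Coxeter generators of $\SSS{5}$ and hence generate it; therefore $\langle\sigma,\tau\rangle=\SSS{5}$. The only point requiring care—rather than a genuine obstacle—is maintaining a single consistent composition and conjugation convention throughout, so that the chain of conjugations really terminates at the adjacent transpositions; once the convention is fixed, every step is a mechanical evaluation of the permutations on $\{1,\dots,5\}$.
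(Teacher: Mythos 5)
Your proposal is correct and follows essentially the same route as the paper, which simply asserts the identities and notes that $(12)$, $(23)$, $(34)$, $(45)$ generate $\SSS{5}$; you have merely written out the mechanical verifications (the computation of $\sigma\tau$, the powers of $\rho$ via disjointness, and the conjugation chain) that the paper leaves implicit. All of your computations check out under the right-to-left composition convention consistent with $s_1s_2s_3=(12345)$.
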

\begin{proof}
As is well known, $\SSS{5}$ are generated by $(12),\ (23),\ (34)$ and $(45)$.
\end{proof}

Let $\tau_j=(jj+1)\in \SSS{5}$ ($j\in [1,4]$). As is well known , they generate $\SSS{5}$, and the defining relations with respect to
them \cite{suz} are
\[
 \tau_j^2=1\ (j\in [1,4]),\  (\tau_j\tau_{j+1})^3\ (j\in [1,3]),\  (\tau_i\tau_j)^2=1\ (|i-j|\geq 2).
\]
There exists a group homomorphism $\chi$ of $\SSS{5}$ into $GL_4(k)$ such that $\chi(\tau_i)=T_i$ ($i\in [1,4]$), where $T_i$ are given
as follows. Hence we have a group homomorphism $(\chi):\ \SSS{5}\rightarrow PGL_4(k)$, where $(\chi)(\tau)=(\chi(\tau))$ for any $\tau\in \SSS{5}$.
We note that ${M^{12}}_{T_i^{-1}}=M^{12}$ for every $i\in [1,4]$.
\begin{eqnarray*}
&&T_1=[e_3,e_2,e_1,e_4],\ T_2=[\sqrt{-1}e_2,-\sqrt{-1}e_1,e_3,e_4],\ T_3=[e_1,\sqrt{-1}e_4,e_3,-\sqrt{-1}e_2],\\
&&T_4=\frac{1}{2}\left[\begin{array}{rrrr}
1&\sqrt{-1}&-1&\sqrt{-1}\\
-\sqrt{-1}&1&-\sqrt{-1}&-1\\
-1&\sqrt{-1}&1&\sqrt{-1} \\
-\sqrt{-1}&-1&-\sqrt{-1}&1\end{array}\right].
\end{eqnarray*}

In fact we have the following lemma. Since $\A{5}$ is the minimal normal subgroup of $\SSS{5}$ \cite[chapter 3,(2.10)]{suz},\\
$\chi$ and $(\chi)$ are injective.
\begin{lemma}
Let $T_{ij}=T_iT_j$. Then
\begin{eqnarray*}
&&\ord(T_{i})=2\ (i\in [1,4]),\ \ord(T_{jj+1})=3\ (j\in [1,3]),\ \ord(T_{ij})=2\ (|i-j|\geq 2).
\end{eqnarray*}
\end{lemma}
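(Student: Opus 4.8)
The plan is to verify the three families of relations by direct computation in $GL_4(k)$, organized so that the dense matrix $T_4$ enters as rarely as possible. The asserted orders are exactly those of the Coxeter generators of $\SSS{5}$, so establishing them amounts to checking $T_i^2=E_4$ for $i\in[1,4]$, $(T_jT_{j+1})^3=E_4$ for $j\in[1,3]$, and $(T_iT_j)^2=E_4$ for $|i-j|\geq 2$, together with the observation that none of the relevant products is $E_4$; since $2$ and $3$ are prime, this pins the orders down exactly. Incidentally, once the relations hold, the defining relations of $\SSS{5}$ recalled just above are satisfied by the $T_i$, which is precisely what is needed for the asserted homomorphism $\chi$ to exist.

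First I would dispose of the involutions. Because $T_1$, $T_2$, $T_3$ are monomial matrices — $T_1$ is the permutation matrix of $(13)$, while $T_2$ and $T_3$ are signed permutations — one reads off $T_i^2=E_4$ for $i\in[1,3]$ at once by following the image of each $e_\ell$ and checking that the two scalars picked up multiply to $1$; for instance $T_2e_1=\sqrt{-1}\,e_2$ and $T_2e_2=-\sqrt{-1}\,e_1$ give $T_2^2e_1=e_1$. The only genuine arithmetic here is squaring the explicit $4\times4$ array defining $T_4$ to obtain $T_4^2=E_4$. Next, for $|i-j|\geq 2$ I would not compute $(T_iT_j)^2$ directly: once each $T_i$ is an involution we have $T_i=T_i^{-1}$, whence $(T_iT_j)^2=E_4$ is equivalent to $T_iT_j=T_jT_i$. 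It therefore suffices to check that the pairs $(T_1,T_3)$, $(T_1,T_4)$ and $(T_2,T_4)$ commute. The first pair is immediate (disjoint supports), and the two pairs involving $T_4$ require only one product in each order. As $T_iT_j\neq E_4$ (otherwise $T_j=T_i$), these orders are exactly $2$.

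Finally, for the braid relations I would form the three products $T_1T_2$, $T_2T_3$, $T_3T_4$ and verify that each cube is $E_4$. The first two are again monomial, so their cubes are checked by tracing a single $3$-cycle of coordinates and accumulating the scalar factors. The main obstacle is $(T_3T_4)^3=E_4$: here $T_3T_4$ is a dense matrix with entries in $\{\pm1,\pm\sqrt{-1}\}$ times $1/2$, and cubing it is the one laborious step. I would first compute $T_3T_4$, then its square, then one further product, keeping the common factor $1/2$ explicit throughout; alternatively one can confirm order $3$ more cheaply by showing that the minimal polynomial of $T_3T_4$ divides $\lambda^3-1$ but not $\lambda-1$, i.e. that its eigenvalues are cube roots of unity and $T_3T_4\neq E_4$. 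Either route yields order exactly $3$ and completes the verification.
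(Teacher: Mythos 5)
Your proposal is correct and matches the paper's approach, which simply states that the equalities can be verified directly. Your organizational shortcuts (reducing $(T_iT_j)^2=E_4$ for $|i-j|\geq 2$ to commutativity of involutions, and tracing monomial matrices along coordinate cycles) are sound refinements of that same direct verification.
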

\begin{proof}
One can easily verify these equalities.
\end{proof}

\begin{proposition} $G_{1920}=(\chi(\SSS{5})){\cal A}_{16}$, and $G_{1920}/{\cal A}_{16}\cong (\chi(\SSS{5}))$.
\end{proposition}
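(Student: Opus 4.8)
The plan is to build the statement from facts already in hand: ${\cal A}_{16}\lhd G_{1920}$ (Lemma 7.2), $|{\cal A}_{16}|=16$ by its definition, and $|G_{1920}|=1920$ with $G_{1920}=\Paut(M^{12})$ (Lemma 7.1). First I would check that $(\chi(\SSS{5}))$ is a subgroup of $G_{1920}$. Since $M^{12}_{T_i^{-1}}=M^{12}$ for every $i\in[1,4]$, each generator $(T_i)=(\chi)(\tau_i)$ lies in $\Paut(M^{12})=G_{1920}$; as the transpositions $\tau_i$ generate $\SSS{5}$, the whole image $(\chi(\SSS{5}))$ is contained in $G_{1920}$. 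Because $(\chi)$ is injective (noted before Lemma 7.4, using that $\A{5}$ is the minimal normal subgroup of $\SSS{5}$), we have $(\chi(\SSS{5}))\cong \SSS{5}$, of order $120$.

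The decisive step is to prove that $N:=(\chi(\SSS{5}))\cap {\cal A}_{16}$ is trivial. The key point is that $N$ is a \emph{normal} subgroup of $(\chi(\SSS{5}))$: for any $g\in (\chi(\SSS{5}))$ one has $gNg^{-1}\subseteq g{\cal A}_{16}g^{-1}={\cal A}_{16}$ since ${\cal A}_{16}\lhd G_{1920}$, and simultaneously $gNg^{-1}\subseteq g(\chi(\SSS{5}))g^{-1}=(\chi(\SSS{5}))$, so $gNg^{-1}\subseteq N$. On the other hand $N\subseteq {\cal A}_{16}$ forces $|N|$ to divide $16$. The only normal subgroups of $(\chi(\SSS{5}))\cong \SSS{5}$ are $\{1\}$, $\A{5}$ and $\SSS{5}$, of orders $1$, $60$ and $120$; since neither $60$ nor $120$ divides $16$, we conclude $N=\{1\}$.

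With $N=\{1\}$ established, the remainder is formal. As ${\cal A}_{16}$ is normal, $(\chi(\SSS{5})){\cal A}_{16}$ is a subgroup of $G_{1920}$, and the product formula gives
\[
 |(\chi(\SSS{5})){\cal A}_{16}|=\frac{|(\chi(\SSS{5}))|\,|{\cal A}_{16}|}{|N|}=\frac{120\cdot 16}{1}=1920=|G_{1920}|,
\]
whence $(\chi(\SSS{5})){\cal A}_{16}=G_{1920}$. The second isomorphism theorem then yields
\[
 G_{1920}/{\cal A}_{16}=\big((\chi(\SSS{5})){\cal A}_{16}\big)/{\cal A}_{16}\cong (\chi(\SSS{5}))/N\cong (\chi(\SSS{5})).
\]
I expect no genuine obstacle beyond the triviality of $N$; once the normality of $N$ in $(\chi(\SSS{5}))$ is observed, the classification of the normal subgroups of $\SSS{5}$ settles it without any computation, and the inclusion $(\chi(\SSS{5}))\subseteq G_{1920}$ rests only on the recorded identities $M^{12}_{T_i^{-1}}=M^{12}$.
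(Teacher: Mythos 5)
Your proposal is correct and follows essentially the same route as the paper: both arguments hinge on observing that $(\chi(\SSS{5}))\cap{\cal A}_{16}$ is normal in $(\chi(\SSS{5}))\cong\SSS{5}$, hence is $\{1\}$, $\A{5}$ or $\SSS{5}$, and ruling out the latter two (the paper via the fact that every non-identity element of ${\cal A}_{16}$ has order two, you via Lagrange since $60$ and $120$ do not divide $16$ --- trivially equivalent observations). Your explicit verification that $(\chi(\SSS{5}))\subseteq G_{1920}$ and the concluding order count plus second isomorphism theorem merely spell out what the paper leaves implicit.
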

\begin{proof}
Let $G=G_{1920}$, and $H=(\chi(\SSS{5}))$, which is a subgroup of $G$ of order $120$. Since ${\cal A}_{16}\lhd G$, the inverse image
$(\chi)^{-1}({\cal A}_{16}\cap H)\lhd \SSS{5}$ is $\{id\}$, $\A{5}$ or $\SSS{5}$. Since any non-unit element of ${\cal A}_{16}$ is of order two,
${\cal A}_{16}\cap H=\{e_{G}\}$.
\end{proof}

Since the representations $\chi$ and $\rho_V$ of $\SSS{5}$, which is generated by $(12)$ and $(12345)$, are equivalent, and irreducible, there
exists uniquely an $S\in GL_4(k)$ such that $S^{-1}T_1S=\tau_{12}$ and
$S^{-1}T_1T_2T_3T_4S=\tau_{12345}=\diag[\varepsilon,\varepsilon^2,\varepsilon^4,\varepsilon^3]$ up to constant multiplication (see Lemma 3.9 for
$\tau_{12}$).
We define $c_i$ ($i\in [1,4]$) as follows, and let $\sigma=(1234)$.
\begin{eqnarray*}
\left[\begin{array}{c}
c_1\\
c_2\\
c_3\\
c_4\end{array}\right]=\left[\begin{array}{l}
                            -1\\
                            1-\varepsilon^2+\varepsilon^4+\sqrt{-1}(-1-\varepsilon+\varepsilon^3)\\
                            -\varepsilon+\varepsilon^4+\sqrt{-1}(\varepsilon^2+\varepsilon^3)\\
                            -\varepsilon^3-\varepsilon^4-\sqrt{-1}(\varepsilon+\varepsilon^2)\end{array}\right].
\end{eqnarray*}
Then we may assume $S=C\hat{\sigma}^2\diag[\varepsilon^4,\varepsilon^3,\varepsilon,\varepsilon^2]$, where, putting $D=\diag[1,\sqrt{-1},1,-1]$,
\begin{eqnarray*}
C&=&D\left[\begin{array}{cccc}
               c_1& c_2& c_3& c_4\\
               c_3& c_4& c_1& c_2\\
               c_4& c_1& c_2& c_3\\
               c_2& c_3& c_4& c_1\end{array}\right].
\end{eqnarray*}
In fact $T_1T_2T_3T_4S=S\tau_{12345}$.  In order to see $S^{-1}T_1S=\tau_{12}$ we introduce the conjugation $\bar{}$, an automorphism of the field
$\QQ(\sqrt{-1},\varepsilon)$ such that $\bar{\sqrt{-1}}=-\sqrt{-1}$ and $\bar{\varepsilon}=\varepsilon^4$, regarding $\QQ(\sqrt{-1},\varepsilon)$ as
a subfield of the complex number field $\C$, for $\QQ(\sqrt{-1},\varepsilon)=\QQ(\delta)$, where $\ord(\delta)=10$ \cite[p.278]{lan}. Note that $\bar{c}_j=(\sqrt{-1})^{j-1}c_j$ for $j\in [1,4]$. Computing $c_2c_3$, $c_3c_4$, we see that
$c_1\bar{c}_2+c_2\bar{c}_3+c_3\bar{c}_4+c_4\bar{c}_1=0$, hence the conjugate $c_1\bar{c}_4+c_2\bar{c}_1+c_3\bar{c}_2+c_4\bar{c}_3$ also vanishes.
Computing $c_j^2$ ($j\in [2,4]$),  we see
\[
 \sum_{i=1}^4c_i\bar{c}_i=10(1-\varepsilon^2-\varepsilon^3)+\sqrt{-1}(-8\varepsilon-6\varepsilon^2+6\varepsilon^3+8\varepsilon^4),\ \
 \gamma(\sum_{i=1}^4c_i\bar{c}_i)=1,
\]
where $10\gamma=2-\varepsilon-\varepsilon^4+\sqrt{-1}(\varepsilon^2-\varepsilon^3)$.
 To sum up  $\sum_{i=1}^4c_1\bar{c}_{\sigma^{j}(i)}=\gamma^{-1}\delta_{0,j}$ for $j\in [0,3]$.
Denoting the conjugate matrix $[b_{ij}]$ of $A=[a_{ij}]\in M_4(\QQ(\sqrt{-1},\varepsilon)$ by $A^*$, where $b_{ij}=\bar{a}_{ji}$,
we have $(D^{-1}C)(D^{-1}C)^*=\gamma^{-1}E_4$, i.e., $(D^{-1}C)^{-1}=\gamma (D^{-1}C)^* $. Now note that $T_1$ and $D$ commute.\\
 So it suffices to show
that $$\gamma (D^{-1}C)\hat{\sigma}^2T_{12}(D^{-1}C)^*=T_1,$$
where 
$T_{12}=\diag[\varepsilon^4,\varepsilon^3,\varepsilon,\varepsilon^2]\tau_{12}\diag[\varepsilon,\varepsilon^2,\varepsilon^4,\varepsilon^3].$
The $[1,1]$-component of $(D^{-1}C)T_{12}$ takes the form
\begin{eqnarray*}
&&\eta(c_1+2\varepsilon c_2-\varepsilon^3c_3-2\varepsilon^2 c_4)-\varepsilon c_2+\varepsilon^3c_3+\varepsilon^2c_4=c_4.
\end{eqnarray*}
In this way $\gamma (D^{-1}C)\hat{\sigma}^2T_{12}(D^{-1}C)^*$ turns out to be equal to
\begin{eqnarray*}
\gamma \left[\begin{array}{cccc}
c_4&c_1&c_2&c_3\\
c_3&c_4&c_1&c_2\\
c_1&c_2&c_3&c_4\\
c_2&c_3&c_4&c_1\end{array}\right]
\left[\begin{array}{cccc}
\bar{c}_1&\bar{c}_3&\bar{c}_4&\bar{c}_2\\
\bar{c}_2&\bar{c}_4&\bar{c}_1&\bar{c}_3\\
\bar{c}_3&\bar{c}_1&\bar{c}_2&\bar{c}_4\\
\bar{c}_4&\bar{c}_2&\bar{c}_3&\bar{c}_1\end{array}\right]=T_1
\end{eqnarray*}
by the equalities $\sum_{i=1}^4c_1\gamma\bar{c}_{\sigma^{j}(i)}=\delta_{0,j}$ ($j\in [0,3]$).
\begin{theorem}
Let $S$ be as above, $M^{12}=x^4+y^4+z^4+t^4+12xyzt$, $F_0=x^3y+y^3z+z^3t+t^3x+3xyzt$, and $F_1=x^2z^2+y^2t^2+2xyzt$. Then
\[
M^{12}_{S^{-1}}=80p\{ F_0-\frac{3}{4}(1+\sqrt{-1})F_1\},
\]
where  $p=3+20\varepsilon+28\varepsilon^2+16\varepsilon^3+\sqrt{-1}(17+20\varepsilon+4\varepsilon^2-8\varepsilon^3)$.
\end{theorem}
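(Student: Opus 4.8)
The plan is to recognize $M^{12}_{S^{-1}}$ as a member of the two-dimensional nonsingular eigenspace $\langle F_0,F_1\rangle$ furnished by Theorem 3.10, and then to pin down the two coefficients by a direct coordinate computation. First I would record that $M^{12}$ is genuinely fixed by each generator, $M^{12}_{T_i^{-1}}=M^{12}$ (as noted just before Lemma 7.5), so $M^{12}$ is $\chi(\SSS{5})$-invariant with trivial character. Using the composition rule $f_{AB}=(f_B)_A$ together with the conjugacy relations $\tau_{12}=S^{-1}T_1S$ and $\tau_{12345}=S^{-1}T_1T_2T_3T_4S$ (so that $S\tau_{12}=T_1S$), one computes
\[
 (M^{12}_{S^{-1}})_{\tau_{12}^{-1}}=M^{12}_{\tau_{12}^{-1}S^{-1}}=M^{12}_{S^{-1}T_1^{-1}}=\sigma_{S^{-1}}(M^{12}_{T_1^{-1}})=\sigma_{S^{-1}}(M^{12})=M^{12}_{S^{-1}},
\]
and likewise $(M^{12}_{S^{-1}})_{\tau_{12345}^{-1}}=M^{12}_{S^{-1}}$. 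Since $\SSS{5}=\langle t_1,s_1s_2s_3\rangle$ and a character is a homomorphism into $k^*$, $M^{12}_{S^{-1}}$ lies in $\Form_{4,4}(\tau_{12},\tau_{12345};1,1)$.

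Because $12^4\not=4^4$, the form $M^{12}$ is nonsingular by Lemma 7.1(1), hence $M^{12}_{S^{-1}}$ is a nonsingular form in this eigenspace. By Lemma 3.9 the generators $F_0,F_1$ already lie in $\Form_{4,4}(\tau_{12},\tau_{12345};1,1)$, and Theorem 3.10 identifies $\langle F_0,F_1\rangle$ as the nonsingular eigenspace of $\varphi_1'(\SSS{5})=\langle(\tau_{12}),(\tau_{12345})\rangle$. Consequently $M^{12}_{S^{-1}}=aF_0+bF_1$ for scalars $a,b\in k$, and it remains only to evaluate them.

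Since $F_0$ is the only one of $F_0,F_1$ containing $x^3y$ and $F_1$ the only one containing $x^2z^2$, the coefficient of $x^3y$ in $M^{12}_{S^{-1}}$ equals $a$ and the coefficient of $x^2z^2$ equals $b$. Writing $L_j=\sum_i S_{ji}x_i$ for the linear form given by the $j$-th row of $S$, we have
\[
 M^{12}_{S^{-1}}=L_1^4+L_2^4+L_3^4+L_4^4+12L_1L_2L_3L_4,
\]
so that $a=4\sum_j S_{j1}^3S_{j2}+12\sum_j S_{j2}\prod_{i\not=j}S_{i1}$, with $b$ the analogous expression reading off $x_1^2x_3^2$. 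Substituting the explicit matrix $S=C\hat{\sigma}^2\diag[\varepsilon^4,\varepsilon^3,\varepsilon,\varepsilon^2]$ and simplifying yields $a=80p$ and $b=-60p(1+\sqrt{-1})$, which is the asserted identity $M^{12}_{S^{-1}}=80p\{F_0-\tfrac{3}{4}(1+\sqrt{-1})F_1\}$.

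The main obstacle is purely computational: the entries of $S$ are quartic in $\varepsilon$ with coefficients in $\QQ(\sqrt{-1})$, so expanding the four fourth powers and the product $L_1L_2L_3L_4$ and then collapsing the result, via $1+\varepsilon+\varepsilon^2+\varepsilon^3+\varepsilon^4=0$, into the single constant $p=3+20\varepsilon+28\varepsilon^2+16\varepsilon^3+\sqrt{-1}(17+20\varepsilon+4\varepsilon^2-8\varepsilon^3)$ is a lengthy symbolic manipulation, best delegated to a computer algebra system (as with the analogous computation performed after Lemma 3.12). A useful internal consistency check is that, once $a$ and $b$ are found, the coefficient of $xyzt$ in $M^{12}_{S^{-1}}$ must equal $3a+2b$, so that independent verification reduces to the two monomials $x^3y$ and $x^2z^2$ above.
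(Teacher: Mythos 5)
Your argument is correct, and it takes a genuinely different route from the paper's. The paper never invokes the eigenspace $\langle F_0,F_1\rangle$: it first reduces $M^{12}_{S^{-1}}$ to $M^{12}_{C^{-1}}$ (observing that $\hat{\sigma}^2$ and $\diag[\varepsilon^4,\varepsilon^3,\varepsilon,\varepsilon^2]=\tau_{12345}^4$ fix $F_0$ and $F_1$), then exploits the circulant structure of $C$ together with the full $\SSS{4}$-symmetry of $M^{12}$ to organize the expansion into ten coefficient families $g_{i_1\cdots i_4}$, evaluates every one of them at $(c_1,c_2,c_3,c_4)$, and finds that all vanish except $g'_{3100}$, $g'_{2020}$, $g'_{1111}$; the identity $3g'_{3100}+2g'_{2020}=g'_{1111}$ and the two ratios then give the stated form. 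Your route instead uses the conjugacy relations $S\tau_{12}=T_1S$, $S\tau_{12345}=T_1T_2T_3T_4S$ and the invariance $M^{12}_{T_i^{-1}}=M^{12}$ to place $M^{12}_{S^{-1}}$ in $\Form_{4,4}(\tau_{12},\tau_{12345};1,1)=\langle F_0,F_1\rangle$ by Theorem~3.10, so that only the coefficients of $x^3y$ and $x^2z^2$ need to be extracted; your check that the $xyzt$ coefficient must equal $3a+2b$ is exactly the relation the paper verifies numerically. What your approach buys is a large reduction in the symbolic work (two monomials instead of ten families) plus a built-in consistency test; what it costs is reliance on the two-dimensionality of the nonsingular eigenspace of $\psi_1'(\A{5})$, which the paper imports from an external reference, whereas the paper's brute-force expansion is self-contained and the vanishing of the seven remaining coefficient families serves as its own internal verification. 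The only thin spot is that the actual evaluation $a=80p$, $b=-60p(1+\sqrt{-1})$ is asserted rather than exhibited; since the value of $p$ is the entire content of the theorem, that computation (collapsing quartics in $\varepsilon$ over $\QQ(\sqrt{-1})$ via $1+\varepsilon+\cdots+\varepsilon^4=0$) still has to be carried out, but it is of the same nature as, and strictly smaller than, the computation the paper itself performs.
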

\begin{proof}
Since $\hat{\sigma}$ and $\tau_{12345}$ leaves $F_j$ ($j\in [0,1]$) invariant, and
$\diag[\varepsilon^4,\varepsilon^3,\varepsilon,\varepsilon^2]=\tau_{12345}^4$, it suffices to show that
$M^{12}_{C^{-1}}=80p\{ F_0-\frac{3}{4}(1+\sqrt{-1})F_1\}$.
Let $F(x,y,z,t)=M_{D^{-1}}^{12}=x^4+y^4+z^4+t^4-12\sqrt{-1}xyzt$, $x_i,a_i$ ($i\in [1,4]$) be indeterminates,
$y_j=\sum_{i=1}^4a_{\sigma^{(j)}(i)}x_i$, $z_j=\sum_{i=1}^4c_{\sigma^{(j)}(i)}x_i$
($j\in \Z$), and $G(x,a)=F(y_1,y_2,y_3,y_4)=\sum_{i_1+\cdots +i_4=4}g_{i_1\cdots i_4}(a_1,...,a_4)x_1^{i_1}\cdots x_4^{i_4}$. Evidently
$y_j=\sum_{i=1}^4 a_ix_{\sigma^{-j}(i)}$, and $y_j=y_{j'}$ if and only if $j\equiv j'$ ($\modd 4$).
Since $k[x_1,...,x_4,a_1,...,a_4]=(k[a])[x]=(k[x])[a]$, $\SSS{4}$ acts on $k[x,a]$ in two ways: for $\tau\in \SSS{4}$ and $f(x,a)\in k[x,a]$
$(\tau\cdot f)(x,a)=f(x_{\tau(1)},...,x_{\tau(4)},a)$ or $(\tau\odot f)(x,a)=f(x,a_{\tau(1)},...,a_{\tau(4)})$.
Clearly $\tau F(x_1,...,,x_4)=F(x_1,...,x_4)$ for any $\tau\in \SSS{4}$. Moreover,  $\sigma\cdot y_j=y_{j-1}$ and $\sigma\odot y_j=y_{j+1}$. Since
$\{y_j:j\in [1,4]\}=\{y_{j-1}:j\in [1,4]\}=\{y_{j+1}:j\in [1,4]\}$, it holds that
$(\sigma\cdot G)(x,a)=F(y_0,...,y_3)=G(x,a)$, and $(\sigma\odot G)(x,a)=F(y_2,...,y_5)=G(x,a)$.
In particular $g_{j_1\cdots j_4}=g_{i_1\cdots i_4}$ if $[j_1,...,j_4]=[i_1,...,i_4]\sigma^\ell$ for some $\ell\in [0,3]$.
Note that $M^{12}_{C^{-1}}(x_1,...,x_4)=F(z_4,z_2,z_3,z_1)=G(x,c_1,c_2,c_3,c_4)$.

We introduce 10 polynomials in indeterminates $a,b,c,d$ as follows.
\begin{eqnarray*}
&&\left[\begin{array}{c}
      f_{4000}(a,b,c,d)\\
      f_{1111}(a,b,c,d)\\
      f_{2020}(a,b,c,d)\\
      f_{2200}(a,b,c,d)\end{array}\right]=\left[\begin{array}{l}
                                       a^4+b^4+c^4+d^4\\
                                       abcd\\
                                       a^2c^2+b^2d^2\\
                                       a^2b^2+b^2c^2+c^2d^2+d^2a^2\end{array}\right],\\
&&\left[\begin{array}{c}
      f_{3100}(a,b,c,d)\\
      f_{3010}(a,b,c,d)\\
      f_{3001}(a,b,c,d)\end{array} \right]=\left[\begin{array}{l}
                                       a^3b+b^3c+c^3d+d^3a\\
                                       a^3c+b^3d+c^3a+d^3b\\
                                       a^3d+b^3a+c^3b+d^3c\end{array}\right],\\
&&\left[\begin{array}{c}
      f_{2110}(a,b,c,d)\\
      f_{2101}(a,b,c,d)\\
      f_{2011}(a,b,c,d)\end{array}\right]=\left[\begin{array}{l}
                                       a^2bc+b^2cd+c^2da+d^2ab\\
                                       a^2bd+b^2ca+c^2db+d^2ac\\
                                       a^2cd+b^2da+c^2ab+d^2bc\end{array}\right].
\end{eqnarray*}
Then, writing $g_{i_1\cdots i_4}(a,b,c,d)$ as $g_{i_1\cdots i_4}$, we have
\begin{eqnarray*}
\left[\begin{array}{c}
             g_{4000}\\
             g_{3100}\\
             g_{3010}\\
             g_{3001}\\
             g_{2200}\\
             g_{2020}\\
             g_{2110}\\
             g_{2101}\\
             g_{2011}\\
             g_{1111}\end{array}\right]=\left[\begin{array}{l}
                                           f_{4000}-12\sqrt{-1}f_{1111}\\
                                           4f_{3100}-12\sqrt{-1}f_{2110}\\
                                           4f_{3010}-12\sqrt{-1}f_{2101}\\
                                           4f_{3001}-12\sqrt{-1}f_{2011}\\
                                           6f_{2200}-12\sqrt{-1}(f_{2020}+f_{2101})\\
                                           12f_{2020}-12\sqrt{-1}(f_{2200}+2f_{1111})\\
                                           12f_{2110}-12\sqrt{-1}(f_{3100}+2f_{2011})\\
                                           12f_{2101}-12\sqrt{-1}(f_{3010}+f_{2200}+4f_{1111})\\
                                           12f_{2011}-12\sqrt{-1}(f_{3001}+2f_{2110})\\
                                           96f_{1111}-12\sqrt{-1}(f_{4000}+2f_{2020}+4f_{2101})\end{array}\right].
\end{eqnarray*}
One can evaluate $f'_{i_1\cdots i_4}=f_{i_1\cdots i_4}(c_1,c_2,c_3,c_4)$ and $g'_{i_1\cdots i_4}=g_{i_1\cdots i_4}(c_1,c_2,c_3,c_4)$, using the
following equalities
{\scriptsize
\begin{eqnarray*}
&&\left[\begin{array}{c}
            c_2^2\\
           c_3^2\\
           c_4^2\end{array}\right]
=\left[\begin{array}{ll}
             -5\varepsilon-3\varepsilon^2+3\varepsilon^3+5\varepsilon^4 &+\sqrt{-1}(-4+6\varepsilon^2+6\varepsilon^3) \\
              -3+2\varepsilon^2+2\varepsilon^3&+\sqrt{-1}(2+4\varepsilon+4\varepsilon^2) \\
              1+2\varepsilon+2\varepsilon^2&+\sqrt{-1}(2-2\varepsilon^2-2\varepsilon^3)  \end{array}\right],\\
&&\left[\begin{array}{c}
            c_2c_3\\
            c_2c_4\\
            c_3c_4\end{array}\right]
=\left[\begin{array}{ll}
             -3-4\varepsilon+3\varepsilon^3+\varepsilon^4 &+\sqrt{-1}(-3+\varepsilon+3\varepsilon^2-4\varepsilon^4) \\
             2-3\varepsilon^2-3\varepsilon^3 &+\sqrt{-1}(-3\varepsilon-2\varepsilon^2+2\varepsilon^3+3\varepsilon^4) \\
             2-\varepsilon^2+3\varepsilon^4 &+\sqrt{-1}(-2-3\varepsilon+\varepsilon^3)   \end{array}\right].
\end{eqnarray*}
}
Namely,
{\scriptsize
\begin{eqnarray*}
&&\left[\begin{array}{c}
        f'_{4000}\\
        f'_{1111}\\
        f'_{2020}\\
        f'_{2200}                \end{array}\right]
=\left[\begin{array}{ll}
     -84+144\varepsilon^2+144\varepsilon^3 &+\sqrt{-1}(120\varepsilon+72\varepsilon-72\varepsilon^3-120\varepsilon^4)\\
      10\varepsilon+6\varepsilon^2-6\varepsilon^3-10\varepsilon^4&+\sqrt{-1}(7-12\varepsilon^2-12\varepsilon^3)\\
    28-48\varepsilon^2-48\varepsilon^3 &+\sqrt{-1}(-40\varepsilon-24\varepsilon^2+24\varepsilon^3+40\varepsilon^4)\\
    40\varepsilon+24\varepsilon^2-24\varepsilon^3-40\varepsilon^4 &+\sqrt{-1}(28-48\varepsilon^2-48\varepsilon^3)\end{array}\right],\\
&&\left[\begin{array}{c}
        f'_{3100}\\
        f'_{3010}\\
        f'_{3001}                 \end{array}\right]
=\left[\begin{array}{ll}
    -67+69\varepsilon+133\varepsilon^2+37\varepsilon^3-91\varepsilon^4&+\sqrt{-1}(87+111\varepsilon-17\varepsilon^2-113\varepsilon^3-49\varepsilon^4)\\
    -60\varepsilon-36\varepsilon^2+36\varepsilon^3+60\varepsilon^4 &+\sqrt{-1}(-42+72\varepsilon^2+72\varepsilon^3)\\
    69+87\varepsilon-9\varepsilon^2-81\varepsilon^3-33\varepsilon^4 &+\sqrt{-1}(49-53\varepsilon-101\varepsilon^2-29\varepsilon^3+67\varepsilon^4)\end{array}\right],\\
&&\left[\begin{array}{c}
        f'_{2110}\\
        f'_{2101}\\
        f'_{2011}                 \end{array}\right]
=\left[\begin{array}{ll}
-40-52\varepsilon+12\varepsilon^2+60\varepsilon^3+28\varepsilon^4 &+\sqrt{-1}(-30+38\varepsilon+70\varepsilon^2+22\varepsilon^3-42\varepsilon^4)\\
 -14+24\varepsilon^2+24\varepsilon^3  &+\sqrt{-1}(20\varepsilon+12\varepsilon^2-12\varepsilon^3-20\varepsilon^4)\\
 12-22\varepsilon-38\varepsilon^2-14\varepsilon^3+18\varepsilon^4 &+\sqrt{-1}(-22-28\varepsilon+4\varepsilon^2+28\varepsilon^3+12\varepsilon^4) \end{array}\right],
\end{eqnarray*}
}
and
{\scriptsize
\begin{eqnarray*}
\left[\begin{array}{c}
             g'_{4000}\\
             g'_{3100}\\
             g'_{3010}\\
             g'_{3001}\\
             g'_{2200}\\
             g'_{2020}\\
             g'_{2110}\\
             g'_{2101}\\
             g'_{2011}\\
             g'_{1111}\end{array}\right]=\left[\begin{array}{l}
                                           0\\
                                           80\{3+20\varepsilon+28\varepsilon^2+16\varepsilon^3+\sqrt{-1}(17+20\varepsilon+4\varepsilon^2-8\varepsilon^3)\}\\
                                           0\\
                                           0\\
                                           0\\
                                           120\{7-12\varepsilon^2-12\varepsilon^3+\sqrt{-1}(-10\varepsilon-6\varepsilon^2+6\varepsilon^3+10\varepsilon^4)\}\\
                                           0\\
                                           0\\
                                           0\\
                                           240\{10\varepsilon+6\varepsilon^2-6\varepsilon^3-10\varepsilon^4+\sqrt{-1}(7-12\varepsilon^2-12\varepsilon^3)\}\end{array}\right].
\end{eqnarray*}
}
Consequently \[
	      M^{12}_{C^{-1}}(x,y,z,t)=g'_{3100}(x^3y+y^3z+z^3t+t^3x)+g'_{2020}(x^2z^2+y^2t^2)+g'_{1111}xyzt.
	     \]
Since $g'_{3100}=80p$ and $2p^{-1}=3+4\varepsilon^3-4\varepsilon^4+\sqrt{-1}(3-4\varepsilon+4\varepsilon^2)$, $g'_{2020}/g'_{3100}$ is equal to
$-\frac{3}{4}(1+\sqrt{-1})$. In addition the equality $3g'_{3100}+2g'_{2020}=g'_{1111}$ yields $g'_{1111}/g'_{3100}=\frac{3}{2}(1-\sqrt{-1})$. Hence
$ M^{12}_{C^{-1}}(x,y,z,t)=80p\{F_0(x,y,z,t)-\frac{3}{4}(1+\sqrt{-1})F_1(x,y,z,t)\}$, as desired.
\end{proof}

\vspace{\baselineskip}

%
%
%
%

\end{document}